\Crefname{assumption}{Assumption}{Assumption}
\Crefname{thm}{Theorem}{Theorem}
\newcommand{\eq}{\begin{equation}}
\newcommand{\en}{\end{equation}}
\newcommand{\rr}{\mathbb{R}}
\newcommand{\norm}[1]{\left\lVert #1 \right\rVert}
\newcommand{\abs}[1]{\left\lvert #1 \right\rvert}
\newcommand{\mcal}[1]{\mathcal{#1}}
\newcommand{\iprod}[1]{\left\langle #1 \right\rangle }
\newcommand{\cost}{\mathbf{C}}
\newcommand{\vone}{\mathbf{1}}
\newcommand{\Hess}{\mathrm{Hess}}
\newcommand{\Ent}{\mathrm{Ent}}
\newcommand{\var}{\mathrm{Var}}
\newcommand{\eps}{\epsilon}
\newcommand{\perm}{\mathcal{S}}
\newcommand{\ind}{\mathds{1}}
\newcommand{\Expect}{\operatorname{\mathbb E}}
\newcommand{\size}{{N}}
\newcommand{\ip}[1]{{\langle #1 \rangle}}
\newcommand{\ones}{{\mathbf{1}}}
\newcommand{\iid}{{\emph{i.i.d.~}}}
\newcommand{\id}{\mathrm{id}}
\newcommand{\lone}{\mathbf{L}^1}
\newcommand{\ltwo}{\mathbf{L}^2}
\newcommand{\txtover}[2]{\overset{\mbox{\scriptsize #1}}{#2}}
\newcommand{\proj}{\mathrm{Proj}}
\newcommand{\Span}{\mathrm{Span}}
\DeclareMathOperator*{\argmin}{arg\,min}
\renewcommand{\tilde}{\widetilde}
\newcommand{\grad}{\mathrm{grad}}
\newcommand{\sinkop}{\mathcal{A}}
\newcommand{\trans}{\mathcal{T}}
\newcommand{\opB}{\mathcal{B}}
\newcommand{\opC}{\mathcal{C}}
\newcommand{\first}{\mathcal{L}_1}
\newcommand{\second}{\mathcal{L}_2}
\newcommand{\prodm}{P \otimes Q}
\newcommand{\exclude}[1]{[N] \backslash \{#1\}}
\newcommand{\emp}{{\mathbb{G}}}
\newcommand{\scb}{\mu}
\newcommand{\muexp}{\mathbb{E}_\scb}
\newcommand{\sumeta}{\bar{\eta}}
\theoremstyle{plain}
\newtheorem{thm}{Theorem}
\newtheorem{theorem}[thm]{Theorem}
\newtheorem{lemma}[thm]{Lemma}
\newtheorem{proposition}[thm]{Proposition}
\newtheorem{corollary}[thm]{Corollary}
\theoremstyle{remark}
\newtheorem{remark}{Remark}
\newtheorem{example}{Example}
\newtheorem{definition}{Definition}
\newtheorem{assumption}{Assumption}
\newtheorem{fact}{Fact}
\begin{document}

\begin{frontmatter}
\title{Asymptotics of Discrete Schr\"odinger Bridges\\via Chaos Decomposition}
\runtitle{Discrete Schr\"odinger Bridge Asymptotics} 

\begin{aug}
\author[A]{\fnms{Zaid}~\snm{Harchaoui}\ead[label=e2,mark]{zaid@uw.edu}},
\author[A]{\fnms{Lang}~\snm{Liu}\ead[label=e1,mark]{liu16@uw.edu}}
\and
\author[B]{\fnms{Soumik}~\snm{Pal}\ead[label=e3]{soumik@uw.edu}}
\address[A]{Department of Statistics, University of Washington, \printead{e1,e2}}

\address[B]{Department of Mathematics, University of Washington, \printead{e3}}
\end{aug}

\begin{abstract}
  Consider the problem of matching two independent i.i.d.~samples of size $N$ from two distributions $P$ and $Q$ in $\mathbb{R}^d$. For an arbitrary continuous cost function, the optimal assignment problem looks for the matching that minimizes the total cost. We consider instead in this paper the problem where each matching is endowed with a Gibbs probability weight proportional to the exponential of the negative total cost of that matching. Viewing each matching as a joint distribution with $N$ atoms, we then take a convex combination with respect to the above Gibbs probability measure. We show that this resulting random joint distribution converges, as $N\rightarrow \infty$, to the solution of a variational problem, introduced by F\"ollmer, called the Schr\"odinger problem. We also derive the first two error terms of orders $N^{-1/2}$ and $N^{-1}$, respectively. This gives us central limit theorems for integrated test functions, including for the cost of transport, and second order Gaussian chaos limits when the limiting Gaussian variance is zero. The proofs are based on a novel chaos decomposition of the discrete Schr\"odinger bridge by polynomial functions of the pair of empirical distributions as a first and second order Taylor approximations in the space of measures. This is achieved by extending the Hoeffding decomposition from the classical theory of U-statistics.
\end{abstract}

\begin{keyword}[class=MSC2020]
\kwd[Primary ]{46N10}
\kwd[; secondary ]{60J35}
\kwd{60F17}
\kwd{62G20}
\end{keyword}

\begin{keyword}
\kwd{Optimal transport}
\kwd{optimal matching}
\kwd{Schr\"odinger bridge}
\kwd{entropy regularization}
\kwd{chaos decomposition}
\kwd{Hoeffding decomposition}
\kwd{infinite-order U-statistics}
\kwd{contiguity}
\end{keyword}

\end{frontmatter}


\section{Introduction}\label{sec:intro}

Consider two probability distributions $P$ and $Q$ on $\rr^d$. Let $\{X_i\}_{i\in[N]}$ and $\{Y_i\}_{i\in[N]}$ be two independent i.i.d.~samples from $P$ and $Q$, respectively, where $[N] := \{1, \dots, N\}$.
Consider a continuous \textit{cost function} $c: \rr^d \times \rr^d \rightarrow [0, \infty)$ such that $c(x,y)=0$ if and only if $x=y$. Let $\perm_N$ be the set of permutations of the set $[N]:=\{1,2,\ldots, N\}$.

Every permutation can be viewed as a matching between the two sets of random variables.
Choose an $\eps >0$ whose significance will be made clear shortly.
Suppose we weigh every permutation $\sigma$ by the (random) weight $w(\sigma):=\exp(-\sum_{i=1}^N c(X_i, Y_{\sigma_i})/\eps)$. That is, define a Gibbs measure on $\perm_N$, 
\eq\label{eq:discretesolution}
q^*_\eps(\sigma):=\frac{w(\sigma)}{\sum_{\tau \in \perm_N} w(\tau)}=\frac{\exp\left(-\sum_{i=1}^N c(X_i, Y_{\sigma_i})/\eps \right)}{\sum_{\tau \in \perm_N} \exp\left(-\sum_{i=1}^N c(X_i, Y_{\tau_i})/\eps \right)}, \quad \sigma \in \perm_N.
\en
Now mix all possible matchings with probabilities given by $q^*_\eps$ by defining   
\begin{equation}
\label{eq:whatismuneps}
\hat\mu^N_\eps:= \sum_{\sigma \in \perm_N} q_\eps^*(\sigma) \frac{1}{N} \sum_{i=1}^N \delta_{(X_i, Y_{\sigma_i})}.
\end{equation}

The random measure $\hat\mu^N_\eps$ is a joint distribution with marginals given by the two empirical distributions $\hat P^N= \frac{1}{N} \sum_{i=1}^N \delta_{X_i}$ and $\hat Q^N= \frac{1}{N} \sum_{i=1}^N \delta_{Y_i}$.
It is obtained by a convex combination of all possible matchings of atoms. A high cost for a matching results in an exponentially small weight.
This paper deals with the limiting behavior of the sequence of random measures $\hat\mu^N_\eps$ as $N\rightarrow \infty$ while $\eps > 0$ is fixed.
Concretely, we show that, as $N \rightarrow \infty$, $\hat\mu^N_\eps$ converges weakly to, and has Gaussian fluctuations around, the solution $\mu_\eps$ of the following variational problem
\eq\label{eq:schbridge}
  \cost_\eps(P, Q) := \min_{\nu \in \Pi(P, Q)}\left[\int c(x,y) d\nu(x, y) + \eps \mbox{KL}(\nu \mid P \otimes Q) \right],
\en
where $\Pi(P, Q)$ is the set of \textit{couplings} of $(P, Q)$, i.e., all joint probability distributions over $\rr^d \times \rr^d$ with marginals given by $P$ and $Q$,
and $\mbox{KL}(\nu \vert \prodm) := \int \log{\frac{d \nu}{d(\prodm)}} d \nu$ if $\nu \ll \prodm$ and infinity otherwise is the Kullback-Leibler divergence.
Due to \cite{csiszar75, ruschendorf93}, the solution $\mu_\eps$ satisfies the following equation:
There exist two measurable functions $a_\eps$ and $b_\eps$ such that
\eq\label{eq:whatismu}
  \frac{d\mu_\eps}{d(\prodm)}(x, y)= \xi(x,y) := \exp\left[ -\frac1\eps \left(c(x,y) - a_\eps(x) - b_\eps(y)\right) \right].
\en

\paragraph*{Schr\"odinger bridges.}
The measures $\mu_\eps$ can be viewed as the (static) Schr\"odinger bridge \cite{schrodinger,follmer88,L12,chen2021stochastic} connecting $P$ to $Q$ at temperature $\eps$.
Assume that the following Markov transition kernel density is well-defined:
\begin{align*}
    p_\eps(y \mid x) \propto \exp\left[-\frac1\eps c(x, y) \right].
\end{align*}
This defines a Markov chain.
Suppose $(W_0, W_1)$ is distributed according to this Markov chain, conditioned on ``$W_0 \sim P$ and $W_1 \sim Q$''.
The the joint law of $(W_0, W_1)$ is called the Schr\"odinger bridge connecting $P$ to $Q$ at temperature $\eps$.
The quoted statement is not an event and is non-trivial to make precise.
In continuum, when both $P$ and $Q$ are densities, the Schr\"odinger bridge can be made precise as the solution of the following problem called \textit{the Schr\"odinger problem} \cite{schrodinger,follmer88,L12}
\begin{align}\label{eq:schbridge_original}
    \min_{\nu \in \Pi(P, Q)} \left[ \int c(x, y) d\nu(x, y) + \eps H(\nu) \right],
\end{align}
where $H$ is the entropy defined as $H(\nu) := \int \nu(x, y) \log{\nu(x,y)} dx dy$ if $\nu$ is a density and infinity otherwise.
We mention here two surveys \cite{leonardsurvey,chen2021stochastic} on this problem.
Since this problem and the problem \eqref{eq:schbridge} share the same solution, we call $\mu_\eps$ the \emph{Schr\"odinger bridge}.

In the same spirit, the random measure $\hat \mu_\eps^N$ can also be interpreted as the Schr\"odinger bridge connecting two empirical measures $\hat P^N$ and $\hat Q^N$ at temperature $\eps$.
In this interpretation $\hat \mu_\eps^N$ first appeared in \cite[Section 3.2]{PW18} for a particular cost function.
To see this, let $X_i=x_i$ and $Y_i=y_i$ for $i \in [N]$.
Then $\hat P^N$ and $\hat Q^N$ are discrete distributions each supported on exactly $N$ atoms.
Imagine $N$ independent Markov chains (or particles) $W(1), \ldots, W(N)$, starting from positions $\{W_0(i)=x_i\}_{i=1}^N$, make jumps according to the Markov kernel $\{p_\eps(\cdot \mid x_i)\}_{i =1}^N$, respectively.
Let $L^N(1):=\frac{1}{N} \sum_{i=1}^N W_1(i)$ denote the empirical distribution of their terminal values and let $L^N(0,1)=\frac{1}{N}\sum_{i=1}^N \delta_{(W_0(i), W_1(i))}$ denote the joint empirical distribution at two time points.
The law of $L^N(0,1)$, conditioned on $L^N(1)=\hat Q^N$, is given by the mixture formula $\hat\mu_\eps^N$ in \eqref{eq:whatismuneps} (given $X_i=x_i$ and $Y_i=y_i$ for $i \in [N]$), which solves Schr\"odinger's problem in the discrete set-up.
We refer to $\hat \mu_\eps^N$ as the \emph{discrete Schr\"odinger bridge}.

\paragraph*{Partition functions in quantum thermodynamics.}
Although weighted averages of symmetrized empirical distributions \eqref{eq:whatismuneps} and its variations go way back to Feynman's work \cite{Feynman}, such quantities also appeared recently in several different contexts. Motivated by the quantum thermodynamics of $N$ non-interacting Boson particles, a variation of \eqref{eq:whatismuneps} where $Y_i=X_i$ for every $i$ has been considered \cite{AdamsBruKonig,AdamsDorlas08,AdamsKonig08}.
In this setting, the samples are obviously dependent and $P = Q$. One of the goals of these articles is to compute the trace of the exponential of an $N$ particle Hamilton operator for Bose-Einstein statistics. In their language it can be described as the following limit
\eq\label{eq:limitpartitionadams}
\lim_{N\rightarrow \infty} \frac{1}{N} \log \left[ \frac{1}{N!} \sum_{\sigma \in \perm_N} \exp\left( - \frac1\eps \sum_{i=1}^N c(X_i, Y_{\sigma_i}) \right)\right] = -\frac1\eps \cost_\eps(P, Q).
\en
The term inside the $\log$ is called the partition function and is the denominator which appears in \eqref{eq:discretesolution} scaled by $N!$. The marginal measure $P$ comes from a Feyman-Kac representation of the trace operator and is taken to be either the uniform density over a compact box or the Lebesgue measure on the entire $\rr^d$ in which case it fails to be a probability measure.
In a similar vein of work, Trashorras \cite{trashorras08} considers the case where $X_i = Y_i = x_i$, $i \in [N]$, are deterministic points such that its empirical measure $\frac{1}{N} \sum_{i=1}^N \delta_{x_i}$ converges weakly to $P=Q$ as $N\rightarrow \infty$.
If a random permutation $\sigma$ is chosen uniformly from $\perm_N$, one gets a random measure $\frac{1}{N} \sum_{i=1}^n \delta_{(x_i, x_{\sigma_i})}$ which is referred to as the \emph{symmetrized empirical measure}.
In \cite{trashorras08}, a Large Deviation Principle for this sequence of random measures is derived, recovering the limit in \eqref{eq:limitpartitionadams}.
One of our key results (\Cref{cor:limitpartition}) establishes the limit in \eqref{eq:limitpartitionadams} in the case of independent i.i.d. samples.
In fact, this result is obtained from a stronger result (\Cref{thm:denominator}) which gives the exact limit of a scaled version of $(N!)^{-1} \sum_{\sigma \in \perm_N} \exp\left( -\epsilon^{-1} \sum_{i=1}^n c(X_i, Y_{\sigma_i}) \right)$ without conforming to large deviation.
This result can be of independent interest to the literature mentioned above.

\paragraph*{Mallows models of random permutations.}
The Gibbs measure $q_\eps^*$ itself appears in a more recent work in an entirely different direction studying the limit of Mallows-type models of random permutations \cite{mallows57}. This is done in \cite{mukherjee16} where the interest is in statistical estimation on Mallows models and in a very recent paper \cite{Kenyon20} on scaling limits of large random permutations with fixed patterns.
In \cite[Theorem 1.5]{mukherjee16} the author obtained the limit $\eqref{eq:limitpartitionadams}$ for $P = Q = \text{Unif}(0, 1)$ in the setting when $X_i=Y_i=i/N$, $i \in [N]$, are deterministic.
In this case, the empirical measure $\frac{1}{N}\sum_{i=1}^N \delta_{X_i}$ can be viewed as a deterministic approximation of Unif$(0,1)$.

\paragraph*{Optimal transport and entropic regularization.}
As shown in \cite{L12}, when $\eps \rightarrow 0$, the Schr\"odinger problem recovers the Monge-Kantorovich optimal transport (OT) problem defined as
\eq\label{eq:otcost}
    \cost(P, Q)=\inf_{\nu \in \Pi(P, Q)} \int c(x,y) \nu(dxdy).
\en
Since the data points are sampled from densities, they are all distinct almost surely.
In this case, the empirical measures $\hat P^N$ and $\hat Q^N$ are discrete measures supported on $N$ atoms.
The plug-in estimator $\cost(\hat P^N, \hat Q^N)$ can then be formulated as the following linear program 
\eq\label{eq:discreteOT}
    \cost(\hat P^N, \hat Q^N) = \min_{M \in \Pi(N^{-1} \ones, N^{-1} \ones)} \ip{M, C},
\en
where $\Pi(N^{-1} \ones, N^{-1} \ones) \subset \rr^{N \times N}$ is the set of matrices such that $M \ones = M^\top \ones = N^{-1} \ones$, i.e., $NM$ is doubly stochastic, and $\ip{M, C} := \sum_{i=1}^N \sum_{j=1}^N c(X_i, Y_j) M_{ij}$.

The limiting behavior of $\cost(\hat P^N, \hat Q^N)$ towards $\cost(P, Q)$ has been studied in combinatorics \cite{ajtai1984optimal}, probability and statistics \cite{Tal92, FG15, WB19, Lei20}, and applied to economics \cite{KY94,GS09}. This problem also arises in nonparametric statistical hypothesis testing \cite{RGC17} where one tests for the null hypothesis $P = Q$ by checking whether $\cost(\hat P^N, \hat Q^N) \approx 0$. This, among other reasons, have spurred a recent interest in the study of asymptotic distributions of $\cost(\hat P^N, \hat Q^N)$, properly scaled with respect to $\cost(P, Q)$. 
 
Early works on the large sample behavior of the OT cost were focused on the well-behaved quadratic cost $c(x,y)=|x-y|^2$ ($\sqrt{\cost(P,Q)}$ is then called the Wasserstein-2 distance between $P$ and $Q$) on the real line $\rr$; see, e.g. \cite{munk1998,barrio1999central,barrio2005}.
These results were built upon the explicit characterization, given by quantile functions, of the Wasserstein distances on measures supported on $\rr$.
Beyond one dimension, similar results are rather challenging to obtain;
see \cite{ajtai1984optimal,dobric1995asymptotics} for almost sure convergence results.
In \cite{rippl2016}, the authors obtained the limiting law of Wasserstein distances between Gaussian distributions with parameters estimated from data by utilizing the d-form representation in this special case.
Recently, normal distributional results have been generalized to $\rr^d$ for the quadratic cost \cite{delbarrio2019} and for a general cost on compact domains \cite{hundrieser2022unifying}.
Wasserstein distances between discrete probability measures supported on a finite \cite{klatt2020limit,sommerfeld2018} and countable \cite{tameling2019empirical} metric space have also been investigated.

An entropy-regularized formulation of~\eqref{eq:discreteOT} is particularly attractive both from a computational viewpoint~\cite{Cut13} and from a statistical viewpoint~\cite{rigollet:2018}.
Cuturi~\cite{Cut13} defined the following entropy-regularized optimal transport (EOT) problem:
\eq\label{eq:modent2}
\min_{M \in \Pi(N^{-1} \ones, N^{-1} \ones)} \left[ \iprod{M,C} + \eps \Ent(M) \right],
\en
where $\eps > 0$ is the regularization parameter and $\Ent(M)= \sum_{i=1}^N \sum_{j=1}^N M_{ij} \log{M_{ij}}$ is the entropy of $M$; see also \cite{ferradans2014regularized}.
The solution, although non-explicit, can be efficiently computed using the Sinkhorn algorithm \cite[Section 4.2]{PC18}.
Let $M^N_\eps$ denote the (unique) optimal solution to \eqref{eq:modent2}, then the limit behavior of $M_\eps^N$ and in particular the \textit{regularized} cost of transport $\iprod{C, M^N_\eps}$, both as $N\rightarrow \infty$ and $\eps$ either fixed or decaying to zero, becomes important.
In fact, $M_\eps^N$ can be viewed as the plug-in estimator of $\mu_\eps$ since the minimizer of the problem \eqref{eq:schbridge} with $P$ and $Q$ replaced by $\hat P^N$ and $\hat Q^N$ is exactly, in its matrix form, $M_\eps^N$.
For finite state spaces and $c(x,y)=\norm{x-y}^p$ with $p\ge1$, this has been taken up in \cite{klatt2020empirical}.
The slightly different but related concept of Sinkhorn divergence has been studied in \cite{bigot2019} and later extended in \cite{MenaWeed19} to Euclidean spaces for $p = 2$.

The discrete Schr\"odinger bridge $\hat \mu^N_\eps$ is, in fact, the solution of a different discrete EOT problem which explains the surprising appearance of entropy in the limit \eqref{eq:schbridge_original}.
For a permutation $\sigma \in \perm_N$, let $A_\sigma$ denote the permutation matrix corresponding to $\sigma$. 
By Birkhoff's Theorem \cite[Theorem 5.2]{barvinok2002course}, every doubly stochastic matrix can be written as a convex combination of permutation matrices. Thus, every coupling $M$ can be expressed as
		\(
	M=\sum_{\sigma \in \perm_N} q_M(\sigma)\frac{1}{N} A_\sigma,
		\) 
		where $q_M(\sigma) \in \mathcal{P}(\perm_N)$ is a probability distribution on $\perm_N$. Such convex combinations are generally not unique. Nevertheless, for any $q \in \mathcal{P}(\perm_N)$, we can get an element in $\Pi(N^{-1}\vone, N^{-1}\vone)$ by defining  
	\(
	M_q:=\sum_{\sigma \in \perm_N} q(\sigma) \frac{1}{N} A_\sigma.
	\) 
	Moreover, it holds that
	\(
	\iprod{M_q,C}= \frac1N \sum_{\sigma\in \perm_N}  q(\sigma) \sum_{i=1}^N c(X_i, Y_{\sigma_i}).
	\)
For $q \in \mathcal{P}(\perm_N)$ we define the entropy of $q$ as
\(
\Ent(q):=\sum_{\sigma \in \perm_N} q(\sigma)\log(q(\sigma)).
\)  
Consider the following problem
\eq\label{eq:modent}
    \min_{q \in \mathcal{P}(\perm_N)} \left[ \iprod{M_q,C} + \frac{\epsilon}{N} \Ent(q) \right].
\en
This is a regularization of discrete OT with a different notion of entropy for a doubly stochastic matrix $M$.
We show in the supplementary material that the solution to \eqref{eq:modent} is exactly $q_\eps^*$ in \eqref{eq:discretesolution}.

The relationship between $M_\eps^N$ that solves \eqref{eq:modent2} and the matrix $M_{q^*_\eps}$ where $q_\eps^\star$ solves \eqref{eq:modent} is not obvious.
However, they are connected through the lens of matrix balancing; see \cite{BeichlSullivan} and references therein.
To see this, we define an $N \times N$ matrix $K$ with $(i,j)$-th element being $K_{ij} := \exp\left( -\frac{1}{\eps} c(X_i, Y_j)\right)$. Let $\abs{K}$ denote the permanent of $K$, i.e., 
\[
\abs{K}=\sum_{\sigma\in \perm_N} \prod_{i=1}^N K_{i \sigma_i}= \sum_{\sigma \in \perm_N} \exp\left( - \frac1\eps \sum_{i=1}^N c(X_i, Y_{\sigma_i}) \right),
\]
which is exactly the denominator in \eqref{eq:discretesolution}.
Notice that
\[
\left(M_{q^*_\eps}\right)_{i,j}= \frac{1}{N} \sum_{\sigma: \sigma_i=j} q^*_\eps(\sigma) = \frac{1}{N}\frac{\sum_{\sigma: \sigma_i=j} \exp\left( - \sum_{i=1}^N c(X_i, Y_{\sigma_i})/\eps \right)}{\sum_{\sigma \in \perm_N} \exp\left( - \sum_{i=1}^N c(X_i, Y_{\sigma_i})/\eps \right)}.
\]
The sum in the numerator is over all permutations $\sigma\in \perm_N$ such that $\sigma_i=j$.
A little bit of algebra omitted here shows that it is exactly given by $N \exp(-c(X_i, Y_j)/\eps )\abs{K^{ij}}$, where $K^{ij}$ is the minor of $K$ obtained by deleting the $i$th row and the $j$th column of the matrix $K$. Therefore, we get the neat formula $(M_{q^*_\eps})_{i,j}= K_{ij} \abs{K^{ij}}/ \abs{K}$.
The matrix $M_{q_\eps^*}$ is referred to as the \textit{matrix balance} of $K$ \cite[Section 3]{BeichlSullivan} while the matrix $M_{\eps}^N$ is called the \textit{Sinkhorn balance} \cite[Section 4]{BeichlSullivan}.
It is shown in \cite[Section 4.1]{BeichlSullivan} that the Sinkhorn balance of a 0-1 matrix approximates the matrix balance of it.
However, a more in-depth investigation on the relationship of these two objects is needed.

\subsection{Main results}\label{sec:mainresults}

We now state our main results regarding the limiting behavior of the discrete Schr\"odinger bridge where both the dimension $d$ and regularization parameter $\eps$ are kept fixed.
Given a probability measure $\nu$ and integer $p\ge 1$, let $\mathbf{L}^p(\nu)$ be the space of functions that have finite $p$-th norm under $\nu$. We shall keep the same notation for an absolutely continuous measure and its density.

We express our results in their full generality.
Let $\mu \in \Pi(P, Q)$ be absolutely continuous w.r.t.~$\prodm$ with density $\xi \in \lone(\prodm)$.
Define the random measure
\begin{align}\label{eq:hat_mu}
    \hat \scb^N := \frac{\frac1{N!} \sum_{\sigma \in \perm_N} \frac1N \sum_{i=1}^N \delta_{(X_i, Y_{\sigma_i})} \xi^{\otimes}(X, Y_\sigma)}{\frac1{N!} \sum_{\sigma \in \perm_N} \xi^{\otimes}(X, Y_{\sigma})},
\end{align}
where $\xi^{\otimes}(X, Y_\sigma) := \prod_{i=1}^N \xi(X_i, Y_{\sigma_i})$.
As a special case, recall from \eqref{eq:whatismu} that, if $\xi(x, y)$ is chosen to be $\exp\big(-(c(x, y) - a_\eps(x) - b_\eps(y)) / \eps\big)$, then $\scb = \mu_\eps$ is the Schr\"odinger bridge connecting $P$ to $Q$.
Moreover, $\hat \scb^N$ recovers the measure defined in \eqref{eq:whatismuneps}.
Our first result shows that the random measure $\hat \scb^N$ converges weakly to its continuous counterpart $\scb$.
Let us start by defining two operators on $\ltwo(P)$ and $\ltwo(Q)$ induced by $\scb$.

\begin{definition}\label{def:sink_op}
Define linear operators $\sinkop: \ltwo(P) \rightarrow \ltwo(Q)$ and its adjoint $\sinkop^*: \ltwo(Q) \rightarrow \ltwo(P)$ by
\eq\label{eq:defineop}
    (\sinkop f)(y) = \int f(x) \xi(x, y) dP(x) \quad \mbox{and} \quad (\sinkop^* g)(x) = \int g(y) \xi(x, y) dQ(y).
\en
Call $A: (x, y) \mapsto \xi(x, y)$ the \emph{kernel} of $\sinkop$ and $A^*: (y,x) \mapsto \xi(x,y)$ the kernel of $\sinkop^*$.
\end{definition}

We show in \Cref{lem:sinkhorn_op} that $\sinkop$ is a well-defined linear operator, and $\sinkop^* \sinkop$ and $\sinkop \sinkop^*$ are two Markov operators defined on $\ltwo(P)$ and $\ltwo(Q)$, respectively.
Moreover, they can be rewritten as two conditional expectations: $(\sinkop f)(y) = \Expect[f(X) \mid Y](y)$ and $(\sinkop^* g)(x) = \Expect[g(Y) \mid X](x)$ where $(X, Y) \sim \scb$.

\paragraph{Consistency.} We first show that $\hat \mu^N$ is a consistent estimator of $\mu$.

\begin{assumption}\label{asmp:contiguity}
    All the results stated below hold under the following assumptions. 
    \begin{enumerate}
    \item $\xi \in \ltwo(\prodm)$. As a consequence \cite[Appendix A.4]{bickel1993efficient}, the operator $\sinkop$ is compact. Then the operators $\sinkop^* \sinkop$ and $\sinkop \sinkop^*$ admit eigenvalue decomposition $\sinkop^* \sinkop \alpha_k = s_k^2 \alpha_k$ and $\sinkop \sinkop^* \beta_k = s_k^2 \beta_k$ for all $k \ge 0$ with $s_0 = 1$, $\alpha_0 = \beta_0 = \ones$ and $0 \le s_k \le 1$ for all $k \ge 0$.
    Moreover, it holds that $\sinkop \alpha_k = s_k \beta_k$ and $\sinkop^* \beta_k = s_k \alpha_k$; see \cite[Chapter 6.1]{ghoberg1990classes}.
    We call $\{s_k\}_{k \ge 0}$ the singular values of $\sinkop$ and $\sinkop^*$, and call $\{\alpha_k\}_{k \ge 0}$ and $\{\beta_k\}_{k \ge 0}$ the singular functions.
    \item The operators $\sinkop^* \sinkop$ and $\sinkop \sinkop^*$ have positive eigenvalue gap, \emph{i.e.}, $s_k \le s_1 < 1$ for all $k \ge 1$.
    By Jentzsch's Theorem \cite[Theorem 7.2]{Rugh10}, a sufficient condition is that $\xi$ is bounded.
    \end{enumerate}
\end{assumption}

\begin{theorem}\label{thm:consistency}
    As $N \rightarrow \infty$, $\hat \scb^N$ converges weakly to $\scb$, in probability.
\end{theorem}

Towards the proof of \Cref{thm:consistency}, a critical result is the limit law of the denominator in \eqref{eq:hat_mu} which is denoted as $D_N$.
We state it here since it is of independent interest.
\begin{theorem}\label{thm:denominator}
    As $N \rightarrow \infty$, the denominator in \eqref{eq:hat_mu} has the following limiting distribution:
    \begin{align}\label{eq:DN_limit}
        D_N \rightarrow_d D := \frac{1}{\sqrt{\prod_{k=1}^\infty (1 - s_k^2)}} \exp\left\{ \frac12 \sum_{k=1}^\infty \left[ -\frac{s_k^2}{1 - s_k^2}(U_k^2 + V_k^2) + \frac{2s_k}{1 - s_k^2}U_kV_k \right] \right\},
    \end{align}
    where $\{U_k\}_{k \ge 1}$ and $\{V_k\}_{k \ge 1}$ are independent standard normal random variables.
\end{theorem}

It is noteworthy that $D_N$ is a two-sample U-statistic of infinite order---a generalization of classical U-statistics introduced by Halmos \cite{halmos1946theory} and Hoeffding \cite{hoeffding1948class}, where the kernel of the U-statistic depends on the sample size.
Infinite-order U-statistics were first considered in \cite{halasz1976elementary} as a special class of elementary symmetric polynomials of random variables; see also \cite{mori1982asymptotic,vanes1986weak,van1988elementary,major1999limit} in this line of research.
The limiting distribution of general infinite-order U-statistics was obtained in \cite[Theorem 1]{dynkin1983symmetric} using randomization of the sample size and multiple Wiener integrals.
\Cref{thm:denominator} extends previous work on one-sample infinite-order U-statistics to \emph{two-sample} infinite-order U-statistics.

Another closely related topic is the asymptotics of random permanents; see the monograph \cite{rempala2007symmetric} for a review.
An elementary symmetric polynomial is the permanent of a random matrix with identical rows \cite[Page 2]{rempala2005approximation}.
The limiting behavior of general random permanents has been studied in the case of i.i.d.~entries \cite{rempala1999limiting} as well as independent columns \cite{rempala2005approximation}, where the limit law is the exponential of a Gaussian distribution.
The denominator $D_N$ can be viewed as the permanent of the random matrix $(\xi(X_i, Y_j))_{N \times N}$ scaled by $N!$.
Hence, \Cref{thm:denominator} characterizes the asymptotic behavior of the permanent of a random matrix induced by a bivariate function whose rows and columns are dependent---the limit law is given by the exponential of a weighted sum of products of Gaussians.

If we set $\xi(x, y) := \exp(-(c(x, y) - a_\eps(x) - b_\eps(y))/\eps)$, then \Cref{thm:denominator} yields the limit in \eqref{eq:limitpartitionadams}.
\begin{corollary}\label{cor:limitpartition}
    As $N \rightarrow \infty$, the denominator in \eqref{eq:discretesolution} has the following limit:
    \begin{align*}
        \frac1N \log{\left[ \frac1{N!} \sum_{\sigma \in \perm_N} \exp\left( -\frac1\eps \sum_{i=1}^N c(X_i, Y_{\sigma_i})\right) \right]} \rightarrow_p -\frac1\eps \cost_\eps(P, Q).
    \end{align*}
\end{corollary}

\paragraph{First order chaos.}
To conduct a more refined analysis of the convergence of $\hat \scb^N$, we let $\eta$ be any function on $\rr^d \times \rr^d$ integrable under $\scb$ and consider the convergence of $T_N := T_N(\eta) := \int \eta(x,y) d\hat \scb^N$ towards $\theta := \int \eta(x,y) d \scb$.
According to \eqref{eq:hat_mu},
\begin{equation}\label{eq:whatistn}
    T_N = \frac{\frac1{N!} \sum_{\sigma \in \perm_N} \frac1N \sum_{i=1}^N \eta(X_i, Y_{\sigma_i}) \xi^{\otimes}(X, Y_\sigma)}{\frac1{N!} \sum_{\sigma \in \perm_N} \xi^{\otimes}(X, Y_\sigma)},
\end{equation}
A particularly important example is when $\eta=c$ is the cost function and $\scb$ is the Schr\"odinger bridge.
In this case $\theta$ is the optimal cost of transport for the regularized problem defined in \eqref{eq:schbridge}, which is known as the \emph{Sinkhorn distance} \citep{Cut13}.
It can be viewed as an approximation to the unregularized optimal transport cost with a convergence rate decays exponentially in $\eps$ \citep{luise2018differential}.
On the other hand, most of the previous works consider the optimal value of the problem \eqref{eq:schbridge} since their analyses rely heavily on the duality.
Moreover, as demonstrated in \citep[Chapter 4]{liu2022statistical}, the statistic $T_N$ can be used to statistically test for the equality of distributions of two independent samples. 

The statistic $T_N$ is a rather complicated function of the two empirical measures $\left( \hat P^N, \hat Q^N \right)$. Our next result shows that it can be well approximated by linear functions of the two measures in a way that is similar to the first order term in a Taylor expansion of smooth functions.

\begin{assumption}\label{asmp:secondmoment}
All the results stated below hold under the following additional assumptions: $\eta^2 \xi \in \lone(\prodm)$ and $\eta \xi \in \ltwo(\prodm)$.
\end{assumption}

We denote by $I_\nu: \ltwo(\nu) \rightarrow \ltwo(\nu)$ the identity operator on $\ltwo(\nu)$, and, by convention, its kernel is given by the Dirac delta function.
When the context is clear, we will write $I$ for short. Define
\begin{align}\label{eq:first_kappa}
    \eta_{1,0}(x) := \int [\eta(x, y) - \theta] \xi(x, y) dQ(y) \quad \mbox{and} \quad
    \eta_{0,1}(y) := \int [\eta(x, y) - \theta] \xi(x, y) dP(x).
\end{align}

\begin{thm}\label{thm:order1chaos}
As $N \rightarrow \infty$, it holds that $T_N - \theta =\first + o_p\big(1/\sqrt{N}\big)$, where
\begin{align*}
\first := \frac1N \sum_{i=1}^N [(I - \sinkop^* \sinkop)^{-1}(\eta_{1,0} - \sinkop^* \eta_{0,1})(X_i) + (I - \sinkop \sinkop^*)^{-1}(\eta_{0, 1} - \sinkop \eta_{1,0})(Y_i)].
\end{align*}
We call $\first$ the first order chaos of $T_N$. 
\end{thm}

\begin{corollary}\label{cor:clt}
As $N \rightarrow \infty$, the sequence $\sqrt{N}(T_N - \theta)$ converges in law to $\mathcal{N}(0, \varsigma^2)$, where $\varsigma^2=\varsigma^2(\eta)$, as a function of $\eta$, is given by
\begin{align*}
  \varsigma^2 := &\int \left( (I - \sinkop^* \sinkop)^{-1}(\eta_{1,0} - \sinkop^* \eta_{0,1})(x) \right)^2dP(x)\\
   &+ \int 
     \left( (I - \sinkop \sinkop^*)^{-1}(\eta_{0, 1} - \sinkop \eta_{1,0})(y) \right)^2dQ(y).
\end{align*}
\end{corollary}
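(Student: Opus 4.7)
The plan is to deduce the corollary directly from \Cref{thm:order1chaos} by recognizing $\sqrt{N}\,\first$ as a normalized sum of i.i.d.\ random variables and invoking the classical central limit theorem, letting Slutsky's lemma absorb the $o_p(1)$ remainder.

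First I would set $f := (I - \sinkop^* \sinkop)^{-1}(\kappa_{1,0} - \sinkop^* \kappa_{0,1})$ and $g := (I - \sinkop \sinkop^*)^{-1}(\kappa_{0,1} - \sinkop \kappa_{1,0})$, so that $\sqrt{N}(T_N - \theta) = N^{-1/2}\sum_{i=1}^N U_i + o_p(1)$ with $U_i := f(X_i) + g(Y_i)$. Since $\{(X_i, Y_i)\}$ is i.i.d.\ from $\prodm$ with $X_i \perp Y_i$, the summands $U_i$ are i.i.d.\ and split into two independent contributions. The substance of the argument then reduces to verifying that $f \in \ltwo(\rho_0)$ and $g \in \ltwo(\rho_1)$ are mean zero, and that their variances aggregate to the claimed $\varsigma^2$.

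For centering, a short computation using $\mu_\eps(x,y) = \xi(x,y)\rho_0(x)\rho_1(y)$ gives $\int \kappa_{1,0}\,d\rho_0 = \int(\eta-\theta)\,d\mu_\eps = 0$ and likewise for $\kappa_{0,1}$. Because $\sinkop^*$ is a Markov operator with $\sinkop^* \ones = \ones$, it preserves the mean-zero subspace $\ones^\perp \subset \ltwo(\rho_1)$, so $\kappa_{1,0} - \sinkop^* \kappa_{0,1} \in \ones^\perp \subset \ltwo(\rho_0)$. For square-integrability of $\kappa_{1,0}$, Cauchy--Schwarz together with $\int \xi(x,y)\rho_1(y)\,dy = 1$ (the marginal identity for $\mu_\eps$) yields $\int \kappa_{1,0}^2\,d\rho_0 \le \int(\eta-\theta)^2\,d\mu_\eps < \infty$ under \Cref{asmp:secondmoment}(1). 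The only nontrivial analytic point is the boundedness of $(I - \sinkop^* \sinkop)^{-1}$ on $\ones^\perp$, which is where the spectral-gap hypothesis \Cref{asmp:secondmoment}(3) enters: the spectrum of $\sinkop^* \sinkop$ restricted to $\ones^\perp$ is contained in $[0, s_1^2]$ with $s_1 < 1$, so the Neumann series $\sum_{k \ge 0}(\sinkop^* \sinkop)^k$ converges in operator norm and represents $(I - \sinkop^* \sinkop)^{-1}$. Hence $f$ is a well-defined mean-zero element of $\ltwo(\rho_0)$; the symmetric argument yields $g \in \ltwo(\rho_1)$.

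Finally, independence of $X_i$ and $Y_i$ makes $\var(U_1) = \Expect[f(X)^2] + \Expect[g(Y)^2]$, matching the stated $\varsigma^2$. The classical i.i.d.\ CLT then delivers $N^{-1/2}\sum_i U_i \convd \mathcal{N}(0,\varsigma^2)$, and one closes via Slutsky's lemma. There is no genuine obstacle here once \Cref{thm:order1chaos} is available; the corollary is essentially its i.i.d.\ CLT consequence, with \Cref{asmp:secondmoment}(3) playing the single role of ensuring the resolvent $(I - \sinkop^* \sinkop)^{-1}$ is bounded on the relevant subspace.
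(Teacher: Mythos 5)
Your proof is correct and matches the paper's approach: the corollary follows from \Cref{thm:order1chaos} by viewing $\sqrt{N}\,\first$ as a normalized i.i.d.\ sum under $(\prodm)^N$, computing the variance via the independence of $X_i$ and $Y_i$ together with the mean-zero property of the two summands, and closing with the CLT and Slutsky. The analytic points you verify (the Neumann-series inversion of $I - \sinkop^*\sinkop$ on the mean-zero subspace, centering, and $\ltwo$ membership of $\kappa_{1,0}$, $\kappa_{0,1}$) are precisely the content of \Cref{lem:sinkhorn_op} and the discussion surrounding \Cref{prop:first_order_chaos}, which the paper cites implicitly.
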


\begin{remark}
    In the arXiv version of this paper (arXiv:2011.08963) we conjectured that the same CLT holds for the solution of the EOT problem \eqref{eq:modent2}. This conjecture has been recently verified in \cite{GSNLW}.
\end{remark}

\begin{remark}
    It has been shown in \cite{L12} that the Schr\"odinger bridge problem recovers the Monge-Kantorovich OT problem as $\epsilon \rightarrow 0$.
    It is of great interest to verify if the limiting variance $\varsigma^2$ in \Cref{cor:clt} converges to the limiting variance of the OT plan.
\end{remark}

\begin{remark}
    When the limiting variance $\varsigma^2 = 0$, we can also establish the second order chaos of $T_N$ and the limiting distribution of $N(T_N - \theta)$.
    We refer interested readers to \cite[Appendix C.5]{liu2022statistical}.
\end{remark}

The first order chaos $\first$ admits a more compact expression using the notion of \emph{tensor products}.
Let $\sinkop_1 \in \{\sinkop, \sinkop^*, I_{P}, I_{Q}\}$ be an operator mapping from $\ltwo(\nu_1)$ to $\ltwo(\gamma_1)$ with kernel $A_1$.
And define $\sinkop_2, A_2$ similarly.
The tensor product $\sinkop_1 \otimes \sinkop_2: \ltwo(\nu_1 \otimes \nu_2) \rightarrow \ltwo(\gamma_1 \otimes \gamma_2)$ is defined by
\[
  (\sinkop_1 \otimes \sinkop_2) f(v_1, v_2) := \iint f(v_1', v_2') A_1(v_1', v_1) A_2(v_2', v_2) d \nu_1(v_1') d \nu_2(v_2'), \; \mbox{for all } f \in \ltwo(\nu_1 \otimes \nu_2).
\]
For instance, $I_{P} \otimes \sinkop : \ltwo(P \otimes P) \rightarrow \ltwo(P \otimes Q)$ is defined by
\begin{align*}
  (I_{P} \otimes \sinkop) f(v_1, v_2) &:= \iint f(v_1', v_2') \delta_{v_1}(v_1') \xi(v_2', v_2) d P(v_1') d P(v_2') \\
  &= \int f(v_1, v_2') \xi(v_2', v_2) d P(v_2'),
\end{align*}
or as a conditional expectation: $(I_{P} \otimes \sinkop) f(v_1, v_2) = \Expect[f(X', X) \mid X', Y](v_1, v_2)$ where $(X, Y) \sim \scb$ is independent of $X'$.
In particular, when $f := f_1 \oplus f_2$, we have $(\sinkop_1 \otimes \sinkop_2) (f_1 \oplus f_2) (v_1, v_2) = \sinkop_1 f_1(v_1) + \sinkop_2 f_2(v_2)$.
Finally, define the \emph{swap} operator $\trans$ by $\trans f(u, v) = f(v, u)$ for any $f$ on $\rr^d \times \rr^d$.
It is clear that $\trans (\sinkop_1 \otimes \sinkop_2) = (\sinkop_2 \otimes \sinkop_1) \trans$ on $\ltwo(\nu_1 \otimes \nu_2)$.
\begin{definition}\label{def:second_order_op}
Define the operator $\opB$ on the space $\ltwo(P \otimes Q)$:
\[
  \opB := \trans (\sinkop \otimes \sinkop^*) = (\sinkop^* \otimes \sinkop) \trans.
\]
\end{definition}

With this new operator $\opB$, the first order chaos $\first$ can be rewritten as (\Cref{cor:first_order_alter})
\[
    \first = \frac1N \sum_{i=1}^N (I + \opB)^{-1}(\eta_{1,0} \oplus \eta_{0,1})(X_i, Y_i).
\]
Both expressions of $\first$ come from the following system of linear equations. Assume the first order chaos in \Cref{thm:order1chaos} is given by $\frac{1}{N}\sum_{i=1}^N [f(X_i) + g(Y_i)]$, then $f$ and $g$ are (almost surely) solutions to:
\begin{align*}
     \eta_{1,0} = f + \sinkop^* g \quad \text{and} \quad  \eta_{0,1} = \sinkop f + g.
\end{align*}

\paragraph{Second order chaos.}
When $\varsigma^2$ in \Cref{cor:clt} is zero for certain $\eta$, the Gaussian limit is trivial and we need to consider a higher order expansion. This is true, for example, when we subtract off from $T_N - \theta$ its first order chaos. That is, consider 
\begin{align}\label{eq:tilde_eta}
\tilde\eta(x,y):=\eta(x,y) - \theta - (I - \sinkop^* \sinkop)^{-1}(\eta_{1,0} - \sinkop^* \eta_{0,1})(x)- (I - \sinkop \sinkop^*)^{-1}(\eta_{0,1} - \sinkop \eta_{1,0})(y).
\end{align}
By linearity, the corresponding statistic follows from \Cref{thm:order1chaos} by subtracting the first order chaos:
\begin{align}\label{eq:degenerate_TN}
T_N(\tilde\eta)= T_N(\eta) - \theta - \frac1N \sum_{i=1}^N [(I - \sinkop^* \sinkop)^{-1}(\eta_{1,0} - \sinkop^* \eta_{0,1})(X_i) + (I - \sinkop \sinkop^*)^{-1}(\kappa_{0, 1} - \sinkop \eta_{1,0})(Y_i)],
\end{align}
In this case both $T_N(\tilde\eta) \rightarrow 0$ in probability and $\varsigma^2(\tilde\eta)=0$. Thus we need a higher order expansion.

\begin{definition}\label{def:opC}
Define the operator $\opC$ on the space $\ltwo(P \otimes Q)$:
\[
  \opC := (I - \sinkop^* \sinkop) \otimes (I - \sinkop \sinkop^*).
\]
\end{definition}

\begin{assumption}\label{asmp:secondorder}
The following results hold under the additional assumptions that $\xi \in \mathbf{L}^{2\mathfrak{p}}(\prodm)$ and $\opC^{-1}(\tilde \eta \xi) \in \mathbf{L}^{2\mathfrak{p}/(\mathfrak{p}-2)}(\prodm)$ for some\footnote{We will show in \Cref{lem:operator_C} that $\opC^{-1}(\tilde \eta \xi)$ is a well-defined element in $\ltwo(\prodm)$. When $\mathfrak{p} = 2$, we assume $\xi \in \mathbf{L}^{4}(\prodm)$ and $\opC^{-1}(\tilde \eta \xi) \in \mathbf{L}^{\infty}(\prodm)$; when $\mathfrak{p} = \infty$, we only assume $\xi \in \mathbf{L}^{\infty}(\prodm)$, i.e., $\xi$ is bounded.} $\mathfrak{p} \in [2, \infty]$.
\end{assumption}

Let $\eta_{2,0} := -(I_{P} \otimes \sinkop^*)\opC^{-1}(\tilde \eta \xi)$, $\eta_{0,2} := -(\sinkop \otimes I_{Q}) \opC^{-1}(\tilde \eta \xi)$, and $\eta_{1,1'} := (I + \opB)\opC^{-1}(\tilde \eta \xi)$.

\begin{thm}\label{thm:order2chaos}
Assume, for some $\eta \in \ltwo(\mu)$, $\varsigma^2=0$ in \Cref{cor:clt}. 
Let $\theta_{1,1'} := \iint \eta_{1,1'}(x, y) d \mu(x, y)$. 
Then
\begin{align*}
    T_N - \theta + \frac{\theta_{1,1'}}{N} = \frac{1}{N(N - 1)} \left[\sum_{i \neq j} \left( \eta_{2,0}(X_i, X_j) + \eta_{0,2}(Y_i, Y_j) \right) + \sum_{i, j = 1}^N \eta_{1,1'}(X_i, Y_j)\right] + o_p(N^{-1}).
\end{align*}
\end{thm}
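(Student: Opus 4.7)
My approach leverages the chaos decomposition machinery developed earlier in the paper and pushes it one order beyond \Cref{thm:order1chaos}. First I would reduce to a clean centered setting. The hypothesis $\varsigma^2=0$ combined with the positive spectral gap $s_1<1$ in \Cref{asmp:secondmoment} forces both first-order kernels appearing in \Cref{thm:order1chaos} to vanish almost surely; equivalently $\kappa_{1,0}^{\tilde\eta}=\kappa_{0,1}^{\tilde\eta}=0$ for the residual $\tilde\eta$ from \eqref{eq:tilde_eta}, and the identity \eqref{eq:degenerate_TN} reduces matters to proving the expansion for $T_N(\tilde\eta)$. These same centering conditions show, via the SVD of $\sinkop$, that $\tilde\eta\xi$ is orthogonal to the nullspace of $\opC$ (spanned by $\alpha_0\otimes\beta_j$ and $\alpha_i\otimes\beta_0$), so $\opC^{-1}(\tilde\eta\xi)$ is a well-defined element of $\ltwo(\prodm)$ and the three kernels $\kappa_{2,0},\kappa_{0,2},\kappa_{1,1'}$ are meaningful.

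Next I would work from the ratio $T_N(\tilde\eta)=N_N/D_N$ derived in \eqref{eq:whatistn}, where $N_N=\frac{1}{N}\sum_\sigma \tilde\eta(X,Y_\sigma)\xi^{\otimes}(X,Y_\sigma)$ and $D_N=\sum_\sigma \xi^{\otimes}(X,Y_\sigma)$. The plan is to decompose each of $N_N/\Expect D_N$ and $D_N/\Expect D_N$ into orthogonal chaos components of order $0,1,2,\ldots$ in the pair of empirical measures $(\hrho_0^N,\hrho_1^N)$ using the paper's extended Hoeffding-type machinery, with the $k$-th order component having $\ltwo$-norm $O(N^{-k/2})$ under \Cref{asmp:secondorder} and a geometric prefactor governed by the spectral gap $1-s_1^2$. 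Expanding $(D_N/\Expect D_N)^{-1}$ as a Neumann series in its fluctuation and collecting all contributions of total order two produces the second-order chaos of $T_N(\tilde\eta)$, which takes the general form
\[
\second = \frac{1}{N(N-1)}\sum_{i\neq j}[h_{2,0}(X_i,X_j)+h_{0,2}(Y_i,Y_j)]+\frac{1}{N(N-1)}\sum_{i,j=1}^N h_{1,1'}(X_i,Y_j)
\]
for symmetric $h_{2,0},h_{0,2}$ and a bivariate $h_{1,1'}$ to be identified.

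Identification of these three kernels is the core algebraic step. Projecting $T_N(\tilde\eta)$ (equivalently its leading two-chaos part) onto each second-order subspace by pairing against test functions of types $\phi(X_i,X_j)$, $\phi(Y_i,Y_j)$, and $\phi(X_i,Y_j)$ orthogonal to all lower-order chaos yields a coupled linear system in $(h_{2,0},h_{0,2},h_{1,1'})$. Using the SVD expansion of $\xi^{\otimes}$ together with the tensor-product operators of \Cref{def:second_order_op}, one finds that its unique solution is given by $u:=\opC^{-1}(\tilde\eta\xi)$ together with $h_{2,0}=-(I_{\rho_0}\otimes\sinkop^*)u$, $h_{0,2}=-(\sinkop\otimes I_{\rho_1})u$, and $h_{1,1'}=(I+\opB)u$. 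The appearance of $\opC$ is natural: the $\phi(X_i,X_j)$ and $\phi(Y_i,Y_j)$ projections contribute factors of $I-\sinkop^*\sinkop$ on one tensor leg and $I-\sinkop\sinkop^*$ on the other, while $(I+\opB)$ in $h_{1,1'}$ encodes the two ways of pairing $(X_i,Y_j)$ through $\xi^{\otimes}$ (direct and transposed). The correction $+\theta_{1,1'}/N$ on the left-hand side represents the leading-order finite-sample bias $\Expect[T_N(\tilde\eta)]=-\theta_{1,1'}/N+O(N^{-2})$ of the ratio statistic, arising from the second-order curvature in the Taylor expansion of $N_N/D_N$; equivalently, it absorbs the leading nonzero expectation of the unrestricted sum $\sum_{i,j}h_{1,1'}(X_i,Y_j)/(N(N-1))$ under the sampling law.

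The main obstacle I anticipate is the remainder estimate, showing $o_p(N^{-1})$ rather than merely $O_p(N^{-1})$. This requires controlling (i) third- and higher-order chaos components of both $N_N/\Expect D_N$ and $D_N/\Expect D_N$ (each of $\ltwo$-norm $O(N^{-3/2})$ with geometric spectral-gap prefactors), and (ii) cross products of total chaos order at least three arising from the Neumann expansion of $1/D_N$. The geometric decay $(1-s_1^2)^{-k}$ from the spectral gap, combined with the moment hypotheses $\xi\in\mathbf{L}^{2\mathfrak{p}}(\prodm)$ and $\opC^{-1}(\tilde\eta\xi)\in\mathbf{L}^{2\mathfrak{p}/(\mathfrak{p}-2)}(\prodm)$ in \Cref{asmp:secondorder}, furnishes the H\"older-Cauchy-Schwarz bounds needed to close the estimate. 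The most delicate bookkeeping will be verifying that the cross-product of first-order chaoses from the numerator (nonzero only through the $\xi$-factor, since $\tilde\eta$ is centered) with that of the denominator combines precisely with the pure second chaos of $N_N/\Expect D_N$ to yield the full $(I+\opB)u$ kernel in $h_{1,1'}$, rather than just $u$.
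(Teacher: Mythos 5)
Your outline is a plausible plan, but it diverges from the paper's proof in one fundamental respect, and that divergence creates a genuine gap that you have not addressed. You propose to handle the denominator $D_N$ by writing $(D_N/\Expect D_N)^{-1}$ as a Neumann series in its fluctuation and then collecting chaos contributions order by order. The paper never does this. Instead it first changes measure to $\mu_\eps^N$, under which $T_N=Q^N[\eta(X_1,Y_1)\mid\mcal{G}_N]$ is an exact conditional expectation (Theorem \ref{thm:unbiased}), so the ratio structure disappears and one can project $T_N$ directly onto the closed subspaces $H_1,H_2\subset\ltwo(\mu_\eps^N)$ (Propositions \ref{prop:first_order_chaos} and \ref{prop:second_order_eot}). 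For the remainder, the paper exploits the identity $(\prodm)^N[\xi^{\otimes}\mid\mcal{G}_N]=D_N$ to write $\muexp[\abs{R}]=(\prodm)^N[\abs{U_N}]$, which cancels the denominator exactly; the variance bound on $U_N$ is then a closed-form Hoeffding computation (Section \ref{sec:remainder}), and contiguity (Theorem \ref{thm:contiguity}) transfers the bound back to $(\prodm)^N$. The Neumann-series route you sketch faces two obstacles the paper's cancellation avoids: first, $D_N>0$ but it is not bounded away from zero almost surely, so the formal expansion of $1/D_N$ does not converge on the whole sample space and you would need a separate tail/truncation argument; second, after truncating you must control the convolution of arbitrarily high chaos orders of $N_N/\Expect D_N$ with those of the Neumann tail, a bookkeeping task that the paper's $\Expect[(U_N-\second D_N)^2]=o(N^{-2})$ criterion reduces to a single finite computation.

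Two smaller points. Your statement that $\varsigma^2=0$ forces $\kappa_{1,0}^{\tilde\eta}=\kappa_{0,1}^{\tilde\eta}=0$ conflates two things: these kernels vanish for $\tilde\eta$ by construction, regardless of $\varsigma^2$; what $\varsigma^2=0$ actually buys (used in the paper's proof of this theorem and of Corollary \ref{cor:chisquare}) is that $\first\equiv 0$, so $\tilde\eta=\eta-\theta$ a.s.\ and $T_N(\tilde\eta)=T_N(\eta)-\theta$. Also, your second interpretation of the bias term, that $\theta_{1,1'}/N$ is ``the leading nonzero expectation of the unrestricted sum $\sum_{i,j}\kappa_{1,1'}(X_i,Y_j)$,'' is false under the sampling law $(\prodm)^N$: since $\kappa_{1,1'}\in\ltwo_{0,0}(\prodm)$, every term there has mean zero including the diagonal. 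In the paper the bias arises from the affine correction $\ell_{1,1'}$ required to put $\second$ in the permutation-symmetric subspace $H_2\subset\ltwo(\mu_\eps^N)$: $\ell_{1,1'}$ is chosen so that $\kappa_{1,1'}-\ell_{1,1'}\in\ltwo_{0,0}(\mu_\eps)$, and it is precisely because $\prodm$ and $\mu_\eps$ agree on affine functions that $(\prodm)[\ell_{1,1'}]=\mu_\eps[\kappa_{1,1'}]=\theta_{1,1'}$, which the LLN then converts into the stated $\theta_{1,1'}/N$ shift. Your first attribution (a Taylor-curvature bias of the ratio statistic) is heuristically reasonable, but the affine bookkeeping is where the $\mu_\eps$-versus-$\prodm$ discrepancy is actually located.
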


The term $\theta_{1,1'}/N$ should be interpreted as an $O(1/N)$ estimate of the bias since we show later in \Cref{prop:unbiased} that $T_N$ may not be an unbiased estimator of $\theta$, i.e., $\Expect[T_N]$ may not be $\theta$. 

\begin{corollary}\label{cor:chisquare}
Assume, for some $\eta \in \ltwo(\mu)$, $\varsigma^2=0$ in \Cref{cor:clt}.
Suppose that the function $(\eta - \theta) \xi$ has a spectral expansion in $\ltwo(\prodm)$ with respect to the orthonormal basis $\{\alpha_k \otimes \beta_l\}_{k, l \ge 0}$ of $\ltwo(\prodm)$ with coefficients $(\gamma_{kl}, k,l\ge 0)$, i.e.,
$
    (\eta - \theta) \xi = \sum_{k,l \ge 0} \gamma_{kl}(\alpha_k \otimes \beta_l).
$
Then, as $N \rightarrow \infty$, the sequence of random variables $N(T_N - \theta) + \theta_{1,1'}$ converges in law to mean-zero random variable
\begin{align*}
    \sum_{k,l \ge 1} \frac{\gamma_{kl}}{(1 - s_k^2)(1 - s_l^2)} \big\{ U_k V_l + s_k s_l U_l V_k - s_l (U_k U_l - \ind\{k = l\}) - s_k (V_k V_l - \ind\{k = l\}) \big\},
\end{align*}
where $\{U_k,\; k\ge 1\}$ and $\{V_l, \; l\ge 1\}$ are two independent sequences of \iid standard normal random variables.
\end{corollary}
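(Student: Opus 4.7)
The plan is to expand the second order chaos representation provided by \Cref{thm:order2chaos} in the singular basis of $\sinkop$ and then apply a joint central limit theorem. First I would exploit the hypothesis $\varsigma^2=0$: since $\varsigma^2$ is a sum of two $\ltwo$-norms, it vanishes only when $(I-\sinkop^*\sinkop)^{-1}(\kappa_{1,0}-\sinkop^*\kappa_{0,1})=0$ in $\ltwo(\rho_0)$ and likewise on the $Y$-side. Inverting these identities using the strict spectral gap ($s_k<1$ for $k\ge 1$) forces $\kappa_{1,0}\equiv 0$ and $\kappa_{0,1}\equiv 0$, so $\tilde\eta=\eta-\theta$ in $\ltwo(\mu_\eps)$. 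Taking inner products with $\alpha_k$ in $\ltwo(\rho_0)$ and using $\beta_0\equiv\ones$ gives $\gamma_{k,0}=\langle\kappa_{1,0},\alpha_k\rangle_{\rho_0}=0$ for every $k$, and similarly $\gamma_{0,l}=0$ for every $l$. Hence the spectral expansion reduces to $(\eta-\theta)\xi=\sum_{k,l\ge 1}\gamma_{kl}(\alpha_k\otimes\beta_l)$, and $\opC^{-1}$, which is only formally defined off the kernel spanned by indices with $s_k=1$ or $s_l=1$, is well-defined on this function.

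Next I would diagonalize the three operator-valued kernels. Using $\sinkop\alpha_k=s_k\beta_k$, $\sinkop^*\beta_l=s_l\alpha_l$, and the tensor-product definition, I obtain
\begin{align*}
\opC(\alpha_k\otimes\beta_l)&=(1-s_k^2)(1-s_l^2)(\alpha_k\otimes\beta_l),\\
(I_{\rho_0}\otimes\sinkop^*)(\alpha_k\otimes\beta_l)&=s_l(\alpha_k\otimes\alpha_l),\\
(\sinkop\otimes I_{\rho_1})(\alpha_k\otimes\beta_l)&=s_k(\beta_k\otimes\beta_l),\\
\opB(\alpha_k\otimes\beta_l)&=s_ks_l(\alpha_l\otimes\beta_k).
\end{align*}
Substituting into the definitions of $\kappa_{2,0}$, $\kappa_{0,2}$, $\kappa_{1,1'}$ yields explicit series with common weight $\gamma_{kl}/[(1-s_k^2)(1-s_l^2)]$.

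I then would analyze the three empirical sums of \Cref{thm:order2chaos} after multiplication by $N$. Setting $U_k^N:=N^{-1/2}\sum_i\alpha_k(X_i)$ and $V_l^N:=N^{-1/2}\sum_i\beta_l(Y_i)$, a multivariate CLT for the orthonormal families $\{\alpha_k\}_{k\ge 1}\subset\ltwo(\rho_0)$ and $\{\beta_l\}_{l\ge 1}\subset\ltwo(\rho_1)$, together with independence of the $X$- and $Y$-samples, gives joint convergence of any finite projection $\bigl((U_k^N)_{k\le K},(V_l^N)_{l\le K}\bigr)$ to independent sequences of i.i.d.~$\mathcal{N}(0,1)$ variables $(U_k)_{k\ge 1}$ and $(V_l)_{l\ge 1}$. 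Separating diagonal and off-diagonal contributions and applying the law of large numbers to the diagonal piece gives, for each fixed $(k,l)$,
\begin{align*}
\tfrac{1}{N-1}\sum_{i\neq j}\alpha_k(X_i)\alpha_l(X_j)&\to U_kU_l-\ind\{k=l\},\\
\tfrac{1}{N-1}\sum_{i\neq j}\beta_k(Y_i)\beta_l(Y_j)&\to V_kV_l-\ind\{k=l\},\\
\tfrac{1}{N-1}\sum_{i,j=1}^N\alpha_k(X_i)\beta_l(Y_j)&\to U_kV_l.
\end{align*}
Matching the $(k,l)$-summand against the spectral expansions of $\kappa_{2,0}$, $\kappa_{0,2}$, $\kappa_{1,1'}$ reproduces the claimed summand $\tfrac{\gamma_{kl}}{(1-s_k^2)(1-s_l^2)}\bigl\{U_kV_l+s_ks_lU_lV_k-s_l(U_kU_l-\ind\{k=l\})-s_k(V_kV_l-\ind\{k=l\})\bigr\}$.

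The main obstacle is interchanging the $N\to\infty$ limit with the infinite sum over $(k,l)$. I would handle this by a truncation argument: fix a cutoff $K$, so that the continuous mapping theorem applied to the joint CLT gives convergence of the partial sum up to index $K$; then bound the $\ltwo(\prob)$-norm of the tail uniformly in $N$. The ingredients are Parseval, $\sum_{k,l}\gamma_{kl}^2=\|(\eta-\theta)\xi\|_{\ltwo(\prodm)}^2<\infty$, the strict spectral gap, which keeps $1/[(1-s_k^2)(1-s_l^2)]$ bounded, orthogonality of the empirical Hoeffding building blocks (so cross terms cancel in the variance computation), and the higher-integrability upgrades of \Cref{asmp:secondorder} to supply the fourth moments Chebyshev needs. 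Sending $K\to\infty$ after $N\to\infty$ then gives tightness and identifies the limit, completing the proof.
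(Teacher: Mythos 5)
Your proposal is correct and follows essentially the same route as the paper's proof: use $\varsigma^2=0$ to reduce to $k,l\ge 1$, diagonalize $\kappa_{2,0},\kappa_{0,2},\kappa_{1,1'}$ in the singular basis, apply a multivariate CLT plus Slutsky to the $K$-truncated partial sum, and control the tail uniformly in $N$ via Parseval and the spectral gap (the paper carries this out through characteristic functions, but the underlying $\ltwo$ truncation argument is the same). One small inaccuracy: the tail bound needs only the second-moment orthogonality you already cite, so there is no need to invoke fourth moments or the integrability upgrades from Assumption~\ref{asmp:secondorder} at this stage; those are used to establish Theorem~\ref{thm:order2chaos}, not the truncation here.
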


\subsection{An abstract Taylor expansion and a conjectured universality}

Consider the Schr\"odinger bridge $\mu$ as a function of the input $(P, Q)$ (and $\eps$, which is kept fixed). Hence, over a suitable space of pairs of probability distributions on $\rr^d$ we get a function $(P, Q)\mapsto \mu(P, Q)$. This space of probability distributions is assumed to be convex in the usual sense. How can one define gradients or variations of this map?   

It seems natural to take a class of test functions and consider the real-valued map $(P, Q)\mapsto \theta(P, Q):= \int \eta d\mu$.
Suppose, \textit{formally}, one can take the gradient $\grad \;\theta(P, Q)$ and the Hessian $\Hess\; \theta(P, Q)$ of this function at $(P, Q)$. Then, a formal Taylor approximation around $(P, Q)$ would give us
\[
\begin{split}
&\theta\left(\hat P^N, \hat Q^N \right) = \theta(P, Q) + \grad\; \theta(P, Q) \cdot \left( \hat P^N - P, \hat Q^N - Q \right)\\
&\qquad + \frac{1}{2} \iprod{\left( \hat P^N - P, \hat Q^N - Q \right), \Hess\; \theta(P, Q)\cdot \left( \hat P^N - P, \hat Q^N - Q \right)} + o_p\left( \norm{\left( \hat P^N - P, \hat Q^N - Q \right)}^2 \right).
\end{split}
\]
Here $\grad \; \theta(P, Q)$ and $\Hess\; \theta(P, Q)$ are linear operators on the pair of measures $\left( \hat P^N - P, \hat Q^N - Q \right)$. Linear operators on measures can be identified with integrals of functions. Hence, one would expect a representation of the form
\eq\label{eq:repgrad}
\begin{split}
\grad\; \theta(P, Q) \cdot \left( \hat P^N - P, \hat Q^N - Q \right)&= \int f(x)\left(\hat P^N - P \right)(dx) + \int g(y) \left(\hat Q^N - Q \right)(dy)\\
&=\frac{1}{N} \sum_{i=1}^N \tilde f(X_i) + \frac{1}{N} \sum_{j=1}^N \tilde g(Y_i),
\end{split}
\en
for some functions $f$ and $g$ and their centered versions $\tilde f$ and $\tilde g$ obtained by subtracting off their expectations. Similarly, one would expect a functional representation for the Hessian as a quadratic function:
\eq\label{eq:repHess}
\begin{split}
&\iprod{\left( \hat P^N - P, \hat Q^N - Q \right), \Hess\; \theta(P, Q)\cdot \left( \hat P^N - P, \hat Q^N - Q \right)}=\int f_{2,0}(x,u)  \left(\hat P^N - P\right)(dx)\left(\hat P^N - P\right)(du)\\
&+ \int f_{1,1}(x,y)  \left(\hat P^N - P\right)(dx)\left(\hat Q^N - Q\right)(dy)+ \int f_{0,2}(v,y)  \left(\hat Q^N - Q\right)(dv)\left(\hat Q^N - Q\right)(dy)\\
&=\frac{1}{N^2}\sum_{i=1}^N \sum_{j=1}^N \tilde f_{2,0}(X_i, X_j) + \frac{1}{N^2}\sum_{i=1}^N \sum_{j=1}^N \tilde f_{1,1}(X_i, Y_j) + \frac{1}{N^2}\sum_{i=1}^N \sum_{j=1}^N \tilde f_{0,2}(Y_i, Y_j),
\end{split}
\en
for some functions $f_{2,0}, f_{1,1}, f_{0,2}$ and their suitably centered versions. For example, 
\[
\tilde f_{1,1}(x,y)= f_{1,1}(x,y) - \int f_{1,1}(x,y) d P(x) - \int f_{1,1}(x,y) d Q(y) + \iint f_{1,1}(x,y) dP(x) d Q(y).
\]

Moreover, due to the Central Limit Theorem, $\sqrt{N}\left( \hat P^N - P, \hat Q^N - Q \right)$ is a tight family of random measures and has a limiting Gaussian distribution. Thus, we would expect 
\begin{enumerate}
\item $\sqrt{N}\grad\; \theta(P, Q) \cdot \left( \hat P^N - P, \hat Q^N - Q \right)$ to converge to a mean zero Gaussian distribution with a variance given by a norm square of the gradient $\grad\; \theta(P, Q)$. 
\item $N \iprod{\left( \hat P^N - P, \hat Q^N - Q \right), \Hess\; \theta(P, Q)\cdot \left( \hat P^N - P, \hat Q^N - Q \right)}$ converges to an element in the Gaussian second order chaos, which is comprised of linear combinations of central chi-squares and products of independent mean-zero Gaussians. The coefficients of the combinations will be given by the operator $\Hess\; \theta(P, Q)$. 
\item $o_p\left(\norm{\left( \hat P^N - P, \hat Q^N - Q \right)}^2\right)=o_p\left( N^{-1}\right)$.
\end{enumerate}
In fact, this method of Taylor expansion has been made rigorous for finite spaces and for $c(x,y)=\norm{x-y}^p$ in \cite{bigot2019} who go on to derive similar distributional limits. The linear terms can also be related to mean elements in abstract spaces~\cite{mourier:1953}. In~\cite{feydy2019}, the authors show how the entropy-regularized transport as a divergence between probability distributions interpolates between Hilbertian kernel-based divergences and optimal transportation distances. 

Our main results, Theorems \ref{thm:order1chaos} and \ref{thm:order2chaos} and the respective Corollaries \ref{cor:clt} and \ref{cor:chisquare}, establish the representations \eqref{eq:repgrad} and \eqref{eq:repHess} and the three limits without a differential structure by devising a chaos decomposition similar to the classical Hoeffding decomposition \cite[Section 11.4]{van2000} in the theory of U-statistics \cite[Chapter 12]{van2000}. Turning the tables around, the kernels appearing in Theorems \ref{thm:order1chaos} and \ref{thm:order2chaos} therefore suggest the linear operators $\grad\; \theta(P, Q)$ and $\Hess\; \theta(P, Q)$. In a formal sense we have derived the first and second order variations of the map $(P, Q) \mapsto \mu(P, Q)$ in terms of the Markov operators appearing in those theorems. Hence, we conjecture that the same limiting distributions (up to constant multiples) would appear for any other sequence of statistics of the form $F\left(\hat P^N, \hat Q^N \right)$ that asymptotically converges in probability to $\theta(P, Q)$. 
\\

\paragraph{\textbf{Conjecture.}} The distributional limits for fixed $\eps$ of Corollaries \ref{cor:clt} and \ref{cor:chisquare} continue to hold (up to constant multiples) for the cost $\iprod{C,M_\eps^N}$, where $M_\eps^N$ is the solution to regularized OT problem \eqref{eq:modent2} for i.i.d. data.

\subsection{Outline of the paper}
\Cref{sec:contiguity} is devoted to proving \Cref{thm:consistency}.
We prove a novel \emph{contiguity} result that allows us to change the model to $\{(X_i,Y_i)\}_{i=1}^N \txtover{i.i.d.}{\sim} \scb$ based on the limiting distribution of the denominator in \Cref{thm:denominator}.
This change of measure enables a more natural analysis for $\hat \scb^N$ and \Cref{thm:consistency} then follows from the reverse martingale convergence theorem.

Next in Section \ref{sec:chaos} we derive the first and (approximate) second order chaoses under the change of measure.
We then prove \Cref{thm:order1chaos,thm:order2chaos} by variance bounds of remainders.
Since $T_N$ is a function of the pair of empirical distributions, it is invariant under permutations of $\{X_i\}_{i=1}^N$ or $\{Y_i\}_{i=1}^N$, separately.
Each terms in the chaos expansion is a polynomial function of the empirical distributions $(\hat P^N, \hat Q^N)$, so they are also symmetric under permutations.
Thus, we obtain symmetric projections on subspaces of $\ltwo(\scb^N)$ when $X_i$ and $Y_i$, under the change of measure $\mu$, are not independent.
Essentially, we extend the classical Hoeffding projection to paired samples, which can be of independent interest.

In Section \ref{sec:remainder} we derive the asymptotic distribution of the denominator and the variance bounds of the remainders used in the previous two sections.
The method here is based on a Hoeffding-like decomposition and new variance bounds for a type of U-statistic of increasing order under our original model when $X_i$ and $Y_i$ are independent.
The tools developed in this section can also be of independent interest.

Finally, Appendix is a collection of technical results used in the other proofs. 
For the readability, we give in \Cref{sec:notation} a table of notation.

\section{Weak Convergence and Contiguity}\label{sec:contiguity}
In this section, we prove the weak convergence of $\hat \scb^N$.
By definition, it suffices to show the convergence of $T_N := \int \eta d \hat \scb^N$ to $\theta := \int \eta d\scb$ for any continuous bounded function $\eta$.
In fact, the convergence holds for every function $\eta$ that is integrable under $\scb$.

Recall from \eqref{eq:whatistn} that $T_N$ admits a complicated expression, i.e.,
\begin{align*}
    T_N = \frac{\frac1{N!} \sum_{\sigma \in \perm_N} \frac1N \sum_{i=1}^N \eta(X_i, Y_{\sigma_i}) \xi^{\otimes}(X, Y_\sigma)}{\frac1{N!} \sum_{\sigma \in \perm_N} \xi^{\otimes}(X, Y_\sigma)}.
\end{align*}
However, it has a rather simple structure under a change of measure---instead of assuming that $\{(X_i, Y_i)\}_{i=1}^N$ is an i.i.d.~sample from the product measure $\prodm$, we assume that $\{(X_i, Y_i)\}_{i=1}^N$ is an i.i.d.~sample from $\scb$. As \Cref{prop:unbiased} below shows, under this change of measure, $T_N$ is a simple conditional expectation and an unbiased estimator of $\theta$.
Hence, it is natural to ask if there is a way to do analysis under the changed measure $\scb$ and carry the results over to the original measure $\prodm$.
Contiguity \cite[Chapter 6]{van2000} is exactly a tool for such purposes.
When $\xi \neq 1$ a.s.~under $\prodm$, the laws of the entire i.i.d.~sequence $\{(X_i, Y_i)\}_{i \ge 1}$ under the two measures $\prodm$ and $\scb$ are singular. But $T_N$ is a function of only $(\hat P^N, \hat Q^N)$. Restricted to the $\sigma$-algebra generated by these marginal empirical distributions, we show that the two measures are contiguous in \Cref{thm:contiguity} below.

We first set-up a measure-theoretic framework.
We use the term ``under the measure $\gamma$'' to indicate that the sample $\{(X_i, Y_i)\}_{i=1}^N \overset{i.i.d.}{\sim} \gamma$ and use $\Expect_\gamma$ to denote the expectation under this model.
When $\gamma = \prodm$, we write $\Expect$ for short.
Let $\mcal{F}_N$ denote the $\sigma$-algebra generated by $\{(X_i, Y_i)\}_{i=1}^N$.
Let $\mcal{G}_N$ denote the sub-$\sigma$-algebra of $\mcal{F}_N$ generated by $(\hat P^N, \hat Q^N)$.
Let $R^N$ and $S^N$ be the law of $(\hat P^N, \hat Q^N)$ under $\prodm$ and $\scb$, respectively.
It is clear that $R^N = (\prodm)^N \vert_{\mcal{G}_N}$ and $S^N = \scb^N \vert_{\mcal{G}_N}$.

According to Le Cam's first lemma \cite[page 88]{van2000}, the contiguity holds true if the likelihood ratio $dS^N / dR^N$ converges weakly, under $R^N$, to an a.s.~positive random variable.
Before we prove that, we give an explicit expression for the likelihood ratio---it is exactly $D_N$, i.e., the denominator of $T_N$.

\begin{fact}\label{fact:like_ratio}
    The likelihood ratio $dS^N / dR^N$ admits the following expression:
    \begin{align}\label{eq:DN}
        \frac{dS^N}{dR^N} = D_N := \frac1{N!} \sum_{\sigma \in \perm_N} \xi^{\otimes}(X, Y_\sigma).
    \end{align}
\end{fact}
\begin{proof}
    Note that the likelihood ratio of $\scb^N$ and $(\prodm)^N$ is given by
    \eq\label{eq:whatisfN}
        f_N := \frac{d \scb^N}{d(\prodm)^N} = \prod_{i=1}^N \xi(X_i, Y_i), \quad \text{on $\left(\rr^d\times \rr^d\right)^N$.}
    \en
    Hence, by the property of conditional expectation,
    \begin{align*}
        \frac{dS^N}{dR^N} = \frac{d \scb^N \vert_{\mcal{G}_N}}{d(\prodm)^N \vert_{\mcal{G}_N}} = \Expect\left[ f_N \mid \mcal{G}_N\right],
    \end{align*}
    where the conditional expectation is under $\prodm$.
    It follows from exchangeability under $\prodm$ that $\Expect[f_N \mid \mcal{G}_N] = \Expect[\xi^\otimes(X, Y_\sigma) \mid \mcal{G}_N]$ for each $\sigma \in \perm_N$.
    Hence,
    \eq\label{eq:cond_exp_denom}
        \Expect\left[ f_N \mid \mcal{G}_N\right] = \Expect\left[ \frac{1}{N!} \sum_{\sigma \in \perm_N} \xi^\otimes(X, Y_\sigma) \Big| \mcal{G}_N \right] = \frac{1}{N!}\sum_{\sigma \in \perm_N} \xi^\otimes(X, Y_\sigma),
    \en
    where the last equality follows from $\sum_{\sigma \in \perm_N} \xi^\otimes(X, Y_\sigma)$ is $\mcal{G}_N$-measurable.
\end{proof}

Recall from \Cref{thm:denominator} that $D_N$ has a limiting distribution given by the exponential of a weighted sum of products of Gaussians which is almost surely positive.
Besides tools such as the Hoeffding decomposition from the U-statistics theory, the proof of \Cref{thm:denominator} involves a novel approach to control the variance of $D_N$.
We defer it to \Cref{sec:remainder}.
Now we are ready to prove the contiguity result.

\begin{thm}\label{thm:contiguity}
Under Assumption \ref{asmp:contiguity}, the sequences $(R^N,\; N\ge 1)$ and $(S^N,\; N\ge 1)$ are mutually contiguous, i.e., $R^N \triangleleft \triangleright S^N$. Explicitly, for a sequence of events $\left(A_N \in \mcal{G}_N,\; N\ge 1 \right)$, we have $\lim_{N\rightarrow \infty} S^N(A_N)=0$ iff $\lim_{N\rightarrow \infty} R^N(A_N)=0$.    
\end{thm}

\begin{proof}
    According to Le Cam's first lemma \cite[page 88]{van2000}, $R^N \triangleleft S^N$, $N\ge 1$, if and only if the following statement holds true: if $D_N$, under $\prodm$, converges weakly to $D$, along a sub-sequence, then $P(D > 0) = 1$.
    This statement follows directly from \Cref{thm:denominator}, so we have $R^N \triangleleft S^N$.
    By a standard computation, it can be shown that $\Expect[D] = 1$.
    Hence, it follows from Le Cam's first lemma again that $S^N \triangleleft R^N$, that is, $R^N$ and $S^N$ are mutually contiguous.
\end{proof}

With \Cref{thm:contiguity} at hand, we can work under the measure $\scb$.
The next result rewrites $T_N$ as a simple conditional expectation and verifies its consistency.

\begin{proposition}\label{prop:unbiased}
    Assume that $\{(X_i, Y_i)\}_{i=1}^N \txtover{i.i.d.}{\sim} \scb$.
    It holds that $T_N = \muexp \left[ \eta(X_1, Y_1) \mid \mcal{G}_N \right]$ for every $\eta \in \lone(\scb)$. Moreover, $T_N$ is an unbiased and consistent estimator of $\theta$. That is, $\muexp[T_N] = \theta$ for all $N$ and $\lim_{N \rightarrow \infty} T_N = \theta$ almost surely. 
\end{proposition}

\begin{proof}
For simplicity of the notation, let $\sumeta(X, Y_\sigma) := \frac1N \sum_{i=1}^N \eta(X_i, Y_{\sigma_i})$ for each $\sigma \in \perm_N$.
By exchangeability of $\{(X_i, Y_i)\}_{i=1}^N$, it holds that $\muexp[\eta(X_i, Y_i) \mid \mcal{F}_N] = \muexp[\eta(X_j, Y_j) \mid \mcal{F}_N]$ for all $1 \le i, j \le N$ which implies that $\muexp[\eta(X_1, Y_1) \mid \mcal{F}_N] = \muexp[\bar \eta(X, Y_{\id}) \mid \mcal{F}_N]$.
Since $\bar \eta(X, Y_{\id})$ is $\mcal{F}_N$-measurable, it follows that $\muexp\left[ \eta(X_1, Y_1) \mid \mcal{F}_N \right] = \bar \eta(X, Y_{\id})$. By the tower property of conditional expectations, 
\[
    h_N := \muexp \left[ \eta(X_1, Y_1) \mid \mcal{G}_N \right]=\muexp\left[ \muexp \left[\eta(X_1, Y_1) \mid \mcal{F}_N\right] \mid \mcal{G}_N \right]=\muexp\left[ \bar \eta(X, Y_{\id}) \mid \mcal{G}_N\right].
\]
By definition, the last expression is the a.s.~unique $\mcal{G}_N$-measurable function such that for any bounded $\mcal{G}_N$-measurable $\phi$, it holds that $\muexp[\bar \eta(X, Y_{\id}) \phi] = \muexp[h_N \phi]$. By \eqref{eq:whatisfN}, we have
\[
\begin{split}
\muexp\left[ \bar \eta(X, Y_{\id}) \phi \right]
&= \Expect\left[ f_N \bar \eta(X, Y_{\id}) \phi \right]= \Expect\left[ \Expect\left[ f_N \bar \eta(X, Y_{\id}) \mid \mcal{G}_N\right] \phi \right] \\
&= \muexp\left[ \frac{dR^N}{d S^N}  \Expect\left[ f_N \bar \eta(X, Y_{\id}) \mid \mcal{G}_N\right] \phi \right],
\end{split}
\]
which implies that $h_N = \frac{dR^N}{d S^N} \Expect\left[ f_N \bar \eta(X, Y_{\id}) \mid \mcal{G}_N\right]$.
Similar to \eqref{eq:cond_exp_denom}, we have
\begin{align*}
    \Expect\left[ f_N \bar \eta(X, Y_{\id}) \mid \mcal{G}_N\right] = \frac1{N!} \sum_{\sigma \in \perm_N} \bar \eta(X, Y_\sigma) \xi^{\otimes}(X, Y_\sigma).
\end{align*}
According to \Cref{fact:like_ratio},
\[
\begin{split}
h_N
= \frac{1}{D_N} \frac{1}{N!}\sum_{\sigma \in \perm_N} \bar \eta(X, Y_\sigma) \xi^\otimes(X, Y_\sigma) = T_N. 
\end{split}
\]

Hence, the unbiasedness of $T_N$ under $\scb$ follows by the tower property of conditional expectations. Now consider the reverse $\sigma$-algebra $\overline{\mathcal{G}}_N=\sigma\left( \mcal{G}_N, (X_i, Y_i),\; i\ge N+1\right)$.
Since $\{(X_i, Y_i)\}_{i \ge N+1}$ are independent of $\{(X_i, Y_i)\}_{i=1}^N$, we have $T_N = \muexp\left[ \eta(X_1, Y_1) \mid \overline{\mathcal{G}}_N \right]$.
Consequently, $(T_N, \overline{\mathcal{G}}_N)_{N\ge 1}$ is a reverse martingale and $T_N$ converges almost surely to $\muexp[\eta(X_1, Y_1)]=\theta$.
\end{proof}

\begin{proof}[Proof of Theorem \ref{thm:consistency}]
    As shown in \Cref{prop:unbiased}, for any $\eta \in \lone(\scb)$, $T_N = T_N(\eta) \rightarrow_{a.s.} \theta$ under $\scb$.
    In particular, \Cref{prop:unbiased} holds for any bounded continuous function $\eta$. Thus, except for a null set, the convergence in \Cref{prop:unbiased} holds for a countable collection of bounded continuous functions. 
    By separability of $\rr^d$, almost sure weak convergence follows \cite[Theorem 3.1]{Varadarajan58} by choosing such a countable collection judiciously. This shows almost sure weak convergence under $\scb$.
    Weak convergence in probability under $\prodm$ now follows from \Cref{thm:contiguity}.
\end{proof}

\section{Limit Law and Chaos Decomposition}\label{sec:chaos}
This section is devoted to the limit laws of $T_N$ in \eqref{eq:whatistn}.
To obtain the Gaussian limit, our goal is to find the first order approximation $\first$ of $T_N$ in the form of a sum of i.i.d.~terms.
Now, provided that the remainder $T_N - \theta - \first = o_p(N^{-1/2})$, it follows from the CLT that $\sqrt{N} (T_N - \theta)$ converges weakly to a normal distribution.
However, there are two main challenges.
First, the statistic $T_N$ has a rather complicated expression involving a ratio of two infinite-order U-statistics.
This prevents us from utilizing the Hoeffding decomposition to derive the first order approximation.
Second, due to its complicated nature, it is extremely challenging to control the remainder---the variance computation for classical U-statistics does not apply here.

To address the first challenge, the key observation is that $T_N$ admits a simple expression under $\scb$ as shown in \Cref{prop:unbiased}.
This allows us to obtain a linear approximation of $T_N$ under $\scb$ which we call the first order chaos.
Due to the contiguity result in \Cref{thm:contiguity}, the first order chaos can be viewed as the first order approximation of $T_N$ under $\prodm$.
As for the second challenge, we develop a novel approach to control the remainder using the spectral gap of the operators $\sinkop$ and $\sinkop^*$.
Since this approach is also used to establish the limit law of $D_N$ in \Cref{thm:denominator}, we discuss the treatment of $D_N$ and the remainder together in \Cref{sec:remainder}.
Following a similar argument, we can also derive the second order chaos and the associated limit law.

In \Cref{sub:first} we first give a formal derivation of the first order approximation $\first$ and prove the asymptotic normality of $T_N$.
We then derive $\first$ rigorously as the first order chaos of $T_N$ using orthogonal projections in $\ltwo(\scb^N)$.
In \Cref{sub:second} we obtain the second order chaos of $T_N$.

\subsection{First order chaos}
\label{sub:first}

\paragraph{A formal derivation.}
Recall from \Cref{prop:unbiased} that $T_N = \muexp[\eta(X_1, Y_1) \mid \mcal{G}_N]$.
Hence, in order to obtain the first order approximation of $T_N$, it is natural to approximate $\eta(X, Y) - \theta$ by some linear term $f(X) + g(Y)$ under $(X, Y) \sim \scb$ and then use
\begin{align*}
    \muexp[f(X_1) + g(Y_1) \mid \mcal{G}_N] = \frac1N \sum_{i=1}^N [f(X_i) + g(Y_i)]
\end{align*}
as the first order approximation of $T_N$.
The above equality can be shown with an argument similar to the proof of \Cref{prop:unbiased}.
A good linear approximation $f(X) + g(Y)$ should satisfy
\begin{equation}\label{eq:linear_condition}
    \begin{split}
        \muexp[\eta(X, Y) - \theta \mid X] &= \muexp[f(X) + g(Y) \mid X] \\
        \muexp[\eta(X, Y) - \theta \mid Y] &= \muexp[f(X) + g(Y) \mid Y].
    \end{split}
\end{equation}
Recall $\frac{d\scb}{d(\prodm)}(x, y) = \xi(x, y)$ and $\eta_{1,0}$ from \eqref{eq:first_kappa}.
It holds that
\begin{align*}
    \muexp[\eta(X, Y) - \theta \mid X](x) = \int [\eta(x, y) - \theta] \xi(x, y) d Q(y) = \eta_{1,0}(x).
\end{align*}
Similarly, we have $\muexp[\eta(X, Y) - \theta \mid Y](y) = \eta_{0,1}(y)$.
It then follows from the tower property that
\begin{align}\label{eq:mean_zero_eta}
    \Expect_{P}[\eta_{1,0}(X)] = \Expect_{P}[\Expect_\mu[\eta(X, Y) - \theta \mid X]] = 0 \quad \mbox{and} \quad \Expect_{Q}[\eta_{0,1}(Y)] = 0.
\end{align}
Moreover, by \Cref{def:sink_op}, we obtain
\begin{equation}\label{eq:sinkop_cond_expect}
    \begin{split}
        \muexp[g(Y) \mid X](x) &= \int g(y) \xi(x, y) d Q(y) = (\sinkop^* g)(x) \\
        \muexp[f(X) \mid Y](y) &= \int f(x) \xi(x, y) d P(x) = (\sinkop f)(y).
    \end{split}
\end{equation}
As a result, the condition \eqref{eq:linear_condition} becomes
\begin{align}\label{eq:linear_system}
    \eta_{1,0}(X) = f(X) + \sinkop^* g(X) \quad \mbox{and} \quad \eta_{0,1}(Y) = \sinkop f(Y) + g(Y).
\end{align}
Formally, we can solve the linear system \eqref{eq:linear_system} to get
\begin{align*}
    f = (I - \sinkop^* \sinkop)^{-1}(\eta_{1,0} - \sinkop^* \eta_{0,1}) \quad \mbox{and} \quad g = (I - \sinkop \sinkop^*)^{-1}(\eta_{0,1} - \sinkop \eta_{1,0}).
\end{align*}
We will make this rigorous later.
This suggests the following first order approximation of $T_N$
\begin{align*}
    \frac{1}{N} \sum_{i=1}^N \left[ (I - \sinkop^* \sinkop)^{-1}(\eta_{1,0} - \sinkop^* \eta_{0,1})(X_i) + (I - \sinkop \sinkop^*)^{-1}(\eta_{0,1} - \sinkop \eta_{1,0})(Y_i) \right],
\end{align*}
which is exactly the first order chaos $\first$ in \Cref{thm:order1chaos}.
In fact, the next result shows that, after subtracting $\first$ from $T_N - \theta$, the variance of the numerator is of order $O(N^{-2})$.

It can be shown that the remainder $T_N - \theta - \first = U_N / D_N$, where $D_N$ is defined in \eqref{eq:DN} and
\begin{align}\label{eq:UN}
    U_N := \frac{1}{N!} \sum_{\sigma \in \perm_N} \frac1N \sum_{i=1}^N \tilde \eta(X_i, Y_{\sigma_i}) \xi^{\otimes}(X, Y_\sigma)
\end{align}
with $\tilde \eta$ defined as
\begin{align}\label{eq:eta_bar}
    \tilde \eta(x, y) := \eta(x,y) - \theta - (I - \sinkop^* \sinkop)^{-1}(\eta_{1,0} - \sinkop^* \eta_{0,1})(x)- (I - \sinkop \sinkop^*)^{-1}(\eta_{0, 1} - \sinkop \eta_{1,0})(y).
\end{align}
In fact, for all $f$ and $g$, we have
\begin{align*}
    &\quad T_N - \theta - \frac1N \sum_{i=1}^N [f(X_i) + g(Y_i)] \\
    &= \frac{\frac1{N!} \sum_{\sigma \in \perm_N} \left[ \frac1N \sum_{i=1}^N \eta(X_i, Y_{\sigma_i}) - \theta - \frac1N \sum_{i=1}^N [f(X_i) + g(Y_i)] \right] \xi^{\otimes}(X, Y_\sigma)}{\frac1{N!} \sum_{\sigma \in \perm_N} \xi^{\otimes}(X, Y_\sigma)} \\
    &= \frac{\frac1{N!} \sum_{\sigma \in \perm_N} \left[ \frac1N \sum_{i=1}^N \eta(X_i, Y_{\sigma_i}) - \theta - \frac1N \sum_{i=1}^N [f(X_i) + g(Y_{\sigma_i})] \right] \xi^{\otimes}(X, Y_\sigma)}{\frac1{N!} \sum_{\sigma \in \perm_N} \xi^{\otimes}(X, Y_\sigma)} \\
    &= \frac{\frac1{N!} \sum_{\sigma \in \perm_N} \frac1N \sum_{i=1}^N [\eta(X_i, Y_{\sigma_i}) - \theta - f(X_i) - g(Y_{\sigma_i})] \xi^{\otimes}(X, Y_\sigma)}{\frac1{N!} \sum_{\sigma \in \perm_N} \xi^{\otimes}(X, Y_\sigma)}.
\end{align*}

\begin{proposition}\label{prop:bound_variance_Un}
    Under Assumptions \ref{asmp:contiguity} and \ref{asmp:secondmoment}, we have $\Expect[U_N^2] = O(N^{-2})$.
\end{proposition}
Similar to $D_N$, the numerator $U_N$ is also a two-sample U-statistic of infinite order.
We defer the proof of \Cref{prop:bound_variance_Un} to \Cref{sec:remainder}.
Let us prove the main results.

\begin{proof}[Proof of \Cref{thm:order1chaos}]
  According to \Cref{thm:denominator} and \Cref{prop:bound_variance_Un}, we have $D_N = O_p(1)$ and $U_N = o_p(N^{-1/2})$.
  By Slutsky's Lemma, it holds that $T_N - \theta - \first = U_N / D_N = o_p(N^{-1/2})$.
  Now, \Cref{cor:clt} follows from the standard Lindeberg CLT \cite[Section 27]{billingsley1995probability}.
\end{proof}

\paragraph{First order chaos.}
We derive the first order chaos $\first$ using orthogonal projections in $\ltwo(\scb^N)$.
We change in this section the measure so that $\{(X_i, Y_i)\}_{i=1}^N \txtover{i.i.d.}{\sim} \scb$.

\begin{definition}\label{def:symmetry}
    Let $x_{[N]}$ and $y_{[N]}$ be two sets of (random) vectors in $\rr^d$.
    Let $T := T(x_{[N]}, y_{[N]})$.
    We say $T$ is \emph{permutation symmetric in $x$} if $T(x_{\sigma_{[N]}}, y_{[N]}) = T(x_{[N]}, y_{[N]})$ for every $\sigma \in \perm_N$, where $x_{\sigma_{[N]}} := (x_{\sigma_i})_{i\in [N]}$.
    We define permutation symmetry in $y$ similarly.
    We say $T$ is permutation symmetric if it is permutation symmetric in both $x$ and $y$.
\end{definition}

Let $H_0 \subset \ltwo(\scb^N)$ be the subspace of constant functions and $H_1 \subset \ltwo(\scb^N)$ be the subspace spanned by functions of the type
\begin{equation}\label{eq:defineHk}
    \sum_{i=1}^N [f(X_i) + g(Y_i)]
\end{equation}
that is orthogonal to $H_0$.
By \Cref{prop:unbiased}, the (orthogonal) projection of $T_N$ onto $H_0$ is $\proj_{H_0}(T_N) = \theta$.
Moreover, we show in \Cref{sec:closed_H1} that $H_1$ is closed so that the projection of $T_N$ onto $H_1$ uniquely exists.
We will compute this projection, which we refer to as the \emph{first order chaos}.
Note that the elements in $\ltwo$ spaces are only defined up to zero-measure sets (or equivalent classes).
For two elements $f, g \in \ltwo$, $f = g$ means $f$ equals $g$ up to equivalent classes.

Given a probability measure $\nu$ on $\rr^d$, let $\ltwo_0(\nu)$ be the subspace of $\ltwo(\nu)$ consisting of mean-zero functions.
Recall $\sinkop$ and $\sinkop^*$ in \Cref{def:sink_op}.
We first argue that $(I - \sinkop^* \sinkop)^{-1}$ and $(I - \sinkop \sinkop^*)^{-1}$ are well-defined on $\ltwo_0(P)$ and $\ltwo_0(Q)$, respectively.
The proof is deferred to the supplementary material.

\begin{lemma}\label{lem:sinkhorn_op}
    Let $(X, Y) \sim \scb$.
    Under \Cref{asmp:contiguity}, the following statements hold true:
    \begin{enumerate}[label=(\alph*)]
        \item \label{sinkop:cond_expect} For any $f \in \ltwo(P)$ and $g \in \ltwo(Q)$, it holds $\muexp[f(X) \mid Y](y) = \sinkop f(y)$ and $\muexp[g(Y) \mid X](x) = \sinkop^* g(x)$. In particular, $\sinkop f \in \ltwo(Q)$ and $\sinkop^* g \in \ltwo(P)$.
        \item \label{sinkop:largest_eigen} The largest eigenvalue of $\sinkop$ and $\sinkop^*$ is $1$, and $\sinkop \ones = \sinkop^* \ones = \ones$.
        \item \label{sinkop:mean_zero} The operator $\sinkop$ maps $\ltwo_0(P)$ to $\ltwo_0(Q)$, and $\sinkop^*$ maps $\ltwo_0(Q)$ to $\ltwo_0(P)$.
        \item \label{sinkop:inverse} The operators $(I - \sinkop^* \sinkop)^{-1}: \ltwo_0(P) \rightarrow \ltwo_0(P)$ and $(I - \sinkop \sinkop^*)^{-1}: \ltwo_0(Q) \rightarrow \ltwo_0(Q)$ are well-defined.
        \item \label{sinkop:identity} It holds that $\sinkop (I - \sinkop^* \sinkop)^{-1} = (I - \sinkop \sinkop^*)^{-1} \sinkop$ and $\sinkop^* (I - \sinkop \sinkop^*)^{-1} = (I - \sinkop^* \sinkop)^{-1} \sinkop^*$ on their domains defined above.
        Moreover, for any $f \in \ltwo_0(P)$ and $g \in \ltwo_0(Q)$, we have
        \begin{equation}\label{eq:first_cond_identity}
        \begin{split}
            \muexp\left[ (I - \sinkop^* \sinkop)^{-1}(f - \sinkop^* g)(X) + (I - \sinkop \sinkop^*)^{-1}(g - \sinkop f)(Y) \mid X \right] &= f(X) \\
            \muexp\left[ (I - \sinkop^* \sinkop)^{-1}(f - \sinkop^* g)(X) + (I - \sinkop \sinkop^*)^{-1}(g - \sinkop f)(Y) \mid Y \right] &= g(Y).
        \end{split}
        \end{equation}
    \end{enumerate}
\end{lemma}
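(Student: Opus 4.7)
The plan is to verify the five parts in order, since each subsequent part leans on the earlier ones, and the only genuine content lies in the spectral step (d) and the algebraic identity (e).

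For part (a) I would disintegrate $\mu_\eps(dxdy) = \xi(x,y)\rho_0(x)\rho_1(y)dxdy$: since the $Y$-marginal is $\rho_1$, the conditional density of $X$ given $Y=y$ is exactly $\xi(x,y)\rho_0(x)$, so by definition of conditional expectation
\[
\muexp[f(X)\mid Y](y) = \int f(x)\xi(x,y)\rho_0(x)\,dx = (\sinkop f)(y),
\]
and the $\ltwo$-boundedness $\sinkop f\in\ltwo(\rho_1)$ follows from Jensen's inequality for conditional expectations. The statement for $\sinkop^*$ is symmetric. Part (b) is then immediate: $\sinkop\ones(y) = \int \xi(x,y)\rho_0(x)dx = 1$ because the marginal of $\mu_\eps$ in $x$ integrates to $\rho_1(y)/\rho_1(y)=1$, and the operator norm bound $\|\sinkop\|\le 1$ comes from the conditional expectation representation; hence $1$ is the largest eigenvalue. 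Part (c) is a one-line Fubini computation: if $\int f\,d\rho_0=0$, then $\int \sinkop f\,d\rho_1 = \iint f(x)\xi(x,y)\rho_0(x)\rho_1(y)\,dxdy = \int f\,d\rho_0=0$, and analogously for $\sinkop^*$.

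For part (d), I would use the spectral decomposition granted in \Cref{asmp:secondmoment}. The top singular value $s_0=1$ corresponds to $\alpha_0=\beta_0=\ones$, and the gap assumption $s_k\le s_1<1$ for $k\ge 1$ implies that on $\ltwo_0(\rho_0) = \overline{\Span}\{\alpha_k:k\ge 1\}$, the operator $\sinkop^*\sinkop$ has norm at most $s_1^2<1$. Thus $I-\sinkop^*\sinkop$ is invertible on $\ltwo_0(\rho_0)$ with bounded inverse given by the Neumann series, and explicitly $(I-\sinkop^*\sinkop)^{-1}\alpha_k = (1-s_k^2)^{-1}\alpha_k$ for $k\ge 1$. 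Using (c), the domain and codomain match. The statement for $(I-\sinkop\sinkop^*)^{-1}$ is symmetric, using the basis $\{\beta_k\}_{k\ge 1}$.

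Part (e) has two sub-claims. The intertwining identity $\sinkop(I-\sinkop^*\sinkop)^{-1} = (I-\sinkop\sinkop^*)^{-1}\sinkop$ follows by testing on the basis $\{\alpha_k\}_{k\ge 1}$ using $\sinkop\alpha_k=s_k\beta_k$:
\[
\sinkop(I-\sinkop^*\sinkop)^{-1}\alpha_k = \frac{s_k\beta_k}{1-s_k^2} = (I-\sinkop\sinkop^*)^{-1}\sinkop\alpha_k,
\]
and density extends this to all of $\ltwo_0(\rho_0)$; the other intertwining is symmetric. For the conditional expectation identity, fix $f\in\ltwo_0(\rho_0)$ and $g\in\ltwo_0(\rho_1)$ and compute using (a):
\[
\muexp\!\left[(I-\sinkop\sinkop^*)^{-1}(g-\sinkop f)(Y)\,\big|\,X\right] = \sinkop^*(I-\sinkop\sinkop^*)^{-1}(g-\sinkop f)(X) = (I-\sinkop^*\sinkop)^{-1}\sinkop^*(g-\sinkop f)(X),
\]
by the intertwining just proved. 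Adding the $X$-measurable first term, the total conditional expectation equals
\[
(I-\sinkop^*\sinkop)^{-1}\bigl[(f-\sinkop^* g) + \sinkop^*(g-\sinkop f)\bigr](X) = (I-\sinkop^*\sinkop)^{-1}(I-\sinkop^*\sinkop)f(X) = f(X).
\]
The second identity in \eqref{eq:first_cond_identity} (which I would read as the analogous statement conditioning on $Y$) follows by the same algebra with the roles of $\sinkop$ and $\sinkop^*$ swapped.

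The main obstacle is part (d): one must use precisely the eigenvalue gap hypothesis together with the orthogonality of $\ones$ to $\ltwo_0$ to split off the eigenvalue $1$ before inverting. Everything else is either a definition-chase (a)--(c), or mechanical manipulation of the singular value relations $\sinkop\alpha_k=s_k\beta_k$ and $\sinkop^*\beta_k=s_k\alpha_k$ (e).
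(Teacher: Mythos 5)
Your proof is correct and follows essentially the same route as the paper's: parts (a)--(c) are identical definition-chases, part (d) relies in both cases on the eigenvalue gap to guarantee convergence of the Neumann series on $\ltwo_0$, and part (e) derives the conditional-expectation identity from the intertwining relation by the same algebra. The only cosmetic difference is that for (d) and the intertwining you work explicitly in the singular basis $\{\alpha_k\}$, whereas the paper argues injectivity by noting $\ker(I-\sinkop^*\sinkop)\cap\ltwo_0=\{0\}$ and proves the intertwining by commuting $\sinkop$ past the Neumann series term by term; these are two presentations of the same underlying fact and neither buys anything over the other.
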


Now we are ready to give the first order chaos of $T_N$, \emph{i.e.}, $\proj_{H_1}(T_N)$.
\begin{proposition}\label{prop:first_order_chaos}
Under Assumptions \ref{asmp:contiguity} and \ref{asmp:secondmoment}, the first order chaos of the statistic $T_N$ is given by
\begin{align}\label{eq:first_chaos}
    \first := \frac1N \sum_{i=1}^N [(I - \sinkop^* \sinkop)^{-1}(\eta_{1,0} - \sinkop^* \eta_{0,1})(X_i) + (I - \sinkop \sinkop^*)^{-1}(\eta_{0,1} - \sinkop \eta_{1,0})(Y_i)].
\end{align}
\end{proposition}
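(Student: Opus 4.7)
The plan is to use the standard characterization of the $\ltwo$ projection: $\proj_{H_1}(T_N)$ is the unique element of $H_1$ whose residual is orthogonal to every element of $H_1$. The key simplification is that, by \Cref{thm:unbiased}, under $\mu_\eps^N$ we have $T_N = \muexp[\eta(X_1,Y_1)\mid \mcal{G}_N]$, and every element of $H_1$ is $\mcal{G}_N$-measurable; hence by the tower property I can replace $T_N$ by the raw observable $\eta(X_1,Y_1)$ when computing inner products against $H_1$. I also note that the claimed $\first$ already lies in $H_1$ provided the two coefficient functions are centered, which will be a byproduct of the Sinkhorn operator properties in \Cref{lem:sinkhorn_op}.

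I would parametrize every $S \in H_1$ as $S = \frac{1}{N}\sum_{i=1}^N[\phi(X_i)+\psi(Y_i)]$ with $\phi\in\ltwo_0(\rho_0)$ and $\psi\in\ltwo_0(\rho_1)$, and write the candidate projection as $\first = \frac{1}{N}\sum_{i=1}^N[\tilde f(X_i)+\tilde g(Y_i)]$ for unknown centered $\tilde f,\tilde g$. First I compute $\muexp[\eta(X_1,Y_1)\cdot S]$: by exchangeability and centering of $\phi,\psi$, the $i\neq 1$ terms factor and vanish, while the $i=1$ term reduces by conditioning on $X_1$ and on $Y_1$, together with the definitions of $\kappa_{1,0},\kappa_{0,1}$, to
\[
\tfrac{1}{N}\bigl[\langle \kappa_{1,0},\phi\rangle_{\rho_0} + \langle \kappa_{0,1},\psi\rangle_{\rho_1}\bigr].
\]
Next I compute $\muexp[\first\cdot S]$: the $i\neq j$ off-diagonal contributions vanish by independence of distinct pairs $(X_i,Y_i),(X_j,Y_j)$ combined with centering, while the $N$ diagonal terms, after converting cross-expectations such as $\muexp[\tilde f(X)\psi(Y)]$ into $\langle \sinkop\tilde f,\psi\rangle_{\rho_1}$ via \Cref{lem:sinkhorn_op}\ref{sinkop:cond_expect}, collapse to
\[
\tfrac{1}{N}\bigl[\langle \tilde f+\sinkop^*\tilde g,\phi\rangle_{\rho_0} + \langle \sinkop\tilde f+\tilde g,\psi\rangle_{\rho_1}\bigr].
\]

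Equating these expressions for all $\phi\in\ltwo_0(\rho_0)$ and $\psi\in\ltwo_0(\rho_1)$ forces the linear system
\[
\tilde f + \sinkop^*\tilde g = \kappa_{1,0},\qquad \sinkop\tilde f + \tilde g = \kappa_{0,1},
\]
on $\ltwo_0(\rho_0)\times \ltwo_0(\rho_1)$. Eliminating $\tilde g$ (symmetrically $\tilde f$) produces $(I-\sinkop^*\sinkop)\tilde f = \kappa_{1,0}-\sinkop^*\kappa_{0,1}$ and $(I-\sinkop\sinkop^*)\tilde g = \kappa_{0,1}-\sinkop\kappa_{1,0}$. Here \Cref{lem:sinkhorn_op}\ref{sinkop:inverse} makes both inversions legitimate on the centered subspaces, and \Cref{lem:sinkhorn_op}\ref{sinkop:mean_zero} ensures that the right-hand sides (and hence $\tilde f,\tilde g$ themselves) are genuinely centered, so $\first \in H_1$ as required. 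The main obstacle is the careful bookkeeping of the independence-centering cancellations in the two double sums and the translation of every mixed expectation into the Sinkhorn operators $\sinkop,\sinkop^*$; once that reduction to a $2\times 2$ linear system is in hand, \Cref{lem:sinkhorn_op} supplies the inversions in closed form and the proposition follows.
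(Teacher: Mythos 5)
Your proof is correct and closely mirrors the paper's argument, though it is organized as a derivation rather than a verification. The paper fixes the candidate $\first$ in advance and checks that $\muexp[T_N-\theta-\first\mid X_i]=\muexp[T_N-\theta-\first\mid Y_i]=0$, relying on the pre-packaged identity \eqref{eq:first_cond_identity} from \Cref{lem:sinkhorn_op}\ref{sinkop:identity} to compute $\muexp[\first\mid X_1]=\frac1N\kappa_{1,0}(X_1)$, and then using permutation symmetry together with the tower property to show $\muexp[T_N-\theta\mid X_1]$ equals the same thing. You instead test orthogonality against a generic $S=\frac1N\sum_i[\phi(X_i)+\psi(Y_i)]$, use the tower property once to replace $T_N$ by the raw observable $\eta(X_1,Y_1)$, and let the inner-product computation produce the $2\times2$ operator system $\tilde f+\sinkop^*\tilde g=\kappa_{1,0}$, $\sinkop\tilde f+\tilde g=\kappa_{0,1}$, which you solve via \Cref{lem:sinkhorn_op}\ref{sinkop:inverse}. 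These are two views of the same fact — indeed the paper's remark after \Cref{cor:first_order_alter} sketches exactly your linear system — but your route avoids introducing the auxiliary $h$ and makes it transparent why the particular $(I-\sinkop^*\sinkop)^{-1}$, $(I-\sinkop\sinkop^*)^{-1}$ combination arises, at the price of having to carry out the $i\neq j$ cancellation bookkeeping explicitly. Both rely on the same inputs: $T_N=\muexp[\eta(X_1,Y_1)\mid\mcal G_N]$ from \Cref{thm:unbiased}, the conditional-expectation representation of $\sinkop,\sinkop^*$, centering of $\kappa_{1,0},\kappa_{0,1}$, and the spectral-gap condition that makes $I-\sinkop^*\sinkop$ invertible on $\ltwo_0$.
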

\begin{proof}
By the definition of orthogonal projection, it suffices to show that, for any $i \in [N]$,
\[
    \muexp[T_N - \theta - \first \mid X_i] = 0 \quad \mbox{and} \quad \muexp[T_N - \theta - \first \mid Y_i] = 0
\]
almost surely.
We will prove it for $X_1$, and the rest of them can be proved similarly.
Recall from \eqref{eq:mean_zero_eta} that $\eta_{1,0} \in \ltwo_0(P)$ and $\eta_{0,1} \in \ltwo_0(Q)$.
By \ref{sinkop:mean_zero} in \Cref{lem:sinkhorn_op}, we know $\eta_{1,0} - \sinkop^* \eta_{0,1} \in \ltwo_0(P)$ and $\eta_{0,1} - \sinkop \eta_{1,0} \in \ltwo_0(Q)$.
It then follows from \ref{sinkop:inverse} in \Cref{lem:sinkhorn_op} that, for every $i \in [N]$,
\begin{align*}
    \muexp\big[ (I - \sinkop^* \sinkop)^{-1}(\eta_{1,0} - \sinkop^* \eta_{0,1})(X_i) + (I - \sinkop \sinkop^*)^{-1}(\eta_{0,1} - \sinkop \eta_{1,0})(Y_i) \big] = 0.
\end{align*}
As a result, $\muexp[\first \mid X_1]$ is equal to
\begin{align*}
    \frac1N \muexp\left[ (I - \sinkop^* \sinkop)^{-1}(\eta_{1,0} - \sinkop^* \eta_{0,1})(X_1) + (I - \sinkop \sinkop^*)^{-1}(\eta_{0,1} - \sinkop \eta_{1,0})(Y_1) \mid X_1 \right] = \frac{\eta_{1,0}(X_1)}N,
\end{align*}
where the last equality follows from \eqref{eq:first_cond_identity}.
We only need to show $\muexp[T_N - \theta \mid X_1] = \frac1N \eta_{1,0}(X_1)$.
Let $h(x) := \muexp[T_N - \theta \mid X_1](x)$.
We will prove that $\Expect_{P}[h(X_1)\phi(X_1)] = \Expect_{P}[\eta_{1,0}(X_1) \phi(X_1)]/N$ for all $\sigma(X_1)$-measurable $\phi$.
Fix an arbitrary $\sigma(X_1)$-measurable $\phi$.
Since $T_N - \theta$ is permutation symmetric in $X$ (see \Cref{def:symmetry}), we get $\muexp[T_N - \theta \mid X_i](x) \equiv h(x)$ for all $i \in [N]$.
As a result, it holds that
\[
    \muexp\left[ (T_N - \theta) \sum_{i=1}^N \phi(X_i) \right] = \sum_{i=1}^N \muexp[(T_N - \theta) \phi(X_i)] = N \Expect_{P}[h(X_1) \phi(X_1)].
\]
Recall from \Cref{prop:unbiased} that $T_N = \muexp[\eta(X_1, Y_1) \mid \mcal{G}_N]$.
Since $\sum_{i=1}^N \phi(X_i)$ is $\mcal{G}_N$-measurable, by the tower property of conditional expectation, we get
\[
    \muexp\left[ (T_N - \theta) \sum_{i=1}^N \phi(X_i) \right] = \muexp\left[ (\eta(X_1, Y_1) - \theta) \sum_{i=1}^N \phi(X_i) \right] = \Expect_{P}[\eta_{1,0}(X_1) \phi(X_1)],
\]
where the last equality follows from the independence of $\{(X_i, Y_i)\}_{i=1}^N$ and $\eta_{1,0} \in \ltwo_0(P)$.
Hence, we have $\Expect_{P}[\eta_{1,0}(X_1) \phi(X_1)] = N \muexp[h(X_1) \phi(X_1)]$ which completes the proof.
\end{proof}

We then derive a more compact expression of $\first$ using $\opB$ in \Cref{def:second_order_op}.
We start by providing some properties of $\opB$ in the following lemma.
The proof is deferred to the supplementary material.
\begin{lemma}\label{lem:operator_B}
Under \Cref{asmp:contiguity}, the following statements hold true:
\begin{enumerate}[label=(\alph*)]
    \item \label{opB:cond_expect} Let $(X_1, Y_1), (X_2, Y_2) \txtover{i.i.d}{\sim} \scb$. It holds that $\muexp[f(X_1, Y_2) \mid X_2, Y_1](x, y) = \opB f(x, y)$ for any $f \in \ltwo(P \otimes Q)$. In particular, $\opB f \in \ltwo(\prodm)$.
    \item \label{opB:maintain_mean_zero} The operator $\opB$ maps $\ltwo_0(P \otimes Q)$ to $\ltwo_0(P \otimes Q)$.
    \item \label{opB:linear} For any $f \oplus g \in \ltwo(P \otimes Q)$, we have $\opB (f \oplus g) = \sinkop^* g \oplus \sinkop f$.
    \item \label{opB:inverse} The operator $(I + \opB)^{-1}$ is well-defined on $\ltwo_0(\prodm)$.
    \item \label{opB:B_and_A} For any $f \in \ltwo_0(P)$ and $g \in \ltwo_0(Q)$, it holds that
    \begin{align}\label{eq:identity_B_A}
        (I + \opB)^{-1}(f \oplus g) = [(I - \sinkop^* \sinkop)^{-1}(f - \sinkop^*g)] \oplus [(I - \sinkop \sinkop^*)^{-1}(g - \sinkop f)].
    \end{align}
\end{enumerate}
\end{lemma}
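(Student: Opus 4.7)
The plan is to establish (a) first, after which (b) and (c) follow as immediate probabilistic consequences, then tackle (d) via a spectral analysis showing $\opB$ has operator norm strictly less than one on $\ltwo_0(\prodm)$, and finally verify (e) by reducing to the commutation relations from \Cref{lem:sinkhorn_op}(e). The main obstacle will be part (d), since $\opB$ is not diagonal in the basis $\{\alpha_k \otimes \beta_l\}$ but only preserves two-dimensional subspaces that pair $(k,l)$ with $(l,k)$.

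For (a), I would expand $\opB f(x,y) = (\sinkop \otimes \sinkop^*)f(y,x)$ from the tensor-product kernel definition, obtaining $\iint f(x',y')\,\xi(x',y)\rho_0(x')\,\xi(x,y')\rho_1(y')\,dx'\,dy'$. Independently computing $\muexp[f(X_1, Y_2) \mid X_2 = x, Y_1 = y]$ using the conditional densities $\xi(\cdot, y)\rho_0$ of $X_1$ given $Y_1 = y$ and $\xi(x, \cdot)\rho_1$ of $Y_2$ given $X_2 = x$ (together with the independence of $(X_1, Y_1)$ and $(X_2, Y_2)$) yields the same double integral. The $\ltwo(\prodm)$ bound then follows because $(X_2, Y_1) \sim \prodm$ and conditional expectation is an $\ltwo$-contraction. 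Part (b) is the observation $\iint \opB f \cdot d(\prodm) = \muexp[f(X_1, Y_2)] = 0$ for $f \in \ltwo_0(\prodm)$, since $(X_1, Y_2) \sim \prodm$. Part (c) comes from splitting the conditional expectation via independence: for $f_1 \oplus f_2$, $\muexp[f_1(X_1) + f_2(Y_2) \mid X_2, Y_1] = \muexp[f_1(X_1) \mid Y_1] + \muexp[f_2(Y_2) \mid X_2] = \sinkop f_1(Y_1) + \sinkop^* f_2(X_2)$, which is $\sinkop^* f_2 \oplus \sinkop f_1$ evaluated at $(X_2, Y_1)$.

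For (d), using the singular relations $\sinkop \alpha_k = s_k \beta_k$ and $\sinkop^* \beta_k = s_k \alpha_k$ from \Cref{asmp:secondmoment}, a direct computation gives
\[
  \opB(\alpha_k \otimes \beta_l) = s_k s_l (\alpha_l \otimes \beta_k).
\]
Thus $\opB$ preserves each line $\Span\{\alpha_k \otimes \beta_k\}$ (acting as the scalar $s_k^2$) and each two-dimensional subspace $V_{k,l} := \Span\{\alpha_k \otimes \beta_l, \alpha_l \otimes \beta_k\}$ for $k < l$, on which it acts antidiagonally with eigenvalues $\pm s_k s_l$. Restricting to $\ltwo_0(\prodm)$ excludes only $\alpha_0 \otimes \beta_0 = \ones$, so all remaining eigenvalues have magnitude at most $\max_{(k,l) \ne (0,0)} s_k s_l = s_0 s_1 = s_1 < 1$, attained on $V_{0,1}$. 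Since $\{\alpha_k \otimes \beta_l\}$ is an orthonormal basis of $\ltwo(\prodm)$, this yields $\|\opB\|_{\mathrm{op}} \le s_1 < 1$ on $\ltwo_0(\prodm)$, so the Neumann series $(I + \opB)^{-1} = \sum_{n \ge 0}(-\opB)^n$ converges. For (e), set $\tilde f := (I - \sinkop^* \sinkop)^{-1}(f - \sinkop^* g)$ and $\tilde g := (I - \sinkop \sinkop^*)^{-1}(g - \sinkop f)$, both mean-zero by \Cref{lem:sinkhorn_op}, and check $(I + \opB)(\tilde f \oplus \tilde g) = f \oplus g$. By (c), $\opB(\tilde f \oplus \tilde g) = \sinkop^* \tilde g \oplus \sinkop \tilde f$, so the claim reduces to the two scalar identities $\tilde f + \sinkop^* \tilde g = f$ and $\tilde g + \sinkop \tilde f = g$; both follow in one line by collapsing with the commutation $\sinkop^*(I - \sinkop \sinkop^*)^{-1} = (I - \sinkop^* \sinkop)^{-1}\sinkop^*$ from \Cref{lem:sinkhorn_op}(e).
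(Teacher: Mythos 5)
Your proof is correct. Parts (a), (b), (c), and (e) follow the same route as the paper's: (a) and (c) by identifying $\opB$ with the stated conditional expectation, (b) via the tower property, and (e) by direct verification that $(I+\opB)(\tilde f\oplus\tilde g)=f\oplus g$ using (c) together with the commutation $\sinkop^*(I-\sinkop\sinkop^*)^{-1}=(I-\sinkop^*\sinkop)^{-1}\sinkop^*$ from Lemma~\ref{lem:sinkhorn_op}.

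Part (d) is a genuinely different, and more careful, argument. The paper defines $\tilde f := \sum_{i+j>0}\frac{\gamma_{ij}}{1+s_i s_j}(\alpha_i\otimes\beta_j)$, asserts $(I+\opB)\tilde f = f$, and separately argues injectivity from the claimed identity $\iprod{\opB f, f} = \sum s_i s_j\gamma_{ij}^2 \ge 0$. Both steps implicitly treat $\opB$ as diagonal on $\{\alpha_i\otimes\beta_j\}$, whereas, as you observe, $\opB(\alpha_i\otimes\beta_j)=s_i s_j(\alpha_j\otimes\beta_i)$ swaps the indices. In consequence $(I+\opB)\tilde f = \sum\frac{\gamma_{ij}+s_i s_j\gamma_{ji}}{1+s_i s_j}(\alpha_i\otimes\beta_j)$, which equals $f$ only when the coefficient array is symmetric, and the inner product is actually $\iprod{\opB f, f} = \sum s_i s_j\gamma_{ij}\gamma_{ji}$, which can be negative (e.g.\ with $f = \alpha_1\otimes\beta_2 - \alpha_2\otimes\beta_1$). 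Your route --- decomposing $\ltwo_0(\prodm)$ into the $\opB$-invariant lines $\Span\{\alpha_k\otimes\beta_k\}$ and planes $\Span\{\alpha_k\otimes\beta_l,\alpha_l\otimes\beta_k\}$, reading off eigenvalues $\pm s_k s_l$, concluding $\norm{\opB}_{\mathrm{op}}\le s_1<1$ on $\ltwo_0(\prodm)$, and summing the Neumann series --- handles the off-diagonal action of $\opB$ honestly, establishes bijectivity in one stroke, and even yields the quantitative bound $\norm{(I+\opB)^{-1}}_{\mathrm{op}}\le(1-s_1)^{-1}$ for free.
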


According to \eqref{eq:identity_B_A}, the first order chaos $\first$ admits a more compact representation.
\begin{corollary}\label{cor:first_order_alter}
    Under Assumptions \ref{asmp:contiguity} and \ref{asmp:secondmoment}, the first order chaos of $T_N$ admits an alternative expression $\first = \frac1N \sum_{i=1}^N (I + \opB)^{-1}(\eta_{1,0} \oplus \eta_{0,1})(X_i, Y_i)$.
\end{corollary}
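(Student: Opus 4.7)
The plan is to derive the alternative representation directly from the identity \eqref{eq:identity_B_A} established in part \ref{opB:B_and_A} of \Cref{lem:operator_B}, with the roles of the generic functions $f$ and $g$ played by the kernels $\kappa_{1,0}$ and $\kappa_{0,1}$ appearing in \Cref{prop:first_order_chaos}.

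First I would verify that the pair $(\kappa_{1,0}, \kappa_{0,1})$ lies in the correct space so that \eqref{eq:identity_B_A} is applicable. By their definition,
\[
    \int \kappa_{1,0}(x)\rho_0(x)dx = \iint[\eta(x,y)-\theta]\mu_\eps(x,y)dxdy = 0,
\]
and analogously $\int \kappa_{0,1}(y)\rho_1(y)dy = 0$, so $\kappa_{1,0}\in \ltwo_0(\rho_0)$ and $\kappa_{0,1}\in \ltwo_0(\rho_1)$ (membership in $\ltwo$ follows from \Cref{asmp:secondmoment}, as already used implicitly in \Cref{prop:first_order_chaos}). Consequently $\kappa_{1,0}\oplus\kappa_{0,1}$ lies in $\ltwo_0(\rho_0\otimes\rho_1)$, which is the domain on which $(I+\opB)^{-1}$ is defined in \Cref{lem:operator_B}\ref{opB:inverse}.

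Second, applying the identity \eqref{eq:identity_B_A} with $f=\kappa_{1,0}$ and $g=\kappa_{0,1}$ yields, as an element of $\ltwo_0(\rho_0\otimes\rho_1)$,
\[
    (I+\opB)^{-1}(\kappa_{1,0}\oplus\kappa_{0,1}) = \bigl[(I-\sinkop^*\sinkop)^{-1}(\kappa_{1,0}-\sinkop^*\kappa_{0,1})\bigr] \oplus \bigl[(I-\sinkop\sinkop^*)^{-1}(\kappa_{0,1}-\sinkop\kappa_{1,0})\bigr].
\]
Evaluating both sides at the point $(X_i, Y_i)$ and recalling that for any $u\in\ltwo(\rho_0)$ and $v\in\ltwo(\rho_1)$ one has $(u\oplus v)(x,y)=u(x)+v(y)$ by definition of the direct sum, we obtain
\[
    (I+\opB)^{-1}(\kappa_{1,0}\oplus\kappa_{0,1})(X_i,Y_i) = (I-\sinkop^*\sinkop)^{-1}(\kappa_{1,0}-\sinkop^*\kappa_{0,1})(X_i) + (I-\sinkop\sinkop^*)^{-1}(\kappa_{0,1}-\sinkop\kappa_{1,0})(Y_i).
\]
Finally, averaging this equation over $i=1,\dots,N$ and comparing with the formula \eqref{eq:first_chaos} for $\first$ yields the desired alternative expression.

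There is essentially no obstacle here beyond checking the domain condition $\kappa_{1,0}\oplus\kappa_{0,1}\in\ltwo_0(\rho_0\otimes\rho_1)$: the corollary is a direct rewriting using the already-established algebraic identity in \Cref{lem:operator_B}\ref{opB:B_and_A}. The only conceptual point worth emphasizing is that \eqref{eq:identity_B_A} holds as an equality in $\ltwo_0(\rho_0\otimes\rho_1)$, hence the evaluation at $(X_i,Y_i)$ is understood almost surely under $\mu_\eps^N$, which suffices because $\first$ itself is only defined up to $\mu_\eps^N$-null sets.
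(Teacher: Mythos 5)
Your proof is correct and follows exactly the route intended by the paper: the corollary is presented immediately after the remark ``According to the identity \eqref{eq:identity_B_A}, the first order chaos $\first$ admits a more compact representation,'' and the intended argument is precisely to substitute $f=\kappa_{1,0}$, $g=\kappa_{0,1}$ into \eqref{eq:identity_B_A} and compare with \eqref{eq:first_chaos}. The only substantive check --- that $\kappa_{1,0}\in\ltwo_0(\rho_0)$ and $\kappa_{0,1}\in\ltwo_0(\rho_1)$ so that \Cref{lem:operator_B}\ref{opB:inverse} applies --- is the one you carry out, and it is the same verification the paper performs in the proof of \Cref{prop:first_order_chaos}.
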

\begin{remark}
    Note that the above expression of $\first$ is permutation symmetric, i.e., $\sum_{i=1}^N (I + \opB)^{-1}(\eta_{1,0} \oplus \eta_{0,1})(X_i, Y_i) = \sum_{i=1}^N (I + \opB)^{-1}(\eta_{1,0} \oplus \eta_{0,1})(X_i, Y_{\sigma_i})$ for all $\sigma \in \perm_N$.
\end{remark}
\begin{remark}
Another way to see this is: due to \eqref{eq:linear_system}, $\eta_{1,0} \oplus \eta_{0,1} = f \oplus g + \sinkop^* g \oplus \sinkop f = (I + \opB) (f \oplus g)$.
\end{remark}

\subsection{Second order chaos}
\label{sub:second}

Recall that we have defined the operator $\opC := (I - \sinkop^* \sinkop) \otimes (I - \sinkop \sinkop^*)$.
Again, let us prove its inverse is well-defined.
Given a measure $\nu$ on $\rr^d \times \rr^d$, let
\begin{align}\label{eq:degenerate_function}
    \ltwo_{0,0}(\nu) := \{f \in \ltwo(\nu): \Expect[f(X, Y) \mid Y] \txtover{a.s.}{=} \Expect[f(X, Y) \mid X] \txtover{a.s.}{=} 0 \mbox{ for all } (X, Y) \sim \nu\}.
\end{align}
For $f \in \ltwo_{0,0}(\nu)$, we say $f$ is degenerate with respect to $\nu$.
For example, we will show in the next lemma that the function $\tilde \eta$ defined in \eqref{eq:eta_bar} belongs to $\ltwo_{0,0}(\mu)$, and then, by \Cref{asmp:secondmoment}, $\tilde \eta \xi \in \ltwo_{0,0}(\prodm)$.

\begin{lemma}\label{lem:operator_C}
Under Assumptions \ref{asmp:contiguity} and \ref{asmp:secondmoment},
the inverse operator $\opC^{-1}: \ltwo_{0,0}(P \otimes Q) \rightarrow \ltwo_{0,0}(P \otimes Q)$ is well-defined.
Moreover, it is equal to $(I - \sinkop^* \sinkop)^{-1} \otimes (I - \sinkop \sinkop^*)^{-1}$.
In particular, $\tilde \eta \xi \in \ltwo_{0,0}(\prodm)$ so that $\opC^{-1}(\tilde \eta \xi)$ is well-defined.
\end{lemma}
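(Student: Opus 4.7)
The plan is to diagonalize $\opC$ in the orthonormal basis $\{\alpha_k \otimes \beta_l\}_{k,l \ge 0}$ of $\ltwo(\prodm)$ guaranteed by \Cref{lem:basis_prod_space}, and to identify $\ltwo_{0,0}(\prodm)$ with the closed span of basis elements having both indices $\ge 1$. The degeneracy condition $\muexp_{\prodm}[f(X,Y)\mid X] = \muexp_{\prodm}[f(X,Y)\mid Y] = 0$ translates, upon expanding $f = \sum_{k,l} \gamma_{kl}(\alpha_k \otimes \beta_l)$ and using $\alpha_0 = \beta_0 = \ones$ together with the orthogonality of $\alpha_k, \beta_l$ against the constants for $k, l \ge 1$, into the vanishing of all coefficients with $k = 0$ or $l = 0$. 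Hence $\ltwo_{0,0}(\prodm)$ is the closed span of $\{\alpha_k \otimes \beta_l : k, l \ge 1\}$.

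Using \Cref{asmp:secondmoment}, $\sinkop^*\sinkop \alpha_k = s_k^2 \alpha_k$ and $\sinkop\sinkop^* \beta_l = s_l^2 \beta_l$, so $\opC$ acts on the basis as $\opC(\alpha_k \otimes \beta_l) = (1 - s_k^2)(1 - s_l^2)(\alpha_k \otimes \beta_l)$. For $k, l \ge 1$, the spectral gap assumption $s_k, s_l \le s_1 < 1$ yields $(1 - s_k^2)(1 - s_l^2) \ge (1 - s_1^2)^2 > 0$, so the eigenvalues of $\opC$ restricted to $\ltwo_{0,0}(\prodm)$ are bounded away from zero. This immediately gives injectivity, surjectivity, and boundedness of the inverse, with
\[
\opC^{-1} f = \sum_{k, l \ge 1} \frac{\gamma_{kl}}{(1 - s_k^2)(1 - s_l^2)} (\alpha_k \otimes \beta_l) \in \ltwo_{0,0}(\prodm).
\]
The identity $\opC^{-1} = (I - \sinkop^*\sinkop)^{-1} \otimes (I - \sinkop\sinkop^*)^{-1}$ then follows by verifying that both sides agree on basis elements $\alpha_k \otimes \beta_l$ with $k, l \ge 1$, using the series expansions $(I - \sinkop^*\sinkop)^{-1}\alpha_k = \sum_{n\ge 0} s_k^{2n}\alpha_k$ and similarly for $\beta_l$, which were established inside the proof of \Cref{lem:sinkhorn_op}\ref{sinkop:inverse}.

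It remains to check $\tilde \eta \xi \in \ltwo_{0,0}(\prodm)$. The integrability $\tilde\eta\xi \in \ltwo(\prodm)$ follows from \Cref{asmp:secondmoment} together with the fact that the first-order-chaos kernels are in $\ltwo(\rho_0)$ and $\ltwo(\rho_1)$. For degeneracy, note that $\xi(x,y)\rho_1(y)$ is the conditional density of $Y$ given $X = x$ under $\mu_\eps$ (since $\sinkop^*\ones = \ones$ by \Cref{lem:sinkhorn_op}\ref{sinkop:largest_eigen} implies $\int \xi(x,y)\rho_1(y)dy = 1$), so
\[
\muexp_{\prodm}[\tilde\eta(X,Y)\xi(X,Y)\mid X = x] = \int \tilde\eta(x,y)\xi(x,y)\rho_1(y)dy = \muexp_{\mu_\eps}[\tilde\eta(X,Y)\mid X = x].
\]
Applying the first identity in \eqref{eq:first_cond_identity} with $f = \kappa_{1,0}$ and $g = \kappa_{0,1}$, this conditional expectation equals $\kappa_{1,0}(x) - \kappa_{1,0}(x) = 0$ almost surely. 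The other degeneracy condition (conditioning on $Y$) follows symmetrically from the second identity in \eqref{eq:first_cond_identity}.

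The only mildly delicate step is verifying that $\ltwo_{0,0}(\prodm)$ coincides with the closed span of $\{\alpha_k \otimes \beta_l : k,l \ge 1\}$ when the $\alpha_k$'s and $\beta_l$'s come from the singular function decomposition of $\sinkop$; everything else is diagonal bookkeeping. Boundedness of $\opC^{-1}$ needs the full strength of the spectral gap assumption, since without it the eigenvalues $(1-s_k^2)(1-s_l^2)$ could approach zero.
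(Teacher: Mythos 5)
Your proof is correct and follows essentially the same route as the paper: diagonalize $\opC$ in the basis $\{\alpha_k \otimes \beta_l\}$, use the spectral gap $s_k \le s_1 < 1$ to invert on $\ltwo_{0,0}(\prodm)$, and invoke the identity \eqref{eq:first_cond_identity} to verify the degeneracy of $\tilde\eta$. You are slightly more explicit than the paper in justifying why $\ltwo_{0,0}(\prodm)$ coincides with the closed span of $\{\alpha_k \otimes \beta_l : k,l \ge 1\}$, but this is filling in a detail, not taking a different approach.
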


From \Cref{lem:operator_C} we know $\opC$ preserves the degeneracy with respect to $\prodm$.
The following lemma verifies similar properties for other operators under consideration.
\begin{lemma}\label{lem:preserve_degeneracy}
  Let $\sinkop_k \in \{\sinkop, \sinkop^*, I_{P}, I_{Q}\}$ be an operator mapping from $\ltwo(\nu_k)$ to $\ltwo(\nu_k')$ for $k \in \{1, 2\}$.
  Then $\sinkop_1 \otimes \sinkop_2$ maps $\ltwo_{0,0}(\nu_1 \otimes \nu_2)$ to $\ltwo_{0,0}(\nu_1' \otimes \nu_2')$.
  In particular, the operator $\opB$ maps $\ltwo_{0,0}(\prodm)$ to $\ltwo_{0,0}(\prodm)$.
\end{lemma}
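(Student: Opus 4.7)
The plan is to reduce the lemma to a single kernel marginal identity. For each of the four candidate operators $\sinkop_k \in \{\sinkop, \sinkop^*, I_{\rho_0}, I_{\rho_1}\}$ with kernel $A_k$ mapping $\ltwo(\nu_k) \to \ltwo(\nu_k')$, I claim that
\[
\int A_k(v', v) \nu_k'(v) dv = \nu_k(v').
\]
For $\sinkop$, with $A(x,y) = \xi(x,y)\rho_0(x)$ and $\nu' = \rho_1$, this reduces to $\int \xi(x,y)\rho_1(y)dy = 1$, which is just the marginal identity saying $\mu_\eps$ has first marginal $\rho_0$. The case of $\sinkop^*$ is symmetric, and the identity operators satisfy this trivially because their kernel is a Dirac delta.

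Given $f \in \ltwo_{0,0}(\nu_1 \otimes \nu_2)$, I would set $g := (\sinkop_1 \otimes \sinkop_2) f$, which lies in $\ltwo(\nu_1' \otimes \nu_2')$ by boundedness of the tensor-product operator. To verify degeneracy, compute the $\nu_1'$-marginal of $g$ by interchanging the order of integration via Fubini (justified since $A_1, A_2$ define bounded $\ltwo$ operators, so Cauchy--Schwarz delivers an $\lone$ integrand):
\[
\int g(v_1, v_2) \nu_1'(v_1) dv_1 = \int A_2(v_2', v_2) \left[ \int f(v_1', v_2') \nu_1(v_1') dv_1' \right] dv_2' = 0,
\]
where the kernel identity turns the $\nu_1'$-weighting on $v_1$ into a $\nu_1(v_1')$-weighting on $v_1'$, and the inner integral then vanishes by the degeneracy of $f$ in its first argument. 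The $\nu_2'$-marginal vanishes by the symmetric computation, so $g \in \ltwo_{0,0}(\nu_1' \otimes \nu_2')$.

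For the statement about $\opB = \trans(\sinkop \otimes \sinkop^*)$, applying the general result with $\sinkop_1 = \sinkop$ and $\sinkop_2 = \sinkop^*$ places $(\sinkop \otimes \sinkop^*) f$ in $\ltwo_{0,0}(\rho_1 \otimes \rho_0)$; the swap $\trans$ trivially preserves degeneracy and identifies $\ltwo_{0,0}(\rho_1 \otimes \rho_0)$ with $\ltwo_{0,0}(\rho_0 \otimes \rho_1)$, yielding $\opB f \in \ltwo_{0,0}(\prodm)$. The only substantive task will be the bookkeeping of the source/target pairs $(\nu_k, \nu_k')$ across the four operator types so that the kernel identity outputs precisely $\nu_k$ (not $\nu_k'$), the measure needed to trigger the degeneracy of $f$; once this is verified case by case, the argument is mechanical.
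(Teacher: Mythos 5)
Your proof is correct. The substance is the same as the paper's, but the route differs in a mild yet genuine way: the paper proves the claim by invoking the conditional-expectation representation $(\sinkop \otimes \sinkop^*)f(Y_1, X_2) = \muexp[f(X_1, Y_2)\mid X_2, Y_1]$ (Lemma \ref{lem:operator_B}\ref{opB:cond_expect}, which relies on $(X_1,Y_1), (X_2,Y_2) \txtover{\text{i.i.d.}}{\sim} \mu_\eps$) and then applies the tower property twice, treating $\sinkop_1=\sinkop$, $\sinkop_2=\sinkop^*$ explicitly and asserting the remaining cases are analogous. You instead work directly with the integral kernels, isolating the marginal identity $\int A_k(v',v)\nu_k'(v)\,dv = \nu_k(v')$ (which for $\sinkop$ and $\sinkop^*$ is nothing but the constraint $\mu_\eps \in \Pi(\rho_0,\rho_1)$ and is trivial for the identities) and then Fubini. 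This is the same underlying fact in a different dress: your kernel identity \emph{is} the tower-property step. What you buy is a uniform treatment of all four operator types and no detour through the paired-sample conditional-expectation machinery; what the paper's version buys is coherence with the probabilistic viewpoint used elsewhere. Two small points to be careful about if you formalize: (i) the Fubini/Tonelli interchange should be argued on the nonnegative majorant $|f|A_1 A_2 \nu_1'\nu_2'$ first, integrating in $v_1,v_2$ to land on $|f|\,\nu_1\nu_2$ and then using $f\in\ltwo\subset\lone$; and (ii) the degeneracy identity $\int f(v_1',v_2')\nu_1(v_1')\,dv_1'=0$ only holds for $\nu_2$-a.e.\ $v_2'$, so one should check that the $A_2$-weighting respects this null set, which it does because $\nu_2$ dominates the relevant conditional law.
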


Unlike the first order chaos, we will give an approximation to the second order chaos, i.e., the projection onto $H_2$, of $T_N$.
Here $H_2 \subset \ltwo(\mu^N)$ is the subspace spanned by functions of the type
\begin{align*}
    \sum_{1\le i < j \le N} [f(X_i, X_j) + g(Y_i, Y_j)] + \sum_{1\le i,j\le N} h(X_i, Y_j)
\end{align*}
that is orthogonal to $H_0 \oplus H_1$.
According to \Cref{lem:operator_C}, we know $\tilde \eta \xi \in \ltwo_{0,0}(\prodm)$ and $\opC^{-1}(\tilde \eta \xi)$ is well-defined.
Let
\begin{align}
    \eta_{2,0} := -(I_{P} \otimes \sinkop^*)\opC^{-1}(\tilde \eta \xi), \quad \eta_{0,2} := -(\sinkop \otimes I_{Q}) \opC^{-1}(\tilde \eta \xi), \quad \mbox{and } \eta_{1,1'} := (I + \opB)\opC^{-1}(\tilde \eta \xi).
\end{align}
We define
\begin{align}\label{eq:approx_second_order}
    \second := \frac1{N(N-1)} \left\{ \sum_{i \neq j} [\eta_{2,0}(X_i, X_j) + \eta_{0,2}(Y_i, Y_j)] + \sum_{i,j=1}^N \eta_{1,1'}(X_i, Y_j) - \sum_{i=1}^N \ell_{1,1'}(X_i, Y_i) \right\},
\end{align}
where $\ell_{1,1'}(X_1, Y_1)$ is an affine function such that $\eta_{1,1'} - \ell_{1,1'} \in \ltwo_{0,0}(\mu)$.
We will show in the next lemma that $\eta_{1,1'} \in \ltwo(\mu)$, so $\ell_{1,1'}$ can be derived the same way we obtain $\tilde \eta$.
Note that $\second$ is permutation symmetric due to affineness of $\ell_{1,1'}$.

\begin{lemma}\label{lem:identity_second_order}
  The functions $\eta_{2,0}$, $\eta_{0,2}$ and $\eta_{1,1'}$ are degenerate, i.e., $\eta_{2,0} \in \ltwo_{0,0}(P \otimes P)$, $\eta_{0,2} \in \ltwo_{0,0}(Q \otimes Q)$ and $\eta_{1,1'} \in \ltwo_{0,0}(\prodm)$.
  Under \Cref{asmp:secondorder}, the function $\eta_{1,1'}$ also belongs to $\ltwo(\mu)$, and thus $\second \in H_2$.
  Moreover, the following identities hold:
  \begin{align*}
      (I + \trans) [\eta_{2,0} + (\sinkop^* \otimes \sinkop^*) \eta_{0,2} + (I_{P} \otimes \sinkop^*) \eta_{1,1'}] &\equiv 0 \\
      (I + \trans) [(\sinkop \otimes \sinkop) \eta_{2,0} + \eta_{0,2} + (\sinkop \otimes I_{Q}) \eta_{1,1'}] &\equiv 0 \\
      (I_{P} \otimes \sinkop)(I + \trans)\eta_{2,0} + (\sinkop^* \otimes I_{Q})(I + \trans) \eta_{0,2} + (I + \opB) \eta_{1,1'} &\equiv \tilde \eta \xi.
  \end{align*}
\end{lemma}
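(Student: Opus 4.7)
The plan is to introduce the shorthand $\phi := \opC^{-1}(\tilde\eta \xi)$, which by \Cref{lem:operator_C} lies in $\ltwo_{0,0}(\prodm)$. All three kernel candidates are then $\phi$ passed through operators already analyzed in the paper, so the degeneracy claims follow at once from \Cref{lem:preserve_degeneracy}: the operator $(I_{\rho_0} \otimes \sinkop^*)$ sends $\ltwo_{0,0}(\prodm)$ into $\ltwo_{0,0}(\rho_0 \otimes \rho_0)$, giving $\kappa_{2,0} \in \ltwo_{0,0}(\rho_0 \otimes \rho_0)$; symmetrically $(\sinkop \otimes I_{\rho_1})$ gives $\kappa_{0,2} \in \ltwo_{0,0}(\rho_1 \otimes \rho_1)$; and since both $I$ and $\opB$ preserve $\ltwo_{0,0}(\prodm)$, $\kappa_{1,1'} = (I + \opB)\phi \in \ltwo_{0,0}(\prodm)$.

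For the integrability $\kappa_{1,1'} \in \ltwo(\mu_\eps)$, I would first rewrite the norm as $\|h\|_{\ltwo(\mu_\eps)}^2 = \int h^2 \xi \, d\prodm$, so one has to pass $\phi$ through reweighting by $\xi$. The key observation is that $\opB$ is a contraction on every $L^p(\prodm)$: by \Cref{lem:operator_B}\ref{opB:cond_expect}, $\opB f(x,y)$ is the conditional expectation of $f(X_1, Y_2)$ given $(X_2, Y_1)$, and under $\mu_\eps^{\otimes 2}$ both $(X_1, Y_2)$ and $(X_2, Y_1)$ have marginal law $\prodm$, so Jensen yields $\|\opB f\|_{L^p(\prodm)} \le \|f\|_{L^p(\prodm)}$. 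Hence $\phi + \opB\phi \in L^{2\mathfrak{p}/(\mathfrak{p}-2)}(\prodm)$ by \Cref{asmp:secondorder}, and by H\"older with conjugate exponents $\mathfrak{p}/(\mathfrak{p}-2)$ and $\mathfrak{p}/2$,
\[
\int |\phi + \opB\phi|^2 \xi \, d\prodm \le \|\phi + \opB\phi\|_{L^{2\mathfrak{p}/(\mathfrak{p}-2)}(\prodm)}^2 \, \|\xi\|_{L^{\mathfrak{p}/2}(\prodm)} < \infty,
\]
finiteness following from $\xi \in L^{2\mathfrak{p}}(\prodm) \subseteq L^{\mathfrak{p}/2}(\prodm)$ since $\prodm$ is a probability measure. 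The endpoints $\mathfrak{p} = 2$ (use $\|\phi + \opB\phi\|_\infty < \infty$ against $\int \xi \, d\prodm = 1$) and $\mathfrak{p} = \infty$ (use $\|\xi\|_\infty < \infty$ against $\phi + \opB\phi \in L^2(\prodm)$) are handled analogously. Once $\kappa_{1,1'} \in \ltwo(\mu_\eps)$ and all three kernels are degenerate, $\second \in H_2$ follows because the affine correction $\ell_{1,1'}$ was defined precisely so that $\kappa_{1,1'} - \ell_{1,1'}$ is degenerate with respect to $\mu_\eps$, rendering each summand of $\second$ orthogonal to $H_0 \oplus H_1$.

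The three identities are pure algebra, driven by $\trans(\sinkop_1 \otimes \sinkop_2) = (\sinkop_2 \otimes \sinkop_1)\trans$, $(\sinkop_1 \otimes \sinkop_2)(B_1 \otimes B_2) = \sinkop_1 B_1 \otimes \sinkop_2 B_2$, $\trans^2 = I$, the dual forms $\opB = \trans(\sinkop \otimes \sinkop^*) = (\sinkop^* \otimes \sinkop)\trans$, and the defining relation $\opC\phi = \tilde\eta\xi$. Substituting the definitions, the bracket in the first identity collapses to $(\trans - I)(\sinkop^*\sinkop \otimes \sinkop^*)\phi$, which is annihilated by $(I + \trans)$ since $(I + \trans)(\trans - I) = \trans^2 - I = 0$; the second identity is the mirror image, with bracket $(\trans - I)(\sinkop \otimes \sinkop\sinkop^*)\phi$. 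For the third, direct expansion cancels the $\opB$-terms generated by $(I + \opB)^2 = I + 2\opB + (\sinkop^*\sinkop \otimes \sinkop\sinkop^*)$ against those produced by the $\trans$-halves of $(I \otimes \sinkop)\kappa_{2,0}$ and $(\sinkop^* \otimes I)\kappa_{0,2}$, leaving
\[
[(I - \sinkop^*\sinkop) \otimes (I - \sinkop\sinkop^*)]\phi = \opC\phi = \tilde\eta\xi.
\]

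The hard step is the $\ltwo(\mu_\eps)$ integrability of $\kappa_{1,1'}$: because the norm is weighted by $\xi$ rather than taken against $\prodm$, the $\ltwo(\prodm)$-theory of $\opB$ and $\opC^{-1}$ is not enough on its own, and the coupled exponents in \Cref{asmp:secondorder} are needed precisely to close the H\"older bound. The rest of the lemma amounts to bookkeeping once $\phi = \opC^{-1}(\tilde\eta\xi)$ is named and the tensor-product commutation relations are brought in.
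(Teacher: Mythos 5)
Your proposal is correct and follows essentially the same route as the paper: degeneracy of the three kernels via \Cref{lem:operator_C} and \Cref{lem:preserve_degeneracy}, $\ltwo(\mu_\eps)$-integrability of $\kappa_{1,1'}$ by combining a Jensen contraction bound for $\opB$ with H\"older against the $\xi$-weight, and the three identities by direct computation with the tensor-product and swap commutation rules. The only difference is cosmetic—you pair exponents $\mathfrak{p}/(\mathfrak{p}-2)$ and $\mathfrak{p}/2$ so that \Cref{asmp:secondorder} is used at face value, whereas the paper pairs $\mathfrak{p}/(\mathfrak{p}-1)$ and $\mathfrak{p}$ (and first contracts $\opB$ into $L^{2\mathfrak{p}/(\mathfrak{p}-1)}(\prodm)$), then invokes nesting of $L^p$ spaces on a probability space; both give the same conclusion.
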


The next proposition shows that $\second$ is equal to the second order chaos of $T_N$ up to an $o_p(N^{-1})$ term.
\begin{proposition}\label{prop:second_order_eot}
  Suppose \Cref{asmp:secondorder} holds.
  Let the second order chaos of $T_N$ be\footnote{We show in \Cref{prop:existence_of_second_projection} in \Cref{sec:close_H2} that the subspace $H_2$ is closed, so the second order chaos exists.} $\proj_{H_2}(T_N)$.
  Then we have $\proj_{H_2}(T_N) = \second + o_p(N^{-1})$ under the measure $\mu^N$.
\end{proposition}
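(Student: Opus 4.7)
The plan has two main steps: (i) verify $\second \in H_2$, and (ii) show the difference $\proj_{H_2}(T_N) - \second$ has $\ltwo(\mu_\eps^N)$-norm of order $o(N^{-1})$, which gives $o_p(N^{-1})$ convergence by Markov's inequality. Step (i) follows from \Cref{lem:identity_second_order}: the kernels $\kappa_{2,0}, \kappa_{0,2}, \kappa_{1,1'}$ are degenerate with respect to the relevant product measures, and the affine function $\ell_{1,1'}$ is chosen precisely so that $\kappa_{1,1'} - \ell_{1,1'} \in \ltwo_{0,0}(\mu_\eps)$. Splitting $\sum_{i,j}\kappa_{1,1'}(X_i,Y_j) - \sum_i \ell_{1,1'}(X_i,Y_i) = \sum_{i\neq j}\kappa_{1,1'}(X_i,Y_j) + \sum_i(\kappa_{1,1'}-\ell_{1,1'})(X_i,Y_i)$ and using that $(X_i,Y_j)\sim\prodm$ for $i\neq j$ while $(X_i,Y_i)\sim\mu_\eps$, each constituent sum has vanishing conditional expectation given any single $X_k$ or $Y_k$; together with symmetry under $\perm_N$-actions in $X$ and $Y$, this shows $\second \in H_2$.

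For step (ii), I would characterize $\proj_{H_2}(T_N)$ by testing against generators of $H_2$, which are symmetric sums of the three types $(2,0), (0,2), (1,1)$ coupled to kernels degenerate under the appropriate product measures. By exchangeability under $\mu_\eps^N$, controlling $\muexp[(T_N - \theta - \first - \second) f]$ for $f \in H_2$ reduces to estimating four pair-conditional expectations:
\[
\muexp[T_N - \theta - \first - \second \mid X_1,X_2],\quad \muexp[T_N - \theta - \first - \second \mid Y_1,Y_2],
\]
\[
\muexp[T_N - \theta - \first - \second \mid X_1,Y_2],\quad \muexp[T_N - \theta - \first - \second \mid X_1,Y_1].
\]
Using $T_N = \muexp[\eta(X_1, Y_1) \mid \mcal{G}_N]$ from \Cref{thm:unbiased} and the tower property, each of these can be computed explicitly by splitting the sum over all pair-orbits into those intersecting $\{1,2\}$ (in one or both coordinates) and those disjoint from it. The leading-order contributions arise from applying operators such as $\sinkop, \sinkop^*, \opB, I_{\rho_0}\otimes\sinkop^*, \sinkop\otimes I_{\rho_1}$ to $\eta$ and $\tilde\eta\xi$, and the three operator identities of \Cref{lem:identity_second_order} force exact cancellation of the leading-order contributions against the analogous conditional expectations of $\theta + \first + \second$. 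The residual terms carry prefactors of order $1/N^2$ or smaller (from counting off-diagonal index pairs among $[N]\times[N]$) and are controlled in $\ltwo$-norm by the integrability bounds on $\xi$ and $\opC^{-1}(\tilde\eta\xi)$ from \Cref{asmp:secondorder}, yielding the required $o(N^{-2})$ bound on the degenerate part of the conditional expectations and hence the $o(N^{-1})$ bound on $\|\proj_{H_2}(T_N) - \second\|_{\ltwo(\mu_\eps^N)}$.

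The main obstacle is the bookkeeping for the diagonal $i=j$ contributions in the $(1,1)$-block. The $N$ diagonal pairs $(X_i,Y_i) \sim \mu_\eps$ do not share the $\prodm$-degeneracy enjoyed by the off-diagonal pairs, so the raw $(1,1)$-sum $\sum_{i,j}\kappa_{1,1'}(X_i,Y_j)$ carries an $H_0\oplus H_1$ mass under $\mu_\eps^N$ that must be exactly canceled by the subtraction $-\sum_i \ell_{1,1'}(X_i,Y_i)$. Matching this affine correction in the paired $(X_1,Y_1)$-conditional computation — as distinct from the unpaired $(X_1, Y_2)$ one — is the most delicate step and requires invoking the defining relation of $\ell_{1,1'}$ directly rather than only the operator identities of \Cref{lem:identity_second_order}; throughout the argument, carefully distinguishing paired and unpaired conditioning is essential to avoid spurious leading-order terms.
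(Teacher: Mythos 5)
Your high-level outline — verify $\second \in H_2$, then bound $\|\proj_{H_2}(T_N) - \second\|$, and keep careful track of the diagonal terms and the affine correction $\ell_{1,1'}$ — matches the paper in spirit, and Step (i) is essentially right. But Step (ii), as written, has a real gap and mischaracterizes the mechanism of the error bound.

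The central difficulty is that $\muexp[T_N \mid X_1, X_2]$ (and the other pair-conditional expectations of $T_N$) are not directly computable: $T_N$ is a ratio of two $N!$-term sums, and the identity $T_N = \muexp[\eta(X_1,Y_1)\mid\mcal{G}_N]$ only helps when you pair $T_N$ against a $\mcal{G}_N$-measurable test function. A single kernel $f_{2,0}(X_1,X_2)$ is not $\mcal{G}_N$-measurable, so the tower property does not hand you the conditional expectation you want. What the paper does is first establish the clean identity $\proj_{H_2}(T_N) = \proj_{H_2}(\tilde T_N)$ where $\tilde T_N := \frac1N\sum_i\tilde\eta(X_i,Y_i)$ (their Step 1): this is exactly the tower-property argument, because the symmetric sums that span $H_2$ are $\mcal{G}_N$-measurable, and exchangeability then lets one symmetrize $\eta(X_1,Y_1)$ to $\frac1N\sum_i\eta(X_i,Y_i)$. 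Only after this replacement are the pair-conditional expectations tractable. You gesture at the tower property, but you never perform this reduction explicitly, and your plan to "compute $\muexp[T_N - \theta - \first - \second \mid X_1,X_2]$ explicitly" cannot be carried out as stated.

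The second discrepancy is the shape of the error estimate. You describe a "leading-order cancellation plus residuals of order $1/N^2$" controlled by integrability. That is not the structure of the argument. After the reduction to $\tilde T_N$, the three operator identities in \Cref{lem:identity_second_order} give \emph{exact} orthogonality: $\muexp[\tilde T_N - \tilde{\mcal{L}}_2 \mid X_1,X_2]$, $\muexp[\tilde T_N - \tilde{\mcal{L}}_2 \mid Y_1,Y_2]$, and the $(1,1)$-pairing all vanish identically, where $\tilde{\mcal{L}}_2$ is the off-diagonal sum $\frac1{N(N-1)}\sum_{i\neq j}[\kappa_{2,0}(X_i,X_j)+\kappa_{0,2}(Y_i,Y_j)+\kappa_{1,1'}(X_i,Y_j)]$. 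There is no residual to bound at this stage. The $o_p(N^{-1})$ error enters entirely through the discrepancy $\second - \tilde{\mcal{L}}_2 = \frac{1}{N(N-1)}\sum_i[\kappa_{1,1'}-\ell_{1,1'}](X_i,Y_i)$, which has variance $O(N^{-3})$ by independence; and then the minimality of the $\ltwo$ projection, $\muexp[(\proj_{H_2}(\tilde{\mcal{L}}_2)-\tilde{\mcal{L}}_2)^2]\le\muexp[(\tilde{\mcal{L}}_2-\second)^2]$ since $\second\in H_2$, closes the argument. Your proposal omits this projection-minimality step entirely and replaces it with an informal translation from "small conditional expectations" to "small projection norm" — an inference that would need its own careful justification and, more to the point, is not needed once the exact orthogonality and the diagonal discrepancy are separated out correctly.
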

\begin{proof}
    Define
    \begin{align}\label{eq:second_order}
        \tilde{\mathcal{L}}_2 := \frac{1}{N(N - 1)} \sum_{i \neq j} [\eta_{2,0}(X_i, X_j) + \eta_{0,2}(Y_i, Y_j) + \eta_{1,1'}(X_i, Y_j)].
    \end{align}
    It follows from LLN that $\second - \tilde{\mathcal{L}}_2 = o_p(N^{-1})$.
    It then suffices to show $\tilde{\mathcal{L}}_2 - \proj_{H_2}(T_N) = o_p(N^{-1})$.
    According to the degeneracy in \Cref{lem:identity_second_order}, we know $\muexp[\tilde{\mathcal{L}}_2] = 0$ and  $\muexp[\tilde{\mathcal{L}}_2 \mid X_i] = \muexp[\tilde{\mathcal{L}}_2 \mid Y_i] = 0$ for all $i \in [N]$, which implies $\tilde{\mathcal{L}}_2 \in H_0^\perp \cap H_1^\perp$.
    Note that $\tilde{\mathcal{L}}_2$ is not permutation symmetric since it lacks the diagonal terms $\eta_{1,1'}(X_i, Y_i)$, so it is not in $H_2$.
    Moreover, we have
    \begin{align}\label{eq:cond_exp_zero_L2}
        \muexp[\second \mid X_i, Y_i] = 0, \quad \mbox{for all } i \in [N].
    \end{align}

    \emph{Step 1.} We show $\proj_{H_2}(T_N) = \proj_{H_2}(\tilde T_N)$, where
    \begin{align}\label{eq:TN_tilde}
        \tilde T_N := \frac1N \sum_{i=1}^N [\eta(X_i, Y_i) - \theta] - \first = \frac1N \sum_{i=1}^N \tilde \eta(X_i, Y_i).
    \end{align}
    In fact, since $\theta \perp H_2$ and $\first \perp H_2$, we have, for any $U \in H_2$,
    \begin{align*}
        \muexp[(\tilde T_N - T_N) U] = \muexp\left[ \left( \frac1N \sum_{i=1}^N \eta(X_i, Y_i) - T_N \right) U \right].
    \end{align*}
    By the exchangeability of $\{(X_i, Y_i)\}_{i \in [N]}$, it holds that $\muexp\left[ \frac1N \sum_{i=1}^N \eta(X_i, Y_i) U \right] = \muexp[\eta(X_1, Y_1) U]$, and thus
    \begin{align*}
        \muexp[(\tilde T_N - T_N) U] = \muexp[\eta(X_1, Y_1) U] - \muexp[T_N U] \txtover{\mbox{\scriptsize (i)}}{=} \muexp[\eta(X_1, Y_1) U] - \muexp[\muexp[\eta(X_1, Y_1) U \mid \mcal{G}_N]] = 0,
    \end{align*}
    where (i) follows from the tower property.
    Hence, $\tilde T_N - T_N \in H_2^\perp$ and thus the claim follows.
    Moreover, since $\tilde \eta \in \ltwo_{0,0}(\mu)$, we have $\tilde T_N \in H_0^\perp \cap H_1^\perp$,
    \begin{align}\label{eq:cond_exp_tildeT}
        \muexp[\tilde T_N \mid X_i, Y_i] = \frac1N \sum_{k=1}^N \muexp[\tilde \eta(X_k, Y_k) \mid X_i, Y_i] = \frac1N \tilde \eta(X_i, Y_i), \quad \mbox{for all } i \in [N],
    \end{align}
    and
    \begin{align}\label{eq:cond_exp_zero_tildeT}
        \muexp[\tilde T_N \mid X_i, X_j] = \muexp[\tilde T_N \mid Y_i, Y_j] = \muexp[\tilde T_N \mid X_i, Y_j] = 0, \quad \mbox{for all } i \neq j.
    \end{align}
    
    \emph{Step 2.} We show $\proj_{H_2}(T_N) = \proj_{H_2}(\tilde{\mathcal{L}}_2)$.
    By Step 1, it suffices to prove $\tilde T_N - \tilde{\mathcal{L}}_2 \in H_2^\perp$.
    We will prove $\muexp[(\tilde T_N - \tilde{\mathcal{L}}_2)U] = 0$ for every
    \[
        U := \sum_{i < j} [f_{2, 0}(X_i, X_j) + f_{0,2}(Y_i, Y_j)] + \sum_{i, j=1}^N f_{1,1}(X_i, Y_j) \in \ltwo(\mu^N).
    \]
    We first compute $\muexp[\tilde T_N - \tilde{\mathcal{L}}_2 \mid X_1, X_2]$.
    Since $\eta_{2,0} \in \ltwo_{0,0}(P \otimes P)$, so it holds
    \begin{align}\label{eq:kappa20_X1X2}
        \muexp\left[ \sum_{i \neq j} \eta_{2,0}(X_i, X_j) \ \Big| \ X_1, X_2 \right] = \muexp\left[ \sum_{\{i, j\} = \{1, 2\}} \eta_{2,0}(X_i, X_j) \ \Big| \ X_1, X_2 \right] = (I + \trans)\eta_{2,0}(X_1, X_2).
    \end{align}
    Since $\eta_{0,2} \in \ltwo_{0,0}(Q \otimes Q)$
    and $\muexp[f(Y_1, Y_2) \mid X_1, X_2] = (\sinkop^* \otimes \sinkop^*) f(X_1, X_2)$ for any $f \in \ltwo(Q \otimes Q)$, we get
    \begin{align}\label{eq:kappa02_X1X2}
        \muexp\left[ \sum_{i \neq j} \eta_{0,2}(Y_i, Y_j) \ \Big|\ X_1, X_2 \right]
        &= (I + \trans) (\sinkop^* \otimes \sinkop^*) \eta_{0,2}(X_1, X_2).
    \end{align}
    Furthermore, since $\muexp[f(X_1, Y_2) \mid X_1, X_2] = (I_{P} \otimes \sinkop^*) f(X_1, X_2)$, we have
    \begin{align}\label{eq:kappa11'_X1X2}
        \muexp\left[ \sum_{i \neq j} \eta_{1,1'}(X_i, Y_j) \ \Big|\ X_1, X_2 \right] &= (I + \trans) (I_{P} \otimes \sinkop^*) \eta_{1,1'}(X_1, X_2).
    \end{align}
    Putting \eqref{eq:kappa20_X1X2}, \eqref{eq:kappa02_X1X2} and \eqref{eq:kappa11'_X1X2} together, we get $\muexp[\tilde{\mathcal{L}}_2 \mid X_1, X_2] = 0$ by the first identity in \Cref{lem:identity_second_order}.
    Consequently, by \eqref{eq:cond_exp_zero_tildeT},
    \[
        \muexp[\tilde T_N - \tilde{\mathcal{L}}_2 \mid X_1, X_2] = \muexp[\tilde T_N \mid X_1, X_2] = 0.
    \]
    By the exchangeability of $\{(X_i, Y_i)\}_{i=1}^N$, we obtain $\muexp[\tilde T_N - \tilde{\mathcal{L}}_2 \mid X_i, X_j] = 0$ for all $i \neq j$.
    Similarly, $\muexp[\tilde T_N - \tilde{\mathcal{L}}_2 \mid Y_i, Y_j] = 0$ for all $i \neq j$.
    Hence, we only need to prove
    \[
    \muexp\left[ (\tilde T_N - \tilde{\mathcal{L}}_2) \sum_{i, j} f_{1,1}(X_i, Y_j) \right] = 0.
    \]
    For that purpose, we will compute $\muexp[\tilde{\mathcal{L}}_2 \mid X_i, Y_j]$.
    We have shown in \eqref{eq:cond_exp_zero_L2} that $\muexp[\tilde{\mathcal{L}}_2 \mid X_i, Y_i] = 0$ for all $i \in [N]$.
    For $(i, j) = (1, 2)$, it holds that
    \begin{align*}
        \muexp\left[ \sum_{i \neq j} \eta_{2,0}(X_i, X_j) \ \Big| \ X_1, Y_2 \right]
        &= (I_{P} \otimes \sinkop)(I + \trans) \eta_{2,0}(X_1, Y_2) \\
        \muexp\left[ \sum_{i \neq j} \eta_{0,2}(Y_i, Y_j) \ \Big|\ X_1, Y_2 \right]
        &= (\sinkop^* \otimes I_{Q})(I + \trans) \eta_{0,2}(X_1, Y_2) \\
        \muexp\left[ \sum_{i \neq j} \eta_{1,1'}(X_i, Y_j) \ \Big|\ X_1, Y_2 \right]
        &= (I + \opB) \eta_{1,1'}(X_1, X_2).
    \end{align*}
    It then follows from the third identity in \Cref{lem:identity_second_order} that
    \begin{align*}
        \muexp[\tilde{\mathcal{L}}_2 \mid X_1, Y_2] &= \frac1{N(N-1)} \tilde \eta(X_1, Y_2) \xi(X_1, Y_2).
    \end{align*}
    By the exchangeability of $\{(X_i, Y_i)\}_{i=1}^N$ again, we get
    \begin{align*}
        \muexp\left[\tilde{\mathcal{L}}_2 \sum_{i,j=1}^N f_{1,1}(X_i, Y_j)\right] &= \sum_{i \neq j} \muexp[\tilde{\mathcal{L}}_2 f_{1,1}(X_i, Y_j)] = \muexp[\tilde \eta(X_1, Y_2) \xi(X_1, Y_2) f_{1,1}(X_1, Y_2)] \\
        &= \muexp\left[ \tilde \eta (X_1, Y_1) f_{1,1}(X_1, Y_1) \right],
    \end{align*}
    since $\xi$ is the Radon-Nikodym derivative of $\mu$ with respect to $\prodm$ under $\muexp$.
    On the other hand, we also have, by \eqref{eq:cond_exp_tildeT} and \eqref{eq:cond_exp_zero_tildeT},
    \[
        \muexp\left[\tilde T_N \sum_{i,j=1}^N f_{1,1}(X_i, Y_j)\right] = N \muexp[\tilde T_N f_{1,1}(X_1, Y_1)] = \muexp[\tilde \eta(X_1, Y_1) f_{1,1}(X_1, Y_1)].
    \]
    Hence, $\muexp\left[(\tilde T_N - \tilde{\mathcal{L}}_2) \sum_{i,j=1}^N f(X_i, Y_j)\right] = 0$ and the claim follows.
    
    \emph{Step 3.} We control the variance of $\proj_{H_2}(T_N) - \tilde{\mathcal{L}}_2$.
    From Step 2 we know $\proj_{H_2}(\tilde{\mathcal{L}}_2) = \proj_{H_2}(T_N)$.
    By the definition of $\ltwo$ projection, it holds
    \[
        \muexp[ (\proj_{H_2}(T_N) - \tilde{\mathcal{L}}_2)^2 ] = \muexp[ (\proj_{H_2}(\tilde{\mathcal{L}}_2) - \tilde{\mathcal{L}}_2)^2 ] = \min_{V \in H_2} \muexp[(\tilde{\mathcal{L}}_2 - V)^2] \le \muexp[(\tilde{\mathcal{L}}_2 - \second)^2],
    \]
    since $\second \in H_2$.
    Note that
    \[
        \second - \tilde{\mathcal{L}}_2 = \frac1{N(N-1)} \sum_{i=1}^N [\eta_{1,1'}(X_i, Y_i) - \ell_{1,1'}(X_i, Y_i)].
    \]
    By independence, we get
    \[
        \muexp[(\tilde{\mathcal{L}}_2 - \second)^2] = \frac1{N^2(N-1)^2} \sum_{i=1}^N \muexp[(\eta_{1,1'}(X_i, Y_i) - \ell_{1,1'}(X_i, Y_i))^2] = O(N^{-3}).
    \]
    It follows that $\tilde{\mathcal{L}}_2 = \proj_{H_2}(T_N) + o_p(N^{-1})$.
\end{proof}

Note that the second order remainder is $T_N - \theta - \first - \second = (U_N - \second D_N) / D_N$ where $\second$ is defined in \eqref{eq:approx_second_order}.
It can be shown that the variance of $U_N - \second D_N$ is of order $O(N^{-4})$.
\begin{proposition}\label{prop:bound_variance_second}
    Under Assumptions \ref{asmp:contiguity}-\ref{asmp:secondorder}, we have $\Expect[(U_N - \second D_N)^2] = O(N^{-4})$.
\end{proposition}

The proof of \Cref{prop:bound_variance_second} is similar to the one of \Cref{prop:bound_variance_Dn}.
We defer it to \Cref{sec:remainder}.
Now we are ready to prove \Cref{thm:order2chaos} and \Cref{cor:chisquare}.
\begin{proof}[Proof of \Cref{thm:order2chaos}]
    Since $\ell_{1,1'}$ is affine and $\eta_{1,1'} - \ell_{1,1'} \in \ltwo_{0,0}(\mu)$, it holds that
    \begin{align*}
        \Expect[\ell_{1,1'}(X, Y)] = \muexp[\ell_{1,1'}(X, Y)] = \muexp[\eta_{1,1'}(X, Y)] = \theta_{1,1'}.
    \end{align*}
    It then follows from LLN that $\frac1N \sum_{i=1}^N \ell_{1,1'}(X_i, Y_i) \rightarrow_p \theta_{1,1'}$.
    As a result,
    \begin{align*}
        T_N - \theta - \first - \second
        &= T_N - \theta - \first - \tilde{\mathcal{L}}_2 + \frac{\theta_{1,1'}}{N} + o_p(N^{-1}).
    \end{align*}
    By \Cref{thm:denominator} and \Cref{prop:bound_variance_second}, we have $T_N - \theta - \first - \second = O_p(N^{-2})$ which completes the proof.
\end{proof}

\begin{proof}[Proof of \Cref{cor:chisquare}]
    Recall from \eqref{lem:identity_second_order} that $\eta_{1,1'} \in \ltwo_{0,0}(\prodm)$, so it holds that $\frac1{N(N-1)} \sum_{i=1}^N \eta_{1,1'}(X_i, Y_i) = o_p(N^{-1})$ by LLN.
    Hence, we will ignore this term in the following derivation.
    
    To begin with, we show the limiting distribution is well-defined.
    Since $\varsigma^2 = 0$ in \Cref{cor:clt}, we know
    \[
        (I - \sinkop^* \sinkop)^{-1}(\eta_{1,0} - \sinkop^* \eta_{0,1})(x) \txtover{a.s.}{=} 0 \quad \mbox{and} \quad (I - \sinkop \sinkop^*)^{-1}(\eta_{0,1} - \sinkop \eta_{1,0})(y) \txtover{a.s.}{=} 0,
    \]
    which implies
    \[
        \tilde \eta(x, y) := \eta(x, y) - \theta - (I - \sinkop^* \sinkop)^{-1}(\eta_{1,0} - \sinkop^* \eta_{0,1})(x)- (I - \sinkop \sinkop^*)^{-1}(\eta_{0, 1} - \sinkop \eta_{1,0})(y) \txtover{a.s.}{=} \eta(x, y) - \theta.
    \]
    Consequently, $(\eta - \theta) \xi \in \ltwo_{0,0}(\prodm)$.
    According to \cite[Page 90]{berezansky2013spectral}, $\{\alpha_i \otimes \beta_j\}_{i, j\ge0}$ forms an orthonormal basis of $\ltwo(\prodm)$.
    Thus, we have the expansion
    \begin{align}
        (\eta - \theta) \xi = \sum_{k,l \ge 1} \gamma_{kl}(\alpha_k \otimes \beta_l), \quad \mbox{in } \ltwo(\prodm),
    \end{align}
    where $\sum_{k,l \ge 1} \gamma_{kl}^2 < \infty$.
    Recall from \Cref{asmp:contiguity} that $0 \le s_k \le s_1 < 1$ for all $k \ge 1$, we have
    \begin{align}\label{eq:square_sum}
        \sum_{k, l \ge 1} \frac{\gamma_{kl}^2}{(1 - s_k^2)^2 (1 - s_l^2)^2} \le \sum_{k, l \ge 1} \frac{\gamma_{kl}^2}{(1 - s_1^2)^4} < \infty.
    \end{align}
    Let $\{U_k\}, \{V_l\}$ be independent sequences of \iid standard normal random variables.
    We define
    \begin{align*}
        Z &:= \sum_{k,l \ge 1} \frac{\gamma_{kl}}{(1 - s_k^2)(1 - s_l^2)} \left\{ U_k V_l + s_k s_l U_l V_k - s_l (U_k U_l - \ind\{k = l\}) - s_k (V_k V_l - \ind\{k = l\}) \right\} \\
        &= \sum_{k,l \ge 1} \frac{1}{(1 - s_k^2)(1 - s_l^2)} \left\{ (\gamma_{kl} + s_k s_l \gamma_{lk}) U_k V_l - s_l \gamma_{kl} (U_k U_l - \ind\{k = l\}) - s_k \gamma_{kl} (V_k V_l - \ind\{k = l\}) \right\},
    \end{align*}
    where the sum converges in $\ltwo$.
    We will show $Z_N := N \second \rightarrow_d Z$ by using characteristic functions, \emph{i.e.}, by showing that, for each $t \in \rr$,
    \begin{align*}
        \Expect[\exp(itZ_N)] \rightarrow \Expect[\exp(itZ)], \quad \mbox{as } N \rightarrow \infty.
    \end{align*}
    The following proof is inspired by \cite[Chapter 5.5.2]{serfling1980approximation}.
    
    \emph{Step 1}. We expand $Z_N$ on $\{\alpha_k \otimes \beta_l\}_{k, l \ge 0}$.
    For $k \ge 1$, we denote
    \begin{align*}
      \tilde \alpha_k := (I - \sinkop^* \sinkop)^{-1} \alpha_k = (1 - s_k^2)^{-1} \alpha_k \quad \mbox{and} \quad \tilde \beta_k := (I - \sinkop \sinkop^*)^{-1} \beta_k = (1 - s_k^2)^{-1} \beta_k.
    \end{align*}
    By \Cref{lem:operator_C} it holds that $\opC^{-1}(\alpha_k \otimes \beta_l) = \tilde \alpha_k \otimes \tilde \beta_l$, and then we get
    \[
      \opC^{-1}[(\eta - \theta) \xi] = \sum_{k,l \ge 1} \gamma_{kl} (\tilde \alpha_k \otimes \tilde \beta_l) = \sum_{k,l \ge 1} \frac{\gamma_{kl}}{(1 - s_k^2)(1 - s_l^2)} (\alpha_k \otimes \beta_l).
    \]
    It follows that
    \begin{align*}
      \eta_{1,1'}(X_i, Y_j) &:= (I + \opB)\opC^{-1}(\tilde \eta \xi)(X_i, Y_j) \txtover{a.s.}{=} \sum_{k,l \ge 1} \frac{\gamma_{kl}}{(1 - s_k^2)(1 - s_l^2)} [\alpha_k(X_i) \beta_l(Y_j) + s_k s_l \alpha_l(X_i) \beta_k(Y_j)] \\
      \eta_{2,0}(X_i, X_j) &:= (I_{\rho_0} \otimes \sinkop^*) \opC^{-1}(\tilde \eta \xi)(X_i, X_j) \txtover{a.s.}{=} \sum_{k,l \ge 1} \frac{\gamma_{kl}}{(1 - s_k^2)(1 - s_l^2)} s_l \alpha_k(X_i) \alpha_l(X_j) \\
      \eta_{0,2}(Y_i, Y_j) &:= (\sinkop \otimes I_{\rho_1}) \opC^{-1}(\tilde \eta \xi)(Y_i, Y_j) \txtover{a.s.}{=} \sum_{k,l \ge 1} \frac{\gamma_{kl}}{(1 - s_k^2)(1 - s_l^2)} s_k \beta_k(Y_i) \beta_l(Y_j).
    \end{align*}
    Hence, $Z_N$ admits the following expansion:
    \begin{align*}
        Z_N &= \frac{1}{N-1} \sum_{i \neq j} \sum_{k, l \ge 1} \frac{\gamma_{kl}[\alpha_k(X_i) \beta_l(Y_j) + s_k s_l \alpha_l(X_i) \beta_k(Y_j) - s_l \alpha_k(X_i) \alpha_l(X_j) - s_k \beta_k(Y_i) \beta_l(Y_j)]}{(1 - s_k^2)(1 - s_l^2)} \\
        &= \frac{1}{N-1} \sum_{i \neq j} \sum_{k, l \ge 1} \frac{(\gamma_{kl} + s_k s_l \gamma_{lk}) \alpha_k(X_i) \beta_l(Y_j) - s_l \gamma_{kl} \alpha_k(X_i) \alpha_l(X_j) - s_k \gamma_{kl} \beta_k(Y_i) \beta_l(Y_j)}{(1 - s_k^2)(1 - s_l^2)}.
    \end{align*}
    
    \emph{Step 2}. We truncate the inner infinite sum. Fix an arbitrary integer $K > 0$. Let
    \begin{align*}
        Z_{N}^{K} &:= \frac{1}{N-1} \sum_{i \neq j} \sum_{k, l = 1}^K \frac{(\gamma_{kl} + s_k s_l \gamma_{lk}) \alpha_k(X_i) \beta_l(Y_j) - s_l \gamma_{kl} \alpha_k(X_i) \alpha_l(X_j) - s_k \gamma_{kl} \beta_k(Y_i) \beta_l(Y_j)}{(1 - s_k^2)(1 - s_l^2)} \\
        Z^{K} &:= \sum_{k,l = 1}^K \frac{\left[ (\gamma_{kl} + s_k s_l \gamma_{lk}) U_k V_l - s_l \gamma_{kl} (U_k U_l - \ind\{k = l\}) - s_k \gamma_{kl} (V_k V_l - \ind\{k = l\}) \right]}{(1 - s_k^2)(1 - s_l^2)}.
    \end{align*}
    By triangle inequality, we have
    \begin{align}
        \abs{\Expect[e^{itZ_N}] - \Expect[e^{itZ}]} &\le \abs{\Expect[e^{itZ_N}] - \Expect[e^{itZ_N^K}]} + \abs{\Expect[e^{itZ_N^K}] - \Expect[e^{itZ^K}]} + \abs{\Expect[e^{itZ^K}] - \Expect[e^{itZ}]} \nonumber \\
        &=: A + B + C \label{eq:bound_characteristic}
    \end{align}
    Fix arbitrary $t \in \rr$ and $\eps > 0$, it now suffices to show that $A, B, C \le \eps$ for all sufficiently large $N$ with an appropriate choice of $K$.
    
    \emph{Step 3}. We bound $A$ and $C$.
    Using the inequality $\abs{e^{iz} - 1} \le \abs{z}$, we get
    \begin{align}\label{eq:bound_A}
        A \le \Expect\abs{e^{itZ_N} - e^{itZ_N^K}} \le \abs{t} \Expect\abs{Z_N - Z_N^K} \le \abs{t} [\Expect(Z_N - Z_N^K)^2]^{1/2}.
    \end{align}
    We rewrite $Z_N - Z_N^K$ as $\frac1{N-1}\sum_{i \neq j} [g_K^{\alpha \beta}(X_i, Y_j) - g_K^{\alpha \alpha}(X_i, X_j) - g_K^{\beta \beta}(Y_i, Y_j)]$, where
    \begin{align*}
        g_K^{\alpha \beta}(x, y) &:= \sum_{k, l > K} \frac{\gamma_{kl} + s_k s_l \gamma_{lk}}{(1 - s_k^2)(1 - s_l^2)} \alpha_k(x) \beta_l(y) \\
        g_K^{\alpha \alpha}(x, x') &:= \sum_{k, l > K} \frac{\gamma_{kl}s_l}{(1 - s_k^2)(1 - s_l^2)} \alpha_k(x) \alpha_l(x') \\
        g_K^{\beta \beta}(y, y') &:= \sum_{k, l > K} \frac{\gamma_{kl}s_k}{(1 - s_k^2)(1 - s_l^2)} \beta_k(y) \beta_l(y').
    \end{align*}
    By the orthogonality of $\{\alpha_k\}_{k\ge 0}$ and $\{\beta_k\}_{k\ge 0}$, we know $\Expect[\alpha_{k}(X_i) \beta_l(Y_j) \alpha_{k'}(X_{i}) \beta_{l'}(Y_{j})] = 0$ for all $k, l \ge 1$ and $i \neq j$.
    This implies $g_K^{\alpha \beta}(X_i, Y_j)$ and $g_K^{\alpha \alpha}(X_i, X_j)$ are uncorrelated.
    Analogously, we have $g_K^{\alpha \beta}(X_i, Y_j)$, $g_K^{\alpha \alpha}(X_i, X_j)$ and $g_K^{\beta \beta}(Y_i, Y_j)$ are mutually uncorrelated for all $i \neq j$.
    As a result, $\Expect[(Z_N - Z_N^K)^2]$ reads
    \begin{align}\label{eq:decomp_square}
        \Expect[(Z_N - Z_N^K)^2] = \frac1{(N-1)^2} \Expect \left\{ \left[ \sum_{i \neq j} g_K^{\alpha \beta}(X_i, Y_j) \right]^2 + \left[ \sum_{i \neq j} g_K^{\alpha \alpha}(X_i, X_j) \right]^2 + \left[ \sum_{i \neq j} g_K^{\beta \beta}(Y_i, Y_j) \right]^2 \right\}.
    \end{align}
    Notice that $\Expect[\alpha_k(X_1) \beta_l(Y_2) \mid X_1] = \Expect[\alpha_k(X_1) \beta_l(Y_2) \mid Y_2] = 0$ for all $k, l \ge 1$, then
    \[
        \Expect[g_K^{\alpha \beta}(X_1, Y_2) \mid X_1] = \Expect[g_K^{\alpha \beta}(X_1, Y_2) \mid Y_2] = 0.
    \]
    As a result,
    \[
        \Expect \left[ \sum_{i \neq j} g_K^{\alpha \beta}(X_i, Y_j) \right]^2 = N(N-1) \Expect[g_K^{\alpha \beta}(X_1, Y_2)^2] = N(N-1) \sum_{k, l > K} \left[ \frac{\gamma_{kl} + s_k s_l \gamma_{lk}}{(1 - s_k^2)(1 - s_l^2)} \right]^2.
    \]
    Let $\delta > 0$ be such that $\abs{t} \delta < \eps$.
    It then follows from \eqref{eq:square_sum} that, for all sufficiently large $K$, we have
    \[
        \frac1{(N-1)^2} \Expect \left[ \sum_{i \neq j} g_K^{\alpha \beta}(X_i, Y_j) \right]^2 \le \frac{N}{N-1} \sum_{k, l > K} \left[ \frac{\gamma_{kl} + s_k s_l \gamma_{lk}}{(1 - s_k^2)(1 - s_l^2)} \right]^2 \le \frac{N}{6(N-1)} \delta^2.
    \]
    The same bound for the rest of the two terms in \eqref{eq:decomp_square} can be shown using similar arguments.
    Therefore, by \eqref{eq:bound_A},
    \[
        A \le \abs{t} [\Expect(Z_N - Z_N^K)^2]^{1/2} \le \sqrt{\frac{N}{2(N-1)}} \abs{t} \delta < \eps, \quad \mbox{ for all } N \ge 2.
    \]
    Repeating the above argument for $Z^K$ and $Z$ gives $C < \eps$ for all $N \ge 2$.
    
    \emph{Step 4}. We bound $B$ by proving $Z_N^K \rightarrow_d Z^K$ as $N \rightarrow \infty$.
    Consider $W_{\size} := (W_\alpha^\top, W_\beta^\top)$ with
    \[
      W_\alpha := \frac1{\sqrt{N}} \left( \sum_{i=1}^\size \alpha_k(X_i) \right)_{k=1}^K \quad \mbox{and} \quad W_\beta := \frac1{\sqrt{N}} \left( \sum_{i=1}^\size \beta_k(Y_i) \right)_{k=1}^K.
    \]
    According to the multivariate CLT \cite[Section 29]{billingsley1995probability}, it holds $W_N \rightarrow_d \mcal{N}_{2K}(0, I_{2K})$, where the covariance matrix $I_{2K}$ follows from the orthonormality of $\{\alpha_k\}_{k \ge 1}$ and $\{\beta_k\}_{k \ge 1}$.
    We then rewrite $Z_N^K$ as a quadratic form of $W_N$.
    Notice that
    \begin{align*}
        &\quad \frac1N \sum_{i\neq j} \sum_{k, l=1}^K \frac{1}{(1 - s_k^2)(1 - s_l^2)} (\gamma_{kl} + s_k s_l \gamma_{lk}) \alpha_k(X_i) \beta_l(Y_j) \\
        &= \frac1N \sum_{k, l=1}^K \frac{(\gamma_{kl} + s_k s_l \gamma_{lk})}{(1 - s_k^2)(1 - s_l^2)} \left\{ \left[ \sum_{i=1}^N \alpha_k(X_i) \right] \left[ \sum_{i=1}^N \beta_l(Y_i) \right] - \sum_{i=1}^N \alpha_k(X_i) \beta_l(Y_i) \right\} \\
        &= 2W_\alpha^\top \Sigma^{\alpha \beta} W_\beta - \sum_{k, l=1}^K \frac{(\gamma_{kl} + s_k s_l \gamma_{lk})}{(1 - s_k^2)(1 - s_l^2)} \frac1N \sum_{i=1}^N \alpha_k(X_i) \beta_l(Y_i), 
    \end{align*}
    where $\Sigma_{kl}^{\alpha \beta} = \frac{(\gamma_{kl} + s_k s_l \gamma_{lk})}{2(1 - s_k^2)(1 - s_l^2)}$ is the $(k, l)$-element in the matrix $\Sigma$.
    Similarly, it holds that
    \begin{align*}
        \frac1N \sum_{i \neq j} \sum_{k,l = 1}^K \frac{\gamma_{kl}}{(1 - s_k^2)(1 - s_l^2)} s_l \alpha_k(X_i) \alpha_l(X_j) &= W_\alpha^\top \Sigma^{\alpha \alpha} W_\alpha - \sum_{k,l = 1}^K \frac{\gamma_{kl} s_l}{(1 - s_k^2)(1 - s_l^2)} \frac1N \sum_{i=1}^N \alpha_k(X_i) \alpha_l(X_i) \\
        \frac1N \sum_{i \neq j} \sum_{k,l = 1}^K \frac{\gamma_{kl}}{(1 - s_k^2)(1 - s_l^2)} s_k \beta_k(Y_i) \beta_l(Y_j) &= W_\beta^\top \Sigma^{\beta \beta} W_\beta - \sum_{k,l = 1}^K \frac{\gamma_{kl} s_k}{(1 - s_k^2)(1 - s_l^2)} \frac1N \sum_{i=1}^N \beta_k(Y_i) \beta_l(Y_i),
    \end{align*}
    where $\Sigma_{kl}^{\alpha \alpha} = \frac{\gamma_{kl} s_l}{(1 - s_k^2)(1 - s_l^2)}$ and $\Sigma_{kl}^{\beta \beta} = \frac{\gamma_{kl} s_k}{(1 - s_k^2)(1 - s_l^2)}$.
    Hence,
    \begin{align*}
      Z_N^K &:=\frac{N}{N-1} W_N^\top \begin{pmatrix} -\Sigma^{\alpha \alpha} & \Sigma^{\alpha \beta} \\ [\Sigma^{\alpha \beta}]^\top & -\Sigma^{\beta \beta} \end{pmatrix} W_N - \frac{N}{N-1} \\ 
      & \quad \sum_{k,l=1}^K \frac{1}{(1 - s_k^2)(1 - s_l^2)} \frac1N \sum_{i=1}^N \left[ (\gamma_{kl} + s_k s_l \gamma_{lk}) \alpha_k(X_i) \beta_l(Y_i) - s_l \gamma_{kl} \alpha_k(X_i) \alpha_l(X_i) - s_k \gamma_{kl} \beta_k(Y_i) \beta_l(Y_i) \right].
    \end{align*}
    Since $\Expect[\alpha_k(X_i) \beta_l(Y_i)] = 0$ and $\Expect[\alpha_k(X_i) \alpha_l(X_i)] = \Expect[\beta_k(Y_i) \beta_l(Y_i)] = \mathbf{1}\{k = l\}$ for all $k, l \ge 1$ and $i \in [N]$, we know from LLN that
    \[
        \frac1N \sum_{i=1}^N \left[ (\gamma_{kl} + s_k s_l \gamma_{lk}) \alpha_k(X_i) \beta_l(Y_i) - s_l \alpha_k(X_i) \alpha_l(X_i) - s_k \beta_k(Y_i) \beta_l(Y_i) \right] \rightarrow_p - s_l \mathbf{1}\{k = l\} - s_k \mathbf{1}\{k = l\}.
    \]
    By Slutsky's lemma, it holds $Z_N^K \rightarrow_d Z^K$, and thus we have $B < \eps$ for all sufficiently large $N$.
    Now, by \eqref{eq:bound_characteristic}, we get $\abs{\Expect[e^{itZ_N}] - \Expect[e^{itZ}]} \le 3 \epsilon$ for all sufficiently large $N$.
    Since $\epsilon$ is arbitrary, this completes the proof.
\end{proof}

\section{Analysis of the Denominator and the Remainder}\label{sec:remainder}
Recall from \eqref{eq:UN} that the first order remainder $R_1 := T_\size - \theta - \first = U_N / D_N$, where
\begin{align}\label{eq:UN_DN}
  U_N := \frac1{N!}\sum_{\sigma \in \perm_N} \frac1N \sum_{i=1}^N \tilde \eta(X_i, Y_{\sigma_i}) \xi^{\otimes}(X, Y_{\sigma}) \quad \mbox{and} \quad D_N := \frac1{N!} \sum_{\sigma \in \perm_N} \xi^{\otimes}(X, Y_{\sigma}),
\end{align}
with $\tilde \eta$ defined in \eqref{eq:eta_bar}.
We prove in this section the limit law of $D_N$ in \Cref{thm:denominator} and the variance bound of $U_N$ in \Cref{prop:bound_variance_Un}.
The strategy is to decompose $D_N$ and $U_N$ into orthogonal pieces using the Hoeffding decomposition (\Cref{sub:heoffding}), and then bound the higher order terms using the spectral gap of $\sinkop$ and $\sinkop^*$ (\Cref{sub:variance}).
Note that both $D_N$ and $U_N$ are \emph{two-sample} U-statistics of infinite order.
Techniques for U-statistics of fixed order and \emph{one-sample} U-statistics of infinite order do not apply here.
Hence, this section develops new tools to handle two-sample U-statistics of infinite order.
Using similar techniques, we also prove the bound for the second order remainder in \Cref{prop:bound_variance_Dn}.
We work throughout this section with the original model assuming that $\{(X_i, Y_i)\}_{i=1}^N \txtover{i.i.d.}{\sim} \prodm$ and use $\Expect$ to denote the expectation.

\subsection{Hoeffding decomposition under the product measure}
\label{sub:heoffding}

\begin{definition}
    Given $A, B \subset [N]$, we denote by $H_{AB}$ the subspace of $\ltwo((\prodm)^N)$ spanned by functions of the form $f(X_A, Y_B)$ such that
    \begin{align}
      \Expect[f(X_A, Y_B) \mid X_C, Y_D] \overset{a.s.}{=} 0, \quad \mbox{for all } C \subset A, D \subset B \mbox{ and } \abs{C} + \abs{D} < \abs{A} + \abs{B}.
    \end{align}
    We say such an $f(X_A, Y_B)$ is completely degenerate.
    In particular, when $\abs{A} = \abs{B} = 1$, we write $f \in \ltwo_{0, 0}(\prodm)$.
    By definition, for distinct choices of the pair $(A, B)$, the subspaces $H_{AB}$ are orthogonal.
    Take an arbitrary mean-zero statistic $T \in \ltwo_0((\prodm)^N)$.
    If $T$ can be decomposed as 
    \begin{equation}\label{eq:Hoeffdingdecomp}
    T = \sum_{A, B \subset [N]} T_{AB},\quad \text{with}\quad T_{AB} \in H_{AB},
    \end{equation}
    then we call it the \emph{Hoeffding decomposition} of $T$ \cite[Chapter 11]{van2000}. Its variance can then be computed as $\Expect[T^2] = \sum_{A, B \subset [N]} \Expect[T_{AB}^2]$.
\end{definition}

For example, both $\tilde \xi(X_1, Y_1) := \xi(X_1, Y_1) - 1$ and $h(X_1, Y_1) := \tilde \eta(X_1, Y_1) \xi(X_1, Y_1)$ are completely degenerate according to the following lemma.
\begin{lemma}\label{lem:exam_ltwo00}
  Assume that $\xi, \eta \xi \in \ltwo(\prodm)$, then $\tilde \xi, \tilde \eta \xi \in \ltwo_{0,0}(\prodm)$.
\end{lemma}
\begin{proof}
    The claim $\tilde \xi \in \ltwo_{0,0}(\prodm)$ follows from $\Expect[\xi(X_i, Y_j) \mid X_i] \txtover{a.s.}{=} \Expect[\xi(X_i, Y_j) \mid Y_j] \txtover{a.s.}{=} 1$ for all $i, j \in [N]$ since $\scb \in \Pi(P, Q)$ and $d \scb/ d(\prodm) = \xi$.
    To prove the other claim, note that, by \eqref{eq:first_cond_identity},
    \begin{align*}
        \eta_{1,0}(x) = \int \big[ (I - \sinkop^* \sinkop)^{-1}(\eta_{1,0} - \sinkop^* \eta_{0,1})(x) + (I - \sinkop \sinkop^*)^{-1}(\eta_{0, 1} - \sinkop \eta_{1,0})(y) \big] \xi(x, y) d Q(y).
    \end{align*}
    By definition, $\eta_{1,0}(x) = \int [\eta(x, y) - \theta] \xi(x, y) d Q(y)$.
    This yields $\int \tilde \eta (x, y) \xi(x, y) d Q(y) = 0$.
    Similarly, we obtain that $\int \tilde \eta(x, y) \xi(x, y) d P(x) = 0$ and thus $\tilde \eta \xi \in \ltwo_{0,0}(\prodm)$.
\end{proof}

We then derive the Hoeffding decompositions of $D_N$ and $U_N$ as defined in \eqref{eq:UN_DN}.
The proof is deferred to the supplementary material.
\begin{proposition}\label{prop:hoeffding_Un}
  Assume that $\xi, \eta \xi \in \ltwo(\prodm)$, then the following Hoeffding decompositions hold:
  \begin{equation}\label{eq:hoeffding_Un}
    \begin{split}
        D_N &= 1 + \sum_{\substack{A, B \subset[N] \\ \abs{A} = \abs{B} > 0}} \frac1{N!} \sum_{\sigma \in \perm_N: \sigma_A = B} \prod_{i \in A} \tilde \xi(X_i, Y_{\sigma_i}) \\
        U_N &= \sum_{\substack{A, B \subset[N] \\ \abs{A} = \abs{B} > 0}} \frac1{N \cdot N!} \sum_{\sigma \in \perm_N: \sigma_A = B} \sum_{i \in A} h(X_i, Y_{\sigma_i}) \prod_{j \in A \backslash \{i\}} \tilde \xi(X_j, Y_{\sigma_j}),
    \end{split}
  \end{equation}
  where $\sigma_A := \{\sigma_i: i \in A\}$.
  Moreover,
  \begin{align*}
    \Expect[D_N^2] &= 1 + \sum_{r=1}^N \sum_{\sigma \in \perm_r} \Expect\Big[ \prod_{j=1}^r \tilde \xi(X_j, Y_j) \tilde \xi(X_j, Y_{\sigma_j}) \Big] \\
    \Expect[U_N^2] &= \frac1{N^2} \sum_{r=1}^N \frac{r}{r!} \sum_{\sigma \in \perm_r} \sum_{i=1}^r \Expect\Big[ h(X_1, Y_1) \prod_{j = 2}^r \tilde \xi(X_j, Y_j) h(X_i, Y_{\sigma_i}) \prod_{j \in [r]\backslash \{i\}} \tilde \xi(X_j, Y_{\sigma_j}) \Big].
  \end{align*}
\end{proposition}

\subsection{Variance bounds}
\label{sub:variance}

We then bound the variances of $D_N$ and $U_N$ using the spectral gap of operators $\sinkop$ and $\sinkop^*$.
\Cref{asmp:contiguity} guarantees that such spectral gap does exist.
We first prove a contraction property.
\begin{lemma}\label{lem:degeneracy_contraction}
  Recall $s_1$ from \Cref{asmp:contiguity}.
  For any $f \in \ltwo_{0,0}(P \otimes P)$, we have $(I_{P} \otimes \sinkop) f \in \ltwo_{0,0}(\prodm)$ and $\norm{(I_{P} \otimes \sinkop) f}_{\ltwo(\prodm)} \le s_1 \norm{f}_{\ltwo(P \otimes P)}$.
  Similar results hold for $I_{P} \otimes \sinkop^*$, $\sinkop \otimes I_{Q}$ and $\sinkop^* \otimes I_{Q}$.
\end{lemma}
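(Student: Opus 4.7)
The plan is to reduce both claims to the singular value decomposition of $\sinkop$. The degeneracy preservation is essentially for free: \Cref{lem:preserve_degeneracy} applied with $\sinkop_1 = I_{\rho_0}$ and $\sinkop_2 = \sinkop$ immediately gives $(I_{\rho_0} \otimes \sinkop)f \in \ltwo_{0,0}(\rho_0 \otimes \rho_1)$, and the other three cases $I_{\rho_0} \otimes \sinkop^*$, $\sinkop \otimes I_{\rho_1}$, $\sinkop^* \otimes I_{\rho_1}$ are handled identically after relabeling. So the content of the lemma is the quantitative contraction by $s_1$.

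For the contraction bound I would expand $f$ in an orthonormal eigenbasis. By \Cref{lem:basis_prod_space}, $\{\alpha_i \otimes \alpha_j\}_{i,j\ge 0}$ is an orthonormal basis of $\ltwo(\rho_0 \otimes \rho_0)$, so $f = \sum_{i,j \ge 0} \gamma_{ij}(\alpha_i \otimes \alpha_j)$ with $\norm{f}^2_{\ltwo(\rho_0\otimes\rho_0)} = \sum_{i,j} \gamma_{ij}^2$. The hypothesis $f \in \ltwo_{0,0}(\rho_0 \otimes \rho_0)$ kills the "marginal" coefficients: since $\alpha_0 = \ones$, the identities $\Expect[f(X_1,X_2)\mid X_2] = \sum_{j} \gamma_{0j}\alpha_j$ and $\Expect[f(X_1,X_2)\mid X_1] = \sum_{i} \gamma_{i0}\alpha_i$ must vanish a.s., forcing $\gamma_{0j} = \gamma_{i0} = 0$ for all $i,j$. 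Hence only indices $i,j \ge 1$ contribute.

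Applying $I_{\rho_0}\otimes \sinkop$ termwise and using $\sinkop \alpha_j = s_j \beta_j$ yields $(I_{\rho_0} \otimes \sinkop) f = \sum_{i,j \ge 1} \gamma_{ij} s_j (\alpha_i \otimes \beta_j)$. Since $\{\alpha_i \otimes \beta_j\}$ is an orthonormal basis of $\ltwo(\rho_0 \otimes \rho_1)$, and since \Cref{asmp:secondmoment}(3) gives $s_j \le s_1 < 1$ for every $j \ge 1$, Parseval yields
\[
  \norm{(I_{\rho_0} \otimes \sinkop) f}^2_{\ltwo(\prodm)} \;=\; \sum_{i,j \ge 1} s_j^2\, \gamma_{ij}^2 \;\le\; s_1^2 \sum_{i,j \ge 1} \gamma_{ij}^2 \;=\; s_1^2\, \norm{f}^2_{\ltwo(\rho_0 \otimes \rho_0)}.
\]
The other three operators are treated in the same way: for $\sinkop \otimes I_{\rho_1}$ and $I_{\rho_0} \otimes \sinkop^*$ expand in $\{\alpha_i \otimes \beta_j\}$ and use $\sinkop \alpha_i = s_i \beta_i$ or $\sinkop^* \beta_j = s_j \alpha_j$; for $\sinkop^* \otimes I_{\rho_1}$ expand in $\{\beta_i \otimes \beta_j\}$ and use $\sinkop^* \beta_i = s_i \alpha_i$. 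In each case the degeneracy rules out the index $0$, which is where $s_0 = 1$ would have prevented strict contraction.

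There is no real obstacle here; the only point that deserves care is verifying that $\ltwo_{0,0}$ degeneracy is exactly what excludes the row and column of the eigenbasis carrying the trivial singular value $s_0 = 1$, so that the strict contraction constant $s_1 < 1$ from \Cref{asmp:secondmoment}(3) becomes available.
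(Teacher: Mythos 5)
Your proof is correct and follows essentially the same route as the paper: degeneracy preservation via Lemma~\ref{lem:preserve_degeneracy}, expansion of $f$ in the singular-function basis $\{\alpha_i\otimes\alpha_j\}$, and Parseval with $\sinkop\alpha_j = s_j\beta_j$. The only difference is that you spell out explicitly why the degeneracy condition forces $\gamma_{0j}=\gamma_{i0}=0$, whereas the paper starts the expansion at $i,j\ge 1$ without comment; this is a welcome clarification but not a different argument.
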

\begin{proof}
  Take $f \in \ltwo_{0,0}(P \otimes P)$.
  By definition, $(I_{P} \otimes \sinkop) f(x, y) = \int f(x, x') \xi(x', y) d P(x')$.
  Thus,
  \begin{align*}
      \Expect[(I_{P} \otimes \sinkop) f(X_1, Y_1) \mid X_1] &= \int f(X_1, x') d P(x') \left[ \int \xi(x', y) d Q(y) \right] \\
      &= \int f(X_1, x') d P(x') \txtover{a.s.}{=} 0.
  \end{align*}
  Similarly, $\Expect[(I_{P} \otimes \sinkop) f(X_1, Y_1) \mid Y_1] \txtover{a.s.}{=} 0$.
  Consequently, $(I_{P} \otimes \sinkop) f \in \ltwo_{0,0}(\prodm)$.
  Now, by \cite[Page 90]{berezansky2013spectral}, $\{\alpha_i \otimes \alpha_j\}_{i, j\ge0}$ forms an orthonormal basis of $\ltwo(P \otimes P)$, and thus $f$ admits the following expansion $f = \sum_{i, j \ge 1} \gamma_{ij} \alpha_i \otimes \alpha_j$ where $\sum_{i,j\ge 1} \gamma_{ij}^2 < \infty$.
  It then follows that
  \begin{align*}
    \norm{(I_{P} \otimes \sinkop) f}_{\ltwo(\prodm)}^2 = \norm{\sum_{i,j\ge 1} \gamma_{ij} s_j \alpha_i \otimes \beta_j}_{\ltwo(\prodm)}^2 = \sum_{i,j\ge 1} \gamma_{ij}^2 s_j^2 \le s_1^2 \norm{f}_{\ltwo(P \otimes P)}^2.
  \end{align*}
\end{proof}

According to \Cref{prop:hoeffding_Un}, the key quantity in the variances of $D_N$ and $U_N$ is
\begin{align}\label{eq:key_variance}
    \Expect\left[ f(X_1, Y_1) \prod_{j = 2}^N \tilde \xi(X_j, Y_j) f(X_i, Y_{\sigma_i}) \prod_{j \in [N]\backslash \{i\}} \tilde \xi(X_j, Y_{\sigma_j}) \right]
\end{align}
for some $f \in \ltwo_{0,0}(\prodm)$, where $f = \tilde \xi = \xi - 1$ for $D_N$ and $f = h = \tilde \eta \xi$ for $U_N$.
In order to control it, we decompose a permutation into disjoint cycles.
By independence, the expectation then equals the product of expectations with respect to each cycle.
We first give a simple example to illustrate the idea.
\begin{example}
  Consider the case when $r = 3$, $i = 3$, and $\sigma$ is given by $\sigma_1 = 2$, $\sigma_2 = 1$ and $\sigma_3 = 3$. We are interested in bounding the following expectation:
  \begin{align}\label{eq:example_target}
    \Expect[f(X_1, Y_1) \tilde \xi(X_2, Y_2) \tilde \xi(X_3, Y_3) f(X_3, Y_3) \tilde \xi(X_1, Y_2) \tilde \xi(X_2, Y_1)].
  \end{align}
  By construction, $\sigma$ contains two cycles, $1 \to 2 \to 1$ and $3 \to 3$, and the above expectation reads
  \begin{align*}
    \Expect[f(X_1, Y_1) \tilde \xi(X_2, Y_2) \tilde \xi(X_1, Y_2) \tilde \xi(X_2, Y_1)] \cdot \Expect[f(X_3, Y_3)\tilde \xi(X_3, Y_3)].
  \end{align*}
  The second expectation is upper bounded by $\norm{f}_{\ltwo(\prodm)} \lVert \tilde \xi \rVert_{\ltwo(\prodm)}$ by the Cauchy-Schwarz inequality.
  It then suffices to bound the first expectation.
  We simplify this expectation by iteratively integrating with respect to a single variable, while keeping the rest being fixed.
  We first integrate with respect to $X_1$ given $X_2, Y_1, Y_2$.
  This gives us
  \begin{align*}
      &\quad \Expect[ f(X_1, Y_1)\tilde \xi(X_1, Y_2) \mid X_2, Y_1, Y_2] \cdot \tilde \xi(X_2, Y_2) \tilde \xi(X_2, Y_1) \\
      &= (\sinkop \otimes I_{Q})f(Y_2, Y_1) \cdot \tilde \xi(X_2, Y_2) \tilde \xi(X_2, Y_1),
  \end{align*}
  where we have used $\Expect[ f(X_1, Y_1) \tilde \xi(X_1, Y_2) \mid X_2, Y_1, Y_2] = \Expect[f(X_1, Y_1) \xi(X_1, Y_2) \mid Y_1, Y_2] = (\sinkop \otimes I_{Q})f(Y_2, Y_1)$ since $f \in \ltwo_{0,0}(\prodm)$ and $\tilde \xi = \xi - 1$.
  We then integrate with respect to $Y_2$ given $X_2$ and $Y_1$.
  This yields
  \begin{align*}
    \Expect[(\sinkop \otimes I_{Q})f(Y_2, Y_1) \tilde \xi(X_2, Y_2) \mid X_2, Y_1] \cdot \tilde \xi(X_2, Y_1) = (\sinkop^* \otimes I_{Q})(\sinkop \otimes I_{Q})f(X_2, Y_1) \cdot \tilde \xi(X_2, Y_1).
  \end{align*}
  By the Cauchy-Schwarz inequality and \Cref{lem:degeneracy_contraction}, its expectation is upper bounded by
  \begin{align*}
    \norm{(\sinkop^* \otimes I_{Q})(\sinkop \otimes I_{Q})f}_{\ltwo(\prodm)} \lVert \tilde \xi \rVert_{\ltwo(\prodm)} \le s_1^2 \norm{f}_{\ltwo(\prodm)} \lVert \tilde \xi \rVert_{\ltwo(\prodm)}.
  \end{align*}
  Hence, the expectation in \eqref{eq:example_target} is upper bounded by $s_1^2 \norm{f}_{\ltwo(\prodm)}^2 \lVert \tilde \xi \rVert_{\ltwo(\prodm)}^2$.
\end{example}

The following lemma generalizes this example to an arbitrary cycle $k_1 \to k_2 \to \dots \to k_l \to k_1$.
The proof is deferred to the supplementary material.

\begin{lemma}\label{lem:bound_f_xi}
  Suppose \Cref{asmp:contiguity} holds and $f, g \in \ltwo_{0,0}(\prodm)$.
  Define $\varsigma_{f} := \norm{f}_{\ltwo(\prodm)}$ and $\varsigma_{g} := \norm{g}_{\ltwo(\prodm)}$.
  For any $l > 0$ and $l$ distinct indices $\{k_1, \dots, k_l\} \subset [N]$, we have, for all $t, t' \in [l]$,
  \begin{align}\label{eq:bound_two_f}
    \Expect\left[ f(X_{k_{t}}, Y_{k_{t}}) g(X_{k_{t'}}, Y_{k_{t'+1}}) \prod_{i \neq t} \tilde \xi(X_{k_i}, Y_{k_i})  \prod_{j \neq t'} \tilde \xi(X_{k_j}, Y_{k_{j+1}}) \right] \le s_1^{2(l - 1)} \varsigma_f \varsigma_g.
  \end{align}
\end{lemma}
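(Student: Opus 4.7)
The plan is to integrate out the expectation one variable at a time by walking around the bipartite cycle formed by the $2l$ factors, using that each integration contracts a completely degenerate function by a factor of $s_1$ via \Cref{lem:degeneracy_contraction}.

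\emph{Graph setup and base case.} Form a bipartite graph $G$ on vertex set $\{X_{k_i}\}_{i=1}^l \cup \{Y_{k_i}\}_{i=1}^l$ in which the identity matching $\{(X_{k_i},Y_{k_i})\}_{i=1}^l$ and the cyclic matching $\{(X_{k_j},Y_{k_{j+1}})\}_{j=1}^l$ (indices mod $l$) together supply all $2l$ edges. Each vertex has degree $2$, and since $k_1 \to \cdots \to k_l \to k_1$ is a single $l$-cycle, $G$ is a single $2l$-cycle. Among its edges, the $f$-edge $(X_{k_t},Y_{k_t})$ and the $g$-edge $(X_{k_{t'}},Y_{k_{t'+1}})$ are distinguished; the remaining $2l-2$ edges carry factors $\xi-1$. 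When $l=1$ the two distinguished edges coincide and no $\xi-1$ factors are present, so Cauchy--Schwarz immediately yields $\Expect[fg] \le \varsigma_f\varsigma_g = s_1^0\varsigma_f\varsigma_g$.

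\emph{Collapse for $l \ge 2$.} Removing the $f$- and $g$-edges splits the $2l$-cycle into two arcs (one possibly empty) whose edge sets partition the $2l-2$ remaining factors. Pair each endpoint of the $f$-edge with the endpoint of the $g$-edge reachable along the same arc. Maintain a running bivariate function $F_m$, initialized to $F_0 := f \in \ltwo_{0,0}(\rho_0 \otimes \rho_1)$, whose two arguments always correspond to two ``current'' vertices of $G$. At step $m \ge 1$, pick an argument $z$ of $F_{m-1}$ that has not yet reached its destination and let $z'$ be the next vertex along its arc. Because $G$ is $2$-regular and exactly one of $z$'s edges is already absorbed into $F_{m-1}$ (the $f$-edge at $z$'s first visit, otherwise the edge just traversed), $\xi(z,z')-1$ is the unique remaining factor that involves $z$. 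The tower property and Fubini then give
\begin{align*}
    \Expect\!\left[\,F_{m-1}(\xi(z,z')-1) \,\bigm|\, \text{everything but } z\,\right] = \opS_m F_{m-1},
\end{align*}
where the ``$-1$'' contribution vanishes by the degeneracy of $F_{m-1}$ in its other slot, and $\opS_m$ is one of $\sinkop \otimes I$, $\sinkop^* \otimes I$, $I \otimes \sinkop$, $I \otimes \sinkop^*$ acting on the slot of $F_{m-1}$ that held $z$, the choice between $\sinkop$ and $\sinkop^*$ being dictated by the $X$/$Y$-types of $z$ and $z'$. Set $F_m := \opS_m F_{m-1}$. By \Cref{lem:preserve_degeneracy}, $F_m$ is again completely degenerate on its updated product space, and \Cref{lem:degeneracy_contraction} gives
\begin{align*}
    \norm{F_m}_{\ltwo} \le s_1 \norm{F_{m-1}}_{\ltwo}.
\end{align*}

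After $m = 2l - 2$ steps every $\xi-1$ factor has been absorbed and the arguments of $F_{2l-2}$ coincide with the endpoints of the $g$-edge. Cauchy--Schwarz then yields
\begin{align*}
    \Expect[F_{2l-2}\,g] \le \norm{F_{2l-2}}_{\ltwo}\,\norm{g}_{\ltwo} \le s_1^{2(l-1)} \varsigma_f \varsigma_g,
\end{align*}
the claimed bound. The main bookkeeping obstacle is verifying that the walk is well-posed at every step: that the chosen vertex $z$ always has a unique remaining $\xi-1$ edge incident to it. This is immediate from $2$-regularity together with the inductive observation that $z$'s other incident edge has already been folded into $F_{m-1}$. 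All remaining ingredients---vanishing of the ``$-1$'' contribution via degeneracy of $F_{m-1}$, preservation of degeneracy under $\opS_m$, and the per-step $s_1$-contraction---are direct consequences of \Cref{lem:preserve_degeneracy} and \Cref{lem:degeneracy_contraction}.
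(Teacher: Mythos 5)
Your proof is correct and follows essentially the same strategy as the paper's: iteratively integrate out one variable at a time, using the complete degeneracy of the running kernel to kill the ``$-1$'' contribution, \Cref{lem:preserve_degeneracy} to keep degeneracy, and \Cref{lem:degeneracy_contraction} to gain a factor of $s_1$ per step, finishing with Cauchy--Schwarz against $g$. The graph-theoretic reformulation as a bipartite $2l$-cycle with the $f$- and $g$-edges removed is a nice organizing device: it makes the walk well-posed in both the $t=t'$ and $t\neq t'$ cases by the same argument (walking along the two arcs, processing whichever slot of $F_{m-1}$ you like at each step), whereas the paper details only $t=t'=1$ and defers $t\neq t'$ to ``a similar argument''---which is, in fact, the case actually invoked later in \Cref{lem:bound_permutation_covariance}. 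One small bookkeeping point worth making explicit in your write-up is that the vanishing of the ``$-1$'' term uses independence under $(\prodm)^N$ to reduce $\muexp[F_{m-1}(w,z)\mid w,z']$ to $\muexp[F_{m-1}(w,z)\mid w]=0$; that is what you implicitly rely on and is exactly what the paper does.
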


Now we are ready to control the quantity in \eqref{eq:key_variance}.
\begin{lemma}\label{lem:bound_permutation_covariance}
  Suppose the same assumptions in \Cref{lem:bound_f_xi} hold true.
  Let $\varsigma_0 := \lVert \tilde \xi \rVert_{\ltwo(\prodm)}$ and $\varsigma_{h} := \norm{h}_{\ltwo(\prodm)}$.
  For any $N \in \mathbb{N}_+$, $\sigma \in \perm_N$ and $i \in [N]$, we have
  \begin{align*}
    \Expect\left[ h(X_1, Y_1) \prod_{j = 2}^N \tilde \xi(X_j, Y_j) h(X_i, Y_{\sigma_i}) \prod_{j \in [N] \backslash \{i\}} \tilde \xi(X_j, Y_{\sigma_j}) \right] \le s_1^{2(N - \#\sigma)} \varsigma_h^2 \varsigma_0^{2(\#\sigma - 1)},
  \end{align*}
  where $\#\sigma$ is the number of cycles of the permutation $\sigma$.
\end{lemma}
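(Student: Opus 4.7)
The plan is to factor the target expectation over the cycle decomposition of $\sigma$ and apply \Cref{lem:bound_f_xi} to each cycle separately. Write $\sigma = C_1 \sqcup \cdots \sqcup C_{\#\sigma}$ where each $C_m \subset [N]$ is an orbit of $\sigma$ of length $l_m$, so $\sum_m l_m = N$. Because $\sigma(C_m) = C_m$, every factor in the integrand indexed by $j \in C_m$ (either $[\xi(X_j,Y_j)-1]$, possibly replaced by $h(X_1,Y_1)$ when $j=1$, or $[\xi(X_j,Y_{\sigma_j})-1]$, possibly replaced by $h(X_i,Y_{\sigma_i})$ when $j=i$) depends only on $\{X_j,Y_j : j \in C_m\}$. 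The i.i.d.\ assumption under $P^N$ then lets the whole expectation factorize as a product of per-cycle expectations.

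For a cycle $C_m = \{k_1,\ldots,k_{l_m}\}$ with $\sigma(k_s) = k_{s+1}$ (indices mod $l_m$), the resulting factor has exactly the form covered by \Cref{lem:bound_f_xi}: it contains all $l_m$ diagonal factors $[\xi(X_{k_s},Y_{k_s})-1]$ and all $l_m$ shifted factors $[\xi(X_{k_s},Y_{k_{s+1}})-1]$, with possibly one diagonal factor (at the position $k_t=1$, if $1 \in C_m$) replaced by $h(X_1,Y_1)$ and one shifted factor (at the position $k_{t'}=i$, if $i \in C_m$) replaced by $h(X_i,Y_{\sigma_i})$. Both $h$ and $\xi-1$ lie in $\ltwo_{0,0}(\prodm)$ — the former by \Cref{lem:operator_C}, the latter because $\int \xi(x,y)\rho_1(y)\,dy = 1 = \int \xi(x,y)\rho_0(x)\,dx$ a.s. — so \Cref{lem:bound_f_xi} yields a per-cycle bound $s_1^{2(l_m-1)} \lVert f \rVert_{\ltwo(\prodm)} \lVert g \rVert_{\ltwo(\prodm)}$, with $\lVert \cdot \rVert$ equal to $\varsigma$ or $\varsigma_0$ according to whether the relevant slot carries an $h$ or a $\xi-1$.

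To finish, combine the bounds over all cycles. The $s_1$ exponents sum to $\sum_{m} 2(l_m - 1) = 2(N-\#\sigma)$. For the $\varsigma,\varsigma_0$ factors, there are two bookkeeping cases. If $1$ and $i$ lie in the same cycle $C_{m_0}$ (automatic when $1=i$), then that cycle contributes $\varsigma^2$ and the other $\#\sigma - 1$ cycles each contribute $\varsigma_0^2$; if $1$ and $i$ lie in different cycles $C_{m_0}, C_{m_1}$, those two cycles each contribute $\varsigma \varsigma_0$ and the remaining $\#\sigma - 2$ cycles each contribute $\varsigma_0^2$. Both cases collapse to the same aggregate $\varsigma^2 \varsigma_0^{2(\#\sigma-1)}$, giving the claimed bound $s_1^{2(N-\#\sigma)} \varsigma^2 \varsigma_0^{2(\#\sigma-1)}$.

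The substantive analytic content — the iterated conditioning argument that extracts one power of $s_1^2$ per edge traversed around a cycle — is entirely deferred to \Cref{lem:bound_f_xi}, so the only real obstacle here is correctly verifying the case split above and checking that the per-cycle integrand genuinely matches the template of that lemma (in particular, that a fixed point $k$ with $\sigma(k)=k$ gives $l_m=1$ and is handled consistently, since then the "two" factors at $(k_1,k_1)$ and $(k_1,k_{2})=(k_1,k_1)$ collapse into a single quadratic term whose expectation is exactly $\lVert f \rVert \lVert g \rVert$ by Cauchy--Schwarz).
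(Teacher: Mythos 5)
Your proposal is correct and follows the same route as the paper: decompose $\sigma$ into cycles, use independence across cycles to factor the expectation, invoke \Cref{lem:bound_f_xi} on each cycle, and multiply the resulting bounds, checking that the $\varsigma,\varsigma_0$ bookkeeping yields $\varsigma^2\varsigma_0^{2(\#\sigma-1)}$ whether or not $1$ and $i$ share a cycle. The paper separately treats four cycle types (contains both $1$ and $i$, only $1$, only $i$, neither) and then the case $i=1$; your grouping by ``same cycle vs.\ different cycles'' with the fixed-point remark is a mildly tidier way to organize the identical argument.
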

\begin{proof}
  We first consider the case when $i \neq 1$.
  It is well-known that every permutation can be decomposed as disjoint cycles.
  Take a cycle $k_1 \rightarrow k_2 \rightarrow \dots \rightarrow k_l \rightarrow k_1$ of $\sigma$.
  If it contains both $1$ and $i$, then we assume, w.l.o.g., $k_1 = 1$ and $k_2 = i$.
  Consequently, all the terms that involve $X_{k_{[l]}}$ and $Y_{k_{[l]}}$ are
  \begin{align*}
    h(X_{1}, Y_{1}) h(X_{i}, Y_{\sigma_i}) \prod_{j=2}^l \tilde \xi(X_{k_j}, Y_{k_j}) \prod_{j \in [l] \backslash \{2\}} \tilde \xi(X_{k_j}, Y_{k_{j+1}}).
  \end{align*}
  Using \Cref{lem:bound_f_xi} with $f = h$ and $g = h$, it holds that
  \begin{align*}
    \Expect\left[ h(X_{1}, Y_{1}) h(X_{i}, Y_{\sigma_i}) \prod_{j=2}^l \tilde \xi(X_{k_j}, Y_{k_j}) \prod_{j \in [l] \backslash \{2\}} \tilde \xi(X_{k_j}, Y_{k_{j+1}}) \right] \le s_1^{2(l-1)} \varsigma_h^2.
  \end{align*}
  If this cycle only contains $1$, then a similar argument gives
  \begin{align*}
    \Expect\left[ h(X_{1}, Y_{1}) \prod_{j=2}^l \tilde \xi(X_{k_j}, Y_{k_j}) \prod_{j = 1}^l \tilde \xi(X_{k_j}, Y_{k_{j+1}}) \right] \le s_1^{2(l-1)} \varsigma_h \varsigma_0.
  \end{align*}
  If this cycle only contains $i$, with $k_1 = i$, then we have
  \begin{align*}
    \Expect\left[ h(X_{i}, Y_{\sigma_i}) \prod_{j=1}^l \tilde \xi(X_{k_j}, Y_{k_j}) \prod_{j = 2}^l \tilde \xi(X_{k_j}, Y_{k_{j+1}}) \right] \le s_1^{2(l-1)} \varsigma_h \varsigma_0.
  \end{align*}
  Finally, if this cycle does not contain either $1$ or $i$, then it holds
  \begin{align*}
    \Expect\left[ \prod_{j=1}^l \tilde \xi(X_{k_j}, Y_{k_j}) \tilde \xi(X_{k_j}, Y_{k_{j+1}}) \right] \le s_1^{2(l-1)} \varsigma_0^2.
  \end{align*}
  Here we are invoking \Cref{lem:bound_f_xi} with $f = g = \tilde \xi$.
  Putting all together, we obtain
  \begin{align*}
    \Expect\left[ h(X_1, Y_1) \prod_{j = 2}^N \tilde \xi(X_j, Y_j) h(X_i, Y_{\sigma_i}) \prod_{j \in [N] \backslash \{i\}} \tilde \xi(X_j, Y_{\sigma_j}) \right] \le s_1^{2(N - \#\sigma)} \varsigma_h^2 \varsigma_0^{2(\#\sigma-1)}.
  \end{align*}
  When $i = 1$, we can invoke \Cref{lem:bound_f_xi} to get the same bound, since we allow $t = t'$ in this lemma.
\end{proof}

Now we are ready to give an upper bound for the variance of $U_N$ and prove \Cref{prop:bound_variance_Un}.
\begin{proof}[Proof of \Cref{prop:bound_variance_Un}]
  Recall from \Cref{prop:hoeffding_Un} that $\Expect[U_N^2]$ is equal to
  \begin{align}\label{eq:expect_UN}
    \frac1{N^2} \sum_{r=1}^N \frac{r}{r!} \sum_{\sigma \in \perm_r} \sum_{i=1}^r \Expect\left[ h(X_1, Y_1) \prod_{j = 2}^r \tilde \xi(X_j, Y_j) h(X_i, Y_{\sigma_i}) \prod_{j \in [N] \backslash \{i\}} \tilde \xi(X_j, Y_{\sigma_j}) \right].
  \end{align}
  By \Cref{lem:bound_permutation_covariance}, we know
  \begin{align}\label{eq:bound_var_UN}
    \Expect[U_N^2] \le \frac1{N^2} \sum_{r=1}^N \frac{r}{r!} \sum_{\sigma \in \perm_r} r s_1^{2(r - \#\sigma)} \varsigma_0^{2(\# \sigma - 1)} \varsigma_h^2.
  \end{align}
  If $s_1 = 0$ or $\varsigma_0 = 0$, then $\xi = 1$ $\prodm$-a.s.
  It follows from \eqref{eq:expect_UN} that $\Expect[U_N^2] = 0$ which completes the proof.
  Hence, we assume in the following that $s_1 > 0$ and $\varsigma_0 > 0$.

  Now, let $\sigma^*$ be a random permutation uniformly sampled from $\perm_r$.
  It is known~\cite[Chapter 1]{arratia2003logarithmic} that the moment generating function of $\# \sigma^*$ is given by $\Expect[u^{\#\sigma^*}] = \prod_{i=1}^r (1 - \frac1i + \frac{u}i)$.
  Thus,
  \begin{align*}
    \frac{r}{r!} \sum_{\sigma \in \perm_r} r s_1^{2(r - \#\sigma)} \varsigma_0^{2(\# \sigma - 1)} &= r^2 \Expect\left[ s_1^{2(r - \#\sigma^*)} \varsigma_0^{2(\# \sigma^* - 1)} \right] = r^2 s_1^{2r} \varsigma_0^{-2} \prod_{i=1}^r \left(1 - \frac1i + \frac{\varsigma_0^2}{s_1^2 i} \right).
  \end{align*}
  Let $m := \lceil \varsigma_0^2/s_1^2 - 1 \rceil$.
  Then, for every $r \ge m$,
  \begin{align*}
      \prod_{i=1}^r \left(1 - \frac1i + \frac{\varsigma_0^2}{s_1^2 i} \right) \le \prod_{i=1}^r \left(1 + \frac{m}{i} \right) = \frac{\prod_{i=1}^r (i + m)}{r!} = \frac{\prod_{i=r-m+1}^r (i + m)}{m!} \le \frac{(r+m)^m}{m!},
  \end{align*}
  and thus $\sum_{r=m}^N \frac{r}{r!} \sum_{\sigma \in \perm_r} r s_1^{2(r - \#\sigma)} \varsigma_0^{2(\# \sigma - 1)} \le \sum_{r=m}^N r^2s_1^{2r}\varsigma_0^{-2} \frac{(r+m)^m}{m!}$
  converges as $N\rightarrow \infty$ since $s_1 < 1$.
  It follows from \eqref{eq:bound_var_UN} that $\Expect[U_N^2] = O(N^{-2})$.
\end{proof}

With the same proof technique, a similar result holds for $D_N$.
Recall from \Cref{prop:hoeffding_Un} that $D_N = 1 + \sum_{r=1}^N D_{N, r}$ where
\begin{align}\label{eq:hoeffding_Dn}
    D_{N, r} := \frac{1}{N!} \sum_{\abs{A} = \abs{B} = r} \sum_{\sigma \in \perm_N: \sigma_A = B} \prod_{i \in A} \tilde \xi(X_i, Y_{\sigma_i}).
\end{align}

\begin{proposition}\label{prop:bound_variance_Dn}
  Under \Cref{asmp:contiguity}, we have, for any integer $R \in [0, N]$,
  \begin{align*}
      \Expect\left[ \left(D_N - 1 - \sum_{r=1}^R D_{N, r} \right)^2 \right] \le \sum_{r = R+1}^N \frac{1}{r!} \sum_{\sigma \in \perm_r} s_1^{2(r - \# \sigma)} \varsigma_0^{2\# \sigma}
  \end{align*}
  which can be arbitrarily small for sufficiently large $R$.
\end{proposition}

\subsection{Limit Law of the Denominator}
\label{sub:denominator}

Finally, we prove \Cref{thm:denominator} regarding the limiting distribution of $D_N$.
According to the singular value decomposition in \Cref{asmp:contiguity}, it holds that
\begin{align*}
    \xi(x, y) = 1 + \sum_{k=1}^\infty s_k \alpha_k(x) \beta_k(y), \quad \mbox{in } \ltwo(\prodm),
\end{align*}
where $0 \le s_k < 1$ is decreasing in $k$.
Hence, we start by considering a truncated version of $\xi$, i.e., $\xi^K(x, y) := 1 + \sum_{k=1}^K s_k \alpha_k(x) \beta_k(y)$ for some integer $K$ and derive the limit law of
\begin{align*}
    D_N^K := \frac1{N!} \sum_{\sigma \in \perm_N} \prod_{i=1}^N \xi^K(X_i, Y_{\sigma_i}).
\end{align*}
Note that all the results for $D_N$ in Sections \ref{sub:heoffding} and \ref{sub:variance} hold for $D_N^K$ with $\xi$ being replaced by $\xi^K$.

\begin{proposition}\label{prop:limit_truncate_DN}
  Under \Cref{asmp:contiguity}, it holds that
  \begin{align}\label{eq:limit_truncate_DN}
    D_N^K \rightarrow_d D^K := \frac{1}{\sqrt{\prod_{k=1}^K (1 - s_k^2)}} \exp\left\{ \frac12 \sum_{k=1}^K \left[ -\frac{s_k^2}{1 - s_k^2}(U_k^2 + V_k^2) + \frac{2s_k}{1 - s_k^2}U_kV_k \right] \right\},
  \end{align}
  where $\{U_k\}_{k = 1}^K$ and $\{V_k\}_{k = 1}^K$ are independent standard normal random variables.
\end{proposition}
\begin{proof}
  We will prove the convergence using characteristic functions, i.e., $\Expect[e^{itD_N^K}] \rightarrow \Expect[e^{itD^K}]$.
  
  \emph{Step 1. Truncation.}
  Recall from \eqref{eq:hoeffding_Dn} that $D_N = 1 + \sum_{r=1}^N D_{N,r}$.
  Applying it to $D_N^K$ yields $D_N^K = 1 + \sum_{r=1}^N D_{N,r}^K$ where $D_{N, r}^K$ is $D_{N,r}$ with $\xi$ being replaced by $\xi^K$.
  We further truncate $D_N^K$ so that it becomes a two-sample U-statistic of fixed order $R > 0$, that is, we consider $D_N^{K, R} := 1 + \sum_{r=1}^R D_{N,r}^K$.
  We then truncate the limit $D^K$.
  By the multi-linear Mehler formula (see, e.g., \cite{foata1981hermite}), we have
  \begin{align}\label{eq:multi_mehler}
      D^K = \sum_{p_1, \dots, p_K \ge 0} \prod_{k=1}^K \frac{s_k^{p_k}}{p_k!} H_{p_k}(U_k) H_{p_k}(V_k),
  \end{align}
  where $\{H_p\}_{p \ge 0}$ are the Hermite polynomials satisfying
  \begin{align}\label{eq:hermite_orthogonal}
      \int H_p(x) H_q(x) e^{-x^2/2} dx = \sqrt{2\pi}p! \ind\{p = q\}.
  \end{align}
  Therefore, it is natural to define
  \begin{align*}
    D^{K, R} := 1 + \sum_{r=1}^R \sum_{p_1 + \dots + p_K = r} \prod_{k=1}^K \frac{s_k^{p_k}}{p_k!} H_{p_k}(U_k) H_{p_k}(V_k).
  \end{align*}
  By the triangle inequality, $\abs{\Expect[e^{itD_N^K}] - \Expect[e^{itD^K}]} \le C_1 + C_2 + C_3$ where
  \begin{align*}
    C_1 := \abs{\Expect[e^{itD_N^K} - e^{itD_N^{K,R}}]},\; C_2 := \abs{\Expect[e^{itD_N^{K,R}} - e^{itD^{K,R}}]},\; C_3 := \abs{\Expect[e^{itD^{K,R}} - e^{itD^{K}}]}.
  \end{align*}
  We fix some arbitrary $\delta > 0$ and show that $C_1, C_2, C_3 \le \delta$ for sufficiently large $N$ and $R$.

  \emph{Step 2. Control $C_1$ and $C_3$.}
  Using the inequality $\abs{e^{iz} - 1} \le \abs{z}$, we get
  \begin{align*}
    C_1 \le \Expect\abs{e^{itD_N^K} - e^{itD_N^{K,R}}} \le \abs{t} \Expect\abs{D_N^K - D_N^{K, R}} \le \abs{t} \sqrt{\Expect(D_N^K - D_N^{K, R})^2}.
  \end{align*}
  Invoking \Cref{prop:bound_variance_Dn} for $D_N^K$ implies that, for sufficiently large $R$, we have $C_1 \le \delta$.
  Similarly, it holds that $C_3 \le \abs{t} \sqrt{\Expect(D^{K, R} - D^K)^2}$
  where
  \begin{align*}
    \Expect(D^{K, R} - D^K)^2
    &= \Expect\abs{\sum_{r = R+1}^\infty \sum_{p_1+\dots+p_K=r} \prod_{k=1}^K \frac{s_k^{p_k}}{p_k!} H_{p_k}(U_k) H_{p_k}(V_k)}^2 \\
    &= \sum_{r=R+1}^\infty \sum_{p_1+\dots+p_K=r} \prod_{k=1}^K s_k^{2p_k} \le \sum_{r=R+1}^\infty s_1^{2r}, \quad \mbox{since } s_k \le s_1.
  \end{align*}
  Here the two equations follow from \eqref{eq:multi_mehler} and \eqref{eq:hermite_orthogonal}, respectively.
  Since $s_1 < 1$, we have $C_3 \le \delta$ for sufficiently large $R$.

  \emph{Step 3. Control $C_2$.}
  It suffices to show that $D_N^{K, R} \rightarrow_d D^{K, R}$ as $N \rightarrow \infty$ for any $R > 0$.
  Note that
  \begin{align*}
    D_{N,r}^K
    &= \frac1{N!} \sum_{\abs{A} = \abs{B} = r} \sum_{\sigma_A = B} \prod_{i \in A} \tilde \xi^K(X_i, Y_{\sigma_i})
    = \frac{(N-r)!}{N!} \sum_{\substack{1 \le i_1 < \dots < i_r \le N \\ 1 \le j_1 < \dots < j_r \le N}} \sum_{\sigma \in \perm_r} \prod_{t=1}^r \tilde \xi^K(X_{i_t}, Y_{j_{\sigma_t}}) \\
    &= \frac{(N-r)!}{N!} \sum_{\substack{1 \le i_1 < \dots < i_r \le N \\ j_1 \neq \dots \neq j_r}} \prod_{t=1}^r \tilde \xi^K(X_{i_t}, Y_{j_t})
    = \frac{(N-r)!}{r! N!} \sum_{\substack{i_1 \neq \dots \neq i_r \\ j_1 \neq \dots \neq j_r}} \prod_{t=1}^r \tilde \xi^K(X_{i_t}, Y_{j_{t}}) \\
    &= \frac{(N-r)!}{r! N!} \sum_{\substack{i_1 \neq \dots \neq i_r \\ j_1 \neq \dots \neq j_r}} \prod_{t=1}^r \left[ \sum_{k=1}^K s_{k} \alpha_{k}(X_{i_t}) \beta_{k}(Y_{j_t}) \right] \\
    &= \frac{(N-r)!}{r! N!} \sum_{\substack{i_1 \neq \dots \neq i_r \\ j_1 \neq \dots \neq j_r}} \sum_{k_1, \dots, k_r=1}^K \prod_{t=1}^r s_{k_t} \alpha_{k_t}(X_{i_t}) \beta_{k_t}(Y_{j_t}) \\
    &= \frac{1}{r!} \sum_{k_1, \dots, k_r=1}^K \left(\prod_{t=1}^r s_{k_t} \right) \frac{(N-r)!}{N!} \left[ \sum_{i_1 \neq \dots \neq i_r} \prod_{t=1}^r \alpha_{k_t}(X_{i_t}) \right] \left[ \sum_{j_1 \neq \dots \neq j_r} \prod_{t=1}^r \beta_{k_t}(X_{j_t}) \right].
  \end{align*}
  The last term above can be rewritten as follows.
  Take an arbitrary sequence $\mathbf{k} := (k_t)_{t=1}^r \subset [K]^r$.
  For each $k \in [K]$, let $p_k(\mathbf{k})$ be the number of times $k$ appears among $(k_t)_{t=1}^r$.
  Then it follows from \cite[Theorem 12.10]{van2000} that
  \begin{align*}
    \sqrt{\frac{(N-r)!}{N!}} \sum_{i_1 \neq \dots \neq i_r} \prod_{t=1}^r \alpha_{k_t}(X_{i_t}) &= \prod_{k=1}^K H_{p_k(\mathbf{k})}(\emp_N^{(X)} \alpha_k) + o_p(1) \\
    \sqrt{\frac{(N-r)!}{N!}} \sum_{j_1 \neq \dots \neq j_r} \prod_{t=1}^r \beta_{k_t}(Y_{j_t}) &= \prod_{k=1}^K H_{p_k(\mathbf{k})}(\emp_N^{(Y)} \beta_k) + o_p(1),
  \end{align*}
  where $\emp_N^{(X)} \alpha := \frac1{\sqrt{n}} \sum_{i=1}^n \alpha(X_i)$ and $\emp_N^{(Y)} \beta$ is defined similarly.
  \begin{align*}
    D_{N,r}^K
    = \frac1{r!} \sum_{k_1, \dots, k_r=1}^K \prod_{k=1}^K s_k^{p_k(\mathbf{k})} H_{p_k(\mathbf{k})}(\emp_N^{(X)} \alpha_k) H_{p_k(\mathbf{k})}(\emp_N^{(Y)} \beta_k) + o_p(1),
  \end{align*}
  Moreover, for any permutation symmetric $f: [K]^r \rightarrow \rr$, we have
  \begin{align*}
      \frac1{r!} \sum_{k_1, \dots, k_r=1}^K f(k_1, \dots, k_r) = \sum_{p_1 + \dots + p_K = r} \frac{1}{p_1! \dots p_K!} f(l_1, \dots, l_r),
  \end{align*}
  where $l_1, \dots, l_r$ is an arbitrary sequence such that $k$ appears exactly $p_k$ times for all $k \in [K]$.
  As a result,
  \begin{align*}
    D_{N,r}^K
    = \sum_{p_1 + \dots + p_K = r} \prod_{k=1}^K \frac{s_k^{p_k}}{p_k!} H_{p_k}(\emp_N^{(X)} \alpha_k) H_{p_k}(\emp_N^{(Y)} \beta_k) + o_p(1),
  \end{align*}
  and thus
  $D_{N}^{K, R}
    = 1 + \sum_{r=1}^R \sum_{p_1 + \dots + p_K = r} \prod_{k=1}^K \frac{s_k^{p_k}}{p_k!} H_{p_k}(\emp_N^{(X)} \alpha_k) H_{p_k}(\emp_N^{(Y)} \beta_k) + o_p(1)$.
  According to the multivariate CLT \cite[Section 29]{billingsley1995probability}, the random vector $(\emp_N^{(X)} \alpha_k, \emp_N^{(Y)} \beta_k)_{k=1}^K$ converges in distribution to $\mathcal{N}_{2K}(0, I_{2K})$ by the orthonormality of $\{\alpha_k\}_{k=1}^K$ and $\{\beta_k\}_{k=1}^K$.
  It then follows from the continuous mapping theorem that
  \begin{align*}
    D_N^{K, R} \rightarrow_d 1 + \sum_{r=1}^R \sum_{p_1+\dots+p_K=r} \prod_{k=1}^K \frac{s_k^{p_k}}{p_k!} H_{p_k}(U_k) H_{p_k}(V_k) = D^{K, R},
  \end{align*}
  which completes the proof.
\end{proof}

\begin{proof}[Proof of \Cref{thm:denominator}]
  We again prove the convergence using the characteristic functions.
  \emph{Step 0. Verify the validity of the limit.}
  We first show $1/\prod_{k=1}^\infty (1 - s_k^2) < \infty$.
  In fact,
  \begin{align}\label{eq:D_finite_coef}
    \frac{1}{\prod_{k=1}^\infty (1 - s_k^2)} = \exp\left\{ \sum_{k=1}^\infty \log{\frac{1}{1 - s_k^2}} \right\} \le \exp\left\{ \sum_{k=1}^\infty \frac{s_k^2}{1 - s_k^2} \right\} \le \exp\left\{ \frac{\sum_{k=1}^\infty s_k^2}{1-s_1^2} \right\} < \infty,
  \end{align}
  where the first inequality follows from $\log{(1 + x)} \ge \frac{x}{1+x}$ for all $x > -1$ and the last inequality follows from the square summability of $\{s_k\}_{k \ge 1}$.
  It suffices to show that $D \in \ltwo(\prodm)$.
  For any $k \ge 1$, let
  \begin{align}\label{eq:Zk_for_D}
    Z_k := \frac1{\sqrt{1 - s_k^2}} \exp\left\{ -\frac{s_k^2}{2(1-s_k^2)}(U_k^2 + V_k^2) + \frac{s_k}{1-s_k^2} U_k V_k \right\}.
  \end{align}
  Then $\{Z_k\}_{k\ge 1}$ are mutually independent and $D = \prod_{k=1}^\infty Z_k$.
  By a standard computation, we get $\Expect[Z_k^2] = 1/(1 - s_k^2)$.
  Therefore, by \eqref{eq:D_finite_coef}, $\Expect[D^2] = \prod_{k=1}^\infty \Expect[Z_k^2] = 1/\prod_{k=1}^\infty (1 - s_k^2) < \infty$.

  \emph{Step 1. Control the difference between the characteristic functions.} Recall $D_N^K$ and $D^K$ from \Cref{prop:limit_truncate_DN}.
  By the triangle inequality, we have $\abs{\Expect[e^{itD_N}] - \Expect[e^{itD}]} \le C_1 + C_2 + C_3$ where
  \begin{align*}
    C_1 := \abs{\Expect[e^{itD_N}] - \Expect[e^{itD_N^K}]},\; C_2 := \abs{\Expect[e^{itD_N^K}] - \Expect[e^{itD^K}]},\; C_3 := \abs{\Expect[e^{itD^K}] - \Expect[e^{itD}]}.
  \end{align*}
  Fix $\delta > 0$.
  By \Cref{prop:limit_truncate_DN}, $C_2 \le \delta$ for sufficiently large $N$.
  It then remains to control $C_1$ and $C_3$.

  \emph{Step 2. Control $C_1$.} By construction, it holds that
  \begin{align*}
    D_N - D_N^K = \sum_{r=1}^N \frac1{N!} \sum_{\abs{A} = \abs{B} = r} \sum_{\sigma_A = B} \prod_{i \in A} \xi^{-K}(X_i, Y_{\sigma_i}),
  \end{align*}
  where $\xi^{-K} := \xi - \xi^K \in \ltwo_{0,0}(\prodm)$ and $\varsigma_K^2 := \Expect_{P\otimes Q}[(\xi^{-K}(X, Y))^2] = \sum_{k \ge K+1} s_k^2$.
  Invoking \Cref{prop:bound_variance_Dn} for $\xi^{-K}$, we obtain
  $\Expect[(D_N - D_N^K)^2] \le \sum_{r=1}^N \frac1{r!} \sum_{\sigma \in \perm_r} s_1^{2(r - \#\sigma)} \varsigma_K^{2 \#\sigma}$.
  As shown in the proof of \Cref{prop:bound_variance_Un}, the sum $\sum_{r=1}^N \frac1{r!} \sum_{\sigma \in \perm_r} s_1^{2(r - \#\sigma)}$ converges.
  Moreover, for sufficiently large $K$, since $\varsigma_K^2$ can be arbitrarily small, we have $C_1 \le \abs{t} \Expect[(D_N - D_N^K)^2] \le \delta$.

  \emph{Step 3. Control $C_3$.} Again, it suffices to control $\Expect[(D^K - D)^2]$.
  Recall $Z_k$ in \eqref{eq:Zk_for_D}.
  By independence,
  \begin{align*}
    \Expect[(D^K - D)^2]
    &= \Expect\left[ \left( \prod_{k=1}^K Z_k - \prod_{k=1}^\infty Z_k \right)^2 \right]
    = \Expect\left[ \prod_{k=1}^K Z_k^2 \right] \Expect\left[ \left(1 - \prod_{k \ge K+1} Z_k\right)^2 \right] \\
    &= \frac{1}{\prod_{k=1}^K (1 - s_k^2)} \left[ \frac{1}{\prod_{k \ge K+1} (1 - s_k^2)} - 1 \right], \quad \mbox{since } \Expect[Z_k] = 1.
  \end{align*}
  It follows from \eqref{eq:D_finite_coef} that $\prod_{k=1}^K (1 - s_k^2)^{-1} < \infty$ and
  \begin{align*}
    1 \le \frac{1}{\prod_{k\ge K+1} (1 - s_k^2)} \le \exp\left\{ \frac1{1-s_1^2} \sum_{k \ge K+1} s_k^2 \right\} \rightarrow 1, \quad \mbox{as } K \rightarrow \infty.
  \end{align*}
  Hence, we have $\Expect[(D^K - D)^2] \rightarrow 0$ as $K \rightarrow \infty$, which completes the proof.
\end{proof}

\begin{proof}[Proof of \Cref{cor:limitpartition}]
  Recall from \eqref{eq:whatismu} that the Schr\"odinger bridge $\mu_\eps$ which solves \eqref{eq:schbridge} is given by $\mu_\eps(x, y) = \xi(x, y) P(x) Q(y)$ where $\xi(x, y) = \exp(-(c(x, y) - a_\eps(x) - b_\eps(y))/\eps)$.
  Moreover, it follows from the strong duality that \cite[Proposition 2.1]{genevay2016stochastic} $(a_\eps, b_\eps)$ solve the dual problem
  \begin{align*}
      \max_{a, b \in \mathcal{C}(\rr^d)} \left[ \int a(x) d P(x) + \int b(y) d Q(y) + \eps - \eps \int \exp\left( -\frac{c(x, y) - a(x) - b(y)}{\eps} \right) d P(x) d Q(y) \right]
  \end{align*}
  where $\mathcal{C}(\rr^d)$ is the set of continuous functions on $\rr^d$.
  Consequently, $\cost_\eps(P, Q) = \int a_\eps(x) d P(x) + \int b_\eps(y) d Q(y)$.
  By some algebra, we have
  \begin{align*}
      &\quad \frac1N \log{\left[ \frac1{N!} \sum_{\sigma \in \perm_N} \exp\left( -\frac{\sum_{i=1}^N c(X_i, Y_{\sigma_i})}{\eps} \right) \right]} \\
      &= \frac1N \log{\left[ \frac1{N!} \sum_{\sigma \in \perm_N} \frac{\prod_{i=1}^N \xi(X_i, Y_{\sigma_i})}{\exp\left( \sum_{i=1}^N(a_\eps(X_i) + b_\eps(Y_{\sigma_i}))/\eps \right)} \right]} \\
      &= -\frac1{\eps N} \sum_{i=1}^N \left[ a_\eps(X_i) + b_\eps(Y_i) \right] + \frac1N \log{D_N}.
  \end{align*}
  Now the claim follows from the facts that $\frac1{N} \sum_{i=1}^N \left[ a_\eps(X_i) + b_\eps(Y_i) \right] \rightarrow_p \cost_\eps(P, Q)$ (by LLN) and $\frac1{N} \log{D_N} = o_p(1)$ (by \Cref{thm:denominator}) as $N \rightarrow \infty$.
\end{proof}

\subsection{Second order remainder}
\label{sub:second_remainder}

We control in this subsection $\Expect[(U_N - \second D_N)^2]$ in \Cref{prop:bound_variance_second}.
We will decompose $\second D_N$ into manageable pieces.
Let $K_{2,0}(x, x', y, y') := \eta_{2,0}(x, x') \xi(x, y)\xi(x', y')$ and $K_{0,2}(x, x', y, y') := \eta_{0,2}(y, y')\xi(x, y)\xi(x', y')$.
Then we have
\begin{align}
  \sum_{i \neq j} \eta_{2,0}(X_i, X_j) D_N &= \frac1{N!} \sum_{i \neq j} \sum_{\sigma \in \perm_N} K_{2,0}(X_i, X_j, Y_{\sigma_i}, Y_{\sigma_j}) \prod_{k \in \exclude{i,j}} \xi(X_k, Y_{\sigma_k}) \label{eq:K20} \\
  \sum_{i \neq j} \eta_{0,2}(Y_i, Y_j) D_N &= \frac1{N!} \sum_{i \neq j} \sum_{\sigma \in \perm_N} \eta_{0,2}(Y_i, Y_j)\xi(X_{\sigma_i^{-1}}, Y_{i}) \xi(X_{\sigma_j^{-1}}, Y_{j}) \prod_{k \in \exclude{\sigma_i^{-1},\sigma_j^{-1}}} \xi(X_k, Y_{\sigma_k}) \nonumber \\
  &= \frac1{N!} \sum_{i \neq j} \sum_{\sigma \in \perm_N} K_{0,2}(X_i, X_j, Y_{\sigma_i}, Y_{\sigma_j}) \prod_{k \in \exclude{i,j}} \xi(X_k, Y_{\sigma_k}). \label{eq:K02}
\end{align}
Furthermore, let $K_{1,1'}(x, x', y, y') := \eta_{1,1'}(x, y')\xi(x, y)\xi(x', y')$, then
\begin{align}
  \frac1{N!} \sum_{i,j=1}^N \sum_{\sigma_i \neq j} \eta_{1,1'}(X_i, Y_j) \xi^{\otimes}(X, Y_{\sigma}) &= \frac1{N!} \sum_{i,j=1}^N \sum_{j' \in \exclude{i}} \sum_{\sigma_{j'} = j} K_{1,1'}(X_i, X_{j'}, Y_{\sigma_i}, Y_{\sigma_{j'}}) \prod_{k \in \exclude{i, j'}} \xi(X_k, Y_{\sigma_k}) \nonumber \\
  &= \frac1{N!} \sum_{i \neq j'} \sum_{\sigma \in \perm_N} K_{1,1'}(X_i, X_{j'}, Y_{\sigma_i}, Y_{\sigma_{j'}}) \prod_{k \in \exclude{i, j'}} \xi(X_k, Y_{\sigma_k}). \label{eq:K11prime}
\end{align}
Note that $\sum_{i=1}^N \ell_{1,1'}(X_i, Y_i) = \sum_{i=1}^N \ell_{1,1'}(X_i, Y_{\sigma_i})$ by affineness, and
\begin{align*}
  \frac1{N!} \sum_{i,j=1}^N \sum_{\sigma_i = j} \eta_{1,1'}(X_i, Y_j) \xi^{\otimes}(X, Y_\sigma) &= \frac1{N!} \sum_{i=1}^N \sum_{\sigma \in \perm_N} \eta_{1,1'}(X_i, Y_{\sigma_i}) \xi^{\otimes}(X, Y_{\sigma}).
\end{align*}
It follows that
\begin{align*}
  \frac1{N!} \sum_{i,j=1}^N \sum_{\sigma_i = j} \eta_{1,1'}(X_i, Y_j) \xi^{\otimes}(X, Y_\sigma) - \sum_{i=1}^N \ell_{1,1'}(X_i, Y_i) D_N = \frac1{N!} \sum_{\sigma \in \perm_N} \sum_{i=1}^N [\eta_{1,1'} - \ell_{1,1'}](X_i, Y_{\sigma_i}) \xi^{\otimes}(X, Y_{\sigma}).
\end{align*}
Repeating the argument in \Cref{prop:bound_variance_Un} for $\tilde \eta$ replaced by $\eta_{1,1'} - \ell_{1,1'} \in \ltwo_{0,0}(\mu)$ gives
\begin{align}\label{eq:diagonal_kappa11}
  \frac1{N(N-1)} \frac1{N!} \sum_{i,j=1}^N \sum_{\sigma_i = j} \eta_{1,1'}(X_i, Y_j) \xi^{\otimes}(X, Y_\sigma) - \sum_{i=1}^N \ell_{1,1'}(X_i, Y_i) D_N = O(N^{-2}).
\end{align}
Here we say a random variable $\phi_N = O(N^{-2})$ if $\var(\phi_N) = O(N^{-4})$.
Putting \eqref{eq:K20}, \eqref{eq:K02}, \eqref{eq:K11prime} and \eqref{eq:diagonal_kappa11} together, we obtain
\begin{align*}
  \second D_N = \frac1{N(N-1)} \frac1{N!} \sum_{\sigma \in \perm_N} \sum_{i \neq j} (K_{2,0} + K_{0,2} + K_{1,1'})(X_i, X_j, Y_{\sigma_i}, Y_{\sigma_j}) \prod_{k \in \exclude{i,j}} \xi(X_k, Y_{\sigma_k}) + O(N^{-2}).
\end{align*}

\begin{definition}\label{def:r_degeneracy}
  Let $r > 0$ be an integer.
  We say a statistic $T := T(X_{[N]}, Y_{[N]})$ is $r$-degenerate if
  \begin{align*}
      \Expect[T \mid X_A, Y_B] \txtover{a.s.}{=} 0, \quad \mbox{for all } A, B \subset [N] \mbox{ such that } \abs{A} + \abs{B} = r.
  \end{align*}
  If $T$ is $(r-1)$-degenerate, and $L \in \oplus_{\abs{A} + \abs{B} = r} H_{AB}$ such that $T - L$ is $r$-degenerate, then we call $L$ the $r$-th order term of $T$.
\end{definition}
In the following, we further decompose $K_{2,0} + K_{0,2} + K_{1,1'}$ into second, third and fourth order terms using Hoeffding decomposition, and show that the second order terms cancel out $U_N$ and the rest of the terms are negligible.

The following lemma gives the second order terms of $K_{2,0}$, $K_{0,2}$ and $K_{1,1'}$.
\begin{lemma}\label{lem:two_degeneracy}
  Let
  \begin{align*}
    k_{2,0}(x, x', y, y') &:= \eta_{2,0}(x, x') + (\sinkop \otimes \sinkop) \eta_{2,0}(y, y') + (I_{P} \otimes \sinkop) \eta_{2,0}(x, y') + (I_{P} \otimes \sinkop) \trans \eta_{2,0}(x', y) \\
    k_{0,2}(x, x', y, y') &:= (\sinkop^* \otimes \sinkop^*)\eta_{0,2}(x, x') + \eta_{0,2}(y, y') + (\sinkop^* \otimes I_{Q}) \eta_{0,2}(x, y') + (\sinkop^* \otimes I_{Q}) \trans \eta_{0,2}(x', y) \\
    k_{1,1'}(x, x', y, y') &:= (I_{P} \otimes \sinkop^*) \eta_{1,1'}(x, x') + \trans(\sinkop \otimes I_{Q}) \eta_{1,1'}(y, y') + \eta_{1,1'}(x, y') + \opB \eta_{1,1'}(x', y).
  \end{align*}
  For any $i \neq i'$ and $j \neq j'$, the function $\bar K_{I}(X_i, X_{i'}, Y_{j}, Y_{j'}) := (K_{I} - k_{I})(X_i, X_{i'}, Y_{j}, Y_{j'})$ is $2$-degenerate for every $I = \{2,0\}, \{0,2\}, \{1,1'\}$.
\end{lemma}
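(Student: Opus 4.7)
I will verify $2$-degeneracy of $\bar K_I = K_I - k_I$ directly, by computing the six possible conditional expectations $\muexp[K_I - k_I \mid X_A, Y_B]$ with $A \subset \{i, i'\}$, $B \subset \{j, j'\}$ and $|A| + |B| = 2$, and checking each is $0$. (The ambient measure here is the product $(\rho_0 \otimes \rho_1)^N$ used throughout \Cref{sec:remainder}, under which $X_i, X_{i'}, Y_j, Y_{j'}$ are jointly independent.) Thus I need to show that for each of the six pairs, the author's explicit term in $k_I$ matches $\muexp[K_I \mid X_A, Y_B]$. The six pairs split into four ``matched'' pairs $\{X_i, X_{i'}\}$, $\{Y_j, Y_{j'}\}$, $\{X_i, Y_{j'}\}$, $\{X_{i'}, Y_j\}$—one for each summand of $k_I$—and two ``mismatched'' pairs $\{X_i, Y_j\}$, $\{X_{i'}, Y_{j'}\}$ for which both sides ought to vanish.

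The first step is a bookkeeping lemma on $k_I$ itself. From \Cref{lem:identity_second_order} the kernels $\kappa_{2,0}, \kappa_{0,2}, \kappa_{1,1'}$ are completely degenerate in their respective product spaces; combining this with \Cref{lem:preserve_degeneracy}, every summand of $k_I$ (for instance $(I_{\rho_0}\otimes \sinkop)\kappa_{2,0}$, $(\sinkop^* \otimes \sinkop^*)\kappa_{0,2}$, $\opB \kappa_{1,1'}$, etc.) is completely degenerate in the pair of variables it depends on. Therefore, when conditioning on a pair $(X_A, Y_B)$, any summand whose variables are not both contained in $(X_A, Y_B)$ integrates out to zero, while the summand attached to exactly that pair survives unchanged. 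This immediately gives $\muexp[k_I \mid X_A, Y_B] = (\text{the corresponding summand})$ for the four matched pairs, and $\muexp[k_I \mid X_A, Y_B] = 0$ for the two mismatched ones.

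The second step is to compute $\muexp[K_I \mid X_A, Y_B]$ by pulling out those $\xi$-factors whose two arguments are both being integrated out (each such factor averages to $1$ by the marginal identity $\int \xi(x, y) \rho_1(y) dy = \int \xi(x, y) \rho_0(x) dx = 1$), and recognizing the residual integral as the action of a tensor-product operator. For example, for $K_{1,1'}(x, x', y, y') = \kappa_{1,1'}(x, y') \xi(x, y) \xi(x', y')$ conditioned on $(X_{i'}, Y_j)$, both $\xi$-factors integrate down and the remaining integral $\iint \kappa_{1,1'}(x, y') \xi(x, Y_j) \xi(X_{i'}, y') \rho_0(x) \rho_1(y') dx dy' = \opB \kappa_{1,1'}(X_{i'}, Y_j)$ by the explicit kernel derivation in the proof of \Cref{lem:operator_B}\ref{opB:cond_expect}. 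Analogous calculations handle the three other matched pairs for $K_{1,1'}$ and all six pairs for $K_{2,0}$ and $K_{0,2}$, each time producing either one of the summands of $k_I$ or, for the two mismatched pairs, a factor of the form $\muexp[\kappa_{2,0}(X_i, \cdot)\mid X_i] \cdot (\text{something})$ that vanishes by the complete degeneracy of $\kappa_{2,0}$ (resp.\ $\kappa_{0,2}$, $\kappa_{1,1'}$). Collecting the identities, $\muexp[K_I - k_I \mid X_A, Y_B] = 0$ for every pair $(A, B)$ with $|A| + |B| = 2$, which is precisely $2$-degeneracy.

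\textbf{Main obstacle.} The computations themselves are short; the difficulty is purely combinatorial bookkeeping—tracking the orientation of the swap $\trans$ and which of $\sinkop, \sinkop^*, I_{\rho_0}, I_{\rho_1}$ acts on which slot when, for a given conditioning pair, one pulls an $\xi$ through the integral. Getting these matches right for the four ``asymmetric'' summands (those of the form $(I_{\rho_0} \otimes \sinkop) \trans \kappa_{2,0}$ and the cross-type summand $\opB \kappa_{1,1'}$, whose argument orders differ from their natural conditioning pair) is where the indexing must be done most carefully.
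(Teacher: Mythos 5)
Your proposal is correct and follows essentially the same route as the paper: compute $\Expect[K_I \mid X_A, Y_B]$ for each conditioning pair, and match it against the corresponding summand of $k_I$, using the complete degeneracy of $\kappa_{2,0}$, $\kappa_{0,2}$, $\kappa_{1,1'}$ (inherited by every summand of $k_I$ via \Cref{lem:preserve_degeneracy}) so that all other summands vanish under conditioning. The paper carries out only the $I=\{2,0\}$ case and three representative pairs before invoking symmetry, while you enumerate the matched/mismatched pair structure explicitly; one minor imprecision in your "second step'' — the $\xi$-factors that drop out are not always those with \emph{both} arguments integrated, but rather those not tied to $\kappa_I$ through a shared variable (e.g.\ $\Expect[\xi(X_i,Y_j)\mid X_i]=1$ already uses the one-sided marginal identity) — does not affect the validity of the argument.
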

\begin{proof}
  We only prove the claim for $I = \{2, 0\}$.
  Recall that $K_{2,0}(x, x', y, y') := \eta_{2,0}(x, x') \xi(x, y) \xi(x', y')$.
  Conditioning on $X_i, X_{i'}$, we have
  \begin{align*}
    \Expect[K_{2,0}(X_i, X_{i'}, Y_j, Y_{j'}) \mid X_i, X_{i'}] = \eta_{2,0}(X_i, X_{i'}) \Expect[\xi(X_i, Y_j) \mid X_i] \Expect[\xi(X_{i'}, Y_{j'}) \mid X_{i'}] = \eta_{2,0}(X_i, X_{i'}).
  \end{align*}
  It then follows from degeneracy that $\Expect[(K_{2,0} - k_{2,0})(X_i, X_{i'}, Y_j, Y_{j'}) \mid X_i, X_{i'}] = 0$.
  Conditioning on $X_i, Y_{j}$, we have
  \begin{align*}
    \Expect[K_{2,0}(X_i, X_{i'}, Y_j, Y_{j'}) \mid X_i, Y_{j}] = \xi(X_i, Y_j) \Expect[\eta_{2,0}(X_i, X_{i'}) \mid X_i, Y_j] = 0 = \Expect[k_{2,0}(X_i, X_{i'}, Y_j, Y_{j'}) \mid X_i, Y_{j}].
  \end{align*}
  Conditioning on $X_i, Y_{j'}$, we have
  \begin{align*}
    \Expect[K_{2,0}(X_i, X_{i'}, Y_j, Y_{j'}) \mid X_i, Y_{j'}] &= \Expect[\eta_{2,0}(X_i, X_{i'}) \xi(X_{i'}, Y_{j'}) \mid X_i, Y_{j'}] = (I_{P} \otimes \sinkop)\eta_{2,0}(X_i, Y_{j'}) \\
    &= \Expect[k_{2,0}(X_i, X_{i'}, Y_j, Y_{j'}) \mid X_i, Y_{j'}].
  \end{align*}
  The rest follows analogously.
\end{proof}

Now, we get
\begin{align}\label{eq:decompose_L2DN}
    \second D_N = W_N + V_N + O(N^{-2}),
\end{align}
where
\begin{align}
  W_N &:= \frac1{N(N-1)}\frac1{N!} \sum_{\sigma \in \perm_N} \sum_{i \neq j} (\bar K_{2,0} + \bar K_{0,2} + \bar K_{1,1
  '})(X_i, X_j, Y_{\sigma_i}, Y_{\sigma_j}) \prod_{k \in \exclude{i,j}} \xi(X_k, Y_{\sigma_k}) \label{eq:WN} \\
  V_N &:= \frac1{N(N-1)}\frac1{N!} \sum_{\sigma \in \perm_N} \sum_{i \neq j} (k_{2,0} + k_{0,2} + k_{1,1
  '})(X_i, X_j, Y_{\sigma_i}, Y_{\sigma_j}) \prod_{k \in \exclude{i,j}} \xi(X_k, Y_{\sigma_k}).
\end{align}
We will show that $\Expect[(U_N - V_N)^2] = O(N^{-4})$ and $\Expect[W_N^2] = O(N^{-4})$.
As a result, $\Expect[(U_N - \second D_N)^2] = O(N^{-4})$.
\begin{lemma}\label{lem:variance_bound_VN}
  The following algebraic identity holds:
  \begin{align}\label{eq:expression_VN}
    V_N = \frac1{N(N-1)} \frac1{N!} \sum_{i, j=1}^N \sum_{\sigma_i \neq j} \tilde \eta(X_i, Y_j) \xi(X_i, Y_j) \prod_{k \in \exclude{i, \sigma_j^{-1}}} \xi(X_k, Y_{\sigma_k}).
  \end{align}
  Moreover, under Assumptions \ref{asmp:contiguity} and \ref{asmp:secondmoment}, $\Expect[(U_N - V_N)^2] = O(N^{-4})$.
\end{lemma}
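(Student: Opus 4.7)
The plan is to prove the algebraic identity \eqref{eq:expression_VN} by reorganizing both sides around a common quartet of index dependencies, and to prove the variance bound by exhibiting an exact per-order cancellation in the Hoeffding expansion.

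\emph{Algebraic identity.} I would first change variables in the right-hand side of \eqref{eq:expression_VN} by setting $j' := \sigma_j^{-1}$, which bijects triples $(\sigma,i,j)$ with $\sigma_i\neq j$ onto triples $(\sigma,i,j')$ with $j'\neq i$, and under which $Y_j$ becomes $Y_{\sigma_{j'}}$. Relabeling $j'\mapsto j$, the right-hand side rewrites as
\[
\frac{1}{N(N-1)\,N!}\sum_{\sigma}\sum_{i\neq j}\tilde\eta(X_i,Y_{\sigma_j})\,\xi(X_i,Y_{\sigma_j})\prod_{k\notin\{i,j\}}\xi(X_k,Y_{\sigma_k}).
\]
On the other side, the twelve summands of $(k_{2,0}+k_{0,2}+k_{1,1'})(X_i,X_j,Y_{\sigma_i},Y_{\sigma_j})$ partition into four dependency groups according to whether they are functions of $(X_i,X_j)$, $(Y_{\sigma_i},Y_{\sigma_j})$, $(X_i,Y_{\sigma_j})$, or $(X_j,Y_{\sigma_i})$. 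Because the outer ordered sum $\sum_{i\neq j}$ realizes the swap $\trans$ on each group while the weight $\prod_{k\notin\{i,j\}}\xi(X_k,Y_{\sigma_k})$ is symmetric in $(i,j)$, the first two identities of \Cref{lem:identity_second_order} annihilate the first two groups, and the third identity collapses the two mixed groups to $\tilde\eta(X_i,Y_{\sigma_j})\xi(X_i,Y_{\sigma_j})$, matching the change-of-variables form above.

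\emph{Variance bound.} Starting from \eqref{eq:expression_VN} with $h := \tilde\eta\xi$, I would expand $\prod_{k\notin\{i,j\}}\xi(X_k,Y_{\sigma_k}) = \sum_{C}\prod_{k\in C}[\xi(X_k,Y_{\sigma_k})-1]$ via \Cref{lem:hoeffding_prod_xi}, so that $V_N = \sum_{r\ge 1} V_N^{(r)}$ (and analogously $U_N = \sum_{r\ge 1} U_N^{(r)}$) stratifies by $r := |C|+1$. Projecting each $U_N^{(r)}$ and $V_N^{(r)}$ onto the Hoeffding subspace $H_{AB}$ with $|A|=|B|=r$, and counting the $(i,j,C,\sigma)$-tuples that share a common bijection $\psi:A\to B$, yields
\[
U_N^{(r)}\big|_{AB} = \frac{(N-r)!}{N\cdot N!}\,S_{AB}, \qquad V_N^{(r)}\big|_{AB} = \frac{(N-r)(N-r)!}{N(N-1)\,N!}\,S_{AB},
\]
where $S_{AB}:=\sum_{i\in A}\sum_{\psi:A\to B\text{ bij.}} h(X_i,Y_{\psi(i)})\prod_{k\in A\setminus\{i\}}[\xi(X_k,Y_{\psi(k)})-1]$. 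Subtraction gives the clean identity $U_N^{(r)} - V_N^{(r)} = \frac{r-1}{N-1}\,U_N^{(r)}$; in particular the $r=1$ layer cancels exactly. By orthogonality of Hoeffding components, $\Expect[(U_N-V_N)^2] = (N-1)^{-2}\sum_{r\ge 2}(r-1)^2\,\Expect[(U_N^{(r)})^2]$. The right-hand sum is $O(N^{-2})$ by the same cycle-decomposition and moment-generating-function estimate used in \Cref{prop:bound_variance_Un}---the extra polynomial factor $(r-1)^2$ preserves summability since $s_1<1$---so $\Expect[(U_N-V_N)^2] = O(N^{-4})$ after multiplying by $(N-1)^{-2}$.

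\emph{Main obstacle.} The delicate point is the per-order cancellation $U_N^{(r)} - V_N^{(r)} = \frac{r-1}{N-1}U_N^{(r)}$. Establishing it requires careful bookkeeping of two sources of multiplicity in $V_N^{(r)}|_{AB}$: the $(N-r)!$ free extensions of $\sigma$ beyond $\{j\}\cup C$, and the $(N-r)$ free choices of $j\in[N]\setminus A$. These conspire with the normalization $1/(N(N-1)N!)$ to give the ratio $(N-r)/(N-1)$ relative to $U_N^{(r)}|_{AB}$, whose complement is precisely $(r-1)/(N-1)$. Once this identity is in hand, the variance bound follows routinely from the machinery of \Cref{prop:bound_variance_Un}.
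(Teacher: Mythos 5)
Your proof is correct and follows essentially the same route as the paper: the identity falls out of grouping the twelve summands of $k_{2,0}+k_{0,2}+k_{1,1'}$ by dependency pattern and applying the three cancellation identities of \Cref{lem:identity_second_order}, and the variance bound follows from matching the Hoeffding strata of $U_N$ and $V_N$ and extracting the per-order coefficient $(r-1)/(N-1)$, exactly as the paper does. Your bookkeeping of the $V_N$-side multiplicity $(N-r)(N-r)!$ is carried out in the change-of-variable coordinates (where both $i$ and $j$ are $X$-indices, so $j$ ranges freely over $[N]\setminus A$ and $\sigma$ is pinned on $C\cup\{j\}$), which is a clean but equivalent counting to the paper's direct $(i,j,\sigma)$ enumeration from \eqref{eq:expression_VN}.
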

\begin{proof}
  We consider the terms involving $(X_i, X_j)$ and $(Y_{\sigma_i}, Y_{\sigma_j})$ in $\sum_{i \neq j} (k_{2,0} + k_{0,2} + k_{1,1'})(X_i, X_j, Y_{\sigma_i}, Y_{\sigma_j})$.
  By \Cref{lem:identity_second_order}, we get
  \begin{align*}
    \sum_{i \neq j} [\eta_{2,0}(X_i, X_j) + (\sinkop^* \otimes \sinkop^*) \eta_{0,2}(X_i, X_j) + (I_{P} \otimes \sinkop^*)\eta_{1,1'}(X_i, X_j)] = 0 \\
    \sum_{i \neq j} [(\sinkop \otimes \sinkop)\eta_{2,0}(Y_{\sigma_i}, Y_{\sigma_j}) + \eta_{0,2}(Y_{\sigma_i}, Y_{\sigma_j}) + (\sinkop \otimes I_{Q})\eta_{1,1'}(Y_{\sigma_i}, Y_{\sigma_j})] = 0.
  \end{align*}
  We then consider the terms involving $(X_i, Y_{\sigma_j})$ and $(X_j, Y_{\sigma_i})$.
  Notice that
  \begin{align*}
    &\quad \sum_{i \neq j} \sum_{\sigma \in \perm_N} (I_{P} \otimes \sinkop) \eta_{2,0}(X_i, Y_{\sigma_j}) \prod_{k \in \exclude{i,j}} \xi(X_k, Y_{\sigma_k}) \\
    &= \sum_{i \neq j} \sum_{j' = 1}^N \sum_{\sigma_j = j'} (I_{P} \otimes \sinkop) \eta_{2,0}(X_i, Y_{j'}) \prod_{k \in \exclude{i, j}} \xi(X_k, Y_{\sigma_k}) \\
    &= \sum_{i, j'=1}^N \sum_{\sigma_i \neq j'} (I_{P} \otimes \sinkop) \eta_{2,0}(X_i, Y_{j'}) \prod_{k \in \exclude{i, \sigma_{j'}^{-1}}} \xi(X_k, Y_{\sigma_k}).
  \end{align*}
  A similar argument gives
  \begin{align*}
    &\quad \sum_{i \neq j} \sum_{\sigma \in \perm_N} (I_{P} \otimes \sinkop) \trans \eta_{2,0}(X_{j}, Y_{\sigma_i}) \prod_{k \in \exclude{i,j}} \xi(X_k, Y_{\sigma_k}) \\
    &= \sum_{i', j=1}^N \sum_{\sigma_j \neq i'} (I_{P} \otimes \sinkop) \trans \eta_{2,0}(X_j, Y_{i'}) \prod_{k \in \exclude{j, \sigma_{i'}^{-1}}} \xi(X_k, Y_{\sigma_k}).
  \end{align*}
  Hence
  \begin{align}\label{eq:f_XiYj}
    &\quad \sum_{i \neq j} \sum_{\sigma \in \perm_N} [(I_{P} \otimes \sinkop) \eta_{2,0}(X_i, Y_{\sigma_j}) + (I_{P} \otimes \sinkop) \trans \eta_{2,0}(X_{j}, Y_{\sigma_i})] \prod_{k \in \exclude{i,j}} \xi(X_k, Y_{\sigma_k}) \\
    &= \sum_{i,j=1}^N \sum_{\sigma_i \neq j} (I_{P} \otimes \sinkop)(I + \trans) \eta_{2,0}(X_i, Y_j) \prod_{k \in \exclude{i, \sigma_j^{-1}}} \xi(X_k, Y_{\sigma_k}) \nonumber.
  \end{align}
  Analogously,
  \begin{align}\label{eq:g_XiYj}
    &\quad \sum_{i \neq j} \sum_{\sigma \in \perm_N} [(\sinkop^* \otimes I_{Q}) \eta_{0,2}(X_i, Y_{\sigma_j}) + (\sinkop^* \otimes I_{Q}) \trans \eta_{0,2}(X_{j}, Y_{\sigma_i})] \prod_{k \in \exclude{i,j}} \xi(X_k, Y_{\sigma_k}) \\
    &= \sum_{i,j=1}^N \sum_{\sigma_i \neq j} (\sinkop^* \otimes I_{Q})(I + \trans) \eta_{0,2}(X_i, Y_j) \prod_{k \in \exclude{i, \sigma_j^{-1}}} \xi(X_k, Y_{\sigma_k}) \nonumber,
  \end{align}
  and
  \begin{align}\label{eq:h_XiYj}
    &\quad \sum_{i \neq j} \sum_{\sigma \in \perm_N} [\eta_{1,1'}(X_i, Y_{\sigma_j}) + \opB \eta_{1,1'}(X_{j}, Y_{\sigma_i})] \prod_{k \in \exclude{i, j}} \xi(X_k, Y_{\sigma_k}) \\
    &= \sum_{i,j=1}^N \sum_{\sigma_i \neq j} (I + \opB) \eta_{1,1'}(X_i, Y_j) \prod_{k \in \exclude{i, \sigma_j^{-1}}} \xi(X_k, Y_{\sigma_k}) \nonumber.
  \end{align}
  Hence, the identity \eqref{eq:expression_VN} follows from the third identity in \Cref{lem:identity_second_order}.

  Let us compute $\Expect[(U_N - V_N)^2]$.
  Denote $h := \tilde \eta \xi$.
  Recall from \eqref{eq:UN_DN} that
  \begin{align*}
      U_N := \frac1{N!} \sum_{\sigma \in \perm_N} \frac1N \sum_{i=1}^N \tilde \eta(X_i, Y_{\sigma_i}) \xi(X, Y_{\sigma}) = \frac1{N \cdot N!} \sum_{i,j=1}^N \sum_{\sigma_i = j} h(X_i, Y_j) \prod_{k \in \exclude{i}} \xi(X_k, Y_{\sigma_k}).
  \end{align*}
  By \Cref{lem:hoeffding_prod_xi}, we get
  \begin{align}\label{eq:hoeffding_tilde_U}
    U_N = \frac1{N \cdot N!} \sum_{i,j=1}^N \sum_{\sigma_i = j} h(X_i, Y_j) \sum_{A \subset \exclude{i}} \prod_{k \in A} [\xi(X_k, Y_{\sigma_k}) - 1].
  \end{align}
  Similarly,
  \begin{align}\label{eq:hoeffding_V}
    V_N &= \frac1{N(N-1)} \frac1{N!} \sum_{i,j=1}^N \sum_{\sigma_i \neq j} h(X_i, Y_j) \sum_{A \subset \exclude{i, \sigma_j^{-1}}} \prod_{k \in A} [\xi(X_k, Y_{\sigma_k}) - 1].
  \end{align}
  Define the set of sequences of length $r$ to be
  \begin{align*}
    \mathrm{S}_{N, r} := \{(k_i)_{i=1}^r: k_i \in [N], \abs{\{k_1, \dots, k_r\}} = r\}, \quad \mbox{for } r \in [N].
  \end{align*}
  Take $r \in [N]$ and $(k_i)_{i=1}^r, (k_i')_{i=1}^r \in \mathrm{S}_{N, r}$.
  Let us count the number of times the term
  \begin{align}\label{eq:counting_basis}
    h(X_{k_1}, Y_{k_1'}) \prod_{s=2}^{r} [\xi(X_{k_s}, Y_{k_s'}) - 1]
  \end{align}
  appears in \eqref{eq:hoeffding_tilde_U} and \eqref{eq:hoeffding_V}, respectively.
  In order to get this term, we must have $i = k_1$, $j = k_1'$, $A = \{k_2, \dots, k_{r}\}$ and $\sigma_{k_s} = k_s'$ for all $s \in \{2, \dots, r\}$.
  Note that $\sigma_i = j$ in \eqref{eq:hoeffding_tilde_U}, so there are $(N - r)!$ such terms in \eqref{eq:hoeffding_tilde_U}.
  Similarly, there are $(N - r)(N - r)!$ such terms in \eqref{eq:hoeffding_V}.
  Hence, the coefficient of this term in $U_N - V_N$ is
  \begin{align*}
    C_{N,r} = \frac{(N - r)!}{N \cdot N!} - \frac{(N -r)(N-r)!}{N(N-1) \cdot N!} = \frac{r-1}{N-1} \frac{(N-r)!}{N \cdot N!}.
  \end{align*}

  We claim that
  \begin{align}\label{eq:UN_minus_VN}
      U_N - V_N = \frac1{N(N-1)} \frac1{N!} \sum_{r=1}^N (r-1) \sum_{\abs{A} = \abs{B} = r} \sum_{\sigma \in \perm_N: \sigma_A = B} \sum_{i \in A} h(X_i, Y_{\sigma_i}) \prod_{j \in A \backslash \{i\}} [\xi(X_j, Y_{\sigma_j}) - 1].
  \end{align}
  To see this, we only need to prove that the coefficient of the term \eqref{eq:counting_basis} on the right hand side of \eqref{eq:UN_minus_VN} is exactly $C_{N, r}$.
  In other words, it appears $(N - r)!$ times in the following sum:
  \begin{align*}
      \sum_{\abs{A} = \abs{B} = r} \sum_{\sigma \in \perm_N: \sigma_A = B} \sum_{i \in A} h(X_i, Y_{\sigma_i}) \prod_{j \in A \backslash \{i\}} [\xi(X_j, Y_{\sigma_j}) - 1].
  \end{align*}
  To get this term, we must have $A = \{k_1, \dots, k_r\}$, $B = \{k_1', \dots, k_r'\}$, $i = k_1$ and $\sigma_{k_s} = k_s'$ for all $s \in [r]$.
  There are $(N - r)!$ permutations satisfy this condition, and thus it appears $(N -r)!$ times.
  
  A derivation analogous to the one for \Cref{prop:hoeffding_Un} implies that $\Expect[(U_N - V_N)^2]$ is equal to
  \begin{align*}
    \frac1{N^2(N-1)^2} \sum_{r=1}^N \frac{r(r-1)^2}{r!} \sum_{\sigma \in \perm_r} \sum_{i=1}^r \Expect\left[ h(X_1, Y_1) \prod_{j = 2}^r [\xi(X_j, Y_j) - 1] h(X_i, Y_{\sigma_i}) \prod_{j \in [N] \backslash \{i\}} [\xi(X_j, Y_{\sigma_j}) - 1] \right].
  \end{align*}
  Repeating the argument in \Cref{prop:bound_variance_Un}, we know $\Expect[(U_N - V_N)^2] = O(N^{-4})$.
\end{proof}

Before we bound $\Expect[W_N^2]$, let us give a result similar to \Cref{lem:bound_f_xi} for functions with $3$ and $4$ arguments.
Let $\phi \in \ltwo(P \otimes P \otimes Q \otimes Q)$ and $\psi \in \ltwo(P \otimes \prodm)$ such that $\phi(X_1, X_2, Y_{1}, Y_{2})$ and $\psi(X_1, X_2, Y_1)$ are completely degenerate under the measure $(\prodm)^N$.

\begin{lemma}\label{lem:covariance_phi}
  Assume $\norm{\phi}_{\mathbf{L}^{2}(P \otimes \prodm \otimes Q)} < \infty$ and $\norm{\psi}_{\mathbf{L}^{2}(P \otimes \prodm)} < \infty$.
  Under Assumptions \ref{asmp:contiguity}-\ref{asmp:secondorder}, there exists a constant $C$ such that, for any $\sigma \in \perm_N$ and $i \neq j \in [N]$,
  \begin{align*}
    \Expect\left[ \phi(X_1, X_2, Y_{1}, Y_{2}) \prod_{k=3}^N [\xi(X_k, Y_k) - 1] \phi(X_i, X_j, Y_{\sigma_i}, Y_{\sigma_j}) \prod_{k \in [N] \backslash \{i,j\}} [\xi(X_k, Y_{\sigma_k}) - 1] \right] &\le s_1^{2(N - \#\sigma - 2)} C^{\#\sigma} \\
    \Expect\left[ \psi(X_1, X_2, Y_{1}) \prod_{k=3}^N [\xi(X_k, Y_k) - 1] \psi(X_i, X_j, Y_{\sigma_i}) \prod_{k \in [N] \backslash \{i,j\}} [\xi(X_k, Y_{\sigma_k}) - 1] \right] &\le s_1^{2(N - \#\sigma - 2)} C^{\#\sigma},
  \end{align*}
  where $\# \sigma$ is the number of cycles of $\sigma \in \perm_N$.
\end{lemma}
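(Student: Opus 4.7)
The plan is to mirror the strategy of \Cref{lem:bound_f_xi} and \Cref{lem:bound_permutation_covariance}: decompose $\sigma$ into disjoint cycles, use independence across cycles to factor the expectation, and bound each cycle by peeling off variables one at a time via iterated conditional expectations that produce factors of $s_1^2$ through applications of operators of the form $I_{\rho_0} \otimes \sinkop$, $I_{\rho_0} \otimes \sinkop^*$, etc., exactly as in \Cref{lem:degeneracy_contraction}. The new complication is that the factors $\phi(X_1,X_2,Y_1,Y_2)$ and $\phi(X_i, X_j, Y_{\sigma_i}, Y_{\sigma_j})$ (respectively $\psi(X_1, X_2, Y_1)$ and $\psi(X_i, X_j, Y_{\sigma_i})$) couple more than two variables and may tie together two otherwise independent cycles of $\sigma$ whenever $1$ and $2$ (or $i$ and $j$) lie in different cycles.

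First I would group the cycles of $\sigma$ into \emph{super-cycles} by identifying any two cycles that are linked through one of the $\phi$-factors (those whose union contains both $1$ and $2$, or both $i$ and $j$). Since each of the two $\phi$-factors can merge at most two cycles, the number of super-cycles is at least $\#\sigma - 2$. Variables attached to disjoint super-cycles remain independent under $(\prodm)^N$, so the expectation factors as a product over super-cycles.

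Inside each super-cycle I would carry out the iterative peeling of \Cref{lem:bound_f_xi}, replacing the role of $h$ by $\phi$ or $\psi$. A pure super-cycle of length $l$ (not touching $\{1,2,i,j\}$) contributes exactly $s_1^{2(l-1)} \varsigma_0^2$, as before. When a super-cycle contains a $\phi$-factor, the $\phi$ absorbs two neighboring $(\xi - 1)$-edges into a single four-variable block, so the peeling sequence applies $\sinkop$-type operators one fewer time per $\phi$ incorporated; this costs an additional factor of $s_1^{-2}$ relative to a pure cycle. The resulting block integral is then controlled by H\"older's inequality with conjugate exponents $(2\mathfrak{q}, 2\mathfrak{p})$ chosen to match \Cref{asmp:secondorder}, yielding a bounded factor in terms of $\norm{\phi}_{\mathbf{L}^{2\mathfrak{q}}(\rho_0 \otimes \prodm \otimes \rho_1)}$ (respectively $\norm{\psi}_{\mathbf{L}^{2\mathfrak{q}}(\rho_0 \otimes \prodm)}$) and $\norm{\xi}_{\mathbf{L}^{2\mathfrak{p}}(\prodm)}$.

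Aggregating across super-cycles and accounting for at most one $s_1^{-2}$ loss per $\phi$, hence at most two in total, the overall $s_1$ exponent is at least $2(N - \#\sigma) - 4 = 2(N - \#\sigma - 2)$, while the per-cycle constants multiply to at most $C^{\#\sigma}$ for a $C$ depending only on $\norm{\phi}$, $\norm{\psi}$, $\norm{\xi}_{\mathbf{L}^{2\mathfrak{p}}(\prodm)}$, and $\varsigma_0$. The argument for $\psi$ is identical up to one fewer variable in each block. The hard part is the case analysis required when the $\phi$-factors cross cycle boundaries: I must verify that regardless of how $\{1,2\}$ and $\{i,j\}$ distribute across the cycles of $\sigma$ (and regardless of collisions such as $\{1,2\} \cap \{i,j\} \neq \emptyset$, or both pairs landing in a single cycle, or one pair split across two of them), the peeling can still be organized so that each surviving $(\xi-1)$-edge contributes $s_1^2$ through the contraction lemma and each $\phi$-block is dispatched by a single H\"older step with exponents admissible under \Cref{asmp:secondorder}.
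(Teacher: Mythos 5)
Your proposal is correct and takes essentially the same route as the paper: iterate conditional expectations variable by variable along the cycle structure of $\sigma$, contract via \Cref{lem:degeneracy_contraction} to harvest factors of $s_1^2$, and finish the residual four-variable block with Cauchy--Schwarz followed by H\"older at the exponents $(\mathfrak{q},\mathfrak{p})$ supplied by \Cref{asmp:secondorder}. The paper stops at a single worked example and declines to write out the general case; your super-cycle bookkeeping --- noting that each $\phi$-factor can merge at most two cycles of $\sigma$ (so the independent blocks number at least $\#\sigma-2$), and that each $\phi$-block costs at most a factor $s_1^{-2}$ relative to a pure cycle --- is a reasonable and correct way to formalize the case analysis the paper leaves implicit, and it recovers the claimed exponent $2(N-\#\sigma-2)$.
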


The proof of \Cref{lem:covariance_phi} is similar to \Cref{lem:bound_f_xi}---we iteratively take expectation with respect to a single variable, while keeping the rest being fixed.
In consideration of the space, we only give an example here.
\begin{example}
  Consider $N = 4$, $i = 2$, $j = 3$ and $\sigma$ given by $\sigma_i = i+1$ for $i \in [3]$.
  By construction, $\sigma$ only has one cycle $1 \to 2 \to 3 \to 4 \to 1$.
  The expectation of interest then reads
  \begin{align*}
      \Expect\left[ \phi(X_1, X_2, Y_{1}, Y_{2}) [\xi(X_3, Y_3) - 1][\xi(X_4, Y_4) - 1] \phi(X_2, X_3, Y_{3}, Y_{4}) [\xi(X_1, Y_2) - 1][\xi(X_4, Y_1) - 1] \right].
  \end{align*}
  Let $\sinkop_4$ be a shorthand notation for $I_{P} \otimes I_{P} \otimes I_{Q} \otimes \sinkop$, and $\sinkop^*_4$ similarly.
  Taking expectation with respect to $Y_4$, while keeping others being fixed, we get
  \begin{align*}
      \Expect\left[ \phi(X_1, X_2, Y_1, Y_2) [\xi(X_3, Y_3) - 1] (\sinkop^*_4 \phi)(X_2, X_3, Y_3, X_4) [\xi(X_1, Y_2) - 1][\xi(X_4, Y_1) - 1] \right],
  \end{align*}
  since
  \begin{align}
      \Expect\left[ \phi(X_2, X_3, Y_3, Y_4) [\xi(X_4, Y_4) - 1] \mid X_2, X_3, X_4, Y_{3} \right] &= \Expect\left[ \phi(X_2, X_3, Y_3, Y_4) \xi(X_4, Y_4) \mid X_2, X_3, X_4, Y_{3} \right] \nonumber \\
      &= \sinkop_4^* \phi(X_2, X_3, Y_3, X_4). \label{eq:example_phi_cond_expect}
  \end{align}
  Now taking expectation with respect to $X_4$, while keeping others being fixed, we get
  \begin{align*}
      &\quad \Expect\left[ \phi(X_1, X_2, Y_1, Y_2) [\xi(X_3, Y_3) - 1] (\sinkop_4 \sinkop^*_4 \phi)(X_2, X_3, Y_3, Y_1) [\xi(X_1, Y_{2}) - 1] \right]
  \end{align*}
  Now, both $X_1$ and $Y_2$ in $\xi(X_1, Y_2) - 1$ appears in $\phi(X_1, X_2, Y_1, Y_2)$, and both $X_3$ and $Y_3$ in $\xi(X_3, Y_3) - 1$ appears in $\phi(X_2, X_3, Y_3, Y_4)$, so we stop here and use the Cauchy-Schwarz inequality to get an upper bound
  \begin{align}
      &\quad \sqrt{\Expect[(\sinkop_4 \sinkop_4^* \phi)^2(X_2, X_3, Y_3, Y_1) [\xi(X_1, Y_2) - 1]^2 ] \times \Expect\left[ \phi^2(X_1, X_2, Y_1, Y_2) [\xi(X_3, Y_3) - 1]^2 \right]} \label{eq:example_phi_bound} \\
      &= \norm{(\sinkop_4 \sinkop_4^*) \phi}_{\ltwo(P \otimes P \otimes Q \otimes Q)} \norm{\phi}_{\mathbf{L}^{2}(P \otimes P \otimes Q \otimes Q)} \norm{\xi-1}_{\ltwo(\prodm)}^2, \quad \mbox{by independence} \nonumber.
  \end{align}
  Hence, \eqref{eq:example_phi_bound} can be further bounded above by $Cs_1^2$, where $C := \norm{\phi}_{\ltwo(P \otimes P \otimes Q \otimes Q)}^2 \norm{\xi-1}_{\ltwo(\prodm)}^2$.
  
    For the expectation associated with $\psi$, we view $\psi$ as a function with four arguments such that it is constant in its fourth argument and then repeat the argument for $\phi$.
    It only makes a difference at places where we apply $\sinkop_4$ or $\sinkop_4^*$ to $\phi$---instead of applying this operator, the expectation is exactly zero, and thus the bound holds trivially.
    To be more specific, in the first step of the above example, where we take expectation with respect to $Y_4$, we should have, in \eqref{eq:example_phi_cond_expect}, that
    \begin{align*}
        \Expect\left[ \psi(X_2, X_3, Y_3) [\xi(X_4, Y_4) - 1] \mid X_2, X_3, X_4, Y_{3} \right] = \psi(X_2, X_3, Y_3) \Expect[\xi(X_4, Y_4) - 1 \mid X_4] \txtover{a.s.}{=} 0.
    \end{align*}
\end{example}

Recall from \eqref{eq:WN} that
\begin{align*}
    W_N := \frac1{N(N-1)}\frac1{N!} \sum_{\sigma \in \perm_N} \sum_{i \neq j} (\bar K_{2,0} + \bar K_{0,2} + \bar K_{1,1
  '})(X_i, X_j, Y_{\sigma_i}, Y_{\sigma_j}) \prod_{k \in \exclude{i,j}} \xi(X_k, Y_{\sigma_k}).
\end{align*}
To prove $\Expect[W_N^2] = O(N^{-4})$, we again use Hoeffding decomposition.
From \Cref{lem:two_degeneracy} we know $(\bar K_{2,0} + \bar K_{0,2} + \bar K_{1,1'})(X_i, X_j, Y_{\sigma_i}, Y_{\sigma_j})$ is $2$-degenerate, so each term in its Hoeffding decomposition should contain at least $3$ variables.
We assume it is given by the following form:
\begin{align*}
    \phi(X_i, X_j, Y_{\sigma_i}, Y_{\sigma_j}) + \psi_0(X_i, X_j, Y_{\sigma_i}) + \psi_1(X_i, X_j, Y_{\sigma_j}) + \psi_2(X_i, Y_{\sigma_i}, Y_{\sigma_j}) + \psi_3(X_j, Y_{\sigma_i}, Y_{\sigma_j}).
\end{align*}
Define
\begin{align*}
  W_N^\phi &:= \frac1{N(N-1)} \frac1{N!} \sum_{\sigma \in \perm_N} \sum_{i \neq j} \phi(X_i, X_j, Y_{\sigma_i}, Y_{\sigma_j}) \prod_{k \in \exclude{i,j}} \xi(X_k, Y_{\sigma_k}) \\
  W_N^{\psi_0} &:= \frac1{N(N-1)} \frac1{N!} \sum_{\sigma \in \perm_N} \sum_{i \neq j} \psi_0(X_i, X_j, Y_{\sigma_i}) \prod_{k \in \exclude{i,j}} \xi(X_k, Y_{\sigma_k}),
\end{align*}
and $W_N^{\psi_1}$, $W_N^{\psi_2}$ and $W_N^{\psi_3}$, similarly.
Consequently, $W_N = W_N^{\phi} + W_N^{\psi_0} + W_N^{\psi_1} + W_N^{\psi_2} + W_N^{\psi_3}$.
It then suffices to show $\Expect[(W_N^{\phi})^2] = O(N^{-4})$ and $\Expect[(W_N^{\psi_i})^2] = O(N^{-4})$ for $i \in \{0,1,2,3\}$.
The strategy here is the same as \Cref{prop:bound_variance_Un}.

\begin{corollary}\label{cor:variance_bound_W_phi}
  Suppose the same assumptions in \Cref{lem:covariance_phi} hold.
  Then
  \begin{align*}
    \Expect[(W_N^\phi)^2] &\le \frac1{N^2(N-1)^2} \sum_{r=2}^N \frac{r^2(r-1)^2}{r!} \sum_{\sigma \in \perm_r} s_1^{2(r - \#\sigma - 2)} C^{\# \sigma} \\
    \Expect[(W_N^{\psi_i})^2] &\le \frac1{N^2(N-1)^2} \sum_{r=2}^N \frac{r^2(r-1)^2}{r!} \sum_{\sigma \in \perm_r} s_1^{2(r - \#\sigma - 2)} C^{\# \sigma}, \quad \mbox{for } i \in \{0,1,2,3\}
  \end{align*}
  In particular, $\Expect[(W_N^\phi)^2] = O(N^{-4})$ and $\Expect[(W_N^{\psi_i})^2] = O(N^{-4})$ for $i \in \{0,1,2,3\}$.
\end{corollary}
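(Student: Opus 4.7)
The plan is to mimic the variance calculation of \Cref{prop:bound_variance_Un} almost verbatim, replacing the covariance bound of \Cref{lem:bound_permutation_covariance} with the four- and three-argument versions in \Cref{lem:covariance_phi}. First I would use \Cref{lem:hoeffding_prod_xi} to expand each product $\prod_{k \in \exclude{i,j}} \xi(X_k, Y_{\sigma_k})$ appearing in $W_N^\phi$ as $\sum_{C \subset \exclude{i,j}} \prod_{k \in C}[\xi(X_k, Y_{\sigma_k}) - 1]$. Because $\phi$ is completely degenerate (\Cref{lem:two_degeneracy}) and each $\xi-1$ is degenerate on a disjoint index set, \Cref{lem:prod_degeneracy} shows this expansion is already the Hoeffding decomposition of $W_N^\phi$, with strata indexed by pairs $(A, B)$ satisfying $A \supset \{i, j\}$, $B \supset \{\sigma_i, \sigma_j\}$ and $|A| = |B| = r \ge 2$.

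By orthogonality of Hoeffding strata and the exchangeability of $\{(X_k, Y_k)\}_{k=1}^N$, all $\binom{N}{r}^2$ strata of a given order $r$ contribute the same to $\Expect[(W_N^\phi)^2]$. In the canonical stratum $A = B = [r]$, the permutations $\sigma \in \perm_N$ with $\sigma([r]) = [r]$ factor as $\perm_r \times \perm_{N-r}$, collapsing the $\perm_{N-r}$ factor into a $(N-r)!$ multiplicity; a further use of exchangeability pins one of the two copies at $(i, j) = (1, 2)$. This reduces the variance to a sum over $\sigma \in \perm_r$ and $(i', j') \in [r]^2$ with $i' \neq j'$ of a canonical covariance of the form $\Expect\bigl[\phi(X_1, X_2, Y_1, Y_2)\prod_{k=3}^r[\xi - 1]\cdot\phi(X_{i'}, X_{j'}, Y_{\sigma_{i'}}, Y_{\sigma_{j'}})\prod_{k \notin \{i',j'\}}[\xi - 1]\bigr]$. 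Inserting the bound $s_1^{2(r-\#\sigma-2)} C^{\#\sigma}$ from \Cref{lem:covariance_phi} and gathering the combinatorial prefactors yields the stated inequality.

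The argument for each $W_N^{\psi_\ell}$ is essentially the same; the only subtlety is that $\psi_\ell$ depends on only three of the four variables $(X_i, X_j, Y_{\sigma_i}, Y_{\sigma_j})$, so the canonical Hoeffding stratum has $|A| \neq |B|$ and the counting of strata of order $r$ must be adjusted accordingly. However, the three-argument half of \Cref{lem:covariance_phi} produces the same form of bound $s_1^{2(r-\#\sigma-2)} C^{\#\sigma}$, and the combinatorial prefactors align to yield the second inequality.

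Finally, to obtain the $O(N^{-4})$ conclusion I would repeat the cycle-counting argument from the close of \Cref{prop:bound_variance_Un}: for $\sigma^*$ uniform on $\perm_r$ one has $\Expect[u^{\#\sigma^*}] = \prod_{i=1}^r(1 - \tfrac{1}{i} + \tfrac{u}{i})$, so taking $u = C/s_1^2$ and using $r^2(r-1)^2 \le r^4$ shows that $\sum_{r \ge 2}\frac{r^2(r-1)^2}{r!}\sum_{\sigma \in \perm_r} s_1^{2(r-\#\sigma-2)} C^{\#\sigma}$ is a convergent series under $s_1 < 1$, exactly as in that proposition. The main obstacle, or at least the piece requiring the most care, is the Hoeffding-stratum bookkeeping in the three-argument $\psi_\ell$ case, where $|A| \neq |B|$; the remainder is a direct reprise of \Cref{prop:bound_variance_Un}.
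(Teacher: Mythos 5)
Your outline for $W_N^\phi$ reproduces the paper's own proof essentially step for step: the Hoeffding expansion via \Cref{lem:hoeffding_prod_xi}, the observation that complete degeneracy of $\phi$ and of each $\xi-1$ on disjoint index sets makes this an orthogonal decomposition, exchangeability to pin the canonical stratum $[r]\times[r]$ with the $((N-r)!)^2\,r!\,r(r-1)$ prefactor, the cycle-indexed covariance bound from \Cref{lem:covariance_phi}, and the closing moment-generating-function estimate imported from \Cref{prop:bound_variance_Un}. The paper's proof of this corollary likewise treats only the $\phi$ case explicitly.

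For the $\psi_\ell$ terms you correctly flag the genuine subtlety: since $\psi_\ell$ depends on only three of $(X_i, X_j, Y_{\sigma_i}, Y_{\sigma_j})$, the occupied Hoeffding strata have $|A| = |B| + 1$ (for $\psi_0$, $X$-indices $\{i,j\}\cup C$ but $Y$-indices $\{\sigma_i\}\cup \sigma_C$), so the stratum count $\binom{N}{r}\binom{N}{r-1}$, the permutation multiplicity $(N-r+1)!$, and the canonical-term count $r(r-1)(r-1)!$ all differ from the $\phi$ case and must be recombined to recover the stated $r^2(r-1)^2/r!$ form. The worked example preceding the corollary shows how the padded-fourth-argument trick plays out inside the iterated conditioning of \Cref{lem:covariance_phi}, producing exact zeros where the corresponding tensorized operator would otherwise act, but it does not discharge the asymmetric-stratum bookkeeping at the level of the corollary: the paper leaves that step implicit, just as you do. Naming it as the obstacle is the honest and correct reading; to close it one would need to verify that the modified counts above indeed collapse to the same $r$-indexed bound, rather than asserting alignment.
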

\begin{proof}
  We only prove the bound for $\Expect[(W_N^\phi)^2]$.
  Notice that, using \Cref{lem:hoeffding_prod_xi} for $A = [N]\backslash \{i,j\}$, we have $\prod_{k \in \exclude{i,j}} \xi(X_k, Y_{\sigma_k}) = \sum_{C \subset \exclude{i,j}} \prod_{k \in C} [\xi(X_k, Y_{\sigma_k}) - 1]$ for every pair $i \neq j$.
  As a result,
  \begin{align}\label{eq:hoeffding_W_phi}
    W_N^\phi = \frac1{N(N-1)} \frac1{N!} \sum_{\sigma \in \perm_N} \sum_{i \neq j} \phi(X_i, X_j, Y_{\sigma_i}, Y_{\sigma_j}) \sum_{C \subset \exclude{i,j}} \prod_{k \in C} [\xi(X_k, Y_{\sigma_k}) - 1].
  \end{align}
  Because $\phi(X_i, X_j, Y_{\sigma_i}, Y_{\sigma_j})$ is completely degenerate,
  an argument similar to the one in \Cref{prop:hoeffding_Un} shows that the Hoeffding decomposition of $W_N^{\phi}$ is given by
  \begin{align*}
    W_N^{\phi} := \frac1{N(N-1)} \frac1{N!} \sum_{\abs{A} = \abs{B} > 1} W_{AB}^{\phi},
  \end{align*}
  where
  \begin{align*}
    W_{AB}^{\phi} := \sum_{\sigma \in \perm_N: \sigma_A = B} \sum_{i \neq j \in A} \phi(X_i, X_j, Y_{\sigma_i}, Y_{\sigma_j}) \prod_{k \in A \backslash \{i,j\}} [\xi(X_k, Y_{\sigma_k}) - 1].
  \end{align*}
  Consequently,
  \begin{align}\label{eq:hoeffding_WN_phi}
      \Expect[(W_N^\phi)^2] = \frac1{N^2(N-1)^2 (N!)^2}\sum_{r=2}^N \sum_{\abs{A} = \abs{B} = r} \Expect[(W_{AB}^\phi)^2] = \frac1{N^2(N-1)^2 (N!)^2}\sum_{r=2}^N \binom{N}{r}^2 \Expect[(W_{[r][r]}^\phi)^2],
  \end{align}
  where the last equality follows from exchangeability.
  Using a derivation similar to the one for \Cref{prop:hoeffding_Un},
  \begin{align}
    \Expect[(W_{[r][r]}^{\phi})^2] &= \left((N-r)!\right)^2 \Expect\left[ \sum_{\sigma \in \perm_r} \sum_{1 \le i \neq j \le r} \phi(X_i, X_j, Y_{\sigma_i}, Y_{\sigma_j}) \prod_{k \in [r] \backslash \{i,j\}} [\xi(X_k, Y_{\sigma_k}) - 1] \right]^2 \nonumber \\
    &= \big((N-r)!\big)^2 r! r(r-1) \sum_{\sigma \in \perm_r} \sum_{1 \le i \neq j \le r} \nonumber \\
    &\qquad \Expect\left[ \phi(X_1, X_2, Y_{1}, Y_{2}) \prod_{k=3}^r [\xi(X_k, Y_k) - 1] \phi(X_i, X_j, Y_{\sigma_i}, Y_{\sigma_j}) \prod_{k \in [r] \backslash \{i,j\}} [\xi(X_k, Y_{\sigma_k}) - 1] \right] \nonumber \\
    &\le \big((N-r)!\big)^2 r! r(r-1) \sum_{\sigma \in \perm_r} \sum_{1 \le i \neq j \le r} s_1^{2(r - \# \sigma - 2)} C^{\# \sigma}, \quad \mbox{by \Cref{lem:covariance_phi}}. \label{eq:bound_WAB_phi}
  \end{align}
  Now, putting \eqref{eq:hoeffding_WN_phi} and \eqref{eq:bound_WAB_phi} together, we get
  \begin{align*}
    \Expect[(W_N^\phi)^2]
    &\le \frac1{N^2(N-1)^2 (N!)^2} \sum_{r=2}^N \binom{N}{r}^2 \big((N-r)!\big)^2 r! r(r-1) \sum_{\sigma \in \perm_r} \sum_{1 \le i \neq j \le r} s_1^{2(r - \# \sigma - 2)} C^{\# \sigma} \\
    &= \frac1{N^2(N-1)^2} \sum_{r=2}^N \frac{r^2(r-1)^2}{r!} \sum_{\sigma \in \perm_r} s_1^{2(r - \#\sigma - 2)} C^{\# \sigma}.
  \end{align*}
\end{proof}

\begin{proof}[Proof of \Cref{prop:bound_variance_second}]
    Let $f := \opC^{-1}(\tilde \eta \xi)$.
    Recall $\mathfrak{p}$ and $\mathfrak{q}$ from \Cref{asmp:secondorder}.
    Note that
    \begin{align*}
        \Expect[\eta_{2,0}^{2\mathfrak{q}}(X_1, X_2)] &= \int [(I_{P} \otimes \sinkop^*) f(x, x')]^{2\mathfrak{q}} d P(x) d P(x') \\
        &= \int \left[ \int f(x, y') \xi(x', y') d Q(y') \right]^{2\mathfrak{q}} d P(x) d P(x') \\
        &\txtover{Jensen}{\le} \iint f^{2\mathfrak{q}}(x, y') \xi(x', y') d Q(y') d P(x) d P(x').
    \end{align*}
    Since $\int \xi(x', y') d P(x') \txtover{a.s.}{=} 1$,
    integrating with respect to $x'$ in the above upper bound gives
    \begin{align*}
        \int f^{2\mathfrak{q}}(x, y') d Q(y') d P(x) = \Expect[f^{2\mathfrak{q}}(X_1, Y_1)] < \infty.
    \end{align*}
    As a result,
    \begin{align*}
        \norm{K_{2,0}}_{\mathbf{L}^{2}(P \otimes \prodm \otimes Q)}^{2} &= \Expect\left[ \eta_{2,0}^{2}(X_1, X_2) \xi^{2}(X_1, Y_1) \xi^{2}(X_2, Y_2) \right] \\
        &\txtover{H\"older}{\le} \Expect[\eta_{2,0}^{2\mathfrak{q}}(X_1, X_2)]^{\frac{1}{\mathfrak{q}}} \Expect[\xi^{2\mathfrak{p}}(X_1, Y_1) \xi^{2\mathfrak{p}}(X_2, Y_2)]^{\frac{1}{\mathfrak{p}}} \\
        &= \Expect[\eta_{2,0}^{2\mathfrak{q}}(X_1, X_2)]^{\frac{1}{\mathfrak{q}}} \Expect[\xi^{2\mathfrak{p}}(X_1, Y_1)]^{\frac2{\mathfrak{p}}} < \infty.
    \end{align*}
    Analogously, we have $\norm{K_{0,2}}_{\mathbf{L}^{2}(P \otimes \prodm \otimes Q)} < \infty$ and $\norm{K_{1,1'}}_{\mathbf{L}^{2}(P \otimes \prodm \otimes Q)} < \infty$.
    As discussed before \Cref{cor:variance_bound_W_phi}, we can then decompose $(\bar K_{2,0} + \bar K_{0,2} + \bar K_{1,1'})(X_i, X_j, Y_{\sigma_i}, Y_{\sigma_j})$ into third and fourth order terms using Hoeffding decomposition and invoke \Cref{cor:variance_bound_W_phi} to show $\Expect[W_N^2] = O(N^{-4})$.
    Recall from \eqref{eq:decompose_L2DN} that $\Expect[(\second D_N - W_N - V_N)^2] = O(N^{-4})$.
    Hence, by \Cref{lem:variance_bound_VN},
    \begin{align*}
        \Expect[(U_N - \second D_N)^2] \le 3\left\{ \Expect[(U_N - V_N)^2] + \Expect[W_N^2] + \Expect[(\second D_N - V_N - W_N)^2] \right\} = O(N^{-4}).
    \end{align*}
\end{proof}

\section*{Acknowledgements}
Z.H.~acknowledges support from NSF grant DMS-1810975 and CCF-1740551. L.L.~acknowledges support from NSF grant DMS-1612483 and CCF-1740551. S.P.~acknowledges support from NSF grant DMS-1612483 and DMS-2052239. Part of this work was done while Z.H.~was visiting the Simons Institute for the Theory of Computing.

\bibliographystyle{imsart-number}
\bibliography{biblio}

\clearpage

\begin{appendix}
\setcounter{page}{1}

\section{Additional Proofs}

\begin{proof}[Proof of the optimality of $q_\eps^*$]
  Recall that the Kullback-Leibler (KL) divergence between probability distributions is defined as
  \begin{align*}
      \text{KL}(\nu' \Vert \nu) := \int \log{\frac{d \nu'}{d \nu}} d \nu', \quad \mbox{for } \nu' \ll \nu.
  \end{align*}
  It is zero iff $\nu' = \nu$.
  We claim that minimizing \eqref{eq:modent} is equivalent to minimizing $\text{KL}(q \Vert q_\eps^*)$ which is uniquely minimized at $q = q_\eps^*$.
  In fact,
  \[
  \begin{split}
      \text{KL}(q \Vert q_\eps^*)
      &= \sum_{\sigma \in \perm_N} q(\sigma) \log \frac{q(\sigma)}{q_\eps^*(\sigma)}= \sum_{\sigma \in \perm_N} q(\sigma)\log\left(\frac{q(\sigma)\sum_{\tau \in \perm_N} w(\tau)}{w(\sigma)} \right) \\
      &= \Ent(q) + \log{\left[ \sum_{\tau \in \perm_N} w(\tau) \right]} \sum_{\sigma \in \perm_N} q(\sigma) + \frac{1}{\eps} \sum_{\sigma\in \perm_N} c(X, Y_\sigma) q(\sigma) \\
      &= \frac{N}{\eps} \ip{M_q, C} + \Ent(q) + \log{\sum_{\tau \in \perm_N} w(\tau)},
  \end{split}
  \]
  and thus the claim follows.
\end{proof}

\begin{proof}[Proof of \Cref{lem:sinkhorn_op}]
    \ref{sinkop:cond_expect}
    According to \eqref{eq:sinkop_cond_expect}, it holds that $\sinkop f(y) = \muexp[f(X) \mid Y](y)$ and thus, by Jensen's inequality,
    \begin{align}\label{eq:contraction}
        \norm{\sinkop f}_{\ltwo(Q)}^2 = \muexp[(\sinkop f)^2(Y)] = \muexp[\muexp[f(X) \mid Y]^2] \le \muexp[f^2(X)] = \norm{f}_{\ltwo(P)}^2 < \infty,
    \end{align}
    which implies $\sinkop f \in \ltwo(Q)$.
    A similar argument holds for $\sinkop^* g$.
    
    \ref{sinkop:largest_eigen} Since $\scb \in \Pi(P, Q)$, we get, for any $y \in \rr^d$,
    \[
        \sinkop \ones(y) = \int \ones(x) \xi(x, y) d P(x) \txtover{a.s.}{=} 1.
    \]
    This implies $(1, \ones)$ is a (eigenvalue, eigenvector) pair of $\sinkop$.
    It then follows from \eqref{eq:contraction} that $1$ is the largest eigenvalue of $\sinkop$.
    
    \ref{sinkop:mean_zero} For any $f \in \ltwo_0(P)$, it holds
    \[
        \int \sinkop f(y) d Q(y) = \iint f(x) \xi(x, y) d P(x) d Q(y) = \int f(x) d P(x) = 0.
    \]
    It then follows that $\sinkop f \in \ltwo_0(Q)$.
    
    \ref{sinkop:inverse} From \ref{sinkop:largest_eigen} and \ref{sinkop:mean_zero} we know $\sinkop^* \sinkop$ maps from $\ltwo_0(P)$ to $\ltwo_0(P)$ with the largest eigenvalue being $1$.
    Recall that we assume $\sinkop^* \sinkop$ has positive eigenvalue gap, in other words, $\ones$ is the only eigenfunction corresponds to the eigenvalue $1$.
    Given $f, g \in \ltwo_0(P)$, if $(I - \sinkop^* \sinkop) f = (I - \sinkop^* \sinkop) g$, then $f - g = c \ones$ for some constant $c$.
    Since $f - g \in \ltwo_0(P)$ is orthogonal to $\ones$, it holds that $f = g$ and thus $I - \sinkop^* \sinkop$ is injective on $\ltwo_0(P)$.
    Moreover, for every $f \in \ltwo_0(P)$,
    \begin{align*}
        \tilde f := \left[ I + \sum_{k \ge 1} (\sinkop^* \sinkop)^k \right]f
    \end{align*}
    converges in $\ltwo(P)$ and $(I - \sinkop^* \sinkop) \tilde f = f$.
    It follows that $I - \sinkop^* \sinkop$ is also surjective.
    Therefore, $(I - \sinkop^* \sinkop)^{-1} f$ is well-defined and is equal to $\tilde f$.
    
    \ref{sinkop:identity} From \ref{sinkop:inverse} we get, for any $f \in \ltwo_0(P)$,
    \[
    \sinkop (I - \sinkop^* \sinkop)^{-1}f = \sinkop \left[ I + \sum_{k \ge 1} (\sinkop^* \sinkop)^k \right] f = \left[ I + \sum_{k \ge 1} (\sinkop \sinkop^*)^k \right] \sinkop f = (I - \sinkop \sinkop^*)^{-1} \sinkop f.
    \]
    This implies $\sinkop (I - \sinkop^* \sinkop)^{-1} = (I - \sinkop \sinkop^*)^{-1} \sinkop$.
    The other identity can be proved analogously.
    Finally, we prove the first equation in \eqref{eq:first_cond_identity}.
    In fact,
    \begin{align*}
        &\quad\ \muexp\left[ (I - \sinkop^* \sinkop)^{-1}(f - \sinkop^* g)(X) + (I - \sinkop \sinkop^*)^{-1}(g - \sinkop f)(Y) \mid X \right] \\
        &= (I - \sinkop^* \sinkop)^{-1}(f - \sinkop^* g)(X) + \sinkop^* (I - \sinkop \sinkop^*)^{-1}(g - \sinkop f)(X) \\
        &= (I - \sinkop^* \sinkop)^{-1}(f - \sinkop^* g)(X) + (I - \sinkop^* \sinkop)^{-1} \sinkop^* (g - \sinkop f)(X) = f(X),
    \end{align*}
    where the last equality follows from a simple algebra.
\end{proof}

\begin{proof}[Proof of \Cref{lem:operator_B}]
  \ref{opB:cond_expect} Let $f \in \ltwo(P \otimes Q)$.
  By the definition of conditional expectation, it suffices to show that $\muexp[\opB f(X_2, Y_1) \phi(X_2, Y_1)] = \muexp[f(X_1, Y_2) \phi(X_2, Y_1)]$ for all $\sigma(X_2, Y_1)$-measurable $\phi$.
  By the definition of $\opB$, we have
  \[
    \opB f(x, y) = \iint f(x', y') \xi(x', y) \xi(x, y') d P(x') d Q(y').
  \]
  As a result, it holds that
  \begin{align*}
      \muexp[f(X_1, Y_2) \phi(X_2, Y_1)] &= \iint d P(x) d Q(y) \iint f(x', y') \phi(x, y) \xi(x', y) \xi(x, y') dP(x') dQ(y') \\
      &= \iint \opB f(x, y) \phi(x, y) d P(x) d Q(y)
      = \muexp[\opB f(X_2, Y_1) \phi(X_2, Y_1)],
  \end{align*}
  which proves the claim.
  By Jensen's inequality,
  \begin{align*}
      \norm{\opB f}^2_{\ltwo(\prodm)} = \muexp[\muexp[f(X_1, Y_2) \mid X_2, Y_1]^2] \le \muexp[f^2(X_1, Y_2)] < \infty,
  \end{align*}
  and thus $\opB f \in \ltwo(\prodm)$.

  \ref{opB:maintain_mean_zero}
  Take any $f \in \ltwo_0(P \otimes Q)$, we have, by \ref{opB:cond_expect},
  \[
    \Expect_{\prodm}[\opB f(X, Y)] = \muexp[\opB f(X_2, Y_1)] = \muexp[\muexp[f(X_1, Y_2) \mid X_2, Y_1]] = \muexp[f(X_1, Y_2)] = 0,
  \]
  and thus $\opB f \in \ltwo_0(P \otimes Q)$.

  \ref{opB:linear} Recall $\opB = \trans (\sinkop \otimes \sinkop^*)$. Take any $f \oplus g \in \ltwo(P \otimes Q)$, we have
  \[
    \opB (f \oplus g)(x, y) = (\sinkop \otimes \sinkop^*) (f \oplus g)(y, x) = \sinkop f(y) + \sinkop^* g(x) = (\sinkop^* g \oplus \sinkop f)(x, y).
  \]
  
  \ref{opB:inverse}
  Recall from \Cref{asmp:secondmoment} that $\sinkop$ admits a singular value decomposition: $\sinkop \alpha_k = s_k \beta_k$ and $\sinkop^* \beta_k = s_k \alpha_k$ for all $k \ge 0$ with $s_0 = 1$ and $\alpha_0 = \beta_0 = \ones$, where $\{\alpha_k\}$ and $\{\beta_k\}$ are orthonormal bases of $\ltwo(P)$ and $\ltwo(Q)$, respectively.
  Take any $f \in \ltwo_0(P \otimes Q)$.
  According to \cite[Page 90]{berezansky2013spectral}, $\{\alpha_i \otimes \beta_j\}_{i, j\ge0}$ forms an orthonormal basis of $\ltwo(\prodm)$.
  As a result, we get that $f$ has an expansion
  \[
    f = \sum_{i,j \ge 0, i + j > 0} \gamma_{ij} (\alpha_i \otimes \beta_j),
  \]
  where $\sum_{i,j \ge 0, i + j > 0} \gamma_{ij}^2 < \infty$.
  Define a function
  \[
    \tilde f := \sum_{i,j \ge 0, i + j > 0} \frac{\gamma_{ij}}{1 + s_i s_j} (\alpha_i \otimes \beta_j).
  \]
  Since $s_k \ge 0$ for all $k \ge 0$, it holds that $\tilde f \in \ltwo(\prodm)$.
  Furthermore, we have $\Expect_{\prodm}[\tilde f(X, Y)] = 0$ as $\alpha_i \in \ltwo_0(P)$ and $\beta_i \in \ltwo_0(Q)$ for all $i > 0$.
  This implies $\tilde f \in \ltwo_0(\prodm)$.
  Moreover, we have
  \begin{align}\label{eq:opB_inv_ident}
    (I + \opB) \tilde f = \sum_{i,j \ge 0, i + j > 0} \frac{\gamma_{ij}}{1 + s_i s_j} (\alpha_i \otimes \beta_j) + \sum_{i,j \ge 0, i + j > 0} \frac{\gamma_{ij}}{1 + s_i s_j} s_i s_j (\alpha_i \otimes \beta_j) = f,
  \end{align}
  and thus $I + \opB: \ltwo_0(\prodm) \rightarrow \ltwo_0(\prodm)$ is surjective.
  On the other hand, if $(I + \opB)f = 0$ for some $f \in \ltwo_0(\prodm)$, then we must have $\ip{\opB f, f}_{\ltwo_0(\prodm)} = -\norm{f}_{\ltwo_0(\prodm)}^2$.
  However, we also know $\ip{\opB f, f}_{\ltwo_0(\prodm)} = \sum_{i,j \ge 0, i + j > 0} s_i s_j \gamma_{ij}^2 \ge 0$.
  Consequently, it holds $f \equiv 0$ and thus $I + \opB$ is also injective.
  Hence, the inverse operator $(I + \opB)^{-1}$ is well-defined on $\ltwo_0(\prodm)$.

  \ref{opB:B_and_A} Take any $f \oplus g \in \ltwo_0(P \otimes Q)$, it follows from \ref{opB:inverse} that $(I + \opB)^{-1}(f \oplus g)$ exists.
  It then suffices to verify
  \[
    (I + \opB)\left[ (I - \sinkop^* \sinkop)^{-1}(f - \sinkop^*g) \oplus (I - \sinkop \sinkop^*)^{-1}(g - \sinkop f) \right] = f \oplus g.
  \]
  By \ref{opB:linear}, we know
  \begin{align*}
      &\quad \opB\left[ (I - \sinkop^* \sinkop)^{-1}(f - \sinkop^*g) \oplus (I - \sinkop \sinkop^*)^{-1}(g - \sinkop f) \right] \\
      &= \sinkop^*(I - \sinkop \sinkop^*)^{-1}(g - \sinkop f) \oplus \sinkop (I - \sinkop^* \sinkop)^{-1}(f - \sinkop^*g) \\
      &= (I - \sinkop^* \sinkop)^{-1} \sinkop^*(g - \sinkop f) \oplus (I - \sinkop \sinkop^*)^{-1} \sinkop(f - \sinkop^* g),
  \end{align*}
  where the last equality follows from \ref{sinkop:identity} in \Cref{lem:sinkhorn_op}.
  Consequently,
  \begin{align*}
      (I + \opB)\left[ (I - \sinkop^* \sinkop)^{-1}(f - \sinkop^*g) \oplus (I - \sinkop \sinkop^*)^{-1}(g - \sinkop f) \right] &= f \oplus g.
  \end{align*}
\end{proof}

\begin{proof}[Proof of \Cref{lem:operator_C}]
    We will prove that $\opC: \ltwo_{0,0}(\prodm) \rightarrow \ltwo_{0,0}(\prodm)$ is bijective.
    On the one hand, take any $f \in \ltwo_{0,0}(\prodm)$, since $\{\alpha_i \otimes \beta_j\}_{i,j \ge 0}$ forms an orthonormal basis of $\ltwo(\prodm)$, we know $f$ must admit the following expansion:
    \[
        f = \sum_{i,j \ge 1} \gamma_{ij} \alpha_i \otimes \beta_j, \quad \mbox{where } \sum_{i,j \ge 1} \gamma_{ij}^2 < \infty.
    \]
    Note that we have assumed $s_k < 1$ for all $k \ge 1$.
    Define
    \[
        \tilde f := \sum_{i,j \ge 1} \frac{\gamma_{ij}}{(1 - s_i^2)(1 - s_j^2)} \alpha_i \otimes \beta_j,
    \]
    then, similar to \eqref{eq:opB_inv_ident}, we have $\opC \tilde f = f$ and $\tilde f \in \ltwo_{0,0}(\prodm)$.
    Hence, $\opC$ is surjective.
    On the other hand, if $\opC f = 0$, then $\opC f = \sum_{i,j \ge 1} (1 - s_i^2) (1 - s_j^2) \gamma_{ij} (\alpha_i \otimes \beta_j) = 0$.
    It follows that $\gamma_{ij} = 0$ for all $i, j \ge 1$, and thus $\opC$ is injective.
    
    By \eqref{eq:first_cond_identity} we get
    \begin{align*}
        \muexp\big[ (I - \sinkop^* \sinkop)^{-1}(\eta_{1,0} - \sinkop^* \eta_{0,1})(X_1) + (I - \sinkop \sinkop^*)^{-1}(\eta_{0,1} - \sinkop \eta_{1,0})(Y_1) \mid X_1 \big] = \eta_{1,0}(X_1).
    \end{align*}
    By definition, $\eta_{1,0}(X_1) = \int [\eta(X_1, y) - \theta] \xi(X_1, y) d Q(y) = \muexp[\eta(X_1, Y_1) - \theta \mid X_1]$.
    This yields $\muexp[\tilde \eta(X_1, Y_1) \mid X_1] = 0$.
    Similarly, $\muexp[\tilde \eta(X_1, Y_1) \mid Y_1] = 0$.
    We obtain $\tilde \eta \in \ltwo_{0,0}(\mu)$, and then, by \Cref{asmp:secondmoment}, $\tilde \eta \xi \in \ltwo_{0,0}(\prodm)$ since
    \begin{align*}
        0 &= \muexp[\tilde \eta(X_1, Y_1) \mid X_1](x) = \int \tilde \eta(x, y) \xi(x, y) d Q(y) \\
        0 &= \muexp[\tilde \eta(X_1, Y_1) \mid Y_1](y) = \int \tilde \eta(x, y) \xi(x, y) d P(x).
    \end{align*}
\end{proof}

\begin{proof}[Proof of \Cref{lem:preserve_degeneracy}]
    We prove the claim for $\sinkop_1 = \sinkop: \ltwo(P) \rightarrow \ltwo(Q)$ and $\sinkop_2 = \sinkop^*: \ltwo(Q) \rightarrow \ltwo(P)$.
    The rest follows similarly.
    Take any $f \in \ltwo_{0,0}(\prodm)$, we know $(\sinkop \otimes \sinkop^*) f(Y_1, X_2) = \muexp[f(X_1, Y_2) \mid X_2, Y_1]$.
    Hence, by the tower property, it holds that
    \begin{align*}
        \muexp[(\sinkop \otimes \sinkop^*) f(Y_1, X_2) \mid X_2] = \muexp[f(X_1, Y_2) \mid X_2] = \muexp\big[ \muexp[f(X_1, Y_2) \mid X_2, Y_2] \mid X_2 \big] = 0.
    \end{align*}
    Analogously, $\muexp[(\sinkop \otimes \sinkop^*) f(Y_1, X_2) \mid Y_1] = 0$.
    This implies $(\sinkop \otimes \sinkop^*) f(Y_1, X_2) \in \ltwo_{0,0}(Q \otimes P)$, and the claim follows.
    Now, observe that $(\sinkop \otimes \sinkop^*) f(Y_1, X_2) \in \ltwo_{0,0}(Q \otimes P)$ yields $\trans (\sinkop \otimes \sinkop^*) f(X_2, Y_1) \in \ltwo_{0,0}(P \otimes Q)$ and $\opB = \trans (\sinkop \otimes \sinkop^*)$, we get $\opB$ maps $\ltwo_{0,0}(\prodm)$ to $\ltwo_{0,0}(\prodm)$.
\end{proof}

\begin{proof}[Proof of \Cref{lem:identity_second_order}]
    Since $\tilde \eta \xi \in \ltwo_{0,0}(\prodm)$, we know from \Cref{lem:operator_C} and \Cref{lem:preserve_degeneracy} that $\eta_{2,0} \in \ltwo_{0,0}(P \otimes P)$, $\eta_{0,2} \in \ltwo_{0,0}(Q \otimes Q)$ and $\eta_{1,1'} \in \ltwo_{0,0}(\prodm)$.
    Let $f := \opC^{-1}(\tilde \eta \xi)$.
    Recall from \Cref{asmp:secondorder} that $\xi \in \mathbf{L}^{2\mathfrak{p}}(\prodm)$ and $f \in \mathbf{L}^{2\mathfrak{q}}(\prodm)$.
    As a result,
    \begin{align}\label{eq:f2_holder}
        \mu\left[ f^{2} \right] \txtover{H\"older}{\le} \left[ \int f^{2\mathfrak{q}}(x, y) d P(x) d Q(y) \right]^{\frac{1}{\mathfrak{q}}} \left[ \int \xi^{\mathfrak{p}}(x, y) d P(x) d Q(y) \right]^{\frac{1}{\mathfrak{p}}} < \infty.
    \end{align}
    Furthermore,
    \begin{align*}
        \int (\opB f)^{2\mathfrak{q}}(x, y) d P(x) d Q(y) &= \int \left[ \int f(x', y') \xi(x', y) \xi(x, y') d P(x') d Q(y') \right]^{2\mathfrak{q}} d P(x) d Q(y) \\
        &\txtover{Jensen}{\le} \iint f^{2\mathfrak{q}}(x', y') \xi(x', y) \xi(x, y') d P(x') d Q(y') d P(x) d Q(y) \\
        &\txtover{(i)}{=} \int f^{2\mathfrak{q}}(x', y') d P(x') d Q(y') < \infty,
    \end{align*}
    where (i) follows from $\int \xi(x', y) d Q(y) \txtover{a.s.}{=} \int \xi(x, y') d P(x) \txtover{a.s.}{=} 1$.
    Similar to \eqref{eq:f2_holder}, it then holds that
    \begin{align*}
        \mu[(\opB f)^2] \le \left[ \int (\opB f)^{2\mathfrak{q}}(x, y) d P(x) d Q(y) \right]^{\frac{1}{\mathfrak{q}}} \left[ \int \xi^{\mathfrak{p}}(x, y) d P(x) d Q(y) \right]^{\frac{1}{\mathfrak{p}}} < \infty.
    \end{align*}
    This yields that $\eta_{1,1'} := (I + \opB)f \in \ltwo(\mu)$.
    Now, by the degeneracy \eqref{eq:degenerate_function} of $\eta_{2,0}$, $\eta_{0,2}$ and $\eta_{1,1'}$, we obtain $\second \in H_0^\perp \cap H_1^\perp$.
    It then follows from the permutation symmetry of $\second$ that $\second \in H_2$.

    Notice that $(\sinkop^* \otimes \sinkop^*) \eta_{0,2} = - (\sinkop^* \sinkop \otimes \sinkop^*) \opC^{-1}(\tilde \eta \xi)$ and
    \begin{align*}
        (I_{P} \otimes \sinkop^*) \eta_{1,1'} &= ((I_{P} \otimes \sinkop^*) + (I_{P} \otimes \sinkop^*) \opB)\opC^{-1}(\tilde \eta \xi) \txtover{(i)}{=} -\eta_{2,0} + \trans (\sinkop^* \otimes I_{P})(\sinkop \otimes \sinkop^*)\opC^{-1}(\tilde \eta \xi) \\
        &= -\eta_{2,0} + \trans(\sinkop^* \sinkop \otimes \sinkop^*) \opC^{-1}(\tilde \eta \xi),
    \end{align*}
    where we have used $\opB = \trans(\sinkop \otimes \sinkop^*)$ in (i).
    It then follows that
    \begin{align*}
        (I + \trans) [\eta_{2,0} + (\sinkop^* \otimes \sinkop^*) \eta_{0,2} + (I_{P} \otimes \sinkop^*) \eta_{1,1'}] &= (I + \trans)(\trans - I) (\sinkop^* \sinkop \otimes \sinkop^*) \opC^{-1}(\tilde \eta \xi) \equiv 0,
    \end{align*}
    since $(I + \trans)(\trans - I) = \trans - I + \trans \trans - \trans = 0$.
    Similarly, $(I + \trans) [(\sinkop \otimes \sinkop) \eta_{2,0} + \eta_{0,2} + (\sinkop \otimes I_{Q}) \eta_{1,1'}] \equiv 0$.
    
    Let us verify the last identity in the statement of \Cref{lem:identity_second_order}.
    Note that
    \begin{align*}
        (I_{P} \otimes \sinkop)(I + \trans)\eta_{2,0} &= [(I_{P} \otimes \sinkop) + \trans (\sinkop \otimes I_{P})]\eta_{2,0} = - [(I_{P} \otimes \sinkop \sinkop^*) + \trans (\sinkop \otimes \sinkop^*)] \opC^{-1}(\tilde \eta \xi) \\
        &= -[(I_{P} \otimes \sinkop \sinkop^*) + \opB]\opC^{-1} (\tilde \eta \xi).
    \end{align*}
    Analogously, $(\sinkop^* \otimes I_{Q})(I + \trans) \eta_{0,2} = -[(\sinkop^* \sinkop \otimes I_{Q}) + \opB]\opC^{-1}(\tilde \eta \xi)$ and
    \begin{align*}
        (I + \opB) \eta_{1,1'} = (I + \opB)(I + \opB) \opC^{-1}(\tilde \eta \xi) = [I + 2 \opB + (\sinkop^* \otimes \sinkop) \trans \trans (\sinkop \otimes \sinkop^*)] \opC^{-1}(\tilde \eta \xi).
    \end{align*}
    Hence,
    \begin{align*}
        &\quad (I_{P} \otimes \sinkop)(I + \trans)\eta_{2,0} + (\sinkop^* \otimes I_{Q})(I + \trans) \eta_{0,2} + (I + \opB) \eta_{1,1'} \\
        &= [I - (I_{P} \otimes \sinkop \sinkop^*) - (\sinkop^* \sinkop \otimes I_{Q}) + (\sinkop^* \sinkop \otimes \sinkop \sinkop^*)] \opC^{-1}(\tilde \eta \xi) = \tilde \eta \xi,
    \end{align*}
    where the last equality follows from $\opC := (I - \sinkop^* \sinkop) \otimes (I - \sinkop \sinkop^*) = I - I_{P} \otimes \sinkop \sinkop^* - \sinkop^* \sinkop \otimes I_{Q} + \sinkop^* \sinkop \otimes \sinkop \sinkop^*$.
\end{proof}

We then prove the Hoeffding decomposition of $D_N$ and $U_N$ in \Cref{prop:hoeffding_Un}.
We start with two useful lemmas.
\begin{lemma}\label{lem:prod_degeneracy}
  Let $A_1, A_2, B_1, B_2 \subset [N]$ be such that $A_1 \cap A_2 = B_1 \cap B_2 = \emptyset$.
  Assume $T_1 := f_1(X_{A_1}, Y_{B_1}) \in \ltwo((P \otimes Q)^N)$ and $T_2 := f_2(X_{A_2}, Y_{B_2}) \in \ltwo((P \otimes Q)^N)$ are completely degenerate.
  Then $T_1 T_2 \in \ltwo((P \otimes Q)^N)$ is also completely degenerate.
\end{lemma}
\begin{proof}
  Take any $A' \subset A_1 \cup A_2$ and $B' \subset B_1 \cup B_2$ such that $\abs{A'} + \abs{B'} < \abs{A_1} + \abs{A_2} + \abs{B_1} + \abs{B_2}$.
  Let $A'_1 := A' \cap A_1$, $A'_2 := A' \cap A_2$, $B'_1 := B' \cap B_1$ and $B'_2 := B' \cap B_2$.
  Then $A' = A'_1 \cup A'_2$ and $B' = B'_1 \cup B'_2$.
  Furthermore, without loss of generality, we may assume $\abs{A_1'} + \abs{B_1'} < \abs{A_1} + \abs{B_1}$.
  By independence, we have
  \begin{align*}
    \Expect[T_1 T_2 \mid X_{A'}, Y_{B'}] = \Expect[T_1 \mid X_{A'_1}, Y_{B'_1}] \Expect[T_2 \mid X_{A'_2}, Y_{B'_2}] = 0,
  \end{align*}
  since $\Expect[T_1 \mid X_{A'_1}, Y_{B'_1}] = 0$.
\end{proof}
\begin{lemma}\label{lem:hoeffding_prod_xi}
  Let $A \subset [N]$ be a subset.
  For any $\sigma \in \perm_N$, the following identity holds:
  \begin{align}\label{eq:hoeffding_prod_xi}
    \prod_{i \in A} \xi(X_i, Y_{\sigma_i}) = \sum_{C \subset A} \prod_{i \in C} \tilde \xi(X_i, Y_{\sigma_i}),
  \end{align}
  where $\prod_{i \in \emptyset} \tilde \xi(X_i, Y_{\sigma_i}) := 1$.
  Moreover, \eqref{eq:hoeffding_prod_xi} gives the Hoeffding decomposition of $\prod_{i \in A} \xi(X_i, Y_{\sigma_i})$.
\end{lemma}
\begin{proof}
  By \Cref{lem:prod_degeneracy}, $\prod_{i \in C} \tilde \xi(X_i, Y_{\sigma_i})$ is completely degenerate for each $C \subset A$.
  It then suffices to prove the identity \eqref{eq:hoeffding_prod_xi}.
  If $\abs{A} = m$, then it is enough to prove \eqref{eq:hoeffding_prod_xi} for $A = [m]$ and $\sigma \in \perm_m$.
  We will prove it by induction.
  For $m = 1$, the identity reduces to $\xi(X_1, Y_1) = 1 + \tilde \xi(X_1, Y_1)$, which is true by definition.
  Assume the identity holds for $m - 1$.
  Consequently,
  \begin{align*}
    \prod_{i=1}^m \xi(X_i, Y_{\sigma_i}) &= \sum_{C \subset [m-1]} \prod_{i \in C} \tilde \xi(X_i, Y_{\sigma_i}) \times \xi(X_m, Y_{\sigma_m}) \\
    &= \sum_{C \subset [m], m \in C} \prod_{i \in C} \tilde \xi(X_i, Y_{\sigma_i}) + \sum_{C \subset [m-1]} \prod_{i \in C} \tilde \xi(X_i, Y_{\sigma_i}) = \sum_{C \subset [m]} \prod_{i \in C} \tilde \xi(X_i, Y_{\sigma_i}).
  \end{align*}
  Thus, the identity holds for $m$.
\end{proof}

\begin{proof}[Proof of \Cref{prop:hoeffding_Un}]
  We only prove the results for $U_N$.
  The proof for $D_N$ is similar.
  By definition,
  \begin{align*}
    U_N &:= \frac1{N \cdot N!} \sum_{\sigma \in \perm_N} \sum_{i=1}^N h(X_i, Y_{\sigma_i}) \prod_{j \in \exclude{i}} \xi(X_j, Y_{\sigma_j}) \\
    &= \frac1{N \cdot N!} \sum_{\sigma \in \perm_N} \sum_{i=1}^N h(X_i, Y_{\sigma_i}) \sum_{C \subset \exclude{i}} \prod_{j \in C} \tilde \xi(X_j, Y_{\sigma_j}), \quad \mbox{by \Cref{lem:hoeffding_prod_xi}}.
  \end{align*}
  Take $A, B \subset [N]$ such that $\abs{A}=\abs{B} > 0$.
  We will write $U_N$ as a sum of terms that contain exactly $X_A := (X_i)_{i \in A}$ and $Y_B := (Y_i)_{i \in B}$.
  The terms that contain exactly $X_A$ among $\{X_i\}_{i=1}^N$ in the above decomposition are
  \begin{align*}
    \frac1{N \cdot N!} \sum_{\sigma \in \perm_N} \sum_{i \in A} h(X_i, Y_{\sigma_i}) \prod_{j \in A \backslash \{i\}} \tilde \xi(X_j, Y_{\sigma_j}).
  \end{align*}
  Consequently, the terms that contain exactly $(X_A, Y_B)$ are
  \begin{align*}
    \frac1{N \cdot N!} U_{AB} := \frac1{N \cdot N!} \sum_{\sigma \in \perm_N: \sigma_A = B} \sum_{i \in A} h(X_i, Y_{\sigma_i}) \prod_{j \in A \backslash \{i\}} \tilde \xi(X_j, Y_{\sigma_j}).
  \end{align*}
  Hence, the identity \eqref{eq:hoeffding_Un} follows.
  Moreover, since $h \in \ltwo_{0,0}(\prodm)$, we get, by \Cref{lem:prod_degeneracy}, that
  \begin{align*}
    h(X_i, Y_{\sigma_i}) \prod_{j \in A \backslash \{i\}} \tilde \xi(X_j, Y_{\sigma_j}) \in H_{AB}, \quad \mbox{for any } i \in A \mbox{ and } \sigma \in \perm_N \mbox{ such that } \sigma_A = B.
  \end{align*}
  This implies $U_{AB} \in H_{AB}$, and thus \eqref{eq:hoeffding_Un} is the Hoeffding decomposition of $U_N$.

  Let us compute $\Expect[U_N^2]$.
  For any $A, B \subset [N]$ such that $\abs{A}=\abs{B}=r > 0$, we get, by the exchangeability of $X_{[N]}$ and $Y_{[N]}$ under $\prodm$, $\Expect[U_{AB}^2] = \Expect[U_{[r][r]}^2]$.
  Furthermore, since there are $(N - r)!$ permutations that map $[r]$ to $[r]$, we get
  \begin{align*}
    \Expect[U_{[r][r]}^2] = (N - r)!^2 \Expect\left[ \sum_{\sigma \in \perm_{r}} \sum_{i = 1}^r h(X_i, Y_{\sigma_i}) \prod_{j \in [r] \backslash \{i\}} \tilde \xi(X_j, Y_{\sigma_j}) \right]^2.
  \end{align*}
  As a result, $\Expect[U_{[r][r]}^2]$ is equal to
  \begin{align*}
      (N -r)!^2 &\sum_{\tau \in \perm_r} \sum_{l=1}^r \Expect\left[ h(X_l, Y_{\tau_l}) \prod_{k \in [r] \backslash \{l\}} \tilde \xi(X_k, Y_{\tau_k}) \times \sum_{\sigma \in \perm_r} \sum_{i = 1}^r h(X_i, Y_{\sigma_i}) \prod_{j \in [r] \backslash \{i\}} \tilde \xi(X_j, Y_{\sigma_j}) \right].
  \end{align*}
  By symmetry, the contribution from every $\tau$ is the same, so $\Expect[U_{[r][r]}^2]$ is equal to
  \begin{align*}
    (N - r)!^2 r! \Expect&\Bigg[ \sum_{l = 1}^r h(X_l, Y_{l}) \prod_{k \in [r] \backslash \{l\}} \tilde \xi(X_k, Y_k) \sum_{\sigma \in \perm_r} \sum_{i = 1}^r h(X_i, Y_{\sigma_i}) \prod_{j \in [r] \backslash \{i\}} \tilde \xi(X_j, Y_{\sigma_j}) \Bigg].
  \end{align*}
  It then follows from the exchangeability of $\{(X_i, Y_i)\}_{i \in [N]}$ that
  \begin{align*}
    \Expect[U_{[r][r]}^2] = (N - r)!^2 r! r \Expect\left[ h(X_1, Y_{1}) \prod_{k=2}^r \tilde \xi(X_k, Y_k) \sum_{\sigma \in \perm_r} \sum_{i = 1}^r h(X_i, Y_{\sigma_i}) \prod_{j \in [r] \backslash \{i\}} \tilde \xi(X_j, Y_{\sigma_j}) \right].
  \end{align*}
  As a result,
  \begin{align*}
    \Expect[U_N^2] &= \frac1{N^2 (N!)^2} \sum_{r=1}^N \sum_{\abs{A} = \abs{B} = r} \Expect[U_{AB}^2] = \frac1{N^2 (N!)^2} \sum_{r=1}^N \binom{N}{r}^2 \Expect[U_{[r][r]}^2] \\
    &= \frac{1}{N^2} \sum_{r=1}^N \frac{r}{r!} \sum_{\sigma \in \perm_r} \sum_{i = 1}^r \Expect\left[ h(X_1, Y_{1}) \prod_{j=2}^r \tilde \xi(X_k, Y_k) h(X_i, Y_{\sigma_i}) \prod_{j \in [r] \backslash \{i\}} \tilde \xi(X_j, Y_{\sigma_j}) \right].
  \end{align*}
\end{proof}

\begin{proof}[Proof of \Cref{lem:bound_f_xi}]
  There are two cases to consider: $t = t'$ and $t \neq t'$.
  The proofs are similar so we only prove it for $t = t'$.
  By exchangeability, it suffices to consider $t = t' = 1$.
  The strategy is again to iteratively take expectation with respective to one variable, while keeping the rest being fixed.
  Note that
  \begin{align*}
    &\quad \Expect[f(X_{k_1}, Y_{k_1}) \tilde \xi(X_{k_l}, Y_{k_1}) \mid X_{k_1}, X_{k_l}] \\
    &= \Expect[f(X_{k_1}, Y_{k_1}) \xi(X_{k_l}, Y_{k_1}) \mid X_{k_1}, X_{k_l}] = (I_{P} \otimes \sinkop^*)f(X_{k_1}, X_{k_l}).
  \end{align*}
  Taking expectation with respect to $Y_{k_1}$ in \eqref{eq:bound_two_f}, while keeping others being fixed, we get
  \begin{align*}
    &\quad \Expect\left[ \Expect[ f(X_{k_1}, Y_{k_1})\tilde \xi(X_{k_l}, Y_{k_1}) \mid X_{k_1}, X_{k_l}] g(X_{k_1}, Y_{k_2}) \prod_{i = 2}^l \tilde \xi(X_{k_i}, Y_{k_i}) \prod_{i = 2}^{l-1} \tilde \xi(X_{k_i}, Y_{k_{i+1}}) \right] \\
    &= \Expect\left[ (I_{P} \otimes \sinkop^*)f(X_{k_1}, X_{k_l}) g(X_{k_1}, Y_{k_2}) \prod_{i = 2}^l \tilde \xi(X_{k_i}, Y_{k_i}) \prod_{i = 2}^{l-1} \tilde \xi(X_{k_i}, Y_{k_{i+1}}) \right].
  \end{align*}
  Now taking expectation with respect to $X_{k_l}$, while keeping others being fixed, we get
  \begin{align*}
    &\quad \Expect\left[ \Expect[ (I_{P} \otimes \sinkop^*)f(X_{k_1}, X_{k_{l}}) \tilde \xi(X_{k_l}, Y_{k_l}) \mid X_{k_1}, Y_{k_l}] g(X_{k_1}, Y_{k_2}) \prod_{i = 2}^{l-1} \tilde \xi(X_{k_i}, Y_{k_i}) \tilde \xi(X_{k_i}, Y_{k_{i+1}}) \right] \\
    &= \Expect\left[ (I_{P} \otimes \sinkop \sinkop^*)f(X_{k_1}, Y_{k_l}) g(X_{k_1}, Y_{k_2}) \prod_{i = 2}^{l-1} \tilde \xi(X_{k_i}, Y_{k_i})  \tilde \xi(X_{k_i}, Y_{k_{i+1}}) \right],
  \end{align*}
  since
  \begin{align*}
      &\quad \Expect[ (I_{P} \otimes \sinkop^*)f(X_{k_1}, X_{k_{l}})\tilde \xi(X_{k_l}, Y_{k_l}) \mid X_{k_1}, Y_{k_l}] \\
      &= \Expect[ (I_{P} \otimes \sinkop^*)f(X_{k_1}, X_{k_{l}}) \xi(X_{k_l}, Y_{k_l}) \mid X_{k_1}, Y_{k_l}] - \Expect[(I_{P} \otimes \sinkop^*)f(X_{k_1}, X_{k_{l}}) \mid X_{k_1}] \\
      &= (I_{P} \otimes \sinkop \sinkop^*)f(X_{k_1}, Y_{k_l}).
  \end{align*}
  Keep repeating this argument, we ultimately get
  \begin{align*}
    &\quad \Expect\left[ f(X_{k_1}, Y_{k_{1}}) g(X_{k_1}, Y_{k_2}) \prod_{i = 2}^l \tilde \xi(X_{k_i}, Y_{k_i})  \tilde \xi(X_{k_i}, Y_{k_{i+1}}) \right] \\
    &= \Expect\left[ (I_{P} \otimes \sinkop \sinkop^*)^{l-1} f(X_{k_1}, Y_{k_2}) g(X_{k_1}, Y_{k_2}) \right] \\
    &\le \norm{(I_{P} \otimes \sinkop \sinkop^*)^{l-1} f}_{\ltwo(\prodm)} \norm{g}_{\ltwo(\prodm)} \le s_1^{2(l - 1)} \varsigma_f \varsigma_g, \quad \mbox{by \Cref{lem:degeneracy_contraction}}.
  \end{align*}
\end{proof}

\section{Closedness of $H_1$}
\label{sec:closed_H1}
Let $\nu$ be a probability measure.
Given a subspace (not necessarily closed) $H \subset \ltwo(\nu)$ and a statistic $T \in \ltwo(\nu)$, the $\ltwo$ projection of $T$ onto $H$, if exists, is defined as
\[
    \proj_{H}(T) := \argmin_{U \in H} \norm{T - U}_{\ltwo(\nu)}^2.
\]
The next lemma gives an equivalent definition using orthogonality.
The proof is omitted.
\begin{lemma}\label{lem:orthogonal_projection}
    Let $U \in H$, then $U = \proj_{H}(T)$ iff $T - U \in H^\perp$.
\end{lemma}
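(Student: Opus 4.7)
The plan is to prove the two directions of the biconditional using the standard Hilbert-space orthogonal projection argument, adapted to the fact that $H$ need not be closed (which turns out not to matter, because we are given that the minimizer $U$ exists and lies in $H$).

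For the forward direction ($U = \proj_H(T) \Rightarrow T - U \in H^\perp$), I would use the usual perturbation argument. Fix an arbitrary $V \in H$. Since $H$ is a linear subspace, $U + tV \in H$ for every $t \in \mathbb{R}$, so by the defining minimality of $U$,
\[
\norm{T - U}_{\ltwo(\nu)}^2 \;\le\; \norm{T - U - tV}_{\ltwo(\nu)}^2 \;=\; \norm{T - U}_{\ltwo(\nu)}^2 \;-\; 2t\,\iprod{T - U,\, V}_{\ltwo(\nu)} \;+\; t^2 \norm{V}_{\ltwo(\nu)}^2.
\]
Viewing the right-hand side as a quadratic in $t$ which is nonnegative for every real $t$, the coefficient $\iprod{T - U, V}$ must vanish. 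As $V$ was arbitrary in $H$, this gives $T - U \in H^\perp$.

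For the reverse direction ($T - U \in H^\perp \Rightarrow U = \proj_H(T)$), I would use the Pythagorean identity. For any $V \in H$, write
\[
\norm{T - V}_{\ltwo(\nu)}^2 = \norm{(T - U) + (U - V)}_{\ltwo(\nu)}^2 = \norm{T - U}_{\ltwo(\nu)}^2 + 2\iprod{T - U,\, U - V}_{\ltwo(\nu)} + \norm{U - V}_{\ltwo(\nu)}^2.
\]
Since $U - V \in H$ and $T - U \in H^\perp$, the cross term vanishes, yielding $\norm{T - V}^2 = \norm{T - U}^2 + \norm{U - V}^2 \ge \norm{T - U}^2$, with equality iff $V = U$ in $\ltwo(\nu)$. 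Hence $U$ achieves the minimum in the definition of $\proj_H(T)$.

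I do not expect any real obstacle here: this is a textbook characterization and the only subtle point is that $H$ is not assumed closed, but the lemma is conditional on the existence of $U \in H$, so closedness is never needed. The argument is completely algebraic in the inner product and does not require any compactness, completeness, or projection-existence theorem.
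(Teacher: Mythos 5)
Your proof is correct and is exactly the standard Hilbert-space characterization: the perturbation argument (discriminant of the quadratic in $t$) for the forward direction, and the Pythagorean identity for the converse. The paper explicitly omits the proof of this lemma, so there is nothing to compare against; your version would serve as a complete replacement, and you are right that closedness of $H$ is irrelevant since the lemma presupposes $U \in H$ is given.
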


In the following, we assume $(X_1, Y_1), \dots, (X_N, Y_N) \txtover{i.i.d.}{\sim} \scb$,
with $\muexp$ denoting the expectation under this model, as before.
Recall the subspace $H_1 \subset \ltwo(\scb^N)$ defined in Section \ref{sec:chaos}.
We will prove that it is closed.
\begin{lemma}\label{lem:alter_expression_H1}
    The subspace $H_1 \subset \ltwo(\scb^N)$ admits the following alternative expression:
    \begin{align}\label{eq:expression_H1}
        H_1 = \Span\left\{ \sum_{i=1}^N \left(f_{1,0}(X_i) + f_{0,1}(Y_i)\right): f_{1,0} \in \ltwo_0(P), f_{0,1} \in \ltwo_0(Q) \right\}.
    \end{align}
\end{lemma}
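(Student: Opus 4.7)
The plan is to establish the equality of the two subspaces by proving mutual inclusion, using the elementary decomposition of any $L^2$ function into its centered part plus the scalar mean. The key observation is that under $\mu_\eps$ the $X_i$'s have marginal $\rho_0$ and the $Y_i$'s have marginal $\rho_1$, so centering with respect to these marginals is exactly the tool needed to detect orthogonality to $H_0$ (the constants).

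For the inclusion $\supseteq$, I would take an arbitrary generator of the right-hand side, namely $V = \sum_{i=1}^N f_{1,0}(X_i) + \sum_{i=1}^N f_{0,1}(Y_i)$ with $f_{1,0}\in\ltwo_0(\rho_0)$ and $f_{0,1}\in\ltwo_0(\rho_1)$. By definition $V$ already has the form in \eqref{eq:defineHk} with $k = 1$ (splitting into the $(a,b) = (1,0)$ and $(a,b) = (0,1)$ pieces), so the only thing to check is orthogonality to $H_0$. This reduces to $\muexp[V] = 0$, which is immediate because $\muexp[f_{1,0}(X_i)] = \rho_0[f_{1,0}] = 0$ and $\muexp[f_{0,1}(Y_i)] = \rho_1[f_{0,1}] = 0$.

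For the inclusion $\subseteq$, I would start from an arbitrary element of the span appearing in the definition of $H_1$: any such element can be written as $V = \sum_{i=1}^N g_{1,0}(X_i) + \sum_{j=1}^N g_{0,1}(Y_j)$ for some $g_{1,0}\in\ltwo(\rho_0)$ and $g_{0,1}\in\ltwo(\rho_1)$ (the class is closed under linear combinations, so spans collapse to single sums of this form). Decompose $g_{1,0} = \tilde g_{1,0} + c_1$ with $\tilde g_{1,0}\in\ltwo_0(\rho_0)$ and $c_1 = \rho_0[g_{1,0}]$, and similarly $g_{0,1} = \tilde g_{0,1} + c_2$. Then
\[
V = \sum_{i=1}^N \tilde g_{1,0}(X_i) + \sum_{j=1}^N \tilde g_{0,1}(Y_j) + N(c_1 + c_2).
\]
If $V \in H_1$, then $V \perp H_0$, so $\muexp[V] = 0$; but from the display above $\muexp[V] = N(c_1+c_2)$, hence $c_1 + c_2 = 0$. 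Therefore $V$ equals the centered expression, which is an element of the right-hand side of \eqref{eq:expression_H1}.

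The argument is essentially bookkeeping, so there is no serious obstacle. The only minor subtlety is ensuring the identification is between the right objects: the RHS span, being the image of the continuous linear map $(\tilde g_{1,0}, \tilde g_{0,1}) \mapsto \sum_i \tilde g_{1,0}(X_i) + \sum_j \tilde g_{0,1}(Y_j)$ from $\ltwo_0(\rho_0)\times\ltwo_0(\rho_1)$, is already closed in $\ltwo(\mu_\eps^N)$ (this can be seen, if needed, by computing the $\ltwo$ norm of the image and showing it controls $\|\tilde g_{1,0}\|_{\ltwo(\rho_0)}^2 + \|\tilde g_{0,1}\|_{\ltwo(\rho_1)}^2$ up to constants, so Cauchy sequences in the image come from Cauchy sequences of pairs), so no additional closure step is required to match the closed subspace $H_1$.
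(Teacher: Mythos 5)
Your proof is correct and follows essentially the same route as the paper: identify when an element of the $k=1$ span is orthogonal to $H_0$ by centering $g_{1,0}$ and $g_{0,1}$ against $\rho_0$ and $\rho_1$, and use $\muexp[V]=0$ to force the two means to cancel. You are more explicit than the paper in spelling out both inclusions (the paper writes out only $H_1 \subseteq$ RHS and treats the reverse as immediate), which is fine. The only extraneous material is the final paragraph about the RHS being a closed subspace: the lemma asserts a set equality between two (not necessarily closed) linear spans, so no closure argument is needed here, and the closedness of $H_1$ is established separately in the paper via \Cref{lem:L2_conv_preserve_symmetry} and \Cref{prop:existence_of_first_projection}. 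Your coercivity sketch for closedness is sound (it hinges on the spectral gap $s_1 < 1$ controlling the cross term $\muexp[\tilde g_{1,0}(X_1)\tilde g_{0,1}(Y_1)]$), but it is not part of what this lemma claims.
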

\begin{proof}[Proof of \Cref{lem:alter_expression_H1}]
    For any $\sum_{i=1}^N \left(f_{1,0}(X_i) + f_{0,1}(Y_i)\right) \in H_1$, we get $\muexp[f_{1,0}(X_1) + f_{0,1}(Y_1)] = 0$ since $H_0 \perp H_1$ and $\{(X_i, Y_i)\}_{i=1}^N$ are i.i.d.
    Let $\theta_{1,0} := \muexp[f_{1,0}(X_1)]$ and $\theta_{0,1} := \muexp[f_{0,1}(Y_1)]$, then it holds that $\theta_{1,0} + \theta_{0,1} = 0$.
    Hence,
    \[
        \sum_{i=1}^N f_{1,0}(X_i) + f_{0,1}(Y_i) = \sum_{i=1}^N \bar f_{1,0}(X_i) + \bar f_{0,1}(Y_i),
    \]
    where $\bar f_{1,0} := f_{1,0} - \theta_{1,0} \in \ltwo_0(P)$ and $\bar f_{0,1} := f_{0,1} - \theta_{0,1} \in \ltwo_0(Q)$, and the claim follows.
\end{proof}

\begin{proposition}\label{prop:existence_of_first_projection}
    Under Assumptions \ref{asmp:contiguity}, the subspace $H_1 \subset \ltwo(\scb^N)$ is closed.
\end{proposition}
\begin{proof}[Proof of \Cref{prop:existence_of_first_projection}]
    We use the representation of $H_1$ given in \eqref{eq:expression_H1}.
    Take an arbitrary Cauchy sequence $\{\sum_{i=1}^N f_{1,0}^n(X_i) + f_{0,1}^n(Y_i)\} \subset H_1$, we have
    \[
        \muexp\left[ \sum_{i=1}^N \Big[ (f_{1,0}^n - f_{1,0}^m)(X_i) + (f_{0,1}^n - f_{0,1}^m)(Y_i) \Big] \right]^2 \rightarrow 0, \quad \mbox{as } m, n \rightarrow \infty.
    \]
    Since $f_{1,0}^n, f_{1,0}^m \in \ltwo_0(P)$ and $f_{0,1}^n, f_{0,1}^m \in \ltwo_0(Q)$ for all $n,m \ge 1$, we get, as $n, m \rightarrow \infty$,
    \begin{align*}
        &\quad \muexp\left[ \sum_{i=1}^N \Big[ (f_{1,0}^n - f_{1,0}^m)(X_i) + (f_{0,1}^n - f_{0,1}^m)(Y_i) \Big] \right]^2 \\
        &= N \muexp\left[ (f_{1,0}^n - f_{1,0}^m)(X_1) + (f_{0,1}^n - f_{0,1}^m)(Y_1) \right]^2 \rightarrow 0.
    \end{align*}
    By the Cauchy-Schwarz inequality,
    \begin{align}
        \abs{\muexp[(f_{1,0}^n - f_{1,0}^m)(X_1) (f_{0,1}^n - f_{0,1}^m)(Y_1)]} &= \abs{\muexp[\sinkop (f_{1,0}^n - f_{1,0}^m)(Y_1) (f_{0,1}^n - f_{0,1}^m)(Y_1)]} \\
        &\le \norm{\sinkop(f_{1,0}^n - f_{1,0}^m)}_{\ltwo(Q)} \norm{f_{0,1}^n - f_{0,1}^m}_{\ltwo(Q)} \nonumber \\
        &\le s_1 \norm{f_{1,0}^n - f_{1,0}^m}_{\ltwo(P)} \norm{f_{0,1}^n - f_{0,1}^m}_{\ltwo(Q)} \nonumber \\
        &\le \frac{s_1}{2} \left[ \norm{f_{1,0}^n - f_{1,0}^m}_{\ltwo(P)} + \norm{f_{0,1}^n - f_{0,1}^m}_{\ltwo(Q)} \right] \nonumber,
    \end{align}
    where the last inequality follows from \Cref{asmp:contiguity} and $f_{1,0}^n - f_{1,0}^m \in \ltwo_0(P)$.
    Therefore,
    \begin{align*}
        &\quad (1 - s_1)\left[ \norm{f_{1,0}^n - f_{1,0}^m}_{\ltwo(P)}^2 + \norm{f_{0,1}^n - f_{0,1}^m}_{\ltwo(Q)}^2 \right] \\
        &\le \muexp\left[ (f_{1,0}^n - f_{1,0}^m)(X_1) + (f_{0,1}^n - f_{0,1}^m)(Y_1) \right]^2 \rightarrow 0.
    \end{align*}
    This implies $\{f_{1,0}^n\} \subset \ltwo_0(P)$ and $\{f_{0,1}^n\} \subset \ltwo_0(Q)$ are two Cauchy sequences, \emph{i.e.}, there exist $f_{1,0} \in \ltwo(P)$ and $f_{0,1} \in \ltwo(Q)$ such that $f_{1,0}^n \rightarrow_{\ltwo(P)} f_{1,0}$ and $f_{0,1}^n \rightarrow_{\ltwo(Q)} f_{0,1}$.
    Moreover, $P[f_{1,0}^n] = Q[f_{0,1}^n] = 0$ yields $f_{1,0} \in \ltwo_0(P)$ and $f_{0,1} \in \ltwo_0(Q)$.
    Therefore, $H_1$ is closed.
\end{proof}

\section{Closedness of $H_2$}
\label{sec:close_H2}

We start with two useful results.
\begin{lemma}\label{lem:alter_expression_H2}
    The subspace $H_2 \subset \ltwo(\mu^N)$ is spanned by functions of the form
    \begin{align}\label{eq:expression_H2}
        \sum_{i < j} [f_{2,0}(X_i, X_j) + f_{0,2}(Y_i, Y_j)] + \sum_{i=1}^N f_{1,1}(X_i, Y_i) + \sum_{i \neq j} f_{1,1'}(X_i, Y_j),
    \end{align}
    where $f_{2,0} \in \ltwo_{0,0}(P \otimes P), f_{0,2} \in \ltwo_{0,0}(Q \otimes Q)$ are symmetric and $f_{1,1} \in \ltwo_{0,0}(\mu), f_{1,1'} \in \ltwo_{0,0}(\prodm)$ are the same up to an affine term, that is, $f_{1,1}(x, y) = f_{1,1'}(x, y) + g_1(x) + g_2(y) + a$.
\end{lemma}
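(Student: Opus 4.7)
My plan is to prove $H_2 = B$, where $B$ denotes the span of expressions of the form \eqref{eq:expression_H2} with the stated constraints, by a two-sided inclusion. The main device throughout is the standard marginal (ANOVA/Hoeffding) centering of a function of two arguments, applied separately in each of the four relevant product spaces ($\rho_0 \otimes \rho_0$, $\rho_1 \otimes \rho_1$, $\mu_\eps$, and $\prodm$), combined with the crucial distributional observation that under $\mu_\eps^N$ one has $(X_i, Y_i) \sim \mu_\eps$ while $(X_i, Y_j) \sim \prodm$ whenever $i \neq j$.

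For the inclusion $B \subseteq H_2$, I would take $V$ of the form \eqref{eq:expression_H2} and verify separately (a) that $V$ belongs to the raw spanning set of \eqref{eq:defineHk} and (b) that $V \perp H_0 \oplus H_1$. For (a), the affine relation $f_{1,1}(x,y) = f_{1,1'}(x,y) + g_1(x) + g_2(y) + a$ permits the rewriting
\[
    \sum_i f_{1,1}(X_i,Y_i) + \sum_{i \neq j} f_{1,1'}(X_i,Y_j) = \sum_{i,j} f_{1,1'}(X_i,Y_j) + \sum_i [g_1(X_i) + g_2(Y_i) + a],
\]
and the linear-plus-constant residual can be absorbed into the symmetric quadratic sums via identities such as $\sum_i g_1(X_i) = (N-1)^{-1}\sum_{i<j}[g_1(X_i) + g_1(X_j)]$. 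For (b), I would check $\muexp[V \cdot W] = 0$ for $W \in \{1,\, \sum_k \phi(X_k),\, \sum_k \psi(Y_k)\}$ with $\phi \in \ltwo_0(\rho_0)$ and $\psi \in \ltwo_0(\rho_1)$. Each cross term vanishes because $f_{2,0}, f_{0,2}$ are completely degenerate in their product spaces, $f_{1,1}$ is degenerate under $\mu_\eps$, and $f_{1,1'}$ is degenerate under $\prodm$; conditional expectations such as $\muexp[\psi(Y_1)\mid X_1] = \sinkop^* \psi(X_1)$ preserve mean-zero.

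For the reverse inclusion $H_2 \subseteq B$, I take $U \in H_2$ with some raw representation $\sum_{i<j} \tilde f_{2,0}(X_i,X_j) + \sum_{i<j} \tilde f_{0,2}(Y_i,Y_j) + \sum_{i,j} \tilde f_{1,1}(X_i,Y_j)$ (with $\tilde f_{2,0}, \tilde f_{0,2}$ symmetric). I split the double sum into diagonal and off-diagonal parts and apply the symmetric marginal centering
\[
    \tilde f_{2,0}(x,x') = c_{20} + a_{20}(x) + a_{20}(x') + g_{2,0}(x,x'), \qquad g_{2,0} \in \ltwo_{0,0}(\rho_0 \otimes \rho_0),
\]
and analogously for $\tilde f_{0,2}$. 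I center $\tilde f_{1,1}$ against $\mu_\eps$ for the diagonal sum, yielding $g_{1,1} \in \ltwo_{0,0}(\mu_\eps)$, and against $\prodm$ for the off-diagonal sum, yielding $g_{1,1'} \in \ltwo_{0,0}(\prodm)$. Since both decompositions center the \emph{same} function $\tilde f_{1,1}$,
\[
    g_{1,1}(x,y) - g_{1,1'}(x,y) = (c_{11'} - c_{11}) + (a_{11'}(x) - a_{11}(x)) + (b_{11'}(y) - b_{11}(y))
\]
is exactly of affine form, which is the required relation. Substituting everything back, $U = U_0 + U_1 + U_2$ with $U_0 \in H_0$, $U_1 \in H_1$, and $U_2$ of the form \eqref{eq:expression_H2}, so $U_2 \in B \subseteq H_2$ by the first direction. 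Since $U \in H_2$, $U_0 + U_1 = U - U_2 \in H_2 \cap (H_0 \oplus H_1) = \{0\}$, hence $U = U_2 \in B$.

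The main obstacle will be the combinatorial bookkeeping of the four different centering constants and linear pieces in a way that cleanly exposes the affine relation between $g_{1,1}$ and $g_{1,1'}$; the rest is routine orthogonality checking. The integrability needed for the two marginal centerings of $\tilde f_{1,1}$ (that $\tilde f_{1,1} \in \ltwo(\mu_\eps) \cap \ltwo(\prodm)$) follows from $U \in \ltwo(\mu_\eps^N)$ by expanding $\muexp[U^2]$ and isolating monomials with $(i_1,j_1,i_2,j_2) = (1,1,1,1)$ and $(1,2,1,2)$ respectively, so this is not itself a substantive difficulty.
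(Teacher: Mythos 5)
Your two-inclusion architecture is sound, but there is one genuine gap: the step where you ``center $\tilde f_{1,1}$ against $\mu_\eps$'' to obtain $g_{1,1}\in\ltwo_{0,0}(\mu_\eps)$. Unlike the other three centerings, $\mu_\eps$ is not a product measure: for $(X,Y)\sim\mu_\eps$ the coordinates are dependent, so the naive ANOVA correction $\tilde f_{1,1}-\theta-\muexp[\tilde f_{1,1}-\theta\mid X]-\muexp[\tilde f_{1,1}-\theta\mid Y]$ is in general \emph{not} doubly degenerate (subtracting the $X$-marginal piece perturbs the conditional mean given $Y$, and vice versa). To produce an affine correction $a(x)+b(y)+c$ whose residual lies in $\ltwo_{0,0}(\mu_\eps)$ you must solve the coupled system $a+\sinkop^* b=h_{1,0}$, $\sinkop a+b=h_{0,1}$, i.e.\ subtract $(I+\opB)^{-1}(h_{1,0}\oplus h_{0,1})+\theta$, which is exactly what the paper does; its legitimacy rests on the invertibility of $I+\opB$ (\Cref{lem:operator_B}, via \eqref{eq:identity_B_A} and \eqref{eq:first_cond_identity}), hence on \Cref{asmp:secondmoment}. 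With the naive centering your $U_2$ violates the constraint $f_{1,1}\in\ltwo_{0,0}(\mu_\eps)$ and is not in $B$, so the reverse inclusion breaks at that point. The fix is local: once the correct centering is used, your observation that $g_{1,1}-g_{1,1'}$ is affine survives verbatim, since both residuals are obtained by subtracting functions of the form $a(x)+b(y)+c$.

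Beyond this, your route differs from the paper's mainly in how the argument is closed. The paper writes out only the inclusion $H_2\subseteq B$: it re-centers the raw representation (using the $(I+\opB)^{-1}$ correction) and then exploits orthogonality of $T$ to $H_0\oplus H_1$ to derive the pointwise identity \eqref{eq:iden23}, showing that the leftover $(N-1)$-weighted affine terms vanish identically, so $T$ literally equals the re-centered expression. You instead conclude softly: the leftover lies in $H_0\oplus H_1$ and also in $H_2$ (granting your first inclusion $B\subseteq H_2$, which the paper leaves implicit but you verify correctly via degeneracy), hence it is zero by orthogonality. Both closings are valid; yours is more forgiving of bookkeeping in the constants, while the paper's produces the explicit cancellation identity. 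Your final remark about extracting $\tilde f_{1,1}\in\ltwo(\mu_\eps)\cap\ltwo(\prodm)$ by ``isolating monomials'' in $\muexp[U^2]$ is too glib as stated (the cross terms do not separate), but the paper itself implicitly assumes this integrability of the kernels in \eqref{eq:defineHk}, so it is a minor point rather than a substantive defect.
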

\begin{proof}[Proof of \Cref{lem:alter_expression_H2}]
    Take any
    \[
        T := \sum_{i < j} [f(X_i, X_j) + g(Y_i, Y_j)] + \sum_{i,j=1}^N h(X_i, Y_j) \in H_2.
    \]
    Define $\tilde f(x, x') := f(x, x') - f_{0}(x) - f_{0}(x') - \theta_f$ and $\tilde g(y, y')$ analogously, where $\theta_f := \muexp[f(X_1, X_2)]$ and $f_{0}(x) := \muexp[f(X_1, X_2) - \theta_f \mid X_1](x) = \muexp[f(X_1, X_2) - \theta_f \mid X_2](x)$.
    By definition, we know $\tilde f \in \ltwo_{0,0}(P \otimes P)$ and $\tilde g \in \ltwo_{0,0}(Q \otimes Q)$ are symmetric.
    Also, let
    \begin{align*}
        \tilde h(x, y) &:= h(x, y) - (I + \opB)^{-1}( h_{1,0} \oplus h_{0,1})(x, y) - \theta_h \\
        \tilde h'(x, y) &:= h(x, y) - h_{1,0}'(x) - h_{0,1}'(y) - \theta_h',
    \end{align*}
    where
    \begin{align*}
        \theta_h &:= \muexp[h(X_1, Y_1)], \quad h_{1,0}(x) := \muexp[h(X_1, Y_1) - \theta_h \mid X_1](x), \quad h_{0,1}(y) := \muexp[h(X_1, Y_1) - \theta_h \mid Y_1](y) \\
        \theta_h' &:= \muexp[h(X_1, Y_2)], \quad h_{1,0}'(x) := \muexp[h(X_1, Y_2) - \theta_h' \mid X_1](x), \quad h_{0,1}'(y) := \muexp[h(X_1, Y_2) - \theta_h' \mid Y_2](y).
    \end{align*}
    By construction, $\tilde h' \in \ltwo_{0,0}(\prodm)$, and $\tilde h$ and $\tilde h'$ are the same up to an affine term.
    Furthermore, it follows from \eqref{eq:first_cond_identity} and \eqref{eq:identity_B_A} that $\tilde h \in \ltwo_{0,0}(\mu)$.
    Hence, to prove \eqref{eq:expression_H2}, we just need to show that $T$ is equal to
    \begin{align}
        \tilde T := \sum_{i < j} [\tilde f(X_i, X_j) + \tilde g(Y_i, Y_j)] + \sum_{i=1}^N \tilde h(X_i, Y_i) + \sum_{i \neq j} \tilde h'(X_i, Y_j).
    \end{align}
    Note that $T \in H_0^\perp \cap H_1^\perp$, it holds that $\muexp[T] = \frac{N(N-1)}{2}(\theta_f + \theta_g) + N\theta_h + N(N-1)\theta_h' = 0$ and
    \begin{align*}
        \muexp[T - \muexp[T] \mid X_i] &= (N-1) [f_{0}(X_i) + \sinkop^* g_{0}(X_i) + h_{1,0}'(X_i)  + \sinkop^* h_{0,1}'(X_i)] + h_{1,0}(X_i) = 0 \\
        \muexp[T - \muexp[T] \mid Y_i] &= (N-1) [\sinkop f_{0}(Y_i) + g_{0}(Y_i) + \sinkop h_{1,0}'(Y_i) + h_{0,1}'(X_i)] + h_{0,1}(Y_i) = 0.
    \end{align*}
    This yields
    \begin{align*}
        (N - 1)(I - \sinkop^* \sinkop)(f_0 + h_{1,0}')(X_i) + (h_{1,0} - \sinkop^* h_{0,1})(X_i) &= 0 \\
        (N - 1)(I - \sinkop \sinkop^*)(g_0 + h_{0,1}')(Y_i) + (h_{0,1} - \sinkop h_{1,0})(Y_i) &= 0,
    \end{align*}
    and thus, using \eqref{eq:identity_B_A},
    \begin{align}
        0 &= (N - 1) [f_0(X_i) + g_0(Y_i) + h_{1,0}'(X_i) + h_{0,1}'(Y_i)] \nonumber \\
        &\quad + (I - \sinkop^* \sinkop)^{-1}(h_{1,0} - \sinkop^* h_{0,1})(X_i) + (I - \sinkop \sinkop^*)^{-1} (h_{0,1} - \sinkop h_{1,0})(Y_i) \nonumber \\
        &= (N - 1) [f_0(X_i) + g_0(Y_i) + h_{1,0}'(X_i) + h_{0,1}'(Y_i)] + (I + \opB)^{-1}(h_{1,0} \oplus h_{0,1})(X_i, Y_i).\label{eq:iden23}
    \end{align}
    Putting all together, we obtain
    \begin{align*}
        \tilde T &= T - \sum_{i < j}[f_0(X_i) + f_0(X_j) + g_0(Y_i) + g_0(Y_j) + \theta_f + \theta_g] - \sum_{i=1}^N [(I + \opB)^{-1}(h_{1,0} \oplus h_{0,1})(X_i, Y_i) + \theta_h] \\
        &\quad - \sum_{i \neq j} [h_{1,0}'(X_i) + h_{0,1}'(Y_j) + \theta_h'] \\
        &= T - (N - 1) \sum_{i=1}^N [f_0(X_i) + g_0(Y_i) + h_{1,0}'(X_i) + h_{0,1}'(Y_i)] - \sum_{i=1}^N (I + \opB)^{-1}(h_{1,0} \oplus h_{0,1})(X_i, Y_i) - \muexp[T],
    \end{align*}
which is exactly equal to $T$ by \eqref{eq:iden23} and the claim follows.
\end{proof}

Let $T^n := T^n(X_{[N]}, Y_{[N]}) \in \ltwo(\mu^N)$ be permutation symmetric for each $n \ge 1$.
Assume $T^n$ converges in $\ltwo(\mu^N)$ to some $T$.
We show that $T$ is also permutation symmetric, even though the underlying measure is not.
\begin{lemma}\label{lem:L2_conv_preserve_symmetry}
    Under \Cref{asmp:secondmoment}, $T$ is also permutation symmetric.
\end{lemma}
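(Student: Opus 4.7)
The plan is to combine a subsequence almost-sure convergence argument with the mutual absolute continuity of $\mu_\eps^N$ under coordinate permutations that act only on the $X$-block (and, symmetrically, only on the $Y$-block). For $\sigma \in \perm_N$, let $\phi_\sigma:(\rr^d)^{2N}\to(\rr^d)^{2N}$ denote the measurable map $\phi_\sigma(x,y)=(x_{\sigma_1},\dots,x_{\sigma_N},y_1,\dots,y_N)$. Then the density of $\mu_\eps^N$ with respect to Lebesgue is $\prod_{i=1}^N \xi(x_i,y_i)\rho_0(x_i)\rho_1(y_i)$, while the density of the pushforward $(\phi_\sigma)_\#\mu_\eps^N$ is $\prod_{i=1}^N \xi(x_i,y_{\sigma_i})\rho_0(x_i)\rho_1(y_i)$ after a change of variable. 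Under \Cref{asmp:contiguity} and \Cref{asmp:secondmoment}, the Schr\"odinger potentials $a_\eps$ and $b_\eps$ are finite $\prodm$-a.e., so $\xi>0$ $\prodm$-a.e. Both densities are therefore strictly positive on the same $(\prodm)^N$-full-measure set, and $\mu_\eps^N$ and $(\phi_\sigma)_\#\mu_\eps^N$ are mutually absolutely continuous.

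Given the hypothesis that $T^n\to T$ in $\ltwo(\mu_\eps^N)$, first extract a subsequence $T^{n_k}\to T$ pointwise $\mu_\eps^N$-almost surely. By the mutual absolute continuity just established, the same convergence holds $(\phi_\sigma^{-1})_\#\mu_\eps^N$-a.s., which is exactly the statement $T^{n_k}\circ\phi_\sigma\to T\circ\phi_\sigma$ pointwise $\mu_\eps^N$-a.s. On the other hand, the hypothesis that each $T^{n_k}$ is permutation symmetric in $X$ (\Cref{def:symmetry}) reads $T^{n_k}\circ\phi_\sigma = T^{n_k}$ $\mu_\eps^N$-a.s., hence $T^{n_k}\circ\phi_\sigma\to T$ $\mu_\eps^N$-a.s.\ along the same subsequence. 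Equating the two pointwise limits gives $T\circ\phi_\sigma = T$ $\mu_\eps^N$-a.s., which is permutation symmetry of $T$ in $X$. The analogous statement for $Y$ permutations is obtained by the identical argument with the roles of $X$ and $Y$ interchanged.

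The only nontrivial content is the verification that $\xi>0$ holds $\prodm$-a.e., which underlies the mutual absolute continuity; beyond that the proof is a standard subsequence extraction combined with the change-of-variables formula for pushforward measures. I anticipate no real obstacle: should one want a soft version avoiding pointwise subsequences, one could alternatively observe that the symmetrization operator $U\mapsto \tfrac{1}{N!}\sum_\sigma U\circ\phi_\sigma$ is bounded on $\ltwo(\mu_\eps^N)$ because the Radon-Nikodym derivatives $d(\phi_\sigma)_\#\mu_\eps^N/d\mu_\eps^N$ are $\mu_\eps^N$-a.s.\ finite, and then pass to the limit; but the subsequence route is cleaner and does not require a uniform bound on these derivatives.
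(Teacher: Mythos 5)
Your argument is essentially the same as the paper's: extract an almost-surely convergent subsequence, show that the null set where convergence fails is mapped to another null set under any coordinate permutation, and equate the two pointwise limits. The paper's proof states this null-set invariance rather tersely (``since $\mu_\eps^N$ is a probability density''), whereas you helpfully make explicit that the actual justification is $\xi > 0$ $\prodm$-a.e., which forces $\mu_\eps^N$ and its pushforward under a coordinate permutation to share the same set of Lebesgue density zeros and hence be mutually absolutely continuous.
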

\begin{proof}[Proof of \Cref{lem:L2_conv_preserve_symmetry}]
    Since $T^n \rightarrow_{\ltwo(\mu^N)} T$ as $n \rightarrow \infty$, there exits a sub-sequence $T^{n_k} \rightarrow_{a.s.} T$ as $k \rightarrow \infty$.
    In other words, there exists a subset $A \subset (\rr^d \times \rr^d)^N$ such that $\mu^N(A) = 0$ and $T^{n_k} \rightarrow T$ on $A^c$ as $k \rightarrow \infty$.
    For all permutations $\sigma, \tau \in \perm_N$, define
    \begin{align*}
        A_{\sigma, \tau} := \left\{ (x_{\sigma_{[N]}}, y_{\tau_{[N]}}): (x_{[N]}, y_{[N]}) \in A \right\}.
    \end{align*}
    Since $\mu^N$ is a probability density, we get $\mu^N(A_{\sigma, \tau}) = 0$, and thus $\mu^N(A_{\perm_N}) = 0$ where $A_{\perm_N} := \cup_{\sigma, \tau \in \perm_N} A_{\sigma, \tau}$.
    
    Now, take any $(x_{[N]}, y_{[N]}) \in A_{\perm_N}^c$, it holds that $T^{n_k}(x_{[N]}, y_{[N]}) \rightarrow T(x_{[N]}, y_{[N]})$ as $k \rightarrow \infty$.
    For any $\sigma \in \perm_N$, we know, by construction, that $(x_{[N]}, y_{\sigma_{[N]}}) \in A_{\perm_N}^c$.
    Consequently, $T^{n_k}(x_{[N]}, y_{\sigma_{[N]}}) \rightarrow T(x_{[N]}, y_{\sigma_{[N]}})$ as $k \rightarrow \infty$.
    It then follows from the permutation symmetry of $T^n$ that $T(x_{[N]}, y_{[N]}) = T(x_{[N]}, y_{\sigma_{[N]}})$.
    This implies, almost surely, $T$ is permutation symmetric.
    Since every element in $\ltwo(\mu^N)$ is only defined up to a zero-measure set, we can conclude that $T$ is permutation symmetric.
\end{proof}

Before we prove the closedness of $H_2$, let us consider the subspace $H_2^{i, j}$ spanned by functions of the type
\begin{align}\label{eq:func_H2_ij}
    \:g(X_i, X_j, Y_i, Y_j):= &\:f_{2,0}(X_i, X_j) + f_{0,2}(Y_i, Y_j)
    + f_{1,1'}(X_j, Y_i) + f_{1,1}(X_i, Y_i) + f_{1,1}(X_j, Y_j),
\end{align}
where $f_{2,0} \in \ltwo_{0,0}(P \otimes P)$, $f_{0,2} \in \ltwo_{0,0}(Q \otimes Q)$ are symmetric, and $f_{1,1'} \in \ltwo_{0,0}(P \otimes Q)$, $f_{1,1} \in \ltwo_{0,0}(\mu)$ are the same up to an affine term.
We will show that $H_2^{i,j}$ is closed.
The next lemma shows that every elements in this subspace is permutation symmetric.
\begin{lemma}\label{lem:symmetry_H2_ij}
    Let $f_{2,0} \in \ltwo_{0,0}(P \otimes P)$, $f_{0,2} \in \ltwo_{0,0}(Q \otimes Q)$, $f_{1,1'} \in \ltwo_{0,0}(P \otimes Q)$ and $f_{1,1} \in \ltwo_{0,0}(\mu)$.
    Then $g(X_i, X_j, Y_i, Y_j)$ defined in \eqref{eq:func_H2_ij} is permutation symmetric iff $f_{2,0}, f_{0,2}$ are symmetric and $f_{1,1'}, f_{1,1}$ are the same up to an affine term.
\end{lemma}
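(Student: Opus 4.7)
The plan is to prove both implications by applying conditional expectations and isolating each ingredient one at a time via the built-in degeneracy assumptions, reducing the symmetry of the whole expression $g$ to algebraic conditions on $f_{2,0}, f_{0,2}, f_{1,1'}, f_{1,1}$ individually.

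For the ``if'' direction, I would substitute $f_{1,1}=f_{1,1'}+g_1(x)+g_2(y)+a$ into the expression \eqref{eq:func_H2_ij} and expand. The terms $f_{2,0}(X_i,X_j)$ and $f_{0,2}(Y_i,Y_j)$ are untouched except by index swaps, which leave them invariant by the symmetry hypothesis. What remains is a linear combination of $f_{1,1'}$ evaluated at the four possible $(X,Y)$-argument pairs plus telescoping contributions from $g_1,g_2,a$. The telescoping piece vanishes under swap because each $X_k$ and each $Y_k$ enters the sum $\sum_{k\in\{i,j\}}[g_1(X_k)+g_2(Y_k)+a]$ exactly once regardless of the swap; the $f_{1,1'}$ pieces are matched against each other by direct inspection. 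This is routine bookkeeping.

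For the converse, I would extract the symmetries of $f_{2,0}$ and $f_{0,2}$ first and then treat the mixed terms. Conditioning on $(X_i,X_j)$ alone: since $f_{0,2}\in\ltwo_{0,0}(\rho_1\otimes\rho_1)$ and $Y_k$ marginally has law $\rho_1$, the $f_{0,2}$ term integrates out; the $f_{1,1'}(X_j,Y_i)$ term becomes $\int f_{1,1'}(X_j,y)\xi(X_i,y)\rho_1(y)dy$, which is the operator $(I_{\rho_0}\otimes\sinkop^*)$ applied (appropriately) to $f_{1,1'}$; and $\muexp[f_{1,1}(X_i,Y_i)\mid X_i]=0$ by degeneracy of $f_{1,1}\in\ltwo_{0,0}(\mu_\eps)$. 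Equating $\muexp[g\mid X_i,X_j]$ with its $X_i\leftrightarrow X_j$ swap and using further conditioning (project onto the orthogonal complement of the range of $I_{\rho_0}\otimes\sinkop^*$ applied to degenerate functions) isolates the condition $f_{2,0}(X_i,X_j)=f_{2,0}(X_j,X_i)$. Conditioning on $(Y_i,Y_j)$ and repeating the same argument yields symmetry of $f_{0,2}$.

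Once $f_{2,0}$ and $f_{0,2}$ are known symmetric, the corresponding parts of $g$ are already permutation symmetric, so subtracting them reduces the problem to showing that
\[
   h(X_i,X_j,Y_i,Y_j):=f_{1,1'}(X_j,Y_i)+f_{1,1}(X_i,Y_i)+f_{1,1}(X_j,Y_j)
\]
is permutation symmetric iff $f_{1,1}-f_{1,1'}$ is affine. Applying the $X$-swap to $h$ and using the degeneracies of $f_{1,1'}$ and $f_{1,1}$ to strip off conditional expectations in $(X_i,Y_i)$ and $(X_j,Y_i)$ produces an identity of the form $(f_{1,1}-f_{1,1'})(X_i,Y_i)-(f_{1,1}-f_{1,1'})(X_j,Y_i)+(\text{analogous terms in }Y_j)=0$. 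Since $X_i,X_j,Y_i,Y_j$ vary over independent copies under a measure with full support, this identity forces $(x,y)\mapsto(f_{1,1}-f_{1,1'})(x,y)$ to be additively separable, i.e.~of the form $g_1(x)+g_2(y)+a$. The main obstacle will be this last step: precisely extracting the additive separability from the permutation identity, which requires showing the equation above forces the function to take the form of an affine combination in $(x,y)$. I would handle it by fixing $y=Y_i$ and varying $X_i,X_j$ to deduce that $(f_{1,1}-f_{1,1'})(\cdot,y)-(f_{1,1}-f_{1,1'})(\cdot,y')$ is $y,y'$-dependent but $x$-independent, which is the standard Cauchy-type functional equation characterization of additive separability.
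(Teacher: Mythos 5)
Your plan for the ``only if'' direction has a genuine gap at the very first step. You claim that conditioning the swap identity on $(X_i,X_j)$ makes the $f_{0,2}$ term ``integrate out'' because $f_{0,2}\in\ltwo_{0,0}(\rho_1\otimes\rho_1)$ and ``$Y_k$ marginally has law $\rho_1$.'' But the ambient measure here is $\mu_\eps^N$, under which each pair $(X_k,Y_k)$ is $\mu_\eps$-distributed and hence \emph{not} independent. Conditionally on $X_k$, the law of $Y_k$ is $\xi(X_k,\cdot)\rho_1(\cdot)$, not $\rho_1$. Consequently
\[
  \muexp\big[f_{0,2}(Y_i,Y_j)\mid X_i,X_j\big]=(\sinkop^*\otimes\sinkop^*)f_{0,2}(X_i,X_j),
\]
which is nonzero in general; the degeneracy of $f_{0,2}$ under $\rho_1\otimes\rho_1$ does not help when you integrate against the tilted kernel $\xi\rho_1$. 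The same issue means that after conditioning on $(X_i,X_j)$ you are left with a linear combination of $f_{2,0}$, $(\sinkop^*\otimes\sinkop^*)f_{0,2}$, and operator images of $f_{1,1'}$, all entangled; the subsequent ``project onto the orthogonal complement of the range of $I_{\rho_0}\otimes\sinkop^*$'' step is not spelled out and it is not clear it isolates $f_{2,0}$ by itself, precisely because the $f_{0,2}$ contribution has not been removed.

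The paper instead conditions the swap identity on a single \emph{coupled} pair $(X_i,Y_i)$. Under $\mu_\eps^N$, the remaining variables $X_j$, $Y_j$ each have a clean marginal ($\rho_0$, resp.\ $\rho_1$), and the joint $(X_j,Y_j)\sim\mu_\eps$ is independent of $(X_i,Y_i)$. With the three degeneracy assumptions in force, every term on both sides vanishes except $f_{1,1}(X_i,Y_i)$ on the left and $f_{1,1'}(X_i,Y_i)$ on the right together with the explicit affine corrections $g_1(X_i):=\muexp[f_{1,1}(X_i,Y_j)\mid X_i]$, $g_2(Y_i):=\muexp[f_{1,1}(X_j,Y_i)\mid Y_i]$, and $a:=\muexp[f_{1,1'}(X_j,Y_j)]$. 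This gives $f_{1,1}=f_{1,1'}+g_1+g_2+a$ directly, with explicit formulas, so no Cauchy-type functional-equation argument is needed. Substituting this back into the swap identity then collapses the mixed terms, and what remains forces $f_{2,0}$ and $f_{0,2}$ to be symmetric. In short: you should condition on the correlated pair $(X_i,Y_i)$, not on $(X_i,X_j)$ or $(Y_i,Y_j)$, because only the former is matched to the degeneracy structure of each of $f_{2,0}$, $f_{0,2}$, $f_{1,1'}$, $f_{1,1}$ simultaneously.
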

\begin{proof}[Proof of \Cref{lem:symmetry_H2_ij}]
    Define $\trans_{i,j}$ to be the operator that swaps $X_i$ and $X_j$.
    If $g(X_i, X_j, Y_i, Y_j)$ is permutation symmetric, then $\trans_{i,j} g(X_i, X_j, Y_i, Y_j) = g(X_i, X_j, Y_i, Y_j)$, that is,
    \begin{align}\label{eq:symmetry_identity_H2_ij}
        &\quad f_{2,0}(X_i, X_j) + f_{1,1'}(X_i, Y_j) + f_{1,1'}(X_j, Y_i) + f_{1,1}(X_i, Y_i) + f_{1,1}(X_j, Y_j) \\
        &= f_{2,0}(X_j, X_i) + f_{1,1'}(X_j, Y_j) + f_{1,1'}(X_i, Y_i) + f_{1,1}(X_j, Y_i) + f_{1,1}(X_i, Y_j) \nonumber.
    \end{align}
    Taking the conditional expectation given $X_i, Y_i$ yields
    \begin{align}\label{eq:differ_in_affine}
        f_{1,1}(X_i, Y_i) = f_{1,1'}(X_i, Y_i) + g_1(X_i) + g_2(Y_i) + a,
    \end{align}
    where $a = \muexp[f_{1,1'}(X_j, Y_j)]$,
    \begin{align*}
        g_1(X_i) = \muexp[f_{1,1}(X_i, Y_j) \mid X_i] \quad \mbox{and} \quad g_2(Y_i) = \muexp[f_{1,1}(X_j, Y_i) \mid Y_i].
    \end{align*}
    Now, plugging \eqref{eq:differ_in_affine} into \eqref{eq:symmetry_identity_H2_ij} gives
    \begin{align}\label{eq:symmetric_identity_with_affine}
        &\quad f_{2,0}(X_i, X_j) + \sum_{k,l \in \{i, j\}} f_{1,1'}(X_k, Y_l) + g_1(X_i) + g_2(Y_i) + g_1(X_j) + g_2(Y_j) + 2a \\
        &= f_{2,0}(X_j, X_i) + \sum_{k,l \in \{i, j\}} f_{1,1'}(X_k, Y_l) + g_1(X_j) + g_2(Y_i) + g_1(X_i) + g_2(Y_j) + 2a \nonumber,
    \end{align}
    and thus $f_{2,0}$ is symmetric.
    The symmetry of $f_{0,2}$ can be derived similarly.
    Conversely, when $f_{2,0}, f_{0,2}$ are symmetric and $f_{1,1'}, f_{1,1}$ are the same up to an affine term, the identity \eqref{eq:symmetric_identity_with_affine} is true.
    As a result, $g(X_1, X_2, Y_1, Y_2)$ is permutation symmetric.
\end{proof}

To prove the closedness of $H_2^{i,j}$, we introduce two operators using again the notation of tensor product: $\opC_{2,0} := (I - \sinkop^* \sinkop) \otimes (I - \sinkop^* \sinkop)$ and $\opC_{0,2} := (I - \sinkop \sinkop^*) \otimes (I - \sinkop \sinkop^*)$.
Following an argument similar to the one for \Cref{lem:operator_C}, we have the following lemma.
\begin{lemma}\label{lem:operator_C_cont}
    Under Assumptions \ref{asmp:contiguity} and \ref{asmp:secondmoment},
    the inverse operators $\opC_{2,0}^{-1}: \ltwo_{0,0}(P \otimes P) \rightarrow \ltwo_{0,0}(P \otimes P)$ and $\opC_{0,2}^{-1}: \ltwo_{0,0}(Q \otimes Q) \rightarrow \ltwo_{0,0}(Q \otimes Q)$ are well-defined.
    Moreover, it holds $\opC_{2,0}^{-1} = (I - \sinkop^* \sinkop)^{-1} \otimes (I - \sinkop^* \sinkop)^{-1}$ and $\opC_{0,2}^{-1} = (I - \sinkop \sinkop^*)^{-1} \otimes (I - \sinkop \sinkop^*)^{-1}$.
\end{lemma}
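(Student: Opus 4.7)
The plan is to mirror almost verbatim the proof of \Cref{lem:operator_C}, using the orthonormal bases of $\ltwo(\rho_0 \otimes \rho_0)$ and $\ltwo(\rho_1 \otimes \rho_1)$ furnished by \Cref{lem:basis_prod_space}. I will treat only $\opC_{2,0}$; the argument for $\opC_{0,2}$ is strictly analogous after swapping the roles of $(\sinkop, \rho_0)$ and $(\sinkop^*, \rho_1)$.

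First, I would observe that $\{\alpha_i \otimes \alpha_j\}_{i,j \ge 0}$ is an orthonormal basis of $\ltwo(\rho_0 \otimes \rho_0)$ by \Cref{lem:basis_prod_space}, so any $f \in \ltwo(\rho_0 \otimes \rho_0)$ admits an expansion $f = \sum_{i,j \ge 0} \gamma_{ij}\, \alpha_i \otimes \alpha_j$ with $\sum \gamma_{ij}^2 < \infty$. Next I would identify which coefficients survive the degeneracy condition: since $\alpha_0 = \vone$ and $\alpha_k \in \ltwo_0(\rho_0)$ for $k \ge 1$, computing the conditional expectations under the \emph{product} measure $\rho_0 \otimes \rho_0$ shows that $f \in \ltwo_{0,0}(\rho_0 \otimes \rho_0)$ if and only if $\gamma_{0j} = \gamma_{i0} = 0$ for all $i,j \ge 0$. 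Thus every $f \in \ltwo_{0,0}(\rho_0 \otimes \rho_0)$ is expressible as $f = \sum_{i,j \ge 1} \gamma_{ij}\, \alpha_i \otimes \alpha_j$.

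Now the operator $\opC_{2,0} = (I - \sinkop^*\sinkop) \otimes (I - \sinkop^*\sinkop)$ acts diagonally on this basis: using $\sinkop^*\sinkop\, \alpha_k = s_k^2\, \alpha_k$, we get $\opC_{2,0}(\alpha_i \otimes \alpha_j) = (1-s_i^2)(1-s_j^2)\, \alpha_i \otimes \alpha_j$. Given $f$ as above, define
\[
\tilde f := \sum_{i,j \ge 1} \frac{\gamma_{ij}}{(1-s_i^2)(1-s_j^2)}\, \alpha_i \otimes \alpha_j.
\]
Surjectivity of $\opC_{2,0}: \ltwo_{0,0}(\rho_0 \otimes \rho_0) \to \ltwo_{0,0}(\rho_0 \otimes \rho_0)$ will then follow from verifying $\opC_{2,0} \tilde f = f$ termwise, together with $\tilde f \in \ltwo_{0,0}(\rho_0 \otimes \rho_0)$; injectivity is immediate because $\opC_{2,0} f = 0$ forces $(1-s_i^2)(1-s_j^2)\gamma_{ij} = 0$, hence $\gamma_{ij}=0$, for every $i,j \ge 1$. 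The factorization $\opC_{2,0}^{-1} = (I-\sinkop^*\sinkop)^{-1} \otimes (I-\sinkop^*\sinkop)^{-1}$ is then read off from the diagonal representation, using that $(I-\sinkop^*\sinkop)^{-1}\alpha_k = (1-s_k^2)^{-1}\alpha_k$ for $k \ge 1$, which is already established in \Cref{lem:sinkhorn_op}\ref{sinkop:inverse}.

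The only real point requiring care, and hence the main obstacle, is confirming that $\tilde f$ actually lies in $\ltwo(\rho_0 \otimes \rho_0)$. This is where the spectral gap hypothesis from \Cref{asmp:secondmoment}, namely $s_k \le s_1 < 1$ for all $k \ge 1$, enters decisively: it yields the uniform bound $(1-s_i^2)(1-s_j^2) \ge (1-s_1^2)^2$ for all $i, j \ge 1$, whence $\|\tilde f\|_{\ltwo(\rho_0 \otimes \rho_0)}^2 \le (1-s_1^2)^{-4} \sum_{i,j \ge 1} \gamma_{ij}^2 < \infty$. The same uniform gap controls $\opC_{0,2}$ via $s_k < 1$ eigenvalues of $\sinkop \sinkop^*$, completing the analogous argument and yielding the claimed tensor product expression in both cases.
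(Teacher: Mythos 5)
Your proof is correct and follows exactly the route the paper intends: the paper itself gives no explicit argument for this lemma, simply writing ``Following an argument similar to the one for \Cref{lem:operator_C}'', and your proof is precisely that transplantation of the proof of \Cref{lem:operator_C} to the bases $\{\alpha_i \otimes \alpha_j\}$ and $\{\beta_i \otimes \beta_j\}$, with the spectral gap $s_k \le s_1 < 1$ guaranteeing that the diagonal inverse has coefficients bounded by $(1-s_1^2)^{-2}$ and hence maps into $\ltwo$. The only point worth making explicit is the characterization $\ltwo_{0,0}(\rho_0 \otimes \rho_0) = \overline{\Span}\{\alpha_i \otimes \alpha_j : i,j \ge 1\}$, which you correctly derive from $\alpha_0 = \vone$ and the independence of the two coordinates under the product measure.
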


\begin{proposition}\label{prop:closed_second_subspace}
    Suppose Assumptions \ref{asmp:contiguity} and \ref{asmp:secondmoment} hold true.
    Let $T \in H_0^\perp \cap H_1^\perp$ be permutation symmetric in $X_{i,j}$ and in $Y_{i,j}$ for $i \neq j$.
    Define $k_{2,0}^{i,j}(x, x') := \muexp[T \mid X_i, X_j](x, x')$, $k_{0,2}^{i,j}(y, y') := \muexp[T \mid Y_i, Y_j](y, y')$, $k_{1,1'}^{i,j}(x, y) := \muexp[T \mid X_i, Y_j](x, y)$ and $k_{1,1}^{i,i} := \muexp[T \mid X_i, Y_i]$, then the projection $\proj_{H_2^{i, j}}(T)$ is given by
    \begin{align}\label{eq:project_second_subspace}
        U := g_{2,0}(X_i, X_j) + g_{0,2}(Y_i, Y_j) + g_{1,1'}(X_i, Y_j) + g_{1,1'}(X_j, Y_i) + k_{1,1}^{i,i}(X_i, Y_i) + k_{1,1}^{j,j}(X_j, Y_j),
    \end{align}
    where
    \begin{align*}
        g_{2,0} &:= \opC_{2,0}^{-1}[k_{2,0}^{i,j} + (\sinkop^* \otimes \sinkop^*) k^{i,j}_{0,2} - (I + \trans)(I_{P} \otimes \sinkop^*) k^{i,j}_{1,1'}] \\
        g_{0,2} &:= \opC_{0,2}^{-1}[k_{0,2}^{i,j} + (\sinkop \otimes \sinkop) k_{2,0}^{i,j} - (I + \trans)(\sinkop \otimes I_{Q}) k_{1,1'}^{i,j}] \\
        g_{1,1'} &:= \opC^{-1}[ (I + \opB)k_{1,1'}^{i,j} - (I_{P} \otimes \sinkop) k_{2,0}^{i,j} - (\sinkop^* \otimes I_{Q}) k_{0,2}^{i,j}].
    \end{align*}
    Moreover, the subspace $H_2^{i,j}$ is closed.
\end{proposition}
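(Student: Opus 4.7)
The strategy is the orthogonality criterion (\Cref{lem:orthogonal_projection}): I will check that $U$ lies in $H_2^{i,j}$ and that $T - U$ is orthogonal to every $V \in H_2^{i,j}$, then deduce closeness by the standard Cauchy-sequence argument. For the first half, the tower property together with the hypothesis $T \in H_0^\perp \cap H_1^\perp$ and the permutation-symmetry assumption on $T$ forces the ``kernels'' $k^{i,j}_{2,0}, k^{i,j}_{0,2}, k^{i,j}_{1,1'}, k^{i,i}_{1,1}$ into their respective $\ltwo_{0,0}$ spaces, with $k^{i,j}_{2,0}, k^{i,j}_{0,2}$ symmetric. \Cref{lem:operator_C} and \Cref{lem:operator_C_cont} then make $g_{2,0}, g_{0,2}, g_{1,1'}$ well defined; symmetry of $g_{2,0}, g_{0,2}$ is inherited from the brackets and preserved by $\opC_{2,0}^{-1}, \opC_{0,2}^{-1}$. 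The affine-difference condition between $g_{1,1'}$ and $k^{i,i}_{1,1}$, needed for $U$ to belong to $H_2^{i,j}$, is enforced by moving an appropriate affine shift between the two; such shifts lie in $H_0 + H_1 \perp H_2^{i,j}$ and therefore leave $\proj_{H_2^{i,j}}(T)$ unchanged.

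Next, by the analogue of \Cref{lem:alter_expression_H2} for the two-index subspace $H_2^{i,j}$, it suffices to test orthogonality against the four generator types involving $f_{2,0}$, $f_{0,2}$, $f_{1,1'}$, and $f_{1,1}$. The tower property reduces each test to an identity for $\muexp[U \mid \cdot]$ on a suitable coordinate pair. Using independence of the pairs $(X_k, Y_k)$, the conditional-expectation interpretations of $\sinkop, \sinkop^*, \opB$, and the degeneracy of $g_{2,0}, g_{0,2}, g_{1,1'}, k^{i,i}_{1,1}$, one finds
\begin{align*}
\muexp[U \mid X_i, X_j] &= g_{2,0} + (\sinkop^* \otimes \sinkop^*) g_{0,2} + (I + \trans)(I_{\rho_0} \otimes \sinkop^*) g_{1,1'}, \\
\muexp[U \mid Y_i, Y_j] &= g_{0,2} + (\sinkop \otimes \sinkop) g_{2,0} + (I + \trans)(\sinkop \otimes I_{\rho_1}) g_{1,1'}, \\
\muexp[U \mid X_i, Y_j] &= (I + \opB) g_{1,1'} + (I_{\rho_0} \otimes \sinkop) g_{2,0} + (\sinkop^* \otimes I_{\rho_1}) g_{0,2},
\end{align*}
while every non-diagonal piece in $\muexp[U \mid X_i, Y_i]$ collapses by degeneracy, leaving exactly $k^{i,i}_{1,1}$. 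Matching these with $k^{i,j}_{2,0}, k^{i,j}_{0,2}, k^{i,j}_{1,1'}, k^{i,i}_{1,1}$ produces three coupled operator equations for $g_{2,0}, g_{0,2}, g_{1,1'}$ (the $f_{1,1}$-orthogonality is automatic from the degeneracy of the other three). Substituting the explicit formulas and invoking the commutation $\trans(\sinkop_1 \otimes \sinkop_2) = (\sinkop_2 \otimes \sinkop_1)\trans$, together with the factorizations of $\opC, \opC_{2,0}, \opC_{0,2}$ already used in \Cref{lem:identity_second_order}, verifies the three identities simultaneously.

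For closeness, take a Cauchy sequence $\{T^n\} \subset H_2^{i,j}$ with $\ltwo(\mu_\eps^N)$-limit $T$. Each $T^n$ is permutation symmetric in $X_{i,j}$ and $Y_{i,j}$, so \Cref{lem:L2_conv_preserve_symmetry} transfers the same property to $T$. Since $H_0$ is trivially closed and $H_1$ is closed by \Cref{prop:existence_of_first_projection}, we have $T \in H_0^\perp \cap H_1^\perp$. The first part of the proposition therefore applies to $T$, producing $\proj_{H_2^{i,j}}(T) \in H_2^{i,j}$, and the Pythagorean identity $\norm{T^n - T}^2 = \norm{T^n - \proj_{H_2^{i,j}}(T)}^2 + \norm{T - \proj_{H_2^{i,j}}(T)}^2 \to 0$ forces $T = \proj_{H_2^{i,j}}(T) \in H_2^{i,j}$.

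The principal obstacle is the algebraic verification just sketched: decoupling a system of three operator equations on tensor-product $\ltwo$ spaces in which the unknowns $g_{2,0}, g_{0,2}, g_{1,1'}$ are intertwined through $\sinkop \otimes \sinkop$, $\sinkop^* \otimes \sinkop^*$, $\opB = \trans(\sinkop \otimes \sinkop^*)$ and the swap $\trans$. The invertibility of $\opC, \opC_{2,0}, \opC_{0,2}$ on the appropriate $\ltwo_{0,0}$ subspaces guarantees a unique solution, but confirming that the candidate formulas produce the required cancellations demands careful operator bookkeeping in the spirit of \Cref{lem:identity_second_order}.
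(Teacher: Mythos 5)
Your proposal takes the same route as the paper's: apply the orthogonality criterion of \Cref{lem:orthogonal_projection} by showing $T - U$ has vanishing conditional expectation given each of the four coordinate pairs, then close $H_2^{i,j}$ via the Cauchy-sequence argument built on \Cref{lem:L2_conv_preserve_symmetry}. The three displayed expressions for $\muexp[U \mid X_i, X_j]$, $\muexp[U \mid Y_i, Y_j]$ and $\muexp[U \mid X_i, Y_j]$ are precisely the identities the paper verifies in its Steps~2 and~3 through the $\mathcal{D}$-operator cancellations, and the observation that the diagonal conditioning sees only $k^{i,i}_{1,1}$ is the paper's Step~1. So the plan matches; the operator bookkeeping --- which you correctly flag as the principal obstacle --- is deferred rather than executed.

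One step is not a proof as written: the membership $U \in H_2^{i,j}$, which by \Cref{lem:symmetry_H2_ij} requires $g_{1,1'}$ and $k^{i,i}_{1,1}$ to differ by an affine function of their two arguments. You assert this is ``enforced by moving an appropriate affine shift between the two,'' but an affine shift cannot turn a non-affine difference into an affine one; whether $g_{1,1'} - k^{i,i}_{1,1}$ is actually affine is a property that must be derived from the permutation symmetry of $T$, not imposed by renormalization. (The paper's Step~1 records only the degeneracy of the individual pieces and does not spell this out either, so you have inherited an elision rather than resolved it.) Everything else in your outline is sound and faithful to the paper's argument.
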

\begin{proof}[Proof of \Cref{prop:closed_second_subspace}]
    We consider $(i, j) = (1, 2)$ and omit the dependency on $(i, j)$ in $k$ for simplicity.
    By the permutation symmetry of $T$, we know $k_{1,1'}(x, y) = \muexp[T \mid X_1, Y_2](x, y) = \muexp[T \mid X_2, Y_1](x, y)$ and $k_{1,1}(x, y) := \muexp[T \mid X_1, Y_1](x, y) = \muexp[T \mid X_2, Y_2](x, y)$.
    According to \Cref{lem:orthogonal_projection}, it suffices to show $T - U \in (H_2^{1, 2})^\perp$, or
    \begin{align}
        \muexp[T - U \mid X_1, X_2] = \muexp[T - U \mid Y_1, Y_2] = \muexp[T - U \mid X_1, Y_1] = \muexp[T - U \mid X_1, Y_2] = 0.
    \end{align}
    
    \emph{Step 1.} We show $\muexp[T - U \mid X_1, Y_1] = 0$.
    We start by showing the statistic $U$ is well-defined.
    Since $T \in H_0^\perp \cap H_1^\perp$, we know $k_{2,0} \in \ltwo_{0,0}(P\otimes P)$, $k_{0,2} \in \ltwo_{0,0}(Q \otimes Q)$, $k_{1,1'} \in \ltwo_{0,0}(P\otimes Q)$ and $k_{1,1} \in \ltwo_{0,0}(\mu)$.
    According to \Cref{lem:preserve_degeneracy}, it holds that $(\sinkop^* \otimes \sinkop^*) k_{0,2} \in \ltwo_{0,0}(P \otimes P)$ and $(I_{P} \otimes \sinkop^*) k_{1,1'} \in \ltwo_{0,0}(P \otimes P)$.
    This implies
    \begin{align}
        k_{2,0}^{i,j} + (\sinkop^* \otimes \sinkop^*) k^{i,j}_{0,2} - (I + \trans)(I_{P} \otimes \sinkop^*) k^{i,j}_{1,1'} \in \ltwo_{0,0}(P \otimes P).
    \end{align}
    Hence, by \Cref{lem:operator_C_cont}, $g_{2,0} \in \ltwo_{0,0}(P \otimes P)$ is well-defined.
    Similarly, $g_{0,2} \in \ltwo_{0,0}(Q \otimes Q)$ and $g_{1,1'} \in \ltwo_{0,0}(\prodm)$ are well-defined.
    Moreover,
    \begin{align*}
        \muexp[g_{2,0}(X_1, X_2) \mid X_1, Y_1] 
        = \muexp[g_{1,1'}(X_2, Y_1) \mid X_1, Y_1] = 0.
    \end{align*}
    Thus,
    \begin{align}
        \muexp[U \mid X_1, Y_1] = k_{1,1}(X_1, Y_1) + \muexp[k_{1,1}(X_2, Y_2)] \txtover{(i)}{=} k_{1,1}(X_1, Y_1) = \muexp[T \mid X_1, Y_1],
    \end{align}
    where $(i)$ follows from $k_{1,1} \in \ltwo_{0,0}(\mu)$.
    Then the claim follows.
    
    \emph{Step 2.} We prove that $\muexp[T - U \mid X_1, X_2] = 0$, that is,
    \begin{align}\label{eq:second_order_X}
        \muexp[U \mid X_1, X_2] = \muexp[T \mid X_1, X_2] = k_{2,0}(X_1, X_2).
    \end{align}
    Recall that $k_{1,1} \in \ltwo_{0,0}(\mu)$, we then have
    \begin{align}\label{eq:cond_diagonal_X1X2}
        \muexp[k_{1,1}(X_1, Y_1) \mid X_1, X_2] = \muexp[k_{1,1}(X_2, Y_2) \mid X_1, X_2] = 0.
    \end{align}
    Furthermore, according to \Cref{lem:operator_C_cont}, it holds that
    \begin{align*}
        (\sinkop^* \otimes \sinkop^*) \opC_{0,2}^{-1} = \txtover{(ii)}{=} (I - \sinkop^* \sinkop)^{-1} \sinkop^* \otimes (I - \sinkop^* \sinkop)^{-1} \sinkop^* = \opC_{2,0}^{-1} (\sinkop^* \otimes \sinkop^*),
    \end{align*}
    where we have used \Cref{lem:sinkhorn_op} in (ii).
    This implies that $\muexp[g_{0,2}(Y_1, Y_2) \mid X_1, X_2]$ is equal to
    \begin{multline}
        (\sinkop^* \otimes \sinkop^*) g_{0,2}(X_1, X_2) \\= \opC_{2,0}^{-1}[(\sinkop^* \otimes \sinkop^*) k_{0,2} + (\sinkop^*\sinkop \otimes \sinkop^*\sinkop) k_{2,0} - (I + \trans)(\sinkop^*\sinkop \otimes \sinkop^*) k_{1,1'}](X_1, X_2) \label{eq:cond_g02_X1X2}.
    \end{multline}
    Similarly, it follows from \Cref{lem:operator_C} that
\begin{align*}
        (I_{P} \otimes \sinkop^*) \opC^{-1} = (I - \sinkop^* \sinkop)^{-1} \otimes (I - \sinkop^* \sinkop)^{-1} \sinkop^* = \opC_{2,0}^{-1} (I_{P} \otimes \sinkop^*),
\end{align*}
    and thus $\muexp[g_{1,1'}(X_1, Y_2) + g_{1,1'}(X_2, Y_1) \mid X_1, X_2] = (I + \trans)(I_{P} \otimes \sinkop^*) g_{1,1'}(X_1, X_2)$ reads
    \begin{align}
        &\quad\ (I + \trans) \opC_{2,0}^{-1} [(I_{P} \otimes \sinkop^*) (I + \opB)k_{1,1'} - (I_{P} \otimes \sinkop^*\sinkop) k_{2,0} - (\sinkop^* \otimes \sinkop^*) k_{0,2}](X_1, X_2) \nonumber \\
        &= \opC_{2,0}^{-1} (I + \trans) [(I_{P} \otimes \sinkop^* + \trans(\sinkop^* \sinkop \otimes \sinkop^*))k_{1,1'} - (I_{P} \otimes \sinkop^*\sinkop) k_{2,0} - (\sinkop^* \otimes \sinkop^*) k_{0,2}](X_1, X_2) \label{eq:cond_g11'_X1X2},
    \end{align}
    where the equality follows from $(I_{P} \otimes \sinkop^*) \opB = (I_{P} \otimes \sinkop^*) \trans (\sinkop \otimes \sinkop^*) = \trans(\sinkop^* \sinkop \otimes \sinkop^*)$.
    Putting \eqref{eq:cond_diagonal_X1X2}, \eqref{eq:cond_g02_X1X2} and \eqref{eq:cond_g11'_X1X2} together, we have
    \begin{align*}
        \muexp[U \mid X_1, X_2] = \opC_{2,0}^{-1}\left[ \mcal{D}_{2,0} k_{2,0} + \mcal{D}_{0,2} k_{0,2} + \mcal{D}_{1,1'} k_{1,1'} \right](X_1, X_2),
    \end{align*}
    where
    \begin{align*}
        \mcal{D}_{2,0} &= I + (\sinkop^* \sinkop \otimes \sinkop^* \sinkop) - (I_{P} \otimes \sinkop^* \sinkop) - (\sinkop^* \sinkop \otimes I_{P}) \trans \\
        \mcal{D}_{0,2} &= 2(\sinkop^* \otimes \sinkop^*) - (I + \trans)(\sinkop^* \otimes \sinkop^*) = (\sinkop^* \otimes \sinkop^*) - (\sinkop^* \otimes \sinkop^*) \trans \\
        \mcal{D}_{1,1'} &= 0.
    \end{align*}
    Moreover, since $T$ is permutation symmetric in $X_{1,2}$, we know $k_{2,0}$ is symmetric.
    As a result, $\trans k_{2,0} = k_{2,0}$, which implies
    \begin{align*}
        \mcal{D}_{2,0} k_{2,0} &= [(I - \sinkop^* \sinkop) \otimes (I - \sinkop^* \sinkop)]k_{2,0} = \opC_{2,0}k_{2,0} \\
        \mcal{D}_{0,2} k_{0,2} &= [(\sinkop^* \otimes \sinkop^*) - (\sinkop^* \otimes \sinkop^*)] k_{0,2} = 0.
    \end{align*}
    Hence, the claim \eqref{eq:second_order_X} follows.
    A similar argument yields $\muexp[U \mid Y_1, Y_2] = \muexp[T \mid Y_1, Y_2] = k_{2,0}(Y_1, Y_2)$.
    
    \emph{Step 3.} We verify
    \[
        \muexp[U \mid X_1, Y_2] = \muexp[T \mid X_1, Y_2] = k_{1,1'}(X_1, Y_2).
    \]
    Again, we prove it by direct computations.
    Analogous to \eqref{eq:cond_diagonal_X1X2}, it holds that
    \begin{align}\label{eq:cond_diagonal_X1Y2}
        \muexp[k_{1,1}(X_1, Y_1) \mid X_1, Y_2] = \muexp[k_{1,1}(X_2, Y_2) \mid X_1, Y_2] = 0.
    \end{align}
    Note that
    \[
        (I_{P} \otimes \sinkop) \opC_{2,0}^{-1} = (I - \sinkop^* \sinkop)^{-1} \otimes \sinkop (I - \sinkop^* \sinkop)^{-1} = (I - \sinkop^* \sinkop)^{-1} \otimes (I - \sinkop \sinkop^*)^{-1} \sinkop = \opC^{-1} (I_{P} \otimes \sinkop),
    \]
    it follows that $\muexp[g_{2,0}(X_1, X_2) \mid X_1, Y_2] = (I_{P} \otimes \sinkop) g_{2,0}(X_1, Y_2)$ is equal to
    \begin{align}
        &\quad\ \opC^{-1}[(I_{P} \otimes \sinkop) k_{2,0} + (\sinkop^* \otimes \sinkop \sinkop^*) k_{0,2} - (I_{P} \otimes \sinkop \sinkop^*)k_{1,1'} - \trans(\sinkop \otimes \sinkop^*) k_{1,1'}](X_1, Y_2) \nonumber \\
        &= \opC^{-1}[(I_{P} \otimes \sinkop) k_{2,0} + (\sinkop^* \otimes \sinkop \sinkop^*) k_{0,2} - (I_{P} \otimes \sinkop \sinkop^*)k_{1,1'} - \opB k_{1,1'}](X_1, Y_2) \label{eq:cond_X1X2_X1Y2},
    \end{align}
    and, analogously,
    \begin{multline}\label{eq:cond_Y1Y2_X1Y2}
        \muexp[g_{0,2}(Y_1, Y_2) \mid X_1, Y_2] \\= \opC^{-1}[(\sinkop^* \otimes I_{Q}) k_{0,2} + (\sinkop^* \sinkop \otimes \sinkop) k_{2,0} - (\sinkop^* \sinkop \otimes I_{Q})k_{1,1'} - \opB k_{1,1'}](X_1, Y_2).
    \end{multline}
    Since $\muexp[g_{1,1'}(X_2, Y_1) \mid X_1, Y_2] = \opB g_{1,1'}(X_1, Y_2)$, we get
    \begin{align*}
        \muexp[U \mid X_1, Y_2] = \opC^{-1}\left[ \mcal{D}'_{2,0} k_{2,0} + \mcal{D}'_{0,2} k_{0,2} + \mcal{D}'_{1,1'} k_{1,1'} \right](X_1, Y_2),
    \end{align*}
    where
    \begin{align*}
        \mcal{D}'_{2,0} &= (I_{P} \otimes \sinkop) + (\sinkop^* \sinkop \otimes \sinkop) - (I + \opB)(I_{P} \otimes \sinkop) = (\sinkop^* \sinkop \otimes \sinkop) - (\sinkop^* \sinkop \otimes \sinkop) \trans \\
        \mcal{D}'_{0,2} &= (\sinkop^* \otimes I_{Q}) + (\sinkop^* \otimes \sinkop \sinkop^*) - (I + \opB)(\sinkop^* \otimes I_{Q}) = (\sinkop^* \otimes \sinkop \sinkop^*) - (\sinkop^* \otimes \sinkop \sinkop^*) \trans
    \end{align*}
    and
    \begin{align*}
        \mcal{D}'_{1,1'} &= -(I_{P} \otimes \sinkop \sinkop^*) - \opB - (\sinkop^* \sinkop \otimes I_{Q}) - \opB + (I + \opB)(I + \opB) \\
        &= I + (\sinkop^* \sinkop \otimes \sinkop \sinkop^*) - (I_{P} \otimes \sinkop \sinkop^*) - (\sinkop^* \sinkop \otimes I_{Q}) \\
        &= (I_{P} - \sinkop^* \sinkop) \otimes (I_{Q} - \sinkop \sinkop^*) = \opC.
    \end{align*}
    Therefore, $\muexp[U \mid X_1, Y_2] = \muexp[T \mid X_1, Y_2] = k_{1,1'}(X_1, Y_2)$.
    
    \emph{Step 4.} We prove $H_2^{1,2}$ is closed.
    Recall that $H_2^{1,2}$ is spanned by $g(X_1, X_2, Y_1, Y_2)$ given in \eqref{eq:func_H2_ij}.
    Take any Cauchy sequence $\{g^n(X_1, X_2, Y_1, Y_2)\} \subset H_2^{1,2} \subset \ltwo(\mu^2)$, there exists $g(X_1, X_2, Y_1, Y_2) \in \ltwo(\mu^2)$ such that $g^n(X_1, X_2, Y_1, Y_2) \rightarrow_{\ltwo(\mu^2)} g(X_1, X_2, Y_1, Y_2)$.
    In the following, we will write $g^n$ and $g$ for short.
    Since $g^n$ is permutation symmetric for each $n \ge 1$, we know, by \Cref{lem:L2_conv_preserve_symmetry}, $g$ is also permutation symmetric.
    As a result, the projection $\proj_{H_2^{1,2}}(g)$ exists, and thus
    \begin{align}
        \norm{g^n - g}_{\ltwo(\mu^2)}^2 = \norm{g^n - \proj_{H_2^{1,2}}(g)}_{\ltwo(\mu^2)}^2 + \norm{g - \proj_{H_2^{1,2}}(g)}_{\ltwo(\mu^2)}^2 \rightarrow 0, \quad \mbox{as } n \rightarrow \infty.
    \end{align}
    It then follows that $g^n \rightarrow_{\ltwo(\mu^2)} \proj_{H_2^{1,2}}(g)$, so $H_2^{1,2}$ is closed.
\end{proof}

Now we are ready to show the closedness of $H_2$.
\begin{proposition}\label{prop:existence_of_second_projection}
    Under Assumptions \ref{asmp:contiguity} and \ref{asmp:secondmoment},
    the subspace $H_2\subset \ltwo(\mu^N)$ is closed.
\end{proposition}
\begin{proof}[Proof of \Cref{prop:existence_of_second_projection}]
    We use the representation of $H_2$ given in \Cref{lem:alter_expression_H2}.
    Take any Cauchy sequence
    \[
        T^n := \sum_{i < j} [f_{2,0}^n(X_i, X_j) + f_{0,2}^n(Y_i, Y_j)] + \sum_{i=1}^N f_{1,1}^n(X_i, Y_i) + \sum_{i \neq j} f_{1,1'}^n(X_i, Y_j),
    \]
    we must have $\muexp[(T^n - T^m)^2] \rightarrow 0$ as $m, n \rightarrow \infty$.
    Let $g^n(x, x', y, y') := f_{2,0}^{n}(x, x') + f_{0,2}^{n}(y, y') + f_{1,1'}^{n}(x, y') + f_{1,1'}^{n}(x', y)$.
    Observe that
    \begin{align*}
        \muexp[(T^n - T^m)^2] &= \muexp\left[ \left( \sum_{i < j} (g^n - g^m)(X_i, X_j, Y_i, Y_j) \right)^2 \right] + \muexp\left[ \left( \sum_{i=1}^N (f_{1,1}^n - f_{1,1}^m)(X_i, Y_i) \right)^2 \right] \\
        &= \frac{N(N-1)}{2} \muexp\left[ \left( (g^n - g^m)(X_1, X_2, Y_1, Y_2) \right)^2 \right] + N \muexp\left[ \left( (f_{1,1}^n - f_{1,1}^m)(X_1, Y_1) \right)^2 \right],
    \end{align*}
    so we get, as $n, m \rightarrow \infty$,
    \begin{align}
        \muexp\left[ \left( (g^n - g^m)(X_1, X_2, Y_1, Y_2) \right)^2 \right] \rightarrow 0 \quad \mbox{ and } \quad \muexp\left[ \left( (f_{1,1}^n - f_{1,1}^m)(X_1, Y_1) \right)^2 \right] \rightarrow 0.
    \end{align}
    Furthermore, since $g^n(X_1, X_2, Y_1, Y_2), f_{1,1}^n(X_1, Y_1) \in H_0^\perp \cap H_1^\perp$ and $\muexp[g^n(X_1, X_2, Y_1, Y_2) \mid X_1, Y_1] = 0$, there exist $g(X_1, X_2, Y_1,  Y_2), f_{1,1}(X_1, Y_1) \in H_0^\perp \cap H_1^\perp$ such that $\muexp[g(X_1, X_2, Y_1, Y_2) \mid X_1, Y_1] = 0$,
    \begin{align}\label{eq:H2_separate_limit}
        g^{n}(X_1, X_2, Y_1, Y_2) \rightarrow_{\ltwo(\mu^N)} g(X_1, X_2, Y_1, Y_2) \quad \mbox{and} \quad f_{1,1}^n(X_1, Y_1) \rightarrow_{\ltwo(\mu^N)} f_{1,1}(X_1, Y_1).
    \end{align}
    Consequently, $T^n$ admits the limit
    \begin{align*}
        T^n \rightarrow_{\ltwo(\mu^N)} \sum_{i < j} g(X_i, X_j, Y_i, Y_j) + \sum_{i=1}^N f_{1,1}(X_i, Y_i).
    \end{align*}
    It then suffices to show the limit lives in $H_2$. According to \eqref{eq:H2_separate_limit}, it holds that
    \begin{multline*}
        g^n(X_1, X_2, Y_1, Y_2) + f_{1,1}^n(X_1, Y_1) + f_{1,1}^n(X_2, Y_2) \rightarrow_{\ltwo(\mu^2)} g(X_1, X_2, Y_1, Y_2) + f_{1,1}(X_1, Y_1) + f_{1,1}(X_2, Y_2).
    \end{multline*}
    Since $g^n(X_1, X_2, Y_1, Y_2) + f_{1,1}^n(X_1, Y_1) + f_{1,1}^n(X_2, Y_2) \in H_2^{i,j}$ and $H_2^{i,j}$ is closed as shown in \Cref{prop:closed_second_subspace}, we get that $g(X_1, X_2, Y_1, Y_2) + f_{1,1}(X_1, Y_1) + f_{1,1}(X_2, Y_2) \in H_2^{i,j}$ and thus has the form
    \begin{align*}
        f_{2,0}(X_1, X_2) + f_{0,2}(Y_1, Y_2) + f_{1,1'}(X_1, Y_2) + f_{1,1'}(X_2, Y_1) + \hat f_{1,1}(X_1, Y_1) + \hat f_{1,1}(X_2, Y_2),
    \end{align*}
    where $f_{2,0} \in \ltwo_{0,0}(P \otimes P), f_{0,2} \in \ltwo_{0,0}(Q \otimes Q)$ are symmetric and $\hat f_{1,1} \in \ltwo_{0,0}(\mu), f_{1,1'} \in \ltwo_{0,0}(\prodm)$ are the same up to an affine term.
    Taking conditional expectation given $(X_1, Y_1)$ leads to $f_{1,1}(X_1, Y_1) = \hat f_{1,1}(X_1, Y_1)$, and thus
    \(
        g(X_1, X_2, Y_1, Y_2) = f_{2,0}(X_1, X_2) + f_{0,2}(Y_1, Y_2) + f_{1,1'}(X_1, Y_2) + f_{1,1'}(X_2, Y_1).
    \)
    Hence, the limit $\sum_{i < j} g(X_i, X_j, Y_i, Y_j) + \sum_{i=1}^N f_{1,1}(X_i, Y_i) \in H_2$, and the closedness of $H_2$ follows.
\end{proof}

\section{Notation}
\label{sec:notation}

We give a table of notation in \Cref{tab:notation}.
\begin{table}[ht]
\centering
\caption{Notation.}
\label{tab:notation}
\begin{tabularx}{\textwidth}{p{0.09\textwidth}X}
  \toprule

    \multicolumn{2}{l}{\underline{Sets and Functions:}} \\
    $[N]$  &  set of integers from $1$ to $N$. \\
    $\mathcal{S}_N$  &  set of permutations of $[N]$. \\
    $\# \sigma$  &  number of cycles in the permutation $\sigma$. \\
    $\mathbf{1}$  &  constant function with value $1$. \\
    $f \oplus g$  &  direct sum of $f$ and $g$, i.e., $(f \oplus g)(x, y) = f(x) + g(y)$. \\
    $c$  &  cost function. \\
    $\eta$  &  general test function. \\
    $\tilde \eta$  &  degenerate test function defined in \eqref{eq:eta_bar}. \\
    $f^\otimes(X, Y_{\sigma})$  &  product $\prod_{i=1}^N f(X_i, Y_{\sigma_i})$. \\
    \\

    \multicolumn{2}{l}{\underline{Probability and Statistics:}} \\
    $P, Q$  &  probability distributions on $\mathbb{R}^d$. \\
    $\hat P^N, \hat Q^N$  &  empirical measures of samples $\{X_i\}_{i=1}^N$ and $\{Y_i\}_{i=1}^N$ from $P$ and $Q$, respectively. \\
    $P \otimes Q$  &  product measure of $P$ and $Q$. \\
    $\mathbb{E}$  &  expectation under the product measure $(P \otimes Q)^N$. \\
    $\muexp$  &  expectation under the measure $\scb^N$. \\
    $\mathbf{L}^p(\nu)$  &  space of functions whose $p$th power is integrable with respect to the measure $\nu$. \\
    $\mbox{Proj}$  &  $\ltwo$ projection. \\
    $H_0$  &  subspace of $\ltwo(\scb^N)$ spanned by constant functions. \\
    $H_1$  &  subspace of $\ltwo(\scb^N)$ spanned by mean-zero linear functions; defined in \eqref{eq:defineHk}. \\
    $\mathcal{L}_1$  &  first order chaos, also the projection of $T_N$ on $H_1$. \\
    \\

    \multicolumn{2}{l}{\underline{Operators:}} \\
    $I_{\nu}$  &  identity operator on $\ltwo(\nu)$. \\
    $\mathcal{T}$  &  swap operator, i.e., $\mathcal{T}f(x, y) = f(y, x)$. \\
    $\mathcal{A}$  &  integral operator mapping from $\ltwo(P)$ to $\ltwo(Q)$ with kernel $\xi:(x, y) \mapsto \xi(x, y)$. \\
    $\mathcal{A}^*$  &  integral operator mapping from $\ltwo(Q)$ to $\ltwo(P)$ with kernel $\xi:(y, x) \mapsto \xi(x, y)$. \\
    $\{s_k\}_{k \ge 0}$  &  singular values of $\mathcal{A}$. \\
    $\{\alpha_k\}_{k \ge 0}$  &  singular functions of $\mathcal{A}$ and $\mathcal{A}^*$. \\
    $\{\beta_k\}_{k \ge 0}$  &  singular functions of $\mathcal{A}$ and $\mathcal{A}^*$. \\
    $\mathcal{A}_1 \otimes \mathcal{A}_2$  &  tensor product of operators $\mathcal{A}_1$ and $\mathcal{A}_2$. \\
    $\mathcal{B}$  &  operator $\mathcal{T} (\mathcal{A} \otimes \mathcal{A}^*)$ defined on $\ltwo(P \otimes Q)$. \\
    \\

    \multicolumn{2}{l}{\underline{Optimal transport:}} \\
    $\mathbf{C}(P, Q)$  &  optimal cost of transporting $P$ to $Q$ with cost function $c$. \\
    $\Pi(P, Q)$  &  space of probabilities defined on $\mathbb{R}^{d} \times \mathbb{R}^d$ with marginals $P$ and $Q$. \\
    $\mu_\eps$  &  (static) Schr\"odinger bridge connecting $P$ to $Q$ at temperature $\eps$. \\
    $\hat \mu_\eps^N$  &  discrete Schr\"odinger bridge connecting $\hat P^N$ to $\hat Q^N$ at temperature $\eps$. \\
    $T_N$  &  see \eqref{eq:whatistn}. \\
    $\xi$  &  nonnegative function on $\mathbb{R}^d \times \mathbb{R}^d$ such that $d \scb / (d (\prodm))(x,y) = \xi(x, y)$. \\
    $\theta$  &  mean of $\eta(X, Y)$ under the measure $\scb$, i.e., $\int \eta(x, y) \scb(x, y) dxdy$. \\
    $\eta_{1,0}, \eta_{0,1}$  &  see \eqref{eq:first_kappa}. \\

  \bottomrule
\end{tabularx}
\end{table}

\end{appendix}

\end{document}